\newcommand{\hcirc}{\accentset{\circ}{h}}
\newcommand{\hbarcirc}{\accentset{\circ}{\bar{h}}}
\newcommand{\sigmacirc}{\accentset{\circ}{\sigma}}
\newcommand{\ubar}[1]{\underaccent{\bar}{#1}}
\DeclareMathSymbol{\lsb@l}{\mathalpha}{letters}{`l}
\def\XXint#1#2#3{{\setbox0=\hbox{$#1{#2#3}{\int}$ }
		\vcenter{\hbox{$#2#3$ }}\kern-.6\wd0}}
\newtheorem{prop}{Proposition}
\newtheorem{thm}[prop]{Theorem}
\newtheorem{lem}[prop]{Lemma}
\newtheorem{coro}[prop]{Corollary}
\newtheorem{rema}[prop]{Remark}
\title[Large area-constrained Willmore surfaces]{Large area-constrained Willmore surfaces in asymptotically Schwarzschild $3$-manifolds}
\author{Michael Eichmair}
\address{ University of Vienna,
	Oskar-Morgenstern-Platz 1,
	1090 Vienna,
	Austria}
\email{michael.eichmair@univie.ac.at}
\author{Thomas Koerber}
\address{ University of Vienna,
	Oskar-Morgenstern-Platz 1,
	1090 Vienna,
	Austria}
\email{thomas.koerber@univie.ac.at}
\begin{document}

\date{\today}
\onehalfspacing

\begin{abstract}
		We apply the method of Lyapunov-Schmidt reduction to study large area-constrained Willmore surfaces in Riemannian $3$-manifolds asymptotic to Schwarzschild. In particular, we prove that the end of such a manifold is foliated by distinguished area-constrained Willmore spheres.  The leaves are the unique area-constrained Willmore spheres with large area, non-negative Hawking mass, and distance to the center of the manifold at least a small multiple of the area radius. Unlike previous related work, we only require that the scalar curvature satisfies mild asymptotic conditions. We also give explicit examples to show that these conditions on the scalar curvature are necessary.
\end{abstract}

\maketitle

\section{Introduction}

Let $(M, g)$ be an asymptotically flat Riemannian $3$-manifold with non-negative scalar curvature. Such manifolds arise as maximal initial data sets for the Einstein field equations and thus play an important role in general relativity. 
\\ \indent Let $\Sigma \subset M$ be a sphere with unit normal $\nu$, mean curvature vector $- H  \,\nu$, area measure $\mathrm{d}\mu$, and area $|\Sigma|$. The Hawking mass
\[
m_H(\Sigma)=\sqrt{\frac{|\Sigma|}{16\,\pi}}\bigg(1-\frac{1}{16\,\pi}\int_{\Sigma} H^2\,\mathrm{d}\mu\bigg)
\]
of $\Sigma$ has been used to probe the gravitational field in the domain bounded by $\Sigma$; see e.g.~\cite{hawking1968gravitational, christodoulou71some}. R.~Geroch \cite[p.~115]{Geroch:1973} has noted that the Hawking mass does not increase if $\Sigma$ flows in direction of the unit normal $\nu$ at a speed equal to $H^{-1}$, provided $H >0$. Moreover, he has proposed a proof of the positive energy theorem based on evolving by {inverse mean curvature flow} a small geodesic sphere in $(M,g)$ with Hawking mass close to zero  into a large, centered sphere in the asymptotically flat end whose Hawking mass is close to the ADM-mass of $(M, g)$. Expanding upon Geroch's idea, P.~S.~Jang and R.~Wald \cite[p.~43]{JangWald1977} have sketched a proof of the Riemannian Penrose inequality in the special case where the apparent horizon is connected. These programs have been completed in the paper \cite{huisken2001inverse} by G.~Huisken and T.~Ilmanen, where a suitable, necessarily non-smooth notion of inverse mean curvature flow is developed.  H.~Bray has proven the Riemannian Penrose inequality with no restriction on the number of boundary components in \cite{Bray:2001} using a different method. 
\\
\indent D.~Christodoulou and S.-T.~Yau \cite{christodoulou71some} have noted that the Hawking mass of stable constant mean curvature spheres is non-negative. Note that $m_H (\Sigma) \leq 0$ in flat $\mathbb{R}^3$ with equality if and only if $\Sigma$ is a {round} sphere. The apparent tension between these results is indicative of the potential role of the Hawking mass as a measure of the gravitational field. In this relation, note that stable constant mean curvature surfaces abound in every initial data set. Indeed, as discussed in Appendix K of \cite{mineffectivePMT}, there exist isoperimetric regions of every volume. 
\\ 
\indent To describe our contributions here, we say that $(M,g)$ is $C^k$-asymptotic to Schwarzschild with mass $m>0$ if there is a non-empty compact set whose complement in $M$ is diffeomorphic to $\{x\in\mathbb{R}^3:|x|>1/2\}$ and such that, in this so-called asymptotically flat, there holds
\[
g=\bigg(1+\frac{m}{2\,|x|}\bigg)^4\bar g+\sigma.
\]
  Here, $x$ is the Euclidean position vector and $\bar g$ is the Euclidean metric on $\mathbb{R}^3$, while $\sigma$ is a symmetric two-tensor that satisfies, as $x\to\infty$ for every multi-index $J$ with $|J|\leq k$, 
\[
\partial_J \sigma=O(|x|^{-2-|J|}).
\]  Note that $(M,g)$ is modeled upon the initial data of a Schwarzschild black hole given by
\begin{align} \bigg(\left\{x\in\mathbb{R}^3:|x|\geq\frac{m}{2}\right\},\,\left( 1+\frac{m}{2\,|x|}\right)^4\bar g \bigg). \label{schwarzschild initial data}
\end{align}
 \indent Given $r>1/2$, we define $B_r\subset M$ to be the compact domain whose boundary corresponds to $S_r(0)$ in the asymptotically flat chart.  We say that a surface $\Sigma\subset M$ is on-center if it bounds a compact region that contains $B_1$. If $\Sigma$ bounds a compact region disjoint from $B_1$, it will be called outlying.
\\ \indent In pioneering work \cite{huisken1996definition}, G.~Huisken and S.-T.~Yau have shown that an end that is $C^4$-asymptotic to Schwarzschild with positive mass is foliated by stable constant mean curvature spheres. This foliation detects fundamental physical quantities associated with the initial data set such as the ADM mass and the Hamiltonian center of mass. Moreover, they have shown that the leaves of the foliation are the only stable constant mean curvature spheres of their respective mean curvature within large classes of competing surfaces. The original characterization of the leaves in \cite{huisken1996definition} has been sharpened by J.~Qing and G.~Tian in \cite{qing2007uniqueness}, by S.~Brendle and the first-named author in \cite{brendle2014large}, and by A.~Carlotto, O.~Chodosh, and the first-named author in \cite{mineffectivePMT}. The optimal uniqueness result for large stable constant mean curvature spheres in asymptotically Schwarzschild initial data sets has recently been obtained by O.~Chodosh and the first-named author \cite{chodosh2017global,chodosh2019far}.
\\ \indent The characterization of the leaves of the  foliation as the unique solutions of the isoperimetric problem for large volumes has been established by H.~Bray in \cite{bray1997penrose} for exact Schwarzschild (\ref{schwarzschild initial data}) and by J.~Metzger and the first-named author in \cite{isostructure, eichmair2013unique} for initial data asymptotic to Schwarzschild. In fact, these optimal global uniqueness results for large isoperimetric surfaces hold for asymptotically flat manifolds with positive mass, in particular for the examples constructed by A.~Carlotto and R.~Schoen in \cite{carlotto2016localizing}, as has recently been shown by O.~Chodosh, Y.~Shi, H.~Yu, and the first-named author in \cite{CESY} and by H.~Yu in \cite{Yu:2020}. \\ \indent A different approach to obtain surfaces that are well-adapted to the ambient geometry is to maximize the  Hawking mass  under a suitable geometric constraint. Here, fixing the area is a natural choice. Area-constrained critical points of the Hawking mass are also area-constrained critical points of the Willmore energy 
\begin{align*} 
\mathcal{W}(\Sigma)=\frac14 \int_{\Sigma} H^2\,\mathrm{d}\mu.
\end{align*}
We refer to such surfaces as area-constrained Willmore surfaces. Note that in e.g.~\cite{lamm2011foliations}, such surfaces are said to be of Willmore type.\\
\indent Critical points of the Willmore energy, known as Willmore surfaces, satisfy the Euler-Lagrange equation $-W=0$ where 
\begin{align}
W=\Delta H+(|\hcirc|^2+\operatorname{Ric}(\nu,\nu))\,H.
\label{Willmore quantity}
\end{align}
Here, $\Delta$ is the non-positive Laplace-Beltrami operator, $\hcirc$ the traceless part of the second fundamental form $h$,  and $\operatorname{Ric}$ the Ricci curvature of $(M,g)$. Likewise, area-constrained Willmore surfaces satisfy the area-constrained Willmore equation
\begin{align}
-W=\kappa\, H,
\label{constrained Willmore equation}
\end{align}
where $\kappa\in\mathbb{R}$ is a Lagrange multiplier. Note that $\kappa$ is denoted by $\lambda$ in \cite{lamm2011foliations}. The linearization of the Willmore operator is denoted by $Q$. It measures how $-W$ changes along a normal variation of the surface $\Sigma$. We refer to Appendix \ref{Willmore appendix} for more details, including a discussion of the notion of stability of such surfaces. 
\\ \indent The cross-sections of rotationally symmetric Riemannian manifolds are easily seen to form a foliation by area-constrained Willmore spheres. This observation applies in particular to the spheres of symmetry in the spatial Schwarzschild manifold (\ref{schwarzschild initial data}). In \cite{lamm2011foliations}, T.~Lamm, J.~Metzger, and F.~Schulze have applied a delicate singular perturbation analysis  to prove the existence of such a foliation also in the case of small perturbations of the Schwarzschild manifold.  To state their result, we define the area radius $\lambda(\Sigma)>0$ of a surface $\Sigma\subset M\setminus K$ by $$4\,\pi\, \lambda(\Sigma)^2=|\Sigma|$$ and its inner radius $\rho(\Sigma)$ by
$$
\rho(\Sigma)=\sup\{r>1/2:B_r\cap \Sigma=\emptyset\}.$$ 
Moreover, we use $R$ to denote the scalar curvature of $(M,g)$. Below, we summarize Theorem 1 and Theorem 2 in \cite{lamm2011foliations}.
\begin{thm}[\cite{lamm2011foliations}] \label {LMS theorem} Given $m>0$, there is a constant $\eta > 0$ with the following property. Suppose that $(M,g)$ is $C^3$-asymptotic to Schwarzschild with mass $m>0$ such that
\begin{align}
\limsup_{|x|\to\infty}\bigg (|x|^2\,|\sigma|+|x|^3\,|D\sigma|+|x|^4\,|D^2\sigma|+|x|^5\,|D^3\sigma|\bigg)<\eta \label{smallness}
\end{align}
and
\[
\limsup_{|x|\to\infty}|x|^5\,|R| <\eta.
\]
There is a compact set $K\subset M$, a number $\kappa_0>0$, and spheres $\{\Sigma(\kappa): \kappa \in(0,\kappa_0)\}$ such that the following hold:
\begin{itemize}
	\item[$\circ$] $\Sigma(\kappa)$ is a stable area-constrained Willmore sphere that satisfies (\ref{constrained Willmore equation}) with parameter $\kappa$.
	\item[$\circ$] $M\setminus K$ is smoothly foliated by the family $\{\Sigma(\kappa): \kappa\in(0,\kappa_0)\}$.
\end{itemize}
Moreover, there is a constant $\epsilon_0>0$ such that every on-center, strictly mean convex area-constrained Willmore sphere $\Sigma\subset M\setminus K$ with \begin{align}\bigg|\frac{\lambda(\Sigma)}{\rho(\Sigma)}-1\bigg|<\epsilon_0 \quad \text{and} \quad  \int_{\Sigma}|\hcirc|^2\,\mathrm{d}\mu<\epsilon_0 
\label{uniqueness requirement}
\end{align} is part of this foliation. 
\end{thm}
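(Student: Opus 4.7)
The plan is to construct the foliation via Lyapunov--Schmidt reduction, perturbing the exact area-constrained Willmore foliation of spatial Schwarzschild (\ref{schwarzschild initial data}) formed by the coordinate spheres $\{S_r(0):r>m/2\}$. I parametrize candidate surfaces as normal graphs $\Sigma_{r,z,u}$ over Euclidean spheres $S_r(z)\subset\mathbb{R}^3$ of radius $r$ centered at $z\in\mathbb{R}^3$, where $u\in C^{4,\alpha}(S_r(z))$ is a graph function. The coordinate spheres $\Sigma_{r,0,0}$ satisfy (\ref{constrained Willmore equation}) in Schwarzschild with an explicit, strictly decreasing function $\bar\kappa(r)$ in place of $\kappa$. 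Setting $\mathcal{F}(r,z,u):=-W(\Sigma_{r,z,u})-\bar\kappa(r)\,H(\Sigma_{r,z,u})$, I would expand $\mathcal{F}$ around $u=0$ and solve $\mathcal{F}=0$ for $u$ in terms of $(r,z)$ with $r$ large.

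The key analytic step is the spectral analysis of the linearization $Q_0$ of $\mathcal{F}$ at $(r,z,0)$ in the Schwarzschild metric. Decomposed into spherical harmonics on $S_r(z)$ with respect to $\bar g$, $Q_0$ has a four-dimensional approximate kernel spanned by the constant mode and the translation modes $\phi_i:=(x^i-z^i)/r$ for $i=1,2,3$. The constant mode is absorbed by the area constraint, while on the span of the $\phi_i$ a direct calculation using the explicit Schwarzschild factor $(1+m/(2\,|x|))^4$ yields eigenvalues of order $m\,r^{-5}$ that are strictly positive and separated from the rest of the spectrum; on the orthogonal complement within the mean-zero subspace, $Q_0$ is uniformly invertible. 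Decomposing $u=u_\perp+\sum_i a_i\,\phi_i$ and projecting $\mathcal{F}=0$ orthogonally to the $\phi_i$, the implicit function theorem provides a unique solution $u_\perp=u_\perp(r,z)$, with error contributions from $\sigma$ and $R$ controlled by the hypotheses (\ref{smallness}) and $|R(x)|\,|x|^5<\eta$.

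The main obstacle is the reduced finite-dimensional system
\begin{equation*}
\mathcal{G}_i(r,z):=\int_{S_r(z)}\mathcal{F}(r,z,u_\perp(r,z))\,\phi_i\,\text{d}\mu=0,\qquad i=1,2,3,
\end{equation*}
which must be solved by a choice $z=z(r)$. In exact Schwarzschild, $z=0$ works by rotational symmetry, and $\partial\mathcal{G}/\partial z$ at $(r,0)$ is a non-degenerate diagonal matrix of order $m\,r^{-5}$; here the positivity of $m$ is essential. The contributions of $\sigma$ and $R$ to $\mathcal{G}$ are of strictly lower order by the smallness hypotheses, so a second application of the implicit function theorem yields $z(r)$ with $|z(r)|=o(r)$ as $r\to\infty$. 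Setting $\Sigma(\kappa):=\Sigma_{r,z(r),u_\perp(r,z(r))}$ for $\kappa=\bar\kappa(r)$ produces the leaves. Monotonicity of $\bar\kappa$ and smoothness of the construction yield the foliation, and stability of each leaf follows from the spectral gap of $Q_0$ identified above, inherited by the second variation of the constrained Willmore functional.

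For the uniqueness clause, let $\Sigma$ be an on-center, strictly mean convex area-constrained Willmore sphere satisfying (\ref{uniqueness requirement}). The smallness of $\int_\Sigma|\hcirc|^2\,\text{d}\mu$ together with $\lambda(\Sigma)/\rho(\Sigma)\approx 1$ allows, via a Simons-type argument and iteration of elliptic estimates for (\ref{constrained Willmore equation}), to show that $\Sigma$ is a small $C^{4,\alpha}$-graph $\Sigma_{\tilde r,\tilde z,\tilde u}$ over some $S_{\tilde r}(\tilde z)$ with $|\tilde z|/\tilde r$ small. Its Lagrange multiplier equals $\bar\kappa(\tilde r)$ up to controlled error, and the uniqueness assertion of the implicit function theorem in both Lyapunov--Schmidt steps then forces $\Sigma=\Sigma(\kappa)$, closing the argument.
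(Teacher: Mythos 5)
This theorem is not proved in the paper: it is a summary of Theorems 1 and 2 of \cite{lamm2011foliations}, quoted as background, so there is no proof in the paper against which to compare your proposal; one must compare against \cite{lamm2011foliations} itself. That said, the strategy you lay out is not the one used there. As the introduction of the present paper explains, Lamm, Metzger, and Schulze rely on a ``delicate singular perturbation analysis'': the constrained Willmore equation is treated as a singular fourth-order perturbation of the constant mean curvature equation, and the argument is built around a priori curvature and position estimates together with an application of the inverse function theorem in the full PDE setting. They do not carry out the finite-dimensional Lyapunov--Schmidt reduction you describe, with its approximate kernel of translation modes $\phi_i$ and reduced system $\mathcal{G}_i(r,z)=0$. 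What you have written is instead close in spirit to the approach developed in this paper (and in \cite{brendle2014large,chodosh2019far} in the CMC setting): compare Lemma \ref{inverse function theorem lemma}, Proposition \ref{LS proposition}, the function $G_\lambda$ in (\ref{G definition}), and Lemmas \ref{variational to 3 dim} and \ref{G der sRT}. Under the hypothesis (\ref{smallness}) and the decay of $R$, that machinery does recover the LMS theorem as a special case of Theorem \ref{existence thm} and Theorem \ref{uniqueness thm}.

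There is also a concrete gap in your reduction. You freeze the multiplier to the exact-Schwarzschild value $\bar\kappa(r)$ and simultaneously impose the area constraint, hoping to solve the projected equation for $u_\perp\in\Lambda_1^\perp$. This over-determines the $\Lambda_0$ block: the area constraint fixes $\operatorname{proj}_{\Lambda_0}u_\perp$, while the $\Lambda_0$ projection of $-W-\bar\kappa(r)\,H=0$ is an additional scalar equation with no remaining free parameter. In the paper's Lemma \ref{inverse function theorem lemma}, the linearized map $T$ is an isomorphism precisely because $\kappa$ is retained as an unknown: $(DT)|_{(0,0,\bar g)}(0,\kappa,0)=(2\,\kappa,0)$ absorbs the $\Lambda_{0,0}$ cokernel while the area constraint fixes the constant mode of $u$. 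To close your outline, solve for the pair $(u,\kappa)$ given $(\xi,\lambda)$, parametrize by the area radius $\lambda$ rather than by the fixed Euclidean radius $r$, and verify a posteriori that $\kappa_{\xi,\lambda}$ agrees with $\bar\kappa$ to leading order (cf.\ Lemma \ref{u, lambda expansions}); the remaining obstruction then lives in $\Lambda_1$ and is captured by $G_\lambda$, as in the paper.
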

\begin{rema} \normalfont
	In \cite{lamm2011foliations}, the uniqueness result is stated in terms of smallness conditions on the rescaled barycenter
	\begin{align*}
	\frac{1}{\lambda(\Sigma)\,|\Sigma|}\int_\Sigma x\,\mathrm{d}\mu 
	\end{align*} 
	and the quotient $\rho(\Sigma)^{-2}\,\lambda(\Sigma)$. These conditions are implied by (\ref{uniqueness requirement}) and Theorem 1.1 in \cite{deLellisMueller}.
\end{rema} 
The stability of the leaves $\Sigma(\kappa)$ suggests that each contains a maximal amount of Hawking mass given their surface area. Locally, this has been confirmed by the second-named author; see Theorem 1.2 in \cite{koerber2020area}.
\begin{thm}[{\cite{koerber2020area}}]
	Assumptions as in Theorem \ref{LMS theorem}. 	\label{centred uniqueness} 
Let $\Sigma\subset M\setminus K$ be a closed, on-center sphere with  $$\bigg|\frac{\lambda(\Sigma)}{\rho(\Sigma)}-1\bigg|<\epsilon_0$$ and $|\Sigma|=|\Sigma(\kappa)|$ for some $\kappa\in(0,\kappa_0)$. Then
$$
m_H(\Sigma)\leq m_H(\Sigma(\kappa))
$$
with equality if and only if $\Sigma=\Sigma(\kappa)$.  Moreover, every on-center area-constrained Willmore sphere $\Sigma\subset M\setminus K$ with $$\bigg|\frac{\lambda(\Sigma)}{\rho(\Sigma)}-1\bigg|<\epsilon_0 \quad \text{and} \quad  \int_{\Sigma}|\hcirc|^2\,\mathrm{d}\mu<\epsilon_0 $$ is part of the foliation from Theorem \ref{LMS theorem}. 
\end{thm}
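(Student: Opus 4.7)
I would structure the proof in two parts matching the two assertions of the theorem.

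For the Hawking mass comparison, observe that for fixed $|\Sigma|$ the Hawking mass $m_H$ and the Willmore energy $\mathcal{W}$ are affinely related, so the claim $m_H(\Sigma) \leq m_H(\Sigma_\kappa)$ is equivalent to $\mathcal{W}(\Sigma) \geq \mathcal{W}(\Sigma_\kappa)$. The plan is to split on the size of $\int_\Sigma |\hcirc|^2\,\text{d}\mu$. When this quantity exceeds a small threshold $\delta$, the Gauss equation and Gauss--Bonnet give
\[
\mathcal{W}(\Sigma) = 4\,\pi + \tfrac{1}{2}\int_\Sigma |\hcirc|^2\,\text{d}\mu - \int_\Sigma K_{\mathrm{amb}}\,\text{d}\mu,
\]
where the ambient sectional-curvature term is small by the asymptotic decay of $g$ along the end and the bound $|\lambda/\rho-1|<\epsilon_0$. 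Since the leaves of the foliation are nearly round, $\mathcal{W}(\Sigma_\kappa) = 4\,\pi + o(1)$ as $\kappa \to 0$; choosing $\kappa_0$ small thus already forces $\mathcal{W}(\Sigma) > \mathcal{W}(\Sigma_\kappa)$ in this regime.

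When $\int_\Sigma |\hcirc|^2\,\text{d}\mu < \delta$, the theorem of de Lellis--Müller realises $\Sigma$ as a $W^{2,2}$-small normal graph over a round Euclidean sphere $S_r(p)$. The area constraint pins $r$ close to $\lambda(\Sigma_\kappa)$; the bound on $\lambda/\rho$ together with the on-center hypothesis forces $|p|$ to be small relative to $r$; and Theorem~\ref{LMS theorem} says $\Sigma_\kappa$ is itself a $W^{2,2}$-small perturbation of such a sphere. Consequently, $\Sigma$ is a $W^{2,2}$-small normal graph over $\Sigma_\kappa$ with defining function $u$, and I would expand
\[
\mathcal{W}(\Sigma) - \mathcal{W}(\Sigma_\kappa) = \tfrac{1}{2}\,\langle \mathcal{H}_\kappa[u],\,u\rangle_{L^2(\Sigma_\kappa)} + O\bigl(\|u\|^3\bigr),
\]
where $\mathcal{H}_\kappa$ is the second variation operator of the constrained Willmore Lagrangian at $\Sigma_\kappa$. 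The linear term vanishes because $\Sigma_\kappa$ satisfies the constrained Willmore equation and because the area constraint renders $u$ orthogonal to constants to leading order. Stability of $\Sigma_\kappa$ supplies non-negativity of $\mathcal{H}_\kappa$ on area-preserving variations, while the on-center hypothesis eliminates the translation-like zero modes that would otherwise prevent coercivity. Absorbing the cubic remainder into the resulting quadratic lower bound yields the strict inequality $\mathcal{W}(\Sigma) > \mathcal{W}(\Sigma_\kappa)$ unless $u \equiv 0$, that is, $\Sigma = \Sigma_\kappa$.

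For the second assertion, the strategy is to remove the strict mean convexity hypothesis of Theorem~\ref{LMS theorem}. The bound $\int_\Sigma |\hcirc|^2\,\text{d}\mu < \epsilon_0$ together with de Lellis--Müller again realises $\Sigma$ as a $W^{2,2}$-small normal graph over a round sphere $S_r(p)$, whose mean curvature is strictly positive. Provided $\epsilon_0$ is small relative to the asymptotic Schwarzschild parameters, $\Sigma$ inherits the positivity of the mean curvature, and Theorem~\ref{LMS theorem} then identifies $\Sigma$ with a leaf of the foliation. The most delicate step will be the quantitative control of $\mathcal{H}_\kappa$: separating the genuine zero modes (excluded by the on-center condition) from the coercive directions while simultaneously absorbing the cubic remainder requires precise knowledge of the spectrum of the linearized constrained Willmore operator on nearly round coordinate spheres. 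This spectral information, together with careful tracking of how the Schwarzschild and $\sigma$ corrections perturb the kernel, should be extractable from the Lyapunov--Schmidt scheme that underpins Theorem~\ref{LMS theorem}.
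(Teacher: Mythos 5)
Theorem~\ref{centred uniqueness} is not proved in the present paper; it is quoted from the second author's work \cite{koerber2020area}, whose title already signals a genuinely different route: the \emph{area-preserving Willmore flow}. There the comparison $m_H(\Sigma)\le m_H(\Sigma_\kappa)$ with rigidity is obtained by running a fourth-order parabolic flow starting at $\Sigma$, showing long-time existence and convergence to the leaf $\Sigma_\kappa$, and using the monotonicity of the Willmore energy along the flow. That dynamical argument bypasses the need for a quantitative positive lower bound on the second variation at $\Sigma_\kappa$, which is exactly the step your static approach hinges on. So your proposal is not a reconstruction of the known proof but an alternative variational strategy.

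As a strategy in its own right, the case split on $\int_\Sigma|\hcirc|^2\,\mathrm{d}\mu$ and the use of the de Lellis--M\"uller theorem in the small-energy regime are both sound, and the large-energy case is correct as sketched (the Gauss--Bonnet bookkeeping matches (\ref{integrated Gauss equation 0})). The genuine gap is in the small-energy case: \emph{stability} of $\Sigma_\kappa$ only gives the non-strict inequality $\langle\mathcal{H}_\kappa[u],u\rangle\ge 0$ on area-preserving variations, and the phrase ``the on-center hypothesis eliminates the translation-like zero modes'' is an aspiration, not an argument. Being on-center only says that $\Sigma$ encloses $K$; it does not by itself bound the component of the graph function $u$ along first spherical harmonics, nor does it produce a spectral gap for $\mathcal{H}_\kappa$ uniform in $\kappa$. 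In the Lyapunov--Schmidt picture of this paper, the translation modes are controlled by the strict convexity of the reduced function $G_\lambda$ near the origin (cf.~Lemma \ref{G der sRT}), which encodes both the Schwarzschild contribution and the scalar-curvature hypotheses; without an analogous quantitative statement your cubic remainder has nothing to be absorbed into, and the claimed strict inequality does not follow. You acknowledge this at the end, but the missing spectral input is the heart of the matter rather than a technicality. For the second assertion, the derivation of mean convexity from $\int_\Sigma|\hcirc|^2\,\mathrm{d}\mu<\epsilon_0$ also requires a bootstrap from the $W^{2,2}$ graph estimate to pointwise $C^0$ control of $H$ via the fourth-order equation; this is true (and used elsewhere in the paper via \cite[Lemma 4.2]{koerber2020area}), but should be stated rather than assumed.
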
 
\begin{rema} \normalfont
	Unlike in Theorem \ref{LMS theorem}, there is no assumption on the sign of the mean curvature in the uniqueness statement in Theorem \ref{centred uniqueness}.
\end{rema}
Comparing with the results available for stable constant mean curvature surfaces, the assumptions of Theorem \ref{LMS theorem} and Theorem \ref{centred uniqueness} are quite restrictive. Yet, there has been no subsequent result  which either establishes  the existence of a foliation by area-constrained Willmore spheres in a more general setting or characterizes the leaves of such a foliation more globally.  Even in  exact Schwarzschild initial data (\ref{schwarzschild initial data}), our variational understanding of the Willmore energy is limited. In fact, it is not known if an area-constrained maximizer of the Hawking mass exists unless the prescribed area is either very small or an integer multiple of the area of the horizon; see \cite[Theorem 1.6 and Remark 1.7]{wei2020maximizers}. By contrast, S.~Brendle has shown in \cite{brendle2013constant} that the spheres of symmetry are the only closed, embedded constant mean curvature surfaces in exact Schwarzschild (\ref{schwarzschild initial data}). Previously, it had been known from the work of H.~Bray in \cite{bray1997penrose} that these spheres are the only solutions of the isoperimetric problem for the volume they enclose. Note that such a result fails for the Willmore energy, as one can construct surfaces of arbitrarily large area and Hawking mass by gluing small catenoidal necks between spheres of symmetry that are close to the horizon; see the remark below Corollary 5.4 in \cite{koerber2020area}. Consequently, any reasonable characterization of such surfaces can only possibly hold outside a compact set or under a small energy assumption. \\ \indent 
In his habilitation thesis \cite{laurain2019analyse}, P.~Laurain conjectures the existence of a foliation by area-constrained Willmore spheres if the metric $g$  satisfies the so-called Regge-Teitelboim condition (see \cite{Regge-Teitelboim}) and that all area-constrained Willmore surfaces with small energy that enclose a sufficiently large compact set are part of this foliation; cf.~\cite[Theorem 49 (In progress)]{laurain2019analyse}. Note that -- being non-linear and of fourth order -- the area-constrained Willmore equation (\ref{constrained Willmore equation}) poses hard analytical challenges and is not as accessible geometrically as the constant mean curvature equation. What is more, Willmore stability does not appear to be as useful of a condition as the stability of a constant mean curvature surface. For instance, every closed minimal surface is a stable Willmore surface.
\\ \indent  In this work, we establish the existence and uniqueness of foliations by area-constrained Willmore spheres in a generality  analogous to the optimal results for stable constant mean curvature surfaces in  \cite{chodosh2017global,chodosh2019far}. In summary, we discover optimal conditions on the scalar curvature under which the end of every asymptotically Schwarzschild manifold is foliated by large stable area-constrained Willmore spheres. These surfaces are unique among  all large area-constrained Willmore spheres with non-negative Hawking mass whose inner radius is at least a small multiple of the area radius. Our results differ from those in Theorem \ref{LMS theorem} in that we do not require smallness of the perturbation $\sigma$ off Schwarzschild or the centering quantity
\[
\frac{\lambda(\Sigma)}{\rho(\Sigma)}-1
\]
such as (\ref{smallness}) or (\ref{uniqueness requirement}), respectively.   \\ \indent 
 More precisely, we first establish the existence of a foliation by area-constrained Willmore spheres assuming that $(M,g)$ is $C^4$-asymptotic to Schwarzschild. We also assume that  the scalar curvature is asymptotically even and satisfies a certain growth condition.
\begin{thm}
Let $(M,g)$ be $C^4$-asymptotic to Schwarzschild with mass $m>0$ and suppose that the scalar curvature $R$ satisfies
\begin{align}
\sum_{i=1}^3	x^i\,\partial_i(|x|^2\,R)&\leq o(|x|^{-2})\text{ and} \label{decay R}\\
R(x)-R(-x)&=o(|x|^{-4}). \label{even R}
\end{align}
There exists a compact set $K\subset M$, a number $\kappa_0>0$, and on-center stable area-constrained Willmore spheres $\Sigma(\kappa)$, $\kappa \in(0,\kappa_0)$, satisfying (\ref{constrained Willmore equation}) with parameter $\kappa$ such that $M\setminus K$ is foliated by the family $\{\Sigma(\kappa): \kappa\in(0,\kappa_0)\}$. Moreover, there holds
\label{existence thm}
 \begin{align*}
 \lim_{\kappa\to0}\frac{\lambda({\Sigma({\kappa})})}{\rho({\Sigma({\kappa})})}=1. 
 \end{align*}
\end{thm}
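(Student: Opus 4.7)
The plan is to apply Lyapunov-Schmidt reduction against the family of coordinate spheres $S_\rho(\xi) = \{x \in \mathbb{R}^3 : |x - \xi| = \rho\}$ in the chart at infinity. In exact Schwarzschild, the spheres centered at the origin are cross-sections of the rotational symmetry and are therefore honest area-constrained Willmore surfaces. For general $(\xi, \rho)$ with $|\xi|$ at most a small multiple of $\rho$, I would write a candidate surface as the normal graph of a small function $u$ over $S_\rho(\xi)$. The constrained Willmore equation (\ref{constrained Willmore equation}) then becomes a fourth-order quasilinear equation for the pair $(u, \kappa)$, where $\kappa \in \mathbb{R}$ is a Lagrange multiplier and the area $4\,\pi\,\rho^2$ is prescribed as an additional constraint.

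First I would analyze the linearized operator $Q$ on a round sphere in Euclidean space, whose kernel is spanned by the constant mode (dilation) and the three restrictions of $\{x^1, x^2, x^3\}$ to the sphere (translations). The area constraint eliminates the constant mode, leaving a three-dimensional approximate kernel of translation type. On the $L^2$-complement of this kernel, $Q$ is invertible with uniform bounds in appropriately rescaled weighted Sobolev spaces. A standard implicit function theorem argument then yields, for each admissible $(\xi, \rho)$, a unique small perturbation $u_{\xi, \rho}$ and multiplier $\kappa_{\xi, \rho}$ such that the perturbed surface solves (\ref{constrained Willmore equation}) modulo the three translation modes. What remains is a finite-dimensional reduced equation $\Phi(\xi, \rho) = 0$ on $\mathbb{R}^3$.

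Next I would expand $\Phi$ asymptotically as $\rho \to \infty$. The Schwarzschild mass $m > 0$ generates a definite restoring force of the form $c\, m\, \xi / \rho^{q}$ with $c > 0$ and an appropriate power $q$, driving $\xi$ toward the origin. The further contributions from the non-Schwarzschild remainder $\sigma$ and the scalar curvature $R$ decompose, after pairing with the coordinate functions $x^i$, into a part antisymmetric under $x \mapsto -x$ that is of order $o(\rho^{-q})$ by the evenness assumption (\ref{even R}), and a symmetric part that is strictly subdominant to the Schwarzschild force by the radial decay assumption (\ref{decay R}). An implicit function argument then produces a unique $\xi = \xi(\rho)$ solving $\Phi(\xi(\rho), \rho) = 0$ for every large $\rho$, with $|\xi(\rho)|/\rho \to 0$. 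Reparameterizing by $\kappa$ gives the spheres $\Sigma(\kappa)$; the limit $\lambda(\Sigma(\kappa))/\rho(\Sigma(\kappa)) \to 1$ follows from $|\xi(\rho)|/\rho \to 0$ together with the smallness of $u_{\xi, \rho}$ relative to $\rho$. Stability of each leaf is inherited from the spectral gap of $Q$ on the translation complement, since the suppressed modes preserve area and Willmore energy to leading order.

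The principal obstacle will be the precise asymptotic expansion of $\Phi$ under the mild conditions (\ref{decay R}) and (\ref{even R}) alone. Since $\sigma$ is not assumed pointwise small, the non-Schwarzschild contributions cannot be absorbed as a uniform perturbation as in \cite{lamm2011foliations}. Instead, one must carefully expand the fourth-order quasilinear Willmore integrand on each approximate sphere, pair it with the coordinate functions $x^i$, and exploit the reflection cancellation driven by (\ref{even R}) together with the radial decay (\ref{decay R}) to conclude that the perturbative contribution to $\Phi$ is strictly dominated by the Schwarzschild restoring term, uniformly in $|\xi| \leq \alpha \rho$ for a suitable small $\alpha > 0$. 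This delicate bookkeeping through the Willmore operator is the technical heart of the proof.
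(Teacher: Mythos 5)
Your outline matches the paper's in the broad sense: Lyapunov-Schmidt reduction against coordinate spheres $S_\lambda(\lambda\xi)$, a finite-dimensional reduced problem in $\xi$, and an asymptotic analysis that isolates the Schwarzschild contribution. The paper casts the reduction variationally — it shows that $\Sigma_{\xi,\lambda}$ solves (\ref{constrained Willmore equation}) precisely when $\xi$ is a critical point of a scalar functional $G_\lambda(\xi)$ built from the Willmore energy, rather than working with a vector-valued residual $\Phi(\xi,\rho)$ — but since the problem is variational these two framings are equivalent ($\Phi$ is $\bar D G_\lambda$), so that difference is cosmetic.

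The genuine gap is in the step where you say the non-Schwarzschild contribution is "strictly subdominant to the Schwarzschild force" and that the antisymmetric part is handled by (\ref{even R}) while the symmetric part is handled by (\ref{decay R}). That is not how the argument goes, and it cannot: under only (\ref{decay R}) and (\ref{even R}), the scalar curvature is merely $O(|x|^{-4})$, so its contribution to the reduced functional is $O(1)$ — the same order as the Schwarzschild term — and is not small in any useful sense. Theorem \ref{counterexample thm 1} shows that with $R\geq 0$ and even but without (\ref{decay R}), spurious critical points do appear; so any bookkeeping scheme that tries to show the perturbative term is dominated in size will fail. What the paper actually proves (Lemma \ref{G der sRT}) is a \emph{sign} statement: the radial derivative of the scalar curvature part $G_{\lambda,2}(\xi)=2\,\lambda\int_{\mathbb{R}^3\setminus B_\lambda(\lambda\xi)}R\,\mathrm{d}\bar v$ is $\geq -o(1)$, so it cannot push a critical point of $G_\lambda=G_1+G_{\lambda,2}+O(\lambda^{-1})$ away from the origin even if it is large. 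Establishing this requires a pointwise reflection/comparison argument: parametrize the sphere so that points of $S_-$ and $-S_+$ are paired along rays through the origin, and compare $t^{-2}R(t^{-1}x)$ with $R(x)$ using the radial growth condition (\ref{decay R}); evenness (\ref{even R}) enters by replacing $S_+$ with its reflection $-S_+$. This reflection mechanism is the technical heart of the existence proof, and your proposal, as written, does not identify it and would instead reach for a size estimate that is unavailable. The statement on stability is also slightly off: stability of the leaves follows because $\xi(\lambda)$ is a \emph{local minimum} of $G_\lambda$, rather than from a spectral gap of $Q$ alone.
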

\begin{rema} \normalfont Note that the assumptions of the theorem are satisfied if $(M,g)$ is $C^4$-asymptotic to Schwarzschild with  $R=o(|x|^{-4})$. The $C^4$-decay gives  $DR=o(|x|^{-5})$ in this case, which implies (\ref{decay R}). 
	\end{rema}
\begin{rema} \normalfont In Theorem \ref{existence thm} and Theorem \ref{uniqueness thm}, it would be sufficient to require appropriate $C^{3,\alpha}$-decay of the metric for some $\alpha\in(0,1)$. We use the slightly stronger assumption for the sake of readability.
\end{rema}
Next, we focus on the geometric characterization of the foliation $\{\Sigma(\kappa):\kappa\in(0,\kappa_0)\}$. Continuing to assume the same asymptotic conditions on the scalar curvature, we show that the leaves of the foliation are the unique  large area-constrained Willmore spheres whose inner radius and area radius are comparable and with traceless second fundamental form small in $L^2$. The conclusion of Theorem \ref{uniqueness thm} below is illustrated in Figure \ref{Figure alternatives}.
\begin{thm}
Assumptions as in Theorem \ref{existence thm}. There exist a small constant $\epsilon_0>0$ and a compact set $K\subset M$ which only depend on $(M,g)$ such that the following holds. For every $\delta>0$, there exists a large constant $\lambda_0>1$ such that every area-constrained Willmore sphere $\Sigma\subset M\setminus K$ with
\begin{align} \label{area and centering} |\Sigma|>4\,\pi\,\lambda_0^2,\qquad\qquad \delta\, \lambda(\Sigma)<\rho(\Sigma), \qquad \qquad \delta\, \rho(\Sigma)<\lambda(\Sigma),
\end{align} 
and 
\begin{align}\int_{\Sigma} |\hcirc|^2\,\mathrm{d}\mu<\epsilon_0
\label{small energy assumption}
\end{align}
belongs to the \label{uniqueness thm} foliation from Theorem \ref{existence thm}.
\end{thm}
\begin{rema}\normalfont
The small energy assumption (\ref{small energy assumption}) and the assumption that $\Sigma$ be spherical may be replaced by requiring a lower bound on the Hawking mass and an upper bound on the genus of $\Sigma$; see Proposition \ref{prop lower hawking mass bound}. 
\end{rema}
\begin{rema}\normalfont
	The proof of Theorem \ref{uniqueness thm} shows that there are no outlying area-constrained Willmore spheres satisfying  \eqref{area and centering} and \eqref{small energy assumption} if the scalar curvature of $(M,g)$ satisfies \eqref{decay R} but not necessarily \eqref{even R}.  The condition \eqref{decay R} has been discovered by O.~Chodosh and the first-named author in \cite[Theorem 1.4]{chodosh2019far} to be sufficient to rule out sequences of large outlying stable constant mean curvature spheres whose area radius and inner radius are comparable.
\end{rema}
Finally, we consider large area-constrained Willmore spheres that are far-outlying in the sense that their inner radius dominates their area radius. In this regime, the contribution of the Schwarzschild metric to the Hawking mass is so weak  that a stronger assumption on the scalar curvature is needed to preclude the existence of such surfaces. 
\begin{thm}
	Suppose that $(M,g)$ is $C^5$-asymptotic to Schwarzschild with mass $m>0$ and that its scalar curvature $R$ satisfies
	\begin{align}
\sum_{i=1}^3	x^i\,\partial_i( |x|^2\,R)\leq 0. \label{R growth without error}
	\end{align}
	There exist small constants $\epsilon_0,\,\delta_0>0$ and a large constant $\lambda_0>1$ which only depend on $(M,g)$ such that the following holds.
Every area-constrained Willmore sphere $\Sigma\subset M$ with
	\begin{align*} |\Sigma|>4\,\pi\,\lambda_0^2,\qquad\text{and} \qquad \int_{\Sigma} |\hcirc|^2\,\mathrm{d}\mu<\epsilon_0
	\end{align*} 
	satisfies  \label{far outlying thm}  
$$
\delta_0\,\rho(\Sigma)<\lambda(\Sigma). 
$$

\end{thm}
\begin{rema} \normalfont Condition (\ref{R growth without error}) is stronger than (\ref{decay R}) and, for instance, satisfied if the scalar curvature of $(M,g)$ vanishes. In any case, the assumptions of Theorem \ref{far outlying thm} are weaker than those discovered in \cite{chodosh2019far} to be sufficient to rule out far-outlying stable constant mean curvature spheres, where stronger decay of the metric is required and the scalar curvature is assumed to either vanish or to be radially convex. This improvement owes to a conservation law for the Einstein tensor known as the Pohozaev identity. In the generality required here, this law has been observed by R.~Schoen, see \cite[Proposition 1.4]{MR929283} and \cite{MR0192184}, and applied by T. Lamm, J. Metzger, and F. Schulze in \cite{lamm2011foliations} in a similar context. This identity precisely brings out the contribution of the scalar curvature to the Willmore energy as we explain in Lemma \ref{Willmore energy sphere}. It turns out that the assumptions on the scalar curvature required in Theorem \ref{far outlying thm} are sufficient to rule out large far-outlying stable constant mean curvature spheres as well. We include a proof of this fact in Appendix \ref{CMC appendix}.
\end{rema}

\begin{figure}
\centering
\begin{subfigure}{0.33\textwidth}
	
	\includegraphics[width=1\linewidth]{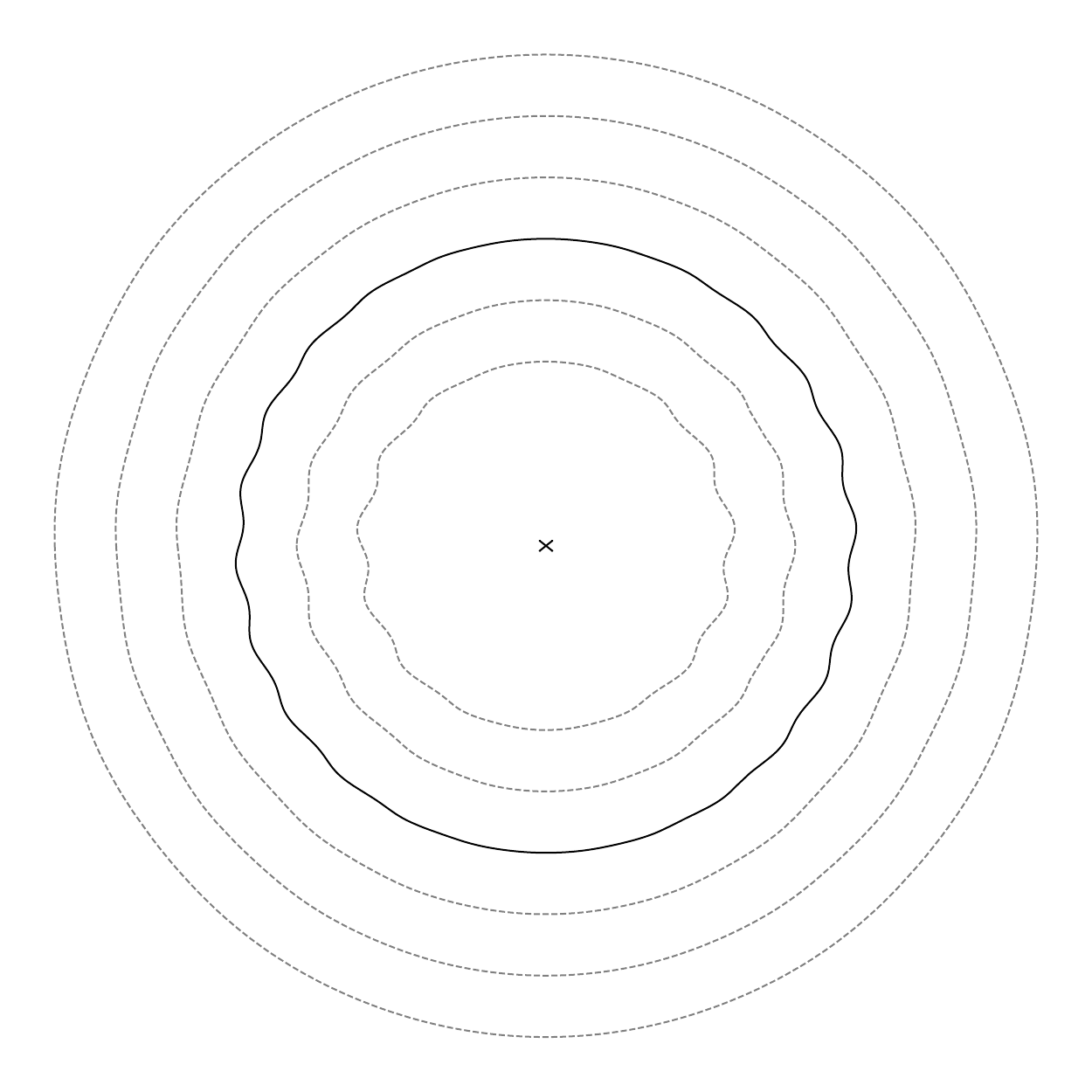}
	
\end{subfigure}%
\begin{subfigure}{0.33\textwidth}
	
	\includegraphics[width=1\linewidth]{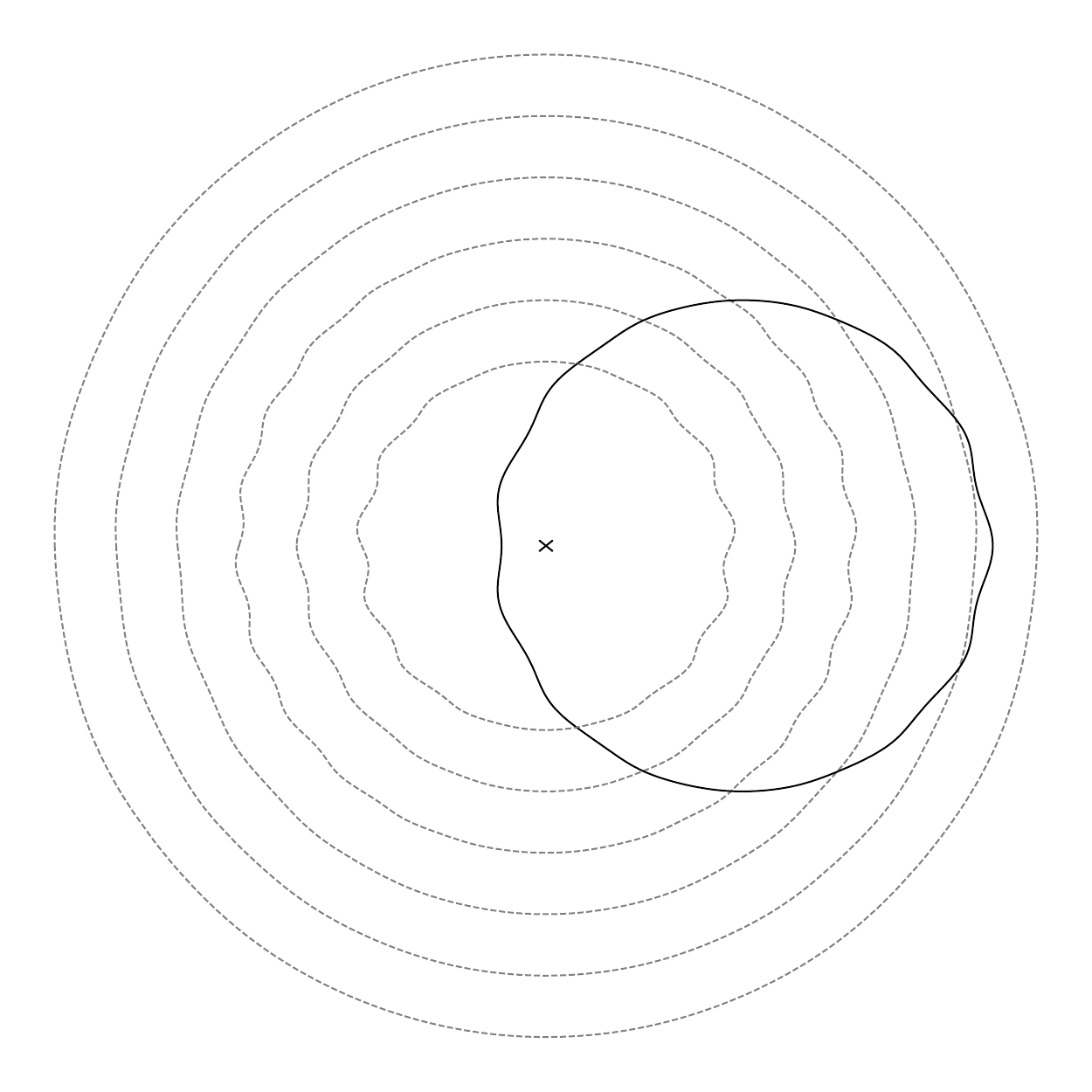}
\end{subfigure}
\begin{subfigure}{0.33\textwidth}
	
	\includegraphics[width=1\linewidth]{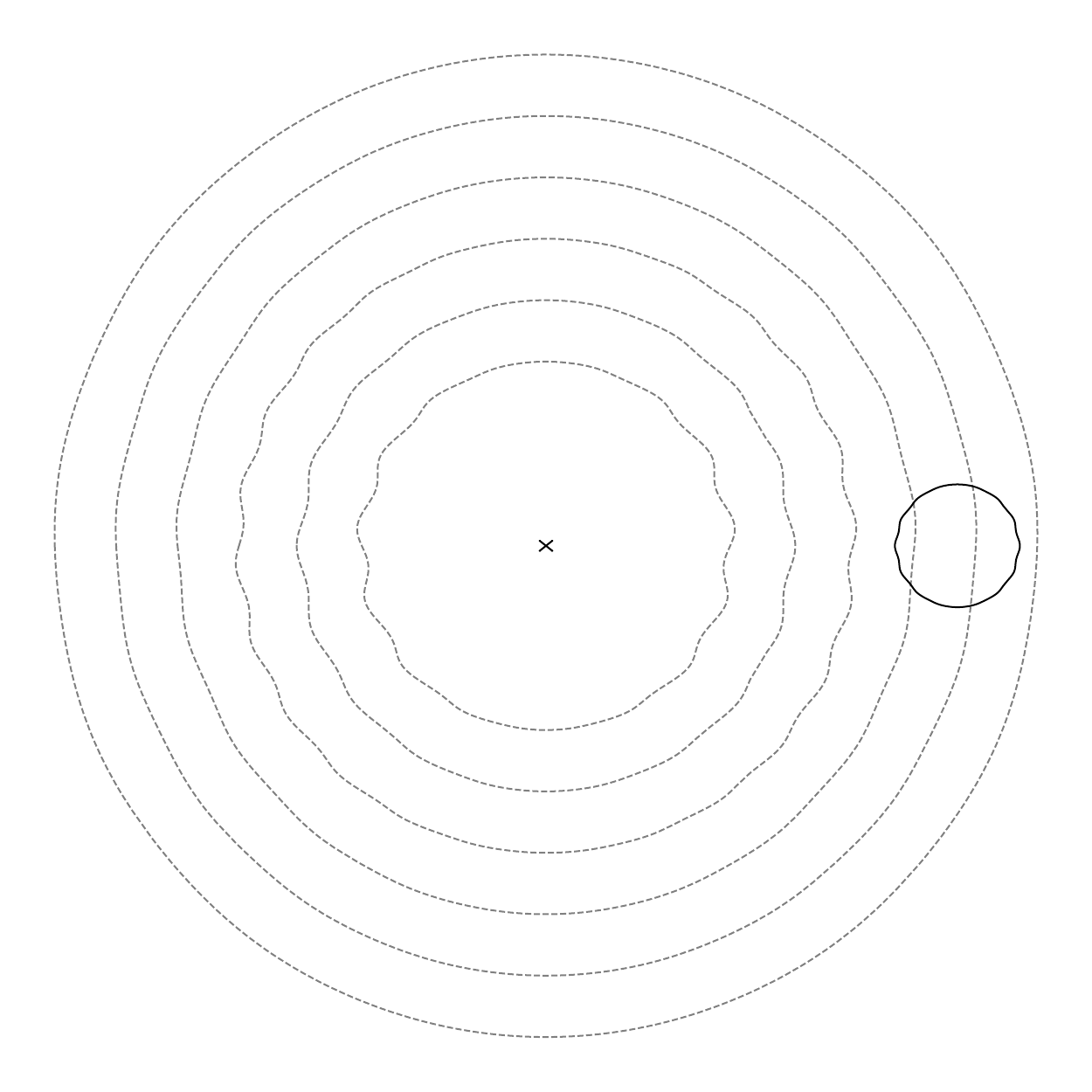}
\end{subfigure}
\caption{An illustration of the situation in Theorem \ref{uniqueness thm}. The dashed, gray lines indicate the leaves of the  foliation from Theorem \ref{existence thm} while the solid, black line indicates the surface $\Sigma$. On the left, $\Sigma$ belongs to the  foliation. In the middle, $\Sigma$ is on-center and the area radius dominates the inner radius. On the right, $\Sigma$ is outlying and the inner radius dominates the area radius.  $\Sigma$ violates the assumption \eqref{area and centering} of Theorem \ref{uniqueness thm} for $\delta=1/4$ in the latter two scenarios. Under the assumptions of Theorem \ref{far outlying thm}, the scenario on the right can be ruled out.}
\label{Figure alternatives}
\end{figure}
The assumptions on the scalar curvature in
Theorem \ref{existence thm}, Theorem \ref{uniqueness thm}, and Theorem \ref{far outlying thm} are essentially optimal. We show that the growth condition (\ref{decay R})
cannot be relaxed to requiring the scalar curvature to be non-negative and even. In fact, these weaker conditions are not sufficient to preclude large area-constrained Willmore spheres -- on-center or outlying -- that satisfy \eqref{area and centering} and (\ref{small energy assumption}) but do not belong to the foliation from  Theorem \ref{existence thm}. In particular, the assumptions conjecturally proposed in \cite{laurain2019analyse} are not quite sufficient to conclude that  large area-constrained Willmore spheres are unique.
\begin{thm}
	\label{counterexample thm 1}
There exist rotationally symmetric metrics $g_1$ and $g_2$  on $M=\{x\in\mathbb{R}^3:|x|>1\}$ both $C^k$-asymptotic to Schwarzschild with mass $m=2$ for every $k\geq 2$ and with non-negative scalar curvature such that the following holds. There exist sequences of  stable area-constrained Willmore spheres $\{\Sigma^1_j\}_{j=1}^\infty$ and $\{\Sigma^2_j\}_{j=1}^\infty$ that are on-center in $(M,g_1)$ and outlying in $(M,g_2)$, respectively, such that $$\lim_{j\to\infty}|\Sigma^1_j|=\lim_{j\to\infty}|\Sigma^2_j|=\infty,\quad  \lim_{j\to\infty}m_H(\Sigma^1_j)= 2,\quad  \lim_{j\to\infty}m_H(\Sigma^2_j)= 0,$$
while, for all $j$,
$$
\frac{1}{4}<\frac{\rho({\Sigma^1_j})}{\lambda({\Sigma^1_j})}<\frac{7}{8} \qquad\text{ and }\qquad  2\sqrt{2}<\frac{\rho({\Sigma^2_j})}{\lambda({\Sigma^2_j})}<5.
$$
\end{thm}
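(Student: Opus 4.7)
Both counterexamples will be produced by explicit constructions using rotationally symmetric warped product metrics $g_i = a_i(r)^2\,\mathrm{d}r^2 + b_i(r)^2\,g_{S^2}$ on $M = \{|x|>1\}$, with $r = |x|$. For any such metric, the cross-sections $\{|x|=s\}$ are totally umbilic and hence automatically area-constrained Willmore surfaces; the quantities $\rho$, $\lambda$, and $m_H$ for cross-sections are explicit in terms of the pair $(a_i, b_i)$, and the conditions of non-negative scalar curvature and asymptotic Schwarzschild behavior translate into explicit constraints on $(a_i, b_i)$.

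For $g_1$, the condition that $g_1$ is asymptotic to Schwarzschild with mass $m = 2$ forces $\rho/\lambda = r/b_1(r) \to 1$ as $r \to \infty$ along cross-sections, so cross-sections cannot realize $\rho/\lambda \in (1/4, 7/8)$ together with $|\Sigma| \to \infty$. I would therefore construct $\Sigma^1_j$ as non-cross-section axially symmetric on-center spheres enclosing the origin. Reducing the constrained Willmore equation~(\ref{constrained Willmore equation}) to a fourth-order ODE along the profile curve of a surface of revolution, the problem becomes producing a sequence of ODE solutions $\gamma_j$ that close up into topological spheres with $\lambda(\Sigma^1_j) \to \infty$ and $\rho(\Sigma^1_j) \in (\lambda(\Sigma^1_j)/4,\, 7\lambda(\Sigma^1_j)/8)$, while the pair $(a_1, b_1)$ is chosen so that $g_1$ is asymptotic to Schwarzschild with mass $2$ (forcing $m_H(\Sigma^1_j) \to 2$) and $R[g_1] \geq 0$ throughout. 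Stability of the resulting spheres should follow by explicit decomposition of the second variation into axisymmetric and non-axisymmetric modes.

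For $g_2$, since rotationally symmetric cross-sections are on-center when $K$ lies near the inner boundary, the outlying surfaces $\Sigma^2_j$ must be axisymmetric topological spheres bounding balls in $M$ disjoint from $K$. I would take them as approximate Euclidean spheres centered on the symmetry axis at Euclidean distance $R_j$ from the origin with Euclidean radius $r_j$, with $R_j/r_j \in (1 + 2\sqrt{2},\, 6)$, which guarantees $\rho/\lambda \in (2\sqrt{2},\, 5)$. Their existence as actual area-constrained Willmore spheres follows via a Lyapunov--Schmidt reduction (in the spirit of the proof of Theorem~\ref{existence thm}) to a critical-point condition on an effective potential involving the scalar curvature profile of $g_2$ along the symmetry axis. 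I would design $g_2$ so that its scalar curvature is non-negative, decays no faster than $|x|^{-p}$ for some $p \in (2, 4)$ (quantitatively violating~(\ref{decay R})), and exhibits oscillations producing an infinite sequence of critical points at the required axis positions. By the Pohozaev-type identity (cf.~Lemma~\ref{Willmore energy sphere}), the Hawking mass of such ball-bounding spheres is controlled by the scalar curvature inside them and can be arranged to tend to $0$.

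The main obstacle is the simultaneous fulfillment of all the constraints: non-negative scalar curvature, asymptotic Schwarzschild behavior at the prescribed mass, and existence of Willmore spheres with the prescribed geometric ratios. For $g_1$, this requires solving the reduced fourth-order ODE in a class of warped product backgrounds compatible with $R \geq 0$ and asymptotic Schwarzschild conditions. For $g_2$, the harder task is engineering the scalar curvature profile so that its critical-point structure produces ball-bounding Willmore spheres in exactly the prescribed window $R_j/r_j \in (1+2\sqrt{2},\,6)$, while preserving $R \geq 0$ and precisely violating~(\ref{decay R}); I expect this to be the most delicate part of the construction and the likely location of the subtlest calculations.
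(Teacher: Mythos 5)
Your overall intuition is right for $g_2$ but contains a concrete error, and your plan for $g_1$ departs from the paper's route in a way that leaves it effectively unproved.

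For the decay rate: you propose designing $g_2$ so that $R$ ``decays no faster than $|x|^{-p}$ for some $p\in(2,4)$'' in order to violate (\ref{decay R}). That cannot work. Smoothness and the asymptotically Schwarzschild decay $\partial_J\sigma=O(|x|^{-2-|J|})$ already force $R=O(|x|^{-4})$, so any $R$ with slower decay would destroy the asymptotics you need. In the paper's construction (see (\ref{metric construction 1})--(\ref{metric construction 2})), $\Psi^{(l)}=O(s^{-2-l})$ and $R=O(|x|^{-4})$ precisely; the violation of the growth condition (\ref{decay R}) is produced not by slow decay but by the \emph{oscillatory pulse} structure $S(s)=-B\sum_k 10^{-4k}\chi(10^{-k}s)$, i.e.\ by the fact that $|x|^2 R$ is non-monotone in $|x|$ along a geometric sequence of annuli. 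The radial derivative $x^i\partial_i(|x|^2R)$ is repeatedly positive on the ascending sides of the pulses, which is what defeats (\ref{decay R}) while keeping $R\geq 0$ everywhere.

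For $g_1$: you rule out cross-sections and then propose an ODE shooting argument along the profile curve for the fourth-order constrained Willmore equation. This is a genuinely different route from the paper, and much harder: you would have to control a fourth-order nonlinear ODE in a warped product background, close the profile curve at both poles, verify the area constraint, and separately prove stability by mode decomposition. The paper instead handles $g_1$ and $g_2$ by the \emph{same} Lyapunov--Schmidt machinery already built for Theorems \ref{existence thm} and \ref{uniqueness thm}: the surfaces $\Sigma^i_j$ are perturbations $\Sigma_{\xi_j,\lambda_j}$ of \emph{off-center} coordinate spheres $S_{\lambda_j}(\lambda_j\xi_j)$, with $|\xi_j|<1$ giving the on-center family and $|\xi_j|>1$ the outlying one, and they arise as interior local minima of the reduced function $G_{\lambda_j}$ of (\ref{G definition}), whose radial derivative is explicitly computed in terms of the scalar curvature pulse. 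Local minimality immediately yields stability (via Lemma~\ref{variational to 3 dim}), so no separate second-variation analysis is needed, and the Hawking mass limits follow from Lemma~\ref{Willmore energy sphere}. You invoke exactly this mechanism for $g_2$, so the missing step is simply to notice that the $|\xi|<1$ case of the same argument disposes of $g_1$ as well; as written, your proposal for $g_1$ is an unexecuted plan that bypasses the tools the paper already provides. Finally, the paper restricts to conformally flat rotationally symmetric metrics rather than general warped products $a^2\,\mathrm{d}r^2+b^2 g_{S^2}$, which is both simpler and fully sufficient.
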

Conversely, centering of the  foliation from Theorem \ref{existence thm} may fail if the assumption that the scalar curvature is asymptotically even is dropped. We refer to the work of C.~Cederbaum and C.~Nerz  \cite{cederbaumexplicit} for a thorough investigation of various divergent notions of center of mass.
\begin{thm}
		\label{counterexample thm 2}
There exists a metric $g_3$ on $\{x\in\mathbb{R}^3:|x|>1\}$ $C^k$-asymptotic to Schwarzschild with mass $m=2$ for every $k\geq 2$ with non-negative scalar curvature satisfying (\ref{decay R}) such that the following holds. There exists  a number $\kappa_0>0$ and a smooth asymptotic foliation $\{\Sigma(\kappa): \kappa \in(0,\kappa_0)\}$ by on-center stable area-constrained Willmore spheres   such that 
$$
\limsup_{\kappa\to0}\frac{\lambda({\Sigma({\kappa})})}{\rho({\Sigma({\kappa})})}>1.
$$

\end{thm}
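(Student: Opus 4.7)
The plan is to construct a metric $g_3$ on $\{x\in\mathbb{R}^3:|x|>1\}$ whose scalar curvature is non-negative and satisfies \eqref{decay R} but whose asymmetric structure at the metric level prevents the foliation from being asymptotically centered. The idea is modeled on the Cederbaum-Nerz examples of metrics with divergent CMC center of mass: one engineers the perturbation $g_3 - g_{\mathrm{Sch}}$ to have an odd-in-$x$ component at the borderline decay order $|x|^{-2}$, producing a non-trivial scale-dependent shift in the balance equation that determines the center of the foliation.

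Concretely, I would try $g_3 = u^4\,\bar g + \tau$, where $u = 1 + |x|^{-1} + \phi$ is a Schwarzschild-type conformal factor adjusted by a small radial perturbation $\phi = O(|x|^{-2})$, and where $\tau$ is a small, smooth $(0,2)$-tensor of order $|x|^{-2}$ with a non-trivial odd-in-$x$ part. The perturbations $\phi$ and $\tau$ are tuned so that the scalar curvature has the asymptotic form
\begin{equation*}
R_{g_3}(x) = \frac{f(\hat x)}{|x|^4} + o(|x|^{-4})
\end{equation*}
for some smooth function $f$ on $S^2$ with $f > 0$ and a non-trivial odd part (for instance, $f(\hat x) = 2 + \hat x \cdot e$ for a fixed unit vector $e \in \mathbb{R}^3$). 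Condition \eqref{decay R} is then automatic: since $f \ge 0$, the one-sided bound $x^i\partial_i(|x|^2 R_{g_3}) = -2 f(\hat x)/|x|^2 + o(|x|^{-2}) \le o(|x|^{-2})$ holds. Condition \eqref{even R}, however, fails, since $R_{g_3}(x) - R_{g_3}(-x) = 2 f_{\mathrm{odd}}(\hat x)/|x|^4 + o(|x|^{-4})$ is not $o(|x|^{-4})$.

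Given such $g_3$, the foliation is produced by an adapted Lyapunov-Schmidt reduction: for each small $\kappa$, the leaf $\Sigma(\kappa)$ is sought as a graph over a translated coordinate sphere centered at a point $z(\kappa) \in \mathbb{R}^3$ determined by a three-dimensional centering equation whose driving term is the first angular moment of $R_{g_3}$ over the sphere. With the odd part of $f$ aligned with $e$, this equation forces $z(\kappa) \sim c\,\lambda(\Sigma(\kappa))\,e$ for an explicit constant $c \in (0,1)$, from which $\rho(\Sigma(\kappa)) \sim (1 - c)\,\lambda(\Sigma(\kappa))$ and hence $\limsup_{\kappa\to 0}\lambda(\Sigma(\kappa))/\rho(\Sigma(\kappa)) \ge (1-c)^{-1} > 1$; stability of $\Sigma(\kappa)$ is inherited from the stability of Schwarzschild spheres of symmetry. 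The main obstacles are (i) constructing $\phi$ and $\tau$ so that $R_{g_3}$ realizes the prescribed asymptotic profile while remaining non-negative on all of $\{|x|>1\}$, which amounts to solving an elliptic system with a scale-invariant source subject to a sign constraint; and (ii) adapting the Lyapunov-Schmidt analysis underlying Theorem \ref{existence thm} to the non-even setting by carrying the translation parameter $z$ through the reduction and establishing that the resulting centering equation has a unique solution $z(\kappa)$ which grows linearly in $\lambda(\Sigma(\kappa))$ rather than remaining bounded.
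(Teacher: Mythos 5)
Your step (ii), the Lyapunov--Schmidt analysis, is aligned with the paper's approach: both reduce the problem to locating the critical point $\xi(\lambda)$ of a reduced function (the paper's $G_\lambda$), and both exploit the first angular moment of $R$ over large spheres as the driving term that shifts $\xi(\lambda)$ away from the origin. The problem is step (i): the target profile $R_{g_3}(x) = f(\hat x)/|x|^4 + o(|x|^{-4})$ with $f_{\mathrm{odd}}(\hat x) = \hat x\cdot e$ a first spherical harmonic \emph{cannot} be realized by a metric that is asymptotic to Schwarzschild with $\sigma = O(|x|^{-2})$.

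To see the obstruction, linearize the scalar curvature in $\sigma$ around the (scalar-flat) Euclidean/Schwarzschild background: $R = \bar{\mathrm{div}}\,\bar{\mathrm{div}}\,\sigma - \bar\Delta\operatorname{tr}\sigma + O(|x|^{-6})$. Pairing with the affine function $x^j$ (which lies in the kernel of the formal adjoint $L^*f = \bar\nabla^2 f - \bar g\,\bar\Delta f$) and integrating over $B_R\setminus B_1$, all interior terms vanish and one is left with boundary integrals that are $O(1)$ as $R\to\infty$ when $\sigma = O(|x|^{-2})$, $\partial\sigma = O(|x|^{-3})$. On the other hand, if $R \sim f(\hat x)/|x|^4$ with $c_j := \int_{S^2}\hat x^j f\,\text{d}\omega \ne 0$, then $\int_{B_R\setminus B_1} x^j R\,\text{d}\bar v = c_j\log R + O(1)$ diverges, a contradiction. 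Thus the first-spherical-harmonic component of $f$ must vanish, which kills exactly the term you want. (This is the same Pohozaev-type balance that the paper exploits; see Lemma~\ref{Willmore energy sphere} and the discussion around (\ref{pohozaev 1}).) Worse, if you move to higher odd harmonics for $f$, which \emph{are} realizable, they drop out of the centering condition anyway, since $\partial_i G_{\lambda,2}(0) = -2\int_{S^2}\hat x^i f\,\text{d}\omega + o(1)$ is sensitive only to the first harmonic. So a scale-invariant odd profile in $R$ can never produce de-centering.

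The paper's construction is designed precisely to evade this. It uses a conformal factor perturbed by a lacunary sum $\psi(x) = \sum_k 10^{-2k}\chi(2\cdot 10^{-k}(x-10^ke_1))$ of bump functions concentrated near the discrete scales $|x|\sim 10^k$ and localized near the axis $e_1$. The resulting $R$ has amplitude $|x|^{-4}$ with a strong first-moment imbalance at the scales $|x|\sim 10^k$, but vanishes in between; because the support is logarithmically sparse, the moment integral $\int x^j R\,\text{d}\bar v$ remains finite and the metric stays $O(|x|^{-2})$. The de-centering is then detected only along the subsequence $\lambda_j = \frac{9}{16}\,10^j$, which is exactly why the conclusion is a $\limsup$ rather than a $\lim$. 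You would need to replace your homogeneous-profile ansatz with a localized/lacunary one of this type; as written, step (i) of your proposal has no solution.
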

Finally, the following result shows that if we relax the growth condition on the scalar curvature only slightly, large far-outlying area-constrained Willmore spheres may exist. 
\begin{thm}
		\label{counterexample thm 3}
	There exists a rotationally symmetric metric 
	$$g_4=\left(1+|x|^{-1}\right)^4\bar g+\sigma_4$$ on $\{x\in\mathbb{R}^3:|x|>1\}$ with non-negative scalar curvature and, as $x\to\infty$ for every multi-index $J$,
$$\partial_J \sigma_4=O(|x|^{-3-|J|})$$ 
	  that has the following property. There exists  a sequence of  stable area-constrained Willmore spheres $\{\Sigma_j\}_{j=1}^\infty$ such that 
	 $$
	 \lim_{j\to\infty} |\Sigma_j|=\infty, \qquad \lim_{j\to\infty}m_H(\Sigma_j)=0, \qquad\text{and}\qquad \lim_{j\to\infty} \frac{\rho({\Sigma_j})}{\lambda({\Sigma_j})}=\infty.
	 $$ 

\end{thm}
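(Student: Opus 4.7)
The plan is to construct a rotationally symmetric metric $g_4$ whose scalar curvature $R_4$ has a sequence of sharp positive bumps on shells $\{|x|=d_j\}$ with $d_j\to\infty$, and to realize each $\Sigma_j$ via a Lyapunov--Schmidt reduction around a Euclidean round sphere $S_j^0=\{|x-p_j|=R_j\}$ with $|p_j|$ close to $d_j$ and with $R_j\to\infty$, $R_j/d_j\to 0$. Because $\Sigma_j$ lives in the almost-flat region far from the origin, its Hawking mass will automatically be small, while the scalar curvature bump at distance $d_j$ provides the restoring force that pins the center of $\Sigma_j$ near the shell and overcomes the Schwarzschild preference for the surface to drift to infinity.

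For the construction of $g_4$, I would choose a rotationally symmetric, tracefree (with respect to $(1+|x|^{-1})^4\bar g$) perturbation $\sigma_4$ of size $O(|x|^{-3})$ whose linearized contribution to the scalar curvature is radially concentrated at $d_j$, with peak $a_j$ and width $w_j$. Since $R_4$ involves two derivatives of $\sigma_4$, a bump of peak $a_j$ and width $w_j$ is compatible with $\sigma_4=O(|x|^{-3})$ provided $a_j w_j^2\lesssim d_j^{-3}$. Working with a tracefree perturbation avoids the monopole obstruction that would otherwise force $\sigma_4$ to decay only like $|x|^{-1}$ outside the bumps. Non-negativity of $R_4$ is automatic: the Schwarzschild background contributes zero scalar curvature, and the bumps are chosen positive.

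The analytic heart is the Lyapunov--Schmidt reduction around $S_j^0$. After rescaling by $R_j^{-1}$, the metric $g_4$ converges in $C^4$ to $\bar g$ uniformly on a fixed neighborhood of $S_j^0$, so the reduction is a small, uniform perturbation of the flat-space problem. The kernel of the Euclidean Willmore operator on a round sphere consists of the constant mode (removed by the area constraint) and the three translational modes, so the reduction identifies area-constrained Willmore spheres near $S_j^0$ with critical points of a reduced functional $\tilde{\mathcal F}_j$ on the space of centers, which by rotational symmetry depends only on $|p|$. Combining the expansion of the Willmore energy of a small Euclidean sphere in the Schwarzschild background with the standard expansion involving the scalar curvature (cf.\ Lemma \ref{Willmore energy sphere}) yields, to leading order,
\begin{equation*}
\tilde{\mathcal F}_j(p)\sim 4\pi - 8\pi\, m\, R_j^2/|p|^3 - C\,R_4(|p|)\,R_j^2
\end{equation*}
for some positive constant $C$. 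The stationarity condition $R_4'(|p|)=24\pi m/(C|p|^4)$ admits a solution $|p_j|$ on the rising edge of the bump precisely because $a_j/w_j\sim d_j^{-4}$ is comparable to $m/d_j^4$. Stability in the radial direction follows by selecting the critical point on the branch where $R_4''(|p_j|)$ has the correct sign, while the tangential degeneracy along the $SO(3)$-orbit of $p_j$ arises from an ambient isometry and does not break stability.

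The hardest step, in my view, is the simultaneous calibration of the four sequences $(d_j,R_j,w_j,a_j)$ so that (i)~$\sigma_4$ and all of its derivatives satisfy the stated decay $O(|x|^{-3-|J|})$, (ii)~$R_4\geq 0$ globally, (iii)~the reduction yields a genuine stable critical point uniformly in $j$, and (iv)~the three asymptotic conclusions on $\Sigma_j$ hold together. A clean choice is $d_j=2^j$, $R_j=d_j^{1/2}$, $w_j=d_j^{1/3}/10$, $a_j\sim d_j^{-11/3}$, which gives $|\Sigma_j|\sim d_j\to\infty$, $\rho(\Sigma_j)/\lambda(\Sigma_j)\sim d_j^{1/2}\to\infty$, and both contributions to $m_H(\Sigma_j)$, namely $m R_j^3/d_j^3\sim m/d_j^{3/2}$ from the Schwarzschild part and $R_j^3 a_j\sim d_j^{-13/6}$ from the scalar curvature bump, tending to zero as $j\to\infty$.
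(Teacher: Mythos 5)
Your high-level strategy (pin a Lyapunov--Schmidt-reduced sphere against the radial drift by a scalar-curvature bump, at scales where the Schwarzschild contribution to the Hawking mass is comparably weak) is the right one and matches the paper. There are, however, genuine gaps in the specifics, the most serious of which concerns your calibration of the bump parameters.

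The required decay $\partial_J\sigma_4 = O(|x|^{-3-|J|})$ forces $R_4 = O(|x|^{-5})$ and $\partial R_4 = O(|x|^{-6})$, since the scalar curvature involves two derivatives of $\sigma_4$. Your choice $a_j\sim d_j^{-11/3}$ already exceeds the permissible amplitude $d_j^{-5}$, and your width $w_j\sim d_j^{1/3}$ forces $\partial R_4\sim a_j/w_j\sim d_j^{-4}$, which is far larger than the allowed $d_j^{-6}$. In fact, requiring $R_4$ and all its derivatives to satisfy the decay compels the bump width to be proportional to its location, $w_j\sim d_j$, which is precisely what the paper's self-similar pulse $S(s) = -\sum_{k\geq 0}10^{-5k}\chi(10^{-k}s)$ with $\operatorname{supp}\chi\subset[4,6]$ achieves. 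Sharp bumps of sub-linear width are simply incompatible with the hypotheses of the theorem, so the restoring force you want cannot be generated this way.

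A second issue: your expansion $\tilde{\mathcal F}_j(p)\sim 4\pi - 8\pi m R_j^2/|p|^3 - C\,R_4(|p|)R_j^2$ assigns the Schwarzschild contribution the wrong scaling. Schwarzschild is scalar-flat, so there is no $O(m)$ tidal contribution to the Willmore energy of a far-outlying round sphere; the leading Schwarzschild contribution is quadratic in the Weyl curvature and scales like $R_j^4/|p|^6$. Indeed Lemma \ref{Willmore energy sphere} gives it explicitly as $\sim\lambda^{-2}|\xi|^{-6}$ in the notation of the paper, which is $R_j^4/|p|^6$. Using your $m R_j^2/|p|^3$ in the stationarity equation produces the wrong balance $R_4'\sim m/|p|^4$, whereas the correct balance (with $R_j\sim d_j^{1/2}$) is $R_4'\sim |p|^{-6}$; the two lead to different and in your case inconsistent choices of $a_j$ and $w_j$.

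Finally, the claim that a tracefree perturbation "avoids a monopole obstruction" is a misconception. The paper's perturbation is conformally flat, i.e.\ pure trace, and the formula (\ref{metric construction 1}) already produces $\Psi = O(|x|^{-3})$ whenever $S = O(|x|^{-5})$; since $\int R\,\text{d}\bar v < \infty$, there is no monopole to absorb. The choice of a conformal rather than tracefree perturbation also streamlines the non-negativity of $R_4$ via (\ref{metric construction 2}) and keeps the metric rotationally symmetric, which is what makes the reduced functional depend only on the radial coordinate and the critical-point argument of Lemma \ref{far-outlying expansion} go through.
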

The study of large area-constrained Willmore spheres $\Sigma$ with $\lambda(\Sigma)\gg\rho(\Sigma)$ is  challenging. This is on account of the loss of analytic control in the part of the surface where the area radius $\lambda(\Sigma)$ is much larger than $|x|$. In particular, the non-linearities owing to the Schwarzschild background dominate so the method  in this paper loses its grip.  We remark that large coordinate spheres cease to be mean convex in this regime and exhibit a first-order defect in their Hawking mass; see Remark \ref{slow divergence}. This suggests that the comparability assumptions on the area radius and inner radius in Theorem \ref{uniqueness thm} are not necessary.
\\
\indent
In order to prove Theorem \ref{existence thm} and Theorem \ref{uniqueness thm}, we use a strategy modeled upon the Lyapunov-Schmidt reduction developed for stable constant mean curvature spheres in \cite{brendle2014large,chodosh2019far}. We will follow by and large the notation of \cite{brendle2014large,chodosh2019far}  throughout this paper. \\ \indent By scaling, we may assume that $m=2$, that is,
$$
g=(1+|x|^{-1})^4\,\bar g +\sigma.
$$  
\indent We use a bar to indicate that a geometric quantity has been computed with respect to the Euclidean background metric $\bar g$. When the Schwarzschild metric
$$
g_S=(1+|x|^{-1})^4\,\bar g
$$
 with mass $m=2$ has been used in the computation, we use the subscript $S$.
\\ \indent 
For every $\xi\in\mathbb{R}^3$ and $\lambda>1$ large, depending on $|1-|\xi||^{-1}$, we use the implicit function theorem to perturb the sphere ${S}_{\lambda}(\lambda\,\xi)$ to a surface $\Sigma_{\xi,\lambda}$ with area $4\,\pi\, \lambda^2$ and which satisfies the area-constrained Willmore equation (\ref{constrained Willmore equation}) up to a sum of first spherical harmonics. On the one hand, we show that $\Sigma_{\xi,\lambda}$ is an area-constrained Willmore surface if and only if $\xi$ is a critical point of the function $G_\lambda$ given by
\begin{equation*}
G_\lambda(\xi)=\begin{dcases}&\lambda^2\bigg(\int_{\Sigma_{\xi,\lambda}} H^2\,\mathrm{d}\mu-16\,\pi+64\,\pi\,\lambda^{-1}\bigg)\, \text{ if }|\xi|<1, \\ 
&\lambda^2\bigg(\int_{\Sigma_{\xi,\lambda}} H^2\,\mathrm{d}\mu-16\,\pi\bigg)\hspace{1.75cm}\,\,\,\text{ if }|\xi|>1.
\end{dcases}
\end{equation*}
The absence of the term $64\,\pi\,\lambda^{-1}$ in the case $|\xi|>1$, which would equal $32\,\pi\,\lambda^{-1}\,m$ without the normalization $m=2$,  owes to the fact that the Hawking mass of an outlying coordinate sphere does not detect the  mass of $(M,g)$; see Lemma \ref{Willmore energy sphere}. On the other hand, we show that 
\begin{align*}
G_\lambda(\xi)=64\,\pi+\frac{32\,\pi}{1-|\xi|^2}-48\,\pi\,|\xi|^{-1}\log\frac{1+|\xi|}{1-|\xi|}-128\,\pi\log(1-|\xi|^2)
+2\,\lambda \int_{\mathbb{R}^3\setminus{B_{\lambda}(\lambda\,\xi)}} R\,\mathrm{d}\bar{v}+o(1)
\end{align*}
if $|\xi|<1$ and
\begin{align} \label{expansion 2}
G_\lambda(\xi)= \frac{32\,\pi}{1-|\xi|^2}-48\,\pi\,|\xi|^{-1}\log\frac{|\xi|+1}{|\xi|-1}-128\,\pi\log(1-|\xi|^{-2})
-2\,\lambda \int_{{B_{\lambda}(\lambda\,\xi)}} R\,\mathrm{d}\bar{v}+o(1)
\end{align}
if $|\xi|>1$. Assuming that the scalar curvature is asymptotically even (\ref{even R}) and satisfies the growth condition (\ref{decay R}), we show that $G_\lambda$ has a unique minimum near the origin for every  $\lambda>1$ large. Moreover,  the corresponding surfaces $\Sigma_{\xi,\lambda}$ form a foliation. We show that these are in fact the only critical points of $G_\lambda$  if $\lambda>1$ is large. Theorem \ref{uniqueness thm}  follows from this and a compactness argument. The strategy for the proof of Theorem \ref{far outlying thm} is formally similar. However, a key difference is that the first three terms in (\ref{expansion 2}) become very small when $\xi$ is large. In a more precise analysis, we  verify that (\ref{expansion 2}) remains true  up to lower-order error terms that decay sufficiently fast as  $|\xi|\to\infty$. Finally, we give explicit examples that show that $G_\lambda$ may have other critical points for suitable choices of $\sigma$ that violate the particular assumptions on the scalar curvature.
\\
\indent Unlike large area-constrained Willmore spheres, small area-constrained Willmore spheres in closed manifolds are well-understood. It is shown in the work  of T.~Lamm and J.~Metzger \cite{LammMetzger} and the work  of A.~Mondino and T.~Riviere \cite{rivieremondino} that minimizers of the Willmore energy with small prescribed area exist in closed manifolds. Moreover, T.~Lamm, J.~Metzger, and F.~Schulze \cite{lms2019local} as well as N.~Ikoma, A.~Malchiodi, and A.~Mondino \cite{ikoma2017embedded} have shown that a neighborhood of a non-degenerate critical point of the scalar curvature is foliated by small area-constrained Willmore spheres. This extends previous work of T.~Lamm
and J.~Metzger \cite{lm2} as well as of A.~Mondino and P.~Laurain \cite{laurainmondino}. We also mention the work \cite{alessandroni2016local} of R.~Alessandroni and E.~Kuwert on small area-constrained Willmore surfaces with free boundary and the work  by A.~Mondino \cite{mondino2013conformal}, where unconstrained Willmore surfaces are studied in a semi-perturbative setting. The method of Lyapunov-Schmidt reduction is used in \cite{ikoma2017embedded,alessandroni2016local,mondino2013conformal}.\\ \newline
\indent \textbf{Acknowledgments.} The authors would like to thank Simon Brendle, Otis Chodosh, Jan Metzger, and Felix Schulze for helpful discussions and the anonymous referees for helpful remarks. The authors acknowledge the support of the START-Project Y963-N35 of the Austrian Science Fund (FWF).
\section{Proof of Theorem \ref{existence thm} and Theorem \ref{uniqueness thm}}
In this section, we assume that $g$ is a Riemannian metric on $\mathbb{R}^3$ such that
\begin{align} \label{s2 asymptotic to Schwarzschild} 
g=(1+|x|^{-1})^4\,\bar{g} +\sigma
\end{align} 
where $\sigma$ is a symmetric, covariant two-tensor with, as $x\to\infty$ for every multi-index $J$ with $|J|\leq 4$,
$$\partial_J \sigma=O(|x|^{-2-|J|}).$$
  Applying a strategy similar to that in \cite{brendle2014large,chodosh2019far}, we perform a Lyapunov-Schmidt reduction  to analyze area-constrained Willmore spheres $\Sigma\subset\mathbb{R}^3$ with large area  and comparable area radius $\lambda({\Sigma})$ and inner radius $\rho({\Sigma})$. \\\indent 
Given $\xi\in\mathbb{R}^3$ and $\lambda>0$, we abbreviate
$${S}_{\xi,\lambda}=S_{\lambda}(\lambda\,\xi)=\{x\in\mathbb{R}^3:|x-\lambda\,\xi|=\lambda\}.$$
Given $u\in C^{\infty}(S_{\xi,\lambda})$, we define the map
\begin{align}
\Phi^u_{\xi,\lambda}:S_{\xi,\lambda}\to \mathbb{R}^3 \qquad \text{given by} \qquad  \Phi^u_{\xi,\lambda}(x)=x+u(x)\, (\lambda^{-1}\,x-\xi). \label{graph map}
\end{align}
We denote by $$\Sigma_{\xi,\lambda}(u)=\Phi^u_{\xi,\lambda}(S_{\xi,\lambda})$$ the Euclidean graph of $u$ over $S_{\xi,\lambda}$. Throughout, for instance in (\ref{willmore in lambda1}) below, we tacitly identify functions defined on $\Sigma_{\xi,\lambda}(u)$ with functions defined on $S_{\xi,\lambda}$ by precomposition with $\Phi^u_{\xi,\lambda}$. 
 \\\indent
Let $\alpha\in(0,1)$ and $\gamma>1$. We denote by $\mathcal{G}$  the space of $C^{3,\alpha}$-Riemmanian metrics  on  $$\{y\in\mathbb{R}^3:\gamma^{-1}\leq |y|\leq \gamma\}$$   with the $C^{3,\alpha}$-topology. Let $\Lambda_0(S_1(0))$ and $\Lambda_1(S_1(0))$ be the constants and first spherical harmonics viewed as subspaces of $C^{4,\alpha}(S_1(0))$, respectively. We use the symbol $\perp$ to denote the $L^2(S_1(0))$-orthogonal complements of these spaces.
\begin{lem}
	There exist open neighborhoods $\mathcal U$ of $\bar g\in \mathcal{G}$, $\mathcal{V}$ of $0\in  \Lambda_1(S_1(0))^\perp$, and $I$ of $0\subset \mathbb{R}$ as well as smooth maps $u:\mathcal{U}\to \mathcal{V}$ and $\kappa:\mathcal{U}\to I$ such that the surface  $\Sigma_{0,1}(u(g))$ has area equal to $4\,\pi$ and satisfies
	\begin{align}
	\Delta H+(|\hcirc|^2+\operatorname{Ric}(\nu,\nu)+\kappa(g))\,H\in \Lambda_1(S_1(0)).
	\label{willmore in lambda1}
	\end{align}
Here,	\label{inverse function theorem lemma}
	 all geometric quantities are computed with respect to the surface $\Sigma_{0,1}(u(g))$ and the metric $g$. Moreover, if $g_0\in\mathcal{U}$, $u_0\in\mathcal{V}$, and $\kappa_0\in I$ are such that $\Sigma_{0,1}(u_0)$ satisfies (\ref{willmore in lambda1}) with respect to $g_0$ and has area equal to $4 \,\pi$, then $u_0=u(g_0)$ and $\kappa_0=\kappa(g_0)$.
\end{lem}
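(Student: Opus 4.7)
The plan is to cast the conclusion as the vanishing of a single nonlinear map between Banach spaces and then apply the implicit function theorem. Define
\[
F\colon \mathcal{G} \times \bigl(\Lambda_1(S_1(0))^\perp\bigr) \times \mathbb{R} \longrightarrow \mathbb{R} \times \Lambda_1(S_1(0))^\perp
\]
by
\[
F(g, u, \kappa) = \Bigl(\,|\Sigma(u)|_g - 4\,\pi,\ \mathrm{Proj}_{\Lambda_1(S_1(0))^\perp}\!\bigl(\Delta H + (|\hcirc|^2 + \operatorname{Ric}(\nu,\nu) + \kappa)\, H\bigr)\,\Bigr),
\]
where the geometric quantities in the second slot are computed on $\Sigma(u)$ with respect to $g$ and identified with functions on $S_1(0)$ by precomposition with $\Phi^u$. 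Any zero of $F(g, \cdot, \cdot)$ yields exactly the pair $(u(g), \kappa(g))$ asserted in the lemma. At the base point $(\bar g, 0, 0)$ the surface $\Sigma(0)$ is the round unit sphere in Euclidean $3$-space, which has area $4\,\pi$, constant mean curvature $H = 2$, vanishing traceless second fundamental form, and vanishing ambient Ricci curvature, so $F(\bar g, 0, 0) = 0$.

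The crucial step is to check that the partial Fr\'echet derivative $L := D_{(u,\kappa)} F\bigl|_{(\bar g, 0, 0)}$ is a Banach space isomorphism. Using the first variation of area and the standard formula $\dot H = -\Delta v - |h|^2 v - \operatorname{Ric}(\nu,\nu)\, v$, together with the vanishing of $\hcirc$ and $\operatorname{Ric}$ at the base, a direct calculation shows that the linearized Willmore operator $Q$ on the round unit sphere equals $Q(v) = \Delta_{S^2}(\Delta_{S^2} + 2)\, v$, while the derivative of the term $\kappa H$ in $\kappa$ produces the constant $H|_{\mathrm{base}} = 2$. Hence
\[
L(v, \mu) = \Bigl(\,2\!\int_{S_1(0)} v\,\text{d}\bar\mu,\ \mathrm{Proj}_{\Lambda_1(S_1(0))^\perp}\bigl(Q(v) + 2\,\mu\bigr)\,\Bigr).
\]

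To verify that $L$ is a bounded isomorphism between $\bigl(\Lambda_1(S_1(0))^\perp \cap C^{4,\alpha}\bigr) \times \mathbb{R}$ and $\mathbb{R} \times \bigl(\Lambda_1(S_1(0))^\perp \cap C^{0,\alpha}\bigr)$, I would decompose $v \in \Lambda_1(S_1(0))^\perp$ into its spherical harmonic components $v = v_0 + \sum_{k\geq 2} v_k$ with $v_k \in \Lambda_k(S_1(0))$. The area derivative reduces to $8\,\pi\, v_0$, which determines $v_0$ uniquely. Since $Q$ annihilates $\Lambda_0$, the $\Lambda_0$-component of $Q(v) + 2\,\mu$ is exactly $2\,\mu$, which determines $\mu$ uniquely. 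Finally, $Q$ acts on $\Lambda_k$ for $k \geq 2$ as multiplication by the nonzero eigenvalue $(k-1)\,k\,(k+1)\,(k+2)$, so it restricts to a bounded isomorphism of $\bigoplus_{k\geq 2}\Lambda_k$ in the $C^{4,\alpha}\to C^{0,\alpha}$ scale by standard elliptic estimates for the fourth-order operator $\Delta_{S^2}(\Delta_{S^2} + 2)$ on $S^2$. The block-triangular structure of $L$ then yields invertibility. The implicit function theorem produces smooth maps $u\colon \mathcal{U}\to \mathcal{V}$ and $\kappa\colon \mathcal{U}\to I$ satisfying $F(g, u(g), \kappa(g)) = 0$ and also supplies the local uniqueness statement after possibly shrinking these neighborhoods.

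The main technical point is to verify that $F$ is smooth as a map between the chosen Banach spaces; this reduces to showing smooth dependence on $(g, u) \in C^{3,\alpha} \times C^{4,\alpha}$ of the quantity $\Delta H + (|\hcirc|^2 + \operatorname{Ric}(\nu,\nu))\, H$, evaluated on $\Sigma(u)$ and pulled back to $S_1(0)$ via $\Phi^u$, with values in $C^{0,\alpha}$. This is routine but notationally heavy, since fourth derivatives of $u$ and third derivatives of $g$ appear in the principal part via the induced Laplacian applied to $H$.
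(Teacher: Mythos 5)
Your proposal is correct and follows essentially the same argument as the paper: set up a nonlinear map from $(g,u,\kappa)$ to the pair (area, projected constrained Willmore quantity), linearize at $(\bar g,0,0)$ to obtain the fourth-order operator $\pm(\bar\Delta^2+2\,\bar\Delta)$ together with the contributions of $v_0$ to the area derivative and of $\mu$ to the $\Lambda_0$-component, use the spectrum $(k-1)\,k\,(k+1)\,(k+2)$ together with elliptic regularity to establish invertibility in the $C^{4,\alpha}\to C^{0,\alpha}$ scale, and conclude with the implicit function theorem. Only a cosmetic sign is off in the second component of your linearized map (the paper's convention gives $-Q(v)$ rather than $Q(v)$ as the derivative of the Willmore quantity), but this has no bearing on the invertibility argument.
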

\begin{proof}

Let $\Lambda_{0,0}(S_1(0))$ and $\Lambda_{1,0}(S_1(0))$ be the constants and first spherical harmonics viewed as subspaces of $C^{0,\alpha}(S_1(0))$, respectively. Note that there are neighborhoods $\tilde {\mathcal U}$ of $\bar g\in \mathcal{G}$ and $\tilde {\mathcal{V}}$ of $0\in  \Lambda_1(S_1(0))^\perp$ such that the map
\[
T:\tilde{\mathcal V}\times\mathbb{R}\times \tilde{\mathcal U} \to\Lambda_{1,0}(S_1(0))^\perp\times \mathbb{R}
\]
given by 
\[
T(u,\,\kappa,\,g)=\left(\operatorname{proj}_{\Lambda_{1,0}(S_1(0))^\perp}\left[ \Delta H+(|\hcirc|^2+\operatorname{Ric}(\nu,\nu)+\kappa)\,H\right],\,|\Sigma|\right),
\]
where all geometric quantities  are with respect to $\Sigma_{0,1}(u)$ and the metric $g$, is well-defined and smooth. Specifying  (\ref{Q general formula}) to $S_1(0)$ in flat $\mathbb{R}^3$, we find
$$
(D T)|_{(0,\,0,\,\bar g)}(u,\,0,\,0)=\left(-\bar \Delta^2 u-2\,\bar \Delta u,\,8\,\pi\,\operatorname{proj}_{\Lambda_0(S_1(0))} u\right).
$$
Moreover, there holds
$$
(D T)|_{(0,\,0,\,\bar g)}(0,\,\kappa,\,0)=(2\,\kappa,\,0).
$$
As discussed in Corollary \ref{Willmore eigenvalues}, the kernel of the operator
$$
-\bar\Delta^2-2\,\bar\Delta:C^{4,\alpha}(S_1(0))\to C^{0,\alpha}(S_1(0)) 
$$
is given by $\Lambda_0(S_1(0))\oplus\Lambda_1(S_1(0))$. It follows from the Fredholm alternative and elliptic regularity
that
$$
-\bar\Delta^2-2\,\bar\Delta:[\Lambda_0(S_1(0))\oplus\Lambda_1(S_1(0))]^\perp\to [\Lambda_{0,0}(S_1(0))\oplus \Lambda_{1,0}(S_1(0))]^\perp 
$$
is an isomorphism. Thus, 
$$
(DT)|_{(0,\,0,\,\bar g)}(\,\cdot\,,\,\cdot\,,\,0):\Lambda_1(S_1(0))^\perp\times \mathbb{R}\to \Lambda_{1,0}(S_1(0))^\perp\times\mathbb{R}
$$
is an isomorphism, too. The assertions follow from this and the implicit function theorem.
\end{proof}
We consider the map $$\Theta_{\xi,\lambda}:\mathbb{R}^3\to\mathbb{R}^3\qquad \text{ given by }\qquad  \Theta_{\xi,\lambda}(y)= \lambda\,(\xi+y).$$ Note that $\Theta_{\xi,\lambda}(S_1(0))=S_{\xi,\lambda}$.  The rescaled metric
\begin{align}
 g_{\xi,\lambda}=\lambda^{-2}\,\Theta_{\xi,\lambda}^*\,g \label{pull back metric}
\end{align}
satisfies, as $\lambda\to\infty$,
$$
||g_{\xi,\lambda}-\bar{g}||_{\mathcal{G}}=O(\lambda^{-1}\,|1-|\xi||^{-1}).
$$

 \indent Let $\delta\in(0,1/2)$.  The following proposition follows from  Lemma \ref{inverse function theorem lemma} and scaling. We let $\Lambda_0(S_{\xi,\lambda})$ and $\Lambda_1(S_{\xi,\lambda})$ be the constants and first spherical harmonics viewed as subspaces of $C^{4,\alpha}(S_{\xi,\lambda})$, respectively. 
\begin{prop}
	There are constants $\lambda_0>1$, $c>1$, and $\epsilon>0$ depending on $g$ and $\delta\in(0,1/2)$ such that for every $\xi\in\mathbb{R}^3$ with $|\xi|<1-\delta$ or $|\xi|>1+\delta$ and every $\lambda>\lambda_0$ there exist $u_{\xi,\lambda}\in C^{\infty}(S_{\xi,\lambda})$ and $\kappa_{\xi,\lambda}\in\mathbb{R}$ such that the following hold. The surface $$\Sigma_{\xi,\lambda}=\Sigma_{\xi,\lambda}(u_{\xi,\lambda})$$ has the  properties
	\begin{itemize}
	\item[$\circ$] $\Delta H+(|\hcirc|^2+\operatorname{Ric}(\nu,\nu)+\kappa_{\xi,\lambda})\,H\in \Lambda_1(S_{\xi,\lambda})$ and
	\item[$\circ$] $|\Sigma_{\xi,\lambda}|=4\,\pi\, \lambda^2$.
	\end{itemize}
There holds $u_{\xi,\lambda}\perp \Lambda_1(S_{\xi,\lambda}) $  and 	\label{LS proposition}
	\begin{equation*}
	\begin{aligned}
	|u_{\xi,\lambda}|+\lambda\,|\bar \nabla u_{\xi,\lambda}|+\lambda^2\,|\bar \nabla^2 u_{\xi,\lambda}|+\lambda^3\,|\bar \nabla^3 u_{\xi,\lambda}|+\lambda^4\,|\bar \nabla^4 u_{\xi,\lambda}|&<c,\\
	\lambda^3\,|\kappa_{\xi,\lambda}|&<c.
	\end{aligned}
	\end{equation*}
   Moreover, if $\kappa\in\mathbb{R}$ and  $\Sigma_{\xi,\lambda}(u)$ with $u\perp \Lambda_1(S_{\xi,\lambda})$  are such that
\begin{itemize}
	\item[$\circ$] $\Delta H+(|\hcirc|^2+\operatorname{Ric}(\nu,\nu)+\kappa)\,H\in \Lambda_1(S_{\xi,\lambda}),$
	\item[$\circ$]  $|\Sigma_{\xi,\lambda}(u)|=4\,\pi\, \lambda^2,$
\end{itemize}
and
\begin{align*}
	|u|+\lambda\,|\bar \nabla u|+\lambda^2\,|\bar \nabla^2 u|+\lambda^3\,|\bar \nabla^3 u|+\lambda^4\,|\bar \nabla^4 u|&<\epsilon\,\lambda, \\\lambda^3\,|\kappa|&<\epsilon\,\lambda,
\end{align*}
then $u=u_{\xi,\lambda}$ and $\kappa=\kappa_{\xi,\lambda}$.
\end{prop}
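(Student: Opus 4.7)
The plan is to apply Lemma \ref{inverse function theorem lemma} to the rescaled metric $g_{\xi,\lambda}$ on $\{\gamma^{-1}\leq|y|\leq\gamma\}$ and to transfer the output back to $S_{\xi,\lambda}$ via $\Theta_{\xi,\lambda}$. The stated estimate $\|g_{\xi,\lambda}-\bar g\|_{\mathcal G}=O(\lambda^{-1}\,|1-|\xi||^{-1})$, combined with the hypothesis $|\xi|<1-\delta$ or $|\xi|>1+\delta$, gives $\|g_{\xi,\lambda}-\bar g\|_{\mathcal G}=O(\lambda^{-1}\,\delta^{-1})$. Once $\lambda>\lambda_0(\delta)$ is large, $g_{\xi,\lambda}$ lies in the open neighborhood $\mathcal U$ of $\bar g$ from Lemma \ref{inverse function theorem lemma}. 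Setting $\bar u_{\xi,\lambda}=u(g_{\xi,\lambda})$ and $\bar\kappa_{\xi,\lambda}=\kappa(g_{\xi,\lambda})$, smoothness of the maps $u,\kappa:\mathcal U\to\mathcal V\times I$ together with $u(\bar g)=0=\kappa(\bar g)$ yields $\|\bar u_{\xi,\lambda}\|_{C^{4,\alpha}(S_1(0))}+|\bar\kappa_{\xi,\lambda}|=O(\lambda^{-1})$.

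I then set $u_{\xi,\lambda}(x)=\lambda\,\bar u_{\xi,\lambda}(\lambda^{-1}\,x-\xi)$ and $\kappa_{\xi,\lambda}=\lambda^{-2}\,\bar\kappa_{\xi,\lambda}$. The scaling weights are forced by $\Theta_{\xi,\lambda}$ being a Euclidean homothety $S_1(0)\to S_{\xi,\lambda}$ with stretching factor $\lambda$, combined with the scaling of geometric quantities under the constant conformal change $\tilde g=\lambda^{-2}\,g$: mean curvature, the squared traceless second fundamental form, the Ricci contraction $\operatorname{Ric}(\nu,\nu)$, and the surface Laplacian rescale by the factors $\lambda$, $\lambda^2$, $\lambda^2$, and $\lambda^2$, respectively. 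In particular, the Willmore quantity $\Delta H+(|\hcirc|^2+\operatorname{Ric}(\nu,\nu))\,H$ rescales by $\lambda^3$, which matches the rescaling $\lambda^2$ of the Lagrange multiplier in $\kappa\,H$, so that condition (\ref{willmore in lambda1}) is compatible with the scaling. The area identity $|\Sigma_{\xi,\lambda}|_g=4\,\pi\,\lambda^2$ follows from $|\Sigma_{\xi,\lambda}(\bar u_{\xi,\lambda})|_{g_{\xi,\lambda}}=4\,\pi$. Membership of the Willmore residual in $\Lambda_1(S_{\xi,\lambda})$ and the orthogonality $u_{\xi,\lambda}\perp\Lambda_1(S_{\xi,\lambda})$ are preserved because linear functions on $\mathbb R^3$ pull back to linear functions under $\Theta_{\xi,\lambda}$, so $\Lambda_1(S_1(0))$ and $\Lambda_1(S_{\xi,\lambda})$ correspond exactly under precomposition with $\Theta_{\xi,\lambda}$. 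The pointwise bounds follow from the identity $|\nabla^k u_{\xi,\lambda}|_{S_{\xi,\lambda}}=\lambda^{1-k}\,|\bar\nabla^k\bar u_{\xi,\lambda}|_{S_1(0)}$ and the $C^{4,\alpha}$-estimate on $\bar u_{\xi,\lambda}$, while $\lambda^3\,|\kappa_{\xi,\lambda}|=\lambda\,|\bar\kappa_{\xi,\lambda}|=O(1)$.

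For the uniqueness clause, given any pair $(u,\kappa)$ on $S_{\xi,\lambda}$ with $u\perp\Lambda_1(S_{\xi,\lambda})$ satisfying the stated conditions, I would reverse the rescaling by setting $\bar u=\lambda^{-1}\,u\circ\Theta_{\xi,\lambda}$ and $\bar\kappa=\lambda^2\,\kappa$. The smallness bounds $|u|+\lambda\,|\nabla u|+\dots+\lambda^4\,|\nabla^4 u|<\epsilon\,\lambda$ and $\lambda^3\,|\kappa|<\epsilon\,\lambda$ translate, via the same scaling computation, into $\|\bar u\|_{C^{4,\alpha}(S_1(0))}+|\bar\kappa|<C\,\epsilon$. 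For $\epsilon>0$ small enough this places $(\bar u,\bar\kappa)$ inside $\mathcal V\times I$, and invoking the uniqueness clause of Lemma \ref{inverse function theorem lemma} identifies $(\bar u,\bar\kappa)=(u(g_{\xi,\lambda}),\kappa(g_{\xi,\lambda}))$, whence $(u,\kappa)=(u_{\xi,\lambda},\kappa_{\xi,\lambda})$.

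No serious obstacle arises: beyond the stated metric closeness and Lemma \ref{inverse function theorem lemma} itself, the proof is bookkeeping of conformal weights together with the observation that first spherical harmonics transform correctly under $\Theta_{\xi,\lambda}$. The one point that requires some care is the quantitative matching of the bounds on $\bar u_{\xi,\lambda}$, $\bar\kappa_{\xi,\lambda}$ produced by the smoothness of $u,\kappa$ on $\mathcal U$ with the size of the neighborhoods $\mathcal V$, $I$, so that the constants $\lambda_0$, $c$, and $\epsilon$ may be chosen uniformly in $\xi$, depending only on $(M,g)$ and $\delta$.
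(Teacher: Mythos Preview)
Your proposal is correct and matches the paper's approach exactly: the paper states only that the proposition ``follows from Lemma \ref{inverse function theorem lemma} and scaling,'' and you have supplied precisely this argument, with the correct conformal weights for $u$ and $\kappa$ under $\Theta_{\xi,\lambda}$ and the correct identification of $\Lambda_1$ under pullback. The only point left implicit is the choice of $\gamma>1$ close enough to $1$ (depending on $\delta$) so that $-\xi$ lies outside the annulus $\{\gamma^{-1}\le|y|\le\gamma\}$ and $g_{\xi,\lambda}$ is well-defined there, but this is routine.
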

\begin{rema} \normalfont
By the implicit function theorem and scaling, 
	\begin{align*}
(\bar D u)|_{(\xi,\,\lambda)}=O(\lambda^{-1})  \qquad\text{and} \qquad  u'|_{(\xi,\,\lambda)}=O(\lambda^{-2}) 
	\end{align*} 
	where  \label{u ambient derivatives ren} 
$\bar D$ and the dash indicate differentiation with respect to the parameters $\xi$ and $\lambda$, respectively. 
\end{rema}
We abbreviate $u_{\xi,\lambda}$ by $u$, $\kappa_{\xi,\lambda}$ by $\kappa$, and $\Lambda_l(S_{\xi,\lambda})$ by $\Lambda_l$ for $l=0,\,1$. To obtain more precise information about the perturbation, we expand $u$ in terms of spherical harmonics. 
\begin{lem}
	There holds 
	\begin{align*}
	\operatorname{proj}_{\Lambda_0}u=\begin{dcases}
&-2+{O}(\lambda^{-1})\, \hspace{0.8cm}\quad\text{ if }|\xi|<1-\delta , 
\\&-2\,|\xi|^{-1}+{O}(\lambda^{-1}) \quad\text{ if }|\xi|>1+\delta.
\end{dcases} 
	\end{align*}
	\label{Lambda0 estimate}
\end{lem}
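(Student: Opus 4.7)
The strategy is to use the area constraint $|\Sigma_{\xi,\lambda}(u)|_g = 4\,\pi\,\lambda^2$ from Proposition \ref{LS proposition} to pin down $u_0 := \operatorname{proj}_{\Lambda_0} u$. Writing $u = u_0 + u_{\geq 2}$ with $u_{\geq 2} \perp \Lambda_0 \oplus \Lambda_1$, the mean $\int_{S_{\xi,\lambda}} u_{\geq 2} \, d\bar\mu$ vanishes, so only $u_0$ contributes to the area at the relevant order in $\lambda$. The higher modes will then enter only through quadratic-type corrections that I will bound by $O(1)$.

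To carry this out, I would parameterize $\Sigma_{\xi,\lambda}(u)$ as the radial graph $x(n) = \lambda\,\xi + (\lambda + u(n))\, n$ over $n \in S^2$, whose Euclidean area element reads $(\lambda+u)\sqrt{(\lambda+u)^2 + |\nabla_{S^2} u|^2}\, d\sigma$. The bounds of Proposition \ref{LS proposition} translate to $|u| + |\nabla_{S^2} u| + |\nabla_{S^2}^2 u| = O(1)$, and together with $\int_{S^2} u_{\geq 2} \, d\sigma = 0$ they give an Euclidean area of $4\,\pi\,\lambda^2 + 8\,\pi\,\lambda\,u_0 + O(1)$.

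Next I would incorporate the ambient metric $g = (1+|x|^{-1})^4\,\bar g + \sigma$. Writing $(1+|x|^{-1})^4 = 1 + 4\,|x|^{-1} + O(|x|^{-2})$ and using that $|\xi|<1-\delta$ or $|\xi|>1+\delta$ forces $|x|\geq c\,\lambda$ uniformly on the graph, the $O(|x|^{-2})$ remainder and the error tensor $\sigma = O(|x|^{-2})$ each modify the area only by $O(1)$. The leading correction comes from $4\int_{\Sigma_{\xi,\lambda}(u)}|x|^{-1}\, d\bar\mu$, which after substituting the radial graph reduces to
$$
4\,\lambda \int_{S^2} \frac{d\sigma(n)}{|\xi + n|} + O(1).
$$
I would evaluate the integral on the right using Newton's shell theorem: the Newtonian potential $n\mapsto 1/|\xi - n|$ integrated against $d\sigma$ over the unit sphere equals $4\,\pi$ when $|\xi|<1$ (the interior case) and $4\,\pi\,|\xi|^{-1}$ when $|\xi|>1$ (the exterior case). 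Combining with the Euclidean expansion, the area equation reads $8\,\pi\,\lambda\,u_0 + 16\,\pi\,\lambda\cdot (1 \text{ or } |\xi|^{-1}) = O(1)$, which solves to $u_0 = -2 + O(\lambda^{-1})$ when $|\xi|<1$ and $u_0 = -2\,|\xi|^{-1} + O(\lambda^{-1})$ when $|\xi|>1$, exactly as claimed.

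The only real bookkeeping step, and the main place where care is required, is to confirm that the cross terms between $u_0$ and $u_{\geq 2}$ arising from the square-root expansion and the error incurred by evaluating $|x|^{-1}$ on the perturbed graph rather than on $S_{\xi,\lambda}$ contribute only at order $O(1)$ to the area. Since the area equation is divided by $8\,\pi\,\lambda$ to extract $u_0$, any error exceeding $O(1)$ in the area would spoil the claimed $O(\lambda^{-1})$ accuracy for $u_0$; the uniform bound $|x|\geq c\,\lambda$ supplied by the hypothesis $\big||\xi|-1\big|\geq \delta$ is exactly what makes this delicate comparison work.
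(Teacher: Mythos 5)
Your proposal is correct and the strategic idea is the same as the paper's: use the area constraint $|\Sigma_{\xi,\lambda}| = 4\pi\lambda^2$ from Proposition~\ref{LS proposition} to pin down $\operatorname{proj}_{\Lambda_0}u$, noting that the higher Fourier modes contribute only at order $O(1)$ to the area. The difference is in execution: the paper invokes Lemma~\ref{area expansion} (citing Brendle–Eichmair for the Schwarzschild area of the unperturbed coordinate sphere) together with the first-variation-of-area formula to compare $|\Sigma_{\xi,\lambda}|$ with $|S_{\xi,\lambda}|$, whereas you compute the area of the radial graph $\Sigma_{\xi,\lambda}(u)$ directly, obtaining the conformal-factor contribution from Newton's shell theorem in place of the cited expansion. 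Your route is more self-contained; it is effectively an unpacking of the ingredients the paper takes off the shelf, and the final bookkeeping (dividing by $8\pi\lambda$, controlling the cross terms and the $O(|x|^{-2})$ remainder using $\big||\xi|-1\big| \geq \delta$) matches the paper's use of the $O(1)$ error in the first variation. Both arrive at the same leading-order identity for $\operatorname{proj}_{\Lambda_0}u$.
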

\begin{proof}
	On the one hand, we have $|\Sigma_{\xi,\lambda}|=4\,\pi\, \lambda^2$ by construction. By Lemma \ref{area expansion}, $$|S_{\xi,\lambda}|=
	\begin{dcases}
	&4\,\pi\, \lambda^2+16\,\pi\,  \lambda+{O}(1)\hspace{1.2cm}\,\text{  if } |\xi|<1-\delta,\\
	&4\,\pi\, \lambda^2+16\,\pi\,  \lambda\,|\xi|^{-1}+{O}(1) \quad \hspace{.03cm}\text{ if } |\xi|>1+\delta.
	\end{dcases}$$ On the other hand,  by the first variation of area formula, 
	$$
	|\Sigma_{\xi,\lambda}|-|S_{\xi,\lambda}|=\int_{S_{\xi,\lambda}}H\,u\,\mathrm{d}\mu+{O}(1)=\int_{S_{\xi,\lambda}}\bar H\,u\,\mathrm{d}\bar\mu+{O}(1).
	$$ 
	In the second equation, we have used Lemma \ref{Schwarzschild identities} and Lemma \ref{perturbations}. Since 
	$$
	\frac{1}{4\,\pi}\,\lambda^{-2}\,\int_{S_{\xi,\lambda}}u\,\mathrm{d}\bar\mu =\operatorname{proj}_{\Lambda_0}u,
	$$
	the assertion follows.
\end{proof}
For the statement of the next lemma, recall the definition of the Legendre polynomials $P_l$ from Appendix \ref{spherical harmonics appendix}.
\begin{lem}
	\label{u, lambda expansions}
	If $|\xi|<1-\delta$, there holds
	\begin{align*}
\kappa&=4\,\lambda^{-3}+O(\lambda^{-4}),\\
W({\Sigma_{\xi,\lambda}})+\kappa\, H(\Sigma_{\xi,\lambda}) &={O}(\lambda^{-5}), 
\\
u&=-2+4\sum_{l=2}^\infty \frac{|\xi|^{l}}{l}\,P_l\left(-|\xi|^{-1}\,\bar{g}(y,\xi)\right)+O(\lambda^{-1}).
\end{align*}
	If $|\xi|>1+\delta$, there holds
\begin{align*}
\kappa&=O(\lambda^{-4}),\\
W({\Sigma_{\xi,\lambda}})+\kappa\, H(\Sigma_{\xi,\lambda})&={O}(\lambda^{-5}),
\\
u&=-2\,|\xi|^{-1}-4\sum_{l=2}^\infty \frac{|\xi|^{-l-1}}{l+1}\,P_l\left(-|\xi|^{-1}\,\bar{g}(y,\xi)\right)+O(\lambda^{-1}).
\end{align*}
\end{lem}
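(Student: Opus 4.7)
The plan is to extract the claimed expansions from the constrained Willmore equation in Proposition \ref{LS proposition} by decomposing along spherical harmonics on $S_{\xi,\lambda}$ and matching leading contributions from the Schwarzschild background $g_S = (1+|x|^{-1})^4 \bar g$. I would write $u = u_0 + \tilde u$ with $u_0 = \operatorname{proj}_{\Lambda_0} u$ and $\tilde u \in (\Lambda_0 \oplus \Lambda_1)^\perp$. By Lemma \ref{Lambda0 estimate}, $u_0 = -2 + O(\lambda^{-1})$ for $|\xi| < 1-\delta$ and $u_0 = -2|\xi|^{-1} + O(\lambda^{-1})$ for $|\xi| > 1+\delta$, so only the higher Legendre coefficients of $\tilde u$ and the Lagrange multiplier $\kappa$ remain to be determined. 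The reference surface $\Sigma(u_0)$ is, to leading order, the round sphere $S_{\lambda + u_0}(\lambda \xi)$, which encloses the origin precisely when $|\xi| < 1$. Decomposing $g = g_S + \sigma$, the contribution of $\sigma$ to the Willmore operator is $O(\lambda^{-5})$ after scaling, as are the quadratic terms in $\tilde u$ and the sub-leading terms of $g_S$.

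Linearizing the Schwarzschild Willmore operator around $\Sigma(u_0)$, the equation reduces to
\begin{equation*}
Q_\lambda \tilde u = -\operatorname{proj}_{(\Lambda_0 \oplus \Lambda_1)^\perp}\bigl(W_S(\Sigma(u_0)) + \kappa\, H_S(\Sigma(u_0))\bigr) + O(\lambda^{-5}),
\end{equation*}
where $Q_\lambda = \lambda^{-4}(-\bar\Delta^2 - 2\bar\Delta)$ is the linearization at the round sphere of radius $\lambda$ in flat space, with eigenvalues $Q_\lambda P_l = -l(l-1)(l+1)(l+2)\lambda^{-4}\, P_l$, non-zero precisely for $l \geq 2$. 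Parametrizing points of $S_{\xi,\lambda}$ by $x = \lambda(\xi + y)$, $|y| = 1$, the conformal factor becomes $\phi = 1 + \lambda^{-1}|\xi+y|^{-1}$, and the Legendre generating function yields
\begin{equation*}
|\xi + y|^{-1} = \begin{dcases} \sum_{l \geq 0} |\xi|^l\, P_l(-|\xi|^{-1}\, \bar g(y, \xi)) & \text{if } |\xi| < 1, \\ \sum_{l \geq 0} |\xi|^{-l-1}\, P_l(-|\xi|^{-1}\, \bar g(y, \xi)) & \text{if } |\xi| > 1. \end{dcases}
\end{equation*}

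Projecting onto $\Lambda_0$ fixes $\kappa$: since $\operatorname{proj}_{\Lambda_0} H_S \approx 2/\lambda$ and $\operatorname{proj}_{\Lambda_0} W_S(\Sigma(u_0))$ captures the Schwarzschild monopole (the mass $m = 2$) precisely when the origin is enclosed, one obtains $\kappa = 4\lambda^{-3} + O(\lambda^{-4})$ for $|\xi| < 1-\delta$ and $\kappa = O(\lambda^{-4})$ for $|\xi| > 1+\delta$. Projecting onto $\Lambda_l$ for each $l \geq 2$ and inverting $Q_\lambda$ gives the Legendre coefficients of $\tilde u$: the factor $(l-1)(l+1)(l+2)$ in the eigenvalue combines with a matching factor produced by $W_S$ to leave the prefactors $1/l$ (interior) and $1/(l+1)$ (exterior). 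Substituting the determined $u$ and $\kappa$ back into $W + \kappa H$ produces the residual estimate $O(\lambda^{-5})$; its $\Lambda_1$ component, which is not annihilated by the construction of Proposition \ref{LS proposition}, is shown to be of this order using the rotational symmetry of the Schwarzschild background about the origin, which forces the first-order translational variation of the Schwarzschild Willmore energy along directions perpendicular to $\xi$ to vanish identically.

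The principal difficulty is the accurate expansion of $W_S(\Sigma(u_0))$ on the reference round sphere in Schwarzschild. The contributions from $\Delta_{g_S} H_S$, $|\hcirc|^2 H_S$, and $\operatorname{Ric}(\nu, \nu) H_S$ all enter at order $\lambda^{-4}$, and their combined Legendre coefficients must realize exactly the $l$-dependence needed to cancel the eigenvalue $l(l-1)(l+1)(l+2)\lambda^{-4}$ of $Q_\lambda$ and produce the neat prefactors $1/l$ and $1/(l+1)$. Handling the residual $\Lambda_1$ component carefully, so that the full statement $W + \kappa H = O(\lambda^{-5})$ -- rather than only modulo $\Lambda_1$ -- is obtained, requires an additional symmetry argument tied to the ADM center of mass of Schwarzschild lying at the origin.
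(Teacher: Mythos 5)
Your overall strategy matches the paper's: treat $u$ as a perturbation off a round coordinate sphere, use that the linearization of the Willmore operator at the round sphere is $-\bar\Delta^2-2\lambda^{-2}\bar\Delta$, expand $W(S_{\xi,\lambda})$ in Legendre polynomials, and project the equation of Proposition~\ref{LS proposition} onto $\Lambda_0$, $\Lambda_1$, and $\Lambda_{l\ge 2}$ in turn to read off $\kappa$, the $\Lambda_1$ residue, and the Legendre coefficients of $u$. Two remarks on where you diverge from the paper's mechanism, one cosmetic and one substantive.

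The cosmetic point: you split $u=u_0+\tilde u$ and re-linearize around the shifted sphere $\Sigma(u_0)$. This is unnecessary. Since $\bar\Delta^2$ and $\bar\Delta$ annihilate constants, the constant part of $u$ drops out of the linearized left-hand side automatically, so the paper simply linearizes once around $S_{\xi,\lambda}$ itself and obtains the single equation $\bar\Delta^2u+2\lambda^{-2}\bar\Delta u-2\lambda^{-1}\kappa=W(S_{\xi,\lambda})-Y_1+O(\lambda^{-5})$. Projecting onto $\Lambda_0$ then identifies $\kappa$ with no need to recenter.

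The substantive point is your treatment of the $\Lambda_1$ residue $Y_1$. You invoke rotational symmetry of Schwarzschild about the origin to force the translational variation of the Willmore energy ``along directions perpendicular to $\xi$'' to vanish. This covers only two of the three directions in $\Lambda_1$; it says nothing about the radial direction along $\xi$, which is the one that a priori could carry a nonzero first-order contribution because moving $S_{\xi,\lambda}$ along $\xi$ genuinely changes its position relative to the Schwarzschild center. The appeal to the ADM center of mass does not close this gap either, since that gives information at a different order (the $\lambda^{-1}$ mass term in the Willmore energy) and about an integrated quantity, not the pointwise $\Lambda_1$ projection of $W$ at order $\lambda^{-4}$. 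What actually works, and what the paper does, is to read off from the Legendre expansion (Corollary~\ref{Willmore operator on sphere spherical harmonics}) that the coefficient at $l=1$ carries a factor $(l-1)$, hence vanishes identically at order $\lambda^{-4}$ for all three directions simultaneously; the $\Lambda_1$ projection of the equation then immediately gives $Y_1=O(\lambda^{-5})$. You already set up everything needed for this computation, so the symmetry heuristic should be replaced by it.
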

\begin{proof}
It follows from (\ref{Q change of W}) and  Lemma \ref{Q lem} that 
	$$
	{W}({\Sigma_{\xi,\lambda}})-{W}({{S}_{\xi,\lambda}})=-Q_{S_{\xi,\lambda}}u+O(\lambda^{-5})=-\bar\Delta_{S_{\xi,\lambda}}^2u-2\,\lambda^{-2}\,\bar\Delta_{S_{\xi,\lambda}} u+{O}(\lambda^{-5}).
	$$
	Likewise, (\ref{mean curvature change}), Proposition \ref{LS proposition}, Lemma \ref{Schwarzschild identities}, and Lemma \ref{perturbations} imply that
	$$
	H(\Sigma_{\xi,\lambda})=2\,\lambda^{-1}+O(\lambda^{{-2}}).
	$$
	Conversely, by Proposition \ref{LS proposition},
	$$
	{W}({\Sigma_{\xi,\lambda}})+H(\Sigma_{\xi,\lambda})\,\kappa=Y_1\in\Lambda_1.
	$$
 Thus,  
\begin{align}
\bar 	\Delta_{S_{\xi,\lambda}}^2u+2\,\lambda^{-2}\,\bar \Delta_{S_{\xi,\lambda}} u-2\, \lambda^{-1}\,\kappa=
	W({{S}_{\xi,\lambda}})-Y_1+{O}(\lambda^{-5}).
\label{Willmore sphere vs u}
\end{align}
By Corollary \ref{Willmore operator on sphere spherical harmonics}, there holds	$$
		W({{S}_{\xi,\lambda}})=
		\begin{dcases}
		&\,\,4\,\lambda^{-4}\,\sum_{l=0}^\infty (l-1)\,(l+1)\,(l+2)\,|\xi|^{l}\,P_l\left(-|\xi|^{-1}\,\bar{g}(y,\xi)\right)+{O}(\lambda^{-5})
\quad \text{ if }|\xi|<1-\delta,\\&-\,4\,\lambda^{-4}\sum_{l=0}^\infty (l-1)\,l\,(l+2)\,|\xi|^{-l-1}\,P_l\left(-|\xi|^{-1}\,\bar{g}(y,\xi)\right)+{O}(\lambda^{-5})\,\,
\quad \,\text{ if }|\xi|>1+\delta.
\end{dcases}$$  Projecting (\ref{Willmore sphere vs u}) onto $\Lambda_1$, we find $Y_1=O(\lambda^{-5})$. The assertions follow from Corollary \ref{Willmore eigenvalues}. 
\end{proof}
To relate the variational structure of the area-constrained Willmore equation on the families of surfaces $\{\Sigma_{\xi,\lambda}:|\xi|<1-\delta\}$ and $\{\Sigma_{\xi,\lambda}:|\xi|>1+\delta\}$ to  a 3-dimensional problem, we introduce the functional
\begin{align}
F_\lambda(\Sigma)=\begin{dcases}
&\lambda^2\,\bigg(\int_{\Sigma} H^2\,\mathrm{d}\mu-16\,\pi+64\,\pi\,\lambda^{-1}\bigg) \quad
\text{ if }\Sigma\text{ is on-center},\\
&\lambda^2\,\bigg(\int_{\Sigma} H^2\,\mathrm{d}\mu-16\,\pi\bigg)\, \hspace{1.8cm}\quad \text{ if }\Sigma\text{ is outlying},
\end{dcases} 
\label{F functional}
\end{align}
for closed, two-sided surfaces $\Sigma\subset M$. Essentially, $F_\lambda$  measures the Willmore energy on the relevant scales for on-center and outlying surfaces, respectively. We then define the function
\begin{align}
G_\lambda:\{\xi\in\mathbb{R}^3:|\xi|<1-\delta\text{ or }|\xi|>1+\delta \}\to\mathbb{R}\qquad\text{given by} \qquad G_\lambda(\xi)=F_\lambda(\Sigma_{\xi,\lambda}). \label{G definition}
\end{align}
\begin{lem} There is $\lambda_0>1$ depending  on $g$ and $\delta\in(0,1/2)$ with the following property. Let $\lambda>\lambda_0$. Then $\Sigma_{\xi,\lambda}$ is an area-constrained 	\label{variational to 3 dim} Willmore sphere if and only if $\xi$ is a critical point of $G_\lambda$.
\end{lem}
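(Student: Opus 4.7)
The claim is the standard Lyapunov--Schmidt identification: the gradient of the reduced functional $G_\lambda$ detects the residual of the constrained Willmore equation in $\Lambda_1(S_{\xi,\lambda})$. By Proposition~\ref{LS proposition}, the quantity $Y_1 := W(\Sigma_{\xi,\lambda}) + \kappa_{\xi,\lambda}\,H(\Sigma_{\xi,\lambda})$ lies in $\Lambda_1(S_{\xi,\lambda})$, and $\Sigma_{\xi,\lambda}$ is area-constrained Willmore if and only if $Y_1 = 0$ (the uniqueness clause of Proposition~\ref{LS proposition} forces any Lagrange multiplier to equal $\kappa_{\xi,\lambda}$). The plan is to prove the identity
\begin{align*}
\partial_{\xi^i} G_\lambda(\xi) = -\,4\,\lambda^2\int_{\Sigma_{\xi,\lambda}} Y_1\,\phi_i\,\text{d}\mu,
\end{align*}
where $\phi_i$ is the normal speed of the variation in $\xi^i$, and then to show that this pairing is non-degenerate in $Y_1$ for $\lambda$ sufficiently large.

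\textbf{First variation.} Parametrize the family via $\Psi_\xi(y) = \lambda\,(\xi + y) + u_{\xi,\lambda}(\lambda\,(\xi+y))\,y$ on $S_1(0)$. The variation vector field for $\partial/\partial\xi^i$ is $X_i = \lambda\,e_i + (\bar{D}_i u)\,y$, and the remark following Proposition~\ref{LS proposition} gives $\bar{D}u = O(\lambda^{-1})$. Since $g = \bar g + O(\lambda^{-1})$ on $\{|x|\sim\lambda\}$ and, by Proposition~\ref{LS proposition}, the unit normal to $\Sigma_{\xi,\lambda}$ differs from $y$ by $O(\lambda^{-1})$ in the rescaled metric $g_{\xi,\lambda}$ defined in (\ref{pull back metric}), the normal speed is $\phi_i := g(X_i,\nu) = \lambda\,y^i + O(1)$. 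The area $|\Sigma_{\xi,\lambda}| = 4\,\pi\,\lambda^2$ is independent of $\xi$, so $\int_{\Sigma_{\xi,\lambda}} H\,\phi_i\,\text{d}\mu = 0$. The standard first variation of $\int H^2\,\text{d}\mu$, combined with $W = -\kappa_{\xi,\lambda}\,H + Y_1$ and the vanishing just noted, then yields
\begin{align*}
\partial_{\xi^i} G_\lambda(\xi) = -\,4\,\lambda^2\int_{\Sigma_{\xi,\lambda}} W\,\phi_i\,\text{d}\mu = -\,4\,\lambda^2\int_{\Sigma_{\xi,\lambda}} Y_1\,\phi_i\,\text{d}\mu.
\end{align*}

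\textbf{Non-degeneracy.} Expanding $Y_1\circ\Theta_{\xi,\lambda} = \sum_{j=1}^3 a_j\,y^j$ on $S_1(0)$ and using $\text{d}\mu = \lambda^2\,(1 + O(\lambda^{-1}))\,\text{d}A_{S_1(0)}$ together with the leading-order expression for $\phi_i$, orthonormality of $\{y^j\}$ in $L^2(S_1(0))$ yields
\begin{align*}
\int_{\Sigma_{\xi,\lambda}} Y_1\,\phi_i\,\text{d}\mu = \frac{4\,\pi}{3}\,\lambda^3\,a_i + O(\lambda^2\,|a|).
\end{align*}
For $\lambda$ large, this forces $a = 0$ whenever $\partial_{\xi^i}G_\lambda(\xi) = 0$ for $i = 1,2,3$, that is, $Y_1 = 0$, completing the proof of both directions. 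The main obstacle is verifying the leading-order formula $\phi_i = \lambda\,y^i + O(1)$: it requires carefully tracking the Schwarzschild conformal factor, the perturbation $\sigma$, and the Lyapunov--Schmidt profile $u_{\xi,\lambda}$ through the first-variation computation, but all of these contributions are controlled by the $\lambda$-decay estimates of Proposition~\ref{LS proposition} and its remark.
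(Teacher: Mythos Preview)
Your proof is correct and follows essentially the same route as the paper: compute the normal speed of the $\xi$-variation as $\lambda\,y^i + O(1)$, use area preservation to kill the $\kappa H$-term, apply the first variation of $\int H^2\,\text{d}\mu$, and conclude by non-degeneracy of the pairing between $\Lambda_1$ and the leading $\lambda\,y^i$ part of the normal speed. One minor slip: by Lemma~\ref{variation lemma} the constant should be $-2\,\lambda^2$ rather than $-4\,\lambda^2$, but this is immaterial to the argument.
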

\begin{proof}
Fix $\xi\in\mathbb{R}^3$ with either $|\xi|<1-\delta$ or $|\xi|>1+\delta$. \\ \indent Let $a\in\mathbb{R}^3$ with $|a|=1$ and $\epsilon>0$ be small. Note that the normal speed $f$ of the area-preserving variation
	$$
	\{\Sigma_{\xi+s\hspace{0.02cm}a,\lambda}:|s|<\epsilon\}
	$$
	of $\Sigma$ at $s=0$ is given by
	$$
	f=g\bigg(\nu,\frac{d}{ds}\bigg|_{s=0}\Phi^{u_{\xi+s\,a,\lambda}}_{\xi+s\,a,\lambda}\bigg)=\lambda\, \bar g(a,\bar\nu)+O(1).
	$$ 
 \indent Assume that $\xi$ is a critical point of $G_\lambda$. Using (\ref{first variation}), we find
	$$
	\int_{\Sigma_{\xi,\lambda}} W(\Sigma_{\xi,\lambda}) \,f\,\mathrm{d}\mu=0, \qquad \int_{\Sigma_{\xi,\lambda}}H(\Sigma_{\xi,\lambda})\,f\,\mathrm{d}\mu=0.
	$$
	In particular,
	$$
	\int_{\Sigma_{\xi,\lambda}} (W(\Sigma_{\xi,\lambda})+\kappa\, H(\Sigma_{\xi,\lambda}))\,(\bar g(a,\bar\nu)+O(\lambda^{-1}))\,\mathrm{d}\mu=0
	$$
	for every choice of $a\in\mathbb{R}^3$ with $|a|=1$. Since $W(\Sigma_{\xi,\lambda})+\kappa\, H(\Sigma_{\xi,\lambda})\in \Lambda_1$ by Proposition \ref{LS proposition}, it follows that 
	$
	W(\Sigma_{\xi,\lambda})+\kappa\, H(\Sigma_{\xi,\lambda})=0
	$
	provided $\lambda_0>1$ is sufficiently large. \\ \indent
	Conversely, if $\Sigma_{\xi,\lambda}$ is an area-constrained Willmore sphere, then
	$$
	\int_{\Sigma_{\xi,\lambda}} W(\Sigma_{\xi,\lambda})\,f\,\mathrm{d}\mu=-\kappa\int_{\Sigma_{\xi,\lambda}} H(\Sigma_{\xi,\lambda})\,f\,\mathrm{d}\mu=0.
	$$
In conjunction with	Lemma \ref{variation lemma}, we see that $\xi$ is a critical point of $G_\lambda$. 
\end{proof} 
In the next step, we compute the asymptotic expansions of $G_\lambda$ as $\lambda\to\infty$. 
\begin{lem}
		\label{G expansion}
	If $|\xi|<1-\delta$, there holds
\begin{equation*}
\begin{aligned}
G_\lambda(\xi)=64\,\pi+\frac{32\,\pi}{1-|\xi|^2}-48\,\pi\,|\xi|^{-1}\log\frac{1+|\xi|}{1-|\xi|}-128\,\pi\log(1-|\xi|^2)
+2\,\lambda \int_{\mathbb{R}^3\setminus{B_{\lambda}(\lambda\,\xi)}} R\,\mathrm{d}\bar{v}+O(\lambda^{-1}).
\end{aligned}
\end{equation*}
If $|\xi|>1+\delta$, there holds	
\begin{equation*}
\begin{aligned}
G_\lambda(\xi)= -\frac{32\,\pi}{|\xi|^2-1}-48\,\pi\,|\xi|^{-1}\,\log\frac{|\xi|+1}{|\xi|-1}-128\,\pi\log(1-|\xi|^{-2})
-2\,\lambda \int_{B_{\lambda}(\lambda\,\xi)} R\,\mathrm{d}\bar{v}+O(\lambda^{-1}).
\end{aligned}
\end{equation*}
\end{lem}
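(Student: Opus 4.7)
My plan is to expand $\int_{\Sigma_{\xi,\lambda}} H^2\,\text{d}\mu$ asymptotically and substitute into (\ref{F functional}). I decompose the metric as $g = g_S + \sigma$, compute the Schwarzschild contribution explicitly using the spherical-harmonic expansion of $u_{\xi,\lambda}$ from Lemma \ref{u, lambda expansions}, and convert the $\sigma$-contribution into the bulk integral of the scalar curvature via a Pohozaev-type identity.

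For the $\sigma$-correction, I interpolate $g_t = g_S + t\,\sigma$ and apply the standard first-variation formula for $H^2\,\text{d}\mu$ under a metric variation. The leading contribution is a surface integral on $\Sigma_{\xi,\lambda}$ that matches, up to $o(1)$, the flux of the Einstein tensor of $g$ paired with the Euclidean position vector $x$. The contracted Bianchi identity $\nabla^i G_{ij} = 0$ together with the divergence theorem---the Pohozaev-type identity observed by R.~Schoen in \cite{MR929283} and employed by T.~Lamm, J.~Metzger and F.~Schulze in \cite{lamm2011foliations}---then converts this flux into $\pm 2\,\lambda\,\int R\,\text{d}\bar v$. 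The sign and the domain of integration ($\mathbb{R}^3 \setminus B_{\lambda}(\lambda\,\xi)$ in the on-center case, $B_{\lambda}(\lambda\,\xi)$ in the outlying case) are determined by the outward orientation of the region bounded by $\Sigma_{\xi,\lambda}$; this is the content referenced by Lemma \ref{Willmore energy sphere}.

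For the Schwarzschild piece $\int_{\Sigma_{\xi,\lambda}} H_S^2\,\text{d}\mu_S$, I use the conformal representation $g_S = \phi^4\,\bar g$ with $\phi = 1 + |x|^{-1}$ to rewrite the integrand in Euclidean terms, and realize $\Sigma_{\xi,\lambda}$ as the Euclidean normal graph of $u_{\xi,\lambda}$ over $S_{\xi,\lambda}$. After rescaling by $\lambda$, both $\phi - 1$ and the rescaled displacement $u/\lambda$ become $O(\lambda^{-1})$ and can be expanded order by order. Substituting the explicit series for $u_{\xi,\lambda}$ from Lemma \ref{u, lambda expansions} and summing in closed form using the Legendre generating function
\[
\sum_{l=0}^\infty t^l\,P_l(z) = \frac{1}{\sqrt{1-2\,t\,z+t^2}}
\]
and its antiderivatives in $t$ (which translate the weights $1/l$ and $1/(l+1)$ into the logarithmic expressions appearing in the lemma), together with the orthogonality of $P_l$ for $l \geq 1$ against the constant function on $S_1(0)$, produces the explicit $\xi$-dependent terms. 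The constant $64\,\pi$ in the on-center case exactly matches the offset $64\,\pi\,\lambda^{-1}$ built into $F_\lambda$, which was chosen precisely to cancel the Schwarzschild mass contribution $-64\,\pi/\lambda$ that would otherwise appear at leading order.

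The main obstacle is the Schwarzschild step. The leading constant part of $u_{\xi,\lambda}$, equal to $-2$ on-center and $-2\,|\xi|^{-1}$ outlying, is not small compared to the sphere's geometric thickness scale, so one cannot Taylor-expand $H_S^2\,\text{d}\mu_S$ naively in $u$ about $S_{\xi,\lambda}$. The natural remedy is to absorb this constant part into a shifted Euclidean base sphere and then to expand only in the remaining higher harmonics of $u_{\xi,\lambda}$. Carrying out this expansion so that the resulting Legendre series collapses to the short closed-form expressions in the lemma, while simultaneously keeping the $O(\lambda^{-1})$ errors from $\sigma$, from the subleading corrections to $u_{\xi,\lambda}$, and from the higher-order conformal contributions all under control, is the technical core of the calculation.
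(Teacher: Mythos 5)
You have correctly identified the two essential ingredients: the Pohozaev-type identity that converts the scalar-curvature contribution into a bulk integral (encoded here in Lemma~\ref{Willmore energy sphere}), and the closed-form Legendre expansion of $u_{\xi,\lambda}$ from Lemma~\ref{u, lambda expansions} that produces the explicit $\xi$-dependent terms. However, your organization and your chief technical concern both differ from the paper, and the concern is in fact misplaced. The paper does not split $\int_{\Sigma_{\xi,\lambda}}H^2\,\text{d}\mu$ into a Schwarzschild piece and a $\sigma$-piece at the level of $\Sigma_{\xi,\lambda}$; instead it Taylor-expands in the \emph{surface} variable $u$ around the coordinate sphere $S_{\xi,\lambda}$, using Lemma~\ref{variation lemma}:
\[
\int_{\Sigma_{\xi,\lambda}}H^2\,\text{d}\mu
=\int_{S_{\xi,\lambda}}H^2\,\text{d}\mu-2\int_{S_{\xi,\lambda}}W\,u\,\text{d}\mu
+\int_{S_{\xi,\lambda}}u\,Qu\,\text{d}\mu+O(\lambda^{-3}),
\]
and then applies Lemma~\ref{Willmore energy sphere} to the zeroth-order term $\int_{S_{\xi,\lambda}}H^2\,\text{d}\mu$ (which is where the Schwarzschild/$\sigma$ split and the Pohozaev identity appear). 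Your worry that the constant part $-2$ (resp.\ $-2\,|\xi|^{-1}$) of $u_{\xi,\lambda}$ invalidates this naive Taylor expansion is not correct at the $O(\lambda^{-1})$ precision required for $G_\lambda$: with $u=O(1)$ on a sphere of radius $\lambda$ and the metric $\lambda^{-1}$-close to flat, the third-order error is $O(\lambda^{-3})$ in $\int H^2$, which becomes $O(\lambda^{-1})$ after the $\lambda^2$-scaling in $F_\lambda$. The sphere-shifting trick you describe \emph{is} used in the paper, but only in the far-outlying refinement (Lemma~\ref{far outlying estimates} and Lemma~\ref{far-outlying expansion}) where one must also track inverse powers of $|\xi|$; moreover, note that the shift only removes the $\Lambda_0$-part of $u$, leaving the $l\geq2$ harmonics still of size $O(1)$, so it would not resolve the concern you raise even if that concern were real.

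Two further points you omit or misstate. First, you do not account for the Lagrange multiplier $\kappa=\kappa_{\xi,\lambda}$. The term $-2\int W\,u\,\text{d}\mu$ is handled by substituting the approximate constrained Willmore equation from Lemma~\ref{u, lambda expansions}, which leaves a residual $\kappa$-contribution $2\,\lambda^{-1}\,\kappa\int u\,\text{d}\bar\mu=-64\,\pi\,\lambda^{-2}+O(\lambda^{-3})$ on-center and $O(\lambda^{-3})$ outlying; this is one source of the constant $64\,\pi$ in the on-center formula, which is \emph{not}, contrary to your description, the leftover from the $64\,\pi\,\lambda^{-1}$ offset in $F_\lambda$ (that offset cancels exactly against the $-64\,\pi\,\lambda^{-1}$ in $\int_{S_{\xi,\lambda}}H^2\,\text{d}\mu$; the $64\,\pi$ constant comes entirely from the $O(\lambda^{-2})$-terms). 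Second, the closed-form summation is not carried out via antiderivatives of the Legendre generating function: one first applies the orthogonality $\int_{S_1(0)}P_{l_1}P_{l_2}\,\text{d}\bar\mu=\tfrac{4\pi}{2l+1}\,\delta_{l_1l_2}$ to reduce to a scalar power series in $|\xi|^2$ with rational coefficients in $l$, and then sums that scalar series by partial fractions together with the logarithm and geometric series of Lemma~\ref{log identities}.
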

\begin{proof}
		Using Lemma \ref{variation lemma}, we compute
	\begin{align*}
	\int_{\Sigma_{\xi,\lambda}} H(\Sigma_{\xi,\lambda})^2\,\mathrm{d}\mu=&\,\int_{{S}_{\xi,\lambda}}H^2\,\mathrm{d}\mu-2\int_{{S}_{\xi,\lambda}} {W}\,u\,\mathrm{d}\mu	+\int_{{S}_{\xi,\lambda}}\left[ u\,Qu-{W}\,H\,u^2\right]\mathrm{d}\mu+O(\lambda^{-3})
	\\=&\,\int_{{S}_{\xi,\lambda}}H^2\,\mathrm{d}\mu-2\int_{{S}_{\xi,\lambda}} {W}\,u\,\mathrm{d}\mu
	+\int_{{S}_{\xi,\lambda}} u\,Qu\,\mathrm{d}\mu+O(\lambda ^{-3}),
	\end{align*}
where we have abbreviated $H=H(S_{\xi,\lambda})$, $W=W({S_{\xi,\lambda}})$, $\Delta=\Delta_{S_{\xi,\lambda}}$, and $Q=Q_{S_{\xi,\lambda}}$.
Using (\ref{Q formula}) and (\ref{stability operator}), we find that
	$$
	\int_{{S}_{\xi,\lambda}}u\, Qu\,\mathrm{d}\mu=\int_{{S}_{\xi,\lambda}}\left[(\bar\Delta u)^2+2\,\lambda^{-2}\,u\,\bar\Delta u\right]\mathrm{d}\bar\mu+O(\lambda^{-3}).
	$$
	Conversely, Lemma \ref{u, lambda expansions} and  (\ref{Willmore sphere vs u}) imply
	$$
\int_{{S}_{\xi,\lambda}} {W}\,u\,\mathrm{d}\mu=	
\int_{{S}_{\xi,\lambda}}\left[ (\bar\Delta u)^2+2 \,\lambda ^{-2}\,u\,\bar\Delta u-2\,\lambda^{-1}\,\kappa\, u\right]\mathrm{d}\bar\mu +O(\lambda^{-3}). 
	$$
Using Lemma \ref{u, lambda expansions} again, we obtain  
\begin{align*}
2\,\lambda^{-1}\,\kappa\int_{{S}_{\xi,\lambda}}   u\,\mathrm{d}\bar\mu=
\begin{dcases}
&-64\,\pi\, \lambda^{-2}+{O}(\lambda^{-3}) \quad \text{ if } |\xi|<1-\delta, \\
&\,{O}(\lambda^{-3}) \hspace{2.45cm}\text{ if } |\xi|>1+\delta.
\end{dcases}
\end{align*}
 Using Corollary \ref{Willmore eigenvalues}, Lemma \ref{u, lambda expansions}, (\ref{legendre polynomials are spherical harmonics}), and Lemma \ref{Legendre identities} in the case where $|\xi|<1-\delta$, we compute
\begin{align*}
&-\int_{{S}_{\xi,\lambda}}\left[ (\bar\Delta u )^2+2\,\lambda^{-2}\,u\,\bar\Delta u\right]\mathrm{d}\bar\mu
\\&\qquad=\,-16\,\lambda^{-4}\sum_{l=2}^\infty  \frac{(l-1)\,(l+1)\,(l+2)}{l}\,|\xi|^{2l}\int_{{S}_{\xi,\lambda}} P_l^2\left(-|\xi|^{-1}\,\bar{g}(y, \xi)\right)\mathrm{d}\bar \mu
\\&\qquad=\,-64\,\pi\, \lambda^{-2}\sum_{l=2}^\infty \frac{(l-1)\,(l+1)\,(l+2)}{l\,(2\,l+1)}\,|\xi|^{2l}
\\&\qquad=\,-16\,\pi\,\lambda^{-2}\bigg[\frac{9}{2}\,|\xi|^{-1}\log\frac{1+|\xi|}{1-|\xi|}+8\,\log(1-|\xi|^2)+\frac{23\,|\xi|^2-12\,|\xi|^4-9}{(1-|\xi|^2)^2}\bigg].
\end{align*}
Similarly, if $|\xi|>1+\delta$, we obtain	  
\begin{align*}
-\int_{{S}_{\xi,\lambda}} \left[(\bar \Delta u )^2+2\,\lambda^{-2}\,u\,\bar\Delta u \right]\mathrm{d}\bar\mu
=&\,-64\,\pi\,\lambda^{-2} \sum_{l=2}^\infty \frac{(l-1)\,l\,(l+2)}{(l+1)\,(2\,l+1)}\,|\xi|^{-2l-2}.
\\
=&\,-16\,\pi\,\lambda^{-2}\bigg[\frac{9}{2}\,|\xi|^{-1}\log\frac{|\xi|+1}{|\xi|-1}+8\,\log(1-|\xi|^{-2})+\frac{3-|\xi|^2}{(|\xi|^2-1)^2}
\bigg].
\end{align*}
We have used Lemma \ref{log identities} in the last equation. The assertions follow from this and  Lemma \ref{Willmore energy sphere}.
\end{proof}
In order to proceed, we note the following technical result.
\begin{lem}
		\label{C2  estimate}
 There are  $c>0$ and $\lambda_0>1$ which only depend on $g$ and $\delta\in(0,1/2)$ such that 
	$$
	||G_\lambda||_{C^{3}(\{\xi\in\mathbb{R}^3:|\xi|\leq1-\delta\text{ or }|\xi|\geq 1+ \delta\})}< c
	$$
for every $\lambda> \lambda_0$.
\end{lem}
\begin{proof}
 This estimate follows from a straightforward computation using the regularity properties of the implicit function theorem, \eqref{s2 asymptotic to Schwarzschild}, and the variational formulae for the Willmore energy in the same way as in the proof of  Proposition 6 in \cite{brendle2014large}.
\end{proof}
We now investigate the qualitative behavior of $G_\lambda$ for  large values of $\lambda$. In this analysis, the assumptions
\begin{align}
\sum_{i=1}^3x^i\,\partial_i (|x|^2\,R)&\leq o(|x|^{-2}), \label{sRT}\\
R(x)-R(-x)&=o(|x|^{-4}), \label{sRT2}
\end{align}
are used. Note that (\ref{sRT}) integrates to \begin{align} R\geq-o(|x|^{-4}).\label{sRT3}\end{align}
We remark that the weaker decay \begin{align} R=O(|x|^{-4}) 
\label{R decay}\end{align} is implied by \eqref{s2 asymptotic to Schwarzschild}.
\begin{lem} Suppose that the scalar curvature $R$ of $(\mathbb{R}^3,g)$ satisfies (\ref{sRT}) and (\ref{sRT2}). \label{G der sRT}
	There exist $\tau>0$, $\delta_0\in(0,1/2)$, and $\lambda_0>1$ depending only on $g$ such that, provided $\lambda>\lambda_0$,
	$$ \bar D^2
	G_\lambda\geq \tau\,\operatorname{Id}$$ holds  on $ \{\xi\in\mathbb{R}^3:|\xi|<\delta_0\}$. Moreover, given $\delta\in(0,1/2)$ and $\delta_1\in(0,1-\delta)$, there is $\lambda_1>\lambda_0$ such that  $G_\lambda$ is strictly increasing in radial directions on $\{\xi\in\mathbb{R}^3:\delta_1<|\xi|<1-\delta\}$ provided  $\lambda>\lambda_1$. 
\end{lem}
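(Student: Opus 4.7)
Write $G_\lambda(\xi) = \Phi(\xi) + \Psi_\lambda(\xi) + \mathcal{E}_\lambda(\xi)$ following Lemma~\ref{G expansion}, where
\[
\Phi(\xi) = 64\,\pi + \frac{32\,\pi}{1-|\xi|^2} - 48\,\pi\,|\xi|^{-1}\log\frac{1+|\xi|}{1-|\xi|} - 128\,\pi\log(1-|\xi|^2)
\]
is the rotationally symmetric model, $\Psi_\lambda(\xi) = 2\,\lambda\int_{\mathbb{R}^3\setminus B_\lambda(\lambda\,\xi)} R\,\text{d}\bar{v}$ encodes the scalar curvature, and $\mathcal{E}_\lambda = O(\lambda^{-1})$ pointwise. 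Combined with the uniform $C^3$-bound on $G_\lambda$ from Lemma~\ref{C2 estimate} and interpolation, $\mathcal{E}_\lambda = o(1)$ in $C^2$ uniformly on compact subsets of $\{|\xi|<1-\delta\}$.

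I analyze $\Phi$ by Taylor expansion. Using $(1-r^2)^{-1}=1+r^2+O(r^4)$, $r^{-1}\log\frac{1+r}{1-r}=2+\tfrac{2}{3}r^2+O(r^4)$, and $\log(1-r^2)=-r^2+O(r^4)$, the constant and linear contributions cancel and I obtain $\Phi(\xi) = 128\,\pi\,|\xi|^2+O(|\xi|^4)$. In particular, $\bar D^2\Phi(0) = 256\,\pi\operatorname{Id}$, so by continuity $\bar D^2\Phi \geq 200\,\pi\operatorname{Id}$ on some ball $\{|\xi|<\delta_0\}$. A direct computation of $\partial_r\Phi$ from the explicit formula also shows $\partial_r\Phi(r) > 0$ on $(0,1)$, hence a positive lower bound $c_0>0$ holds by compactness on the closed annulus $\{\delta_1\leq|\xi|\leq 1-\delta\}$.

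For $\Psi_\lambda$, I decompose $R = R_e + R_o$ into its even and odd parts in $x$. By (\ref{sRT2}), $|R_o| = o(|x|^{-4})$; combined with parity cancellation on spheres centred at the origin, this shows that $\Psi_\lambda^o$ together with its first and second $\xi$-derivatives at $\xi=0$ is $o(1)$ as $\lambda\to\infty$. Since $\Psi_\lambda^e$ is itself even in $\xi$, $\bar D\Psi_\lambda^e(0) = 0$ exactly. Averaging (\ref{sRT}) at $x$ and $-x$ shows that $R_e$ inherits (\ref{sRT}) and, via (\ref{sRT3}), the bound $\partial_r R_e \leq o(|x|^{-5})$. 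Applying the divergence theorem,
\[
\partial^2_{\xi^i \xi^j}\Psi_\lambda^e(0) = -2\,\lambda^2\int_{|x|=\lambda}\partial_j R_e(x)\, x^i\,\text{d}\bar\mu,
\]
and a spherical-harmonic analysis of this symmetric matrix using the derived derivative bound yields $\bar D^2\Psi_\lambda(\xi) \geq -\tau_1\operatorname{Id}$, with $\tau_1$ arbitrarily small for $\lambda$ large, uniformly on $\{|\xi|<\delta_0\}$. Similarly, the divergence-theorem identity
\[
\partial_r\Psi_\lambda(\xi) = -2\,\lambda\int_{S_{\xi,\lambda}} R(x)\,(x-\lambda\,\xi)\cdot\hat\xi\,\text{d}\bar\mu
\]
together with a Pohozaev-type integration-by-parts using (\ref{sRT}) yields $|\partial_r\Psi_\lambda(\xi)| = o(1)$ uniformly on the annulus. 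Combining everything, $\bar D^2 G_\lambda \geq \tau\operatorname{Id}$ on $\{|\xi|<\delta_0\}$ for a suitable $\tau > 0$ and $\partial_r G_\lambda \geq c_0/2 > 0$ on $\{\delta_1<|\xi|<1-\delta\}$, provided $\lambda_0$ and $\lambda_1$ are chosen large enough.

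The main obstacle is controlling the full Hessian $\bar D^2\Psi_\lambda(0)$: since (\ref{sRT}) alone only gives a one-sided bound on the radial derivative of $|x|^2 R$, the off-diagonal entries of the boundary-integral matrix on $\{|x|=\lambda\}$ must be handled by exploiting the parity structure from (\ref{sRT2}) in a spherical-harmonic decomposition, and it is precisely at this step that the two conditions (\ref{sRT}) and (\ref{sRT2}) combine crucially.
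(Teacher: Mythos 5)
There is a genuine gap, and it is precisely at the point you flag as ``the main obstacle.'' Your formula
\[
\partial^2_{\xi^i\xi^j}\Psi^e_\lambda(0)=-2\,\lambda^2\int_{|x|=\lambda}\partial_j R_e(x)\,x^i\,\text{d}\bar\mu
\]
contains \emph{tangential} derivatives of $R$, and neither (\ref{sRT}) nor (\ref{sRT2}) gives any pointwise control of those: (\ref{sRT}) constrains only the radial derivative (and one-sidedly), while (\ref{sRT2}) is a zeroth-order parity statement. A ``spherical-harmonic analysis exploiting the parity structure'' cannot extract a lower bound on this matrix, because a trace bound (which \emph{does} follow from the one-sided radial-derivative estimate) says nothing about individual eigenvalues, and there is no a priori reason the off-diagonal part is small. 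The missing idea, and the one the paper uses, is to integrate by parts on the sphere to eliminate the tangential derivatives: for $|a|=1$ one computes
\[
(\bar D^2 G_{\lambda,2})\big|_\xi(a,a)=-2\,\lambda^2\int_{S_{\xi,\lambda}}\Big(\lambda\,\bar g(a,\bar\nu)^2\,\bar D_{\bar\nu}R+3\,\bar g(a,\bar\nu)^2 R-R\Big)\,\text{d}\bar\mu,
\]
where the tangential part of $a^j\partial_j R$ has been converted into the $3\,\bar g(a,\bar\nu)^2 R - R$ terms via $\operatorname{div}_{S}(\bar g(a,\bar\nu)\,a^T)=\lambda^{-1}(1-3\,\bar g(a,\bar\nu)^2)$. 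Now (\ref{sRT}) gives $-\lambda\,\bar D_{\bar\nu}R\geq 2R-o(\lambda^{-4})$ at $\xi=0$, and combining with (\ref{sRT3}) the integrand is pointwise $\geq -o(\lambda^{-4})$ since $1-\bar g(a,\bar\nu)^2\geq 0$. The parity condition (\ref{sRT2}) plays no role in the Hessian estimate at $\xi=0$; your even/odd decomposition of $R$ is a detour that does not help.

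A second issue is your claim of a two-sided estimate $|\partial_r\Psi_\lambda|=o(1)$ on the annulus. This is both stronger than needed and in general false under (\ref{sRT}), (\ref{sRT2}): the counterexamples of Theorem \ref{counterexample thm 1} are built exactly by making this radial derivative large. Only the one-sided bound $|\xi|^{-1}\xi^i\,\partial_i G_{\lambda,2}\geq -o(1)$ is available, and since $\partial_r G_1>0$ is bounded below on compact annuli, that suffices. The paper obtains the one-sided bound not by a Pohozaev identity but by a reflection-and-rescaling argument: it compares $R$ at points on $S_-$ with $R$ at the antipodally reflected points on $-S_+$ along rays through the origin, using (\ref{sRT2}) to replace $R(x)$ by $R(-x)$ up to $o(|x|^{-4})$, and then (\ref{sRT}) to compare $R$ at the two points on each ray, together with (\ref{sRT3}) to discard the remaining region. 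That combination of the parity and monotonicity hypotheses along rays is the content you would need to supply, and a ``Pohozaev-type integration by parts'' is not a substitute. Finally, you assert uniformity of the Hessian lower bound on $\{|\xi|<\delta_0\}$ but only justify it at $\xi=0$; the paper closes this by using the $C^4$-decay to propagate the $\xi=0$ estimate with an error $-(o(1)+c\,|\xi|)\operatorname{Id}$, which beats the strict convexity of $G_1$ after shrinking $\delta_0$.
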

\begin{proof}  We write 
	\begin{align}
	G_\lambda=G_{1}+G_{\lambda,2}+O(\lambda^{-1})
	\label{G decomposition}
	\end{align}
	 where
	\begin{align}
	G_{1}(\xi)=64\,\pi+\frac{32\,\pi}{1-|\xi|^2}-48\,\pi\,|\xi|^{-1}\log\frac{1+|\xi|}{1-|\xi|}-128\,\pi\log(1-|\xi|^2) \label{G1}
	\end{align}
	and
	$$
	G_{\lambda,2}(\xi)=2\,\lambda\int_{\mathbb{R}^3\setminus{B_{\lambda}(\lambda\,\xi)}}R\,\mathrm{d}\bar{v}.
	$$
The $C^4$-decay of the metric implies that the family of functions $\{G_{2,\lambda}:\lambda>\lambda_0\}$ is uniformly bounded in $C^{3}(\{\xi\in\mathbb{R}^3:|\xi|\leq1-\delta\})$, provided $\lambda_0>1$ is sufficiently large. Hence, by Lemma \ref{C2  estimate} and interpolation,  the error term in (\ref{G decomposition}) converges to $0$ in $C^2(\{\xi\in\mathbb{R}^3:|\xi|\leq 1- \delta\})$. \\ \indent  
We compute
$$
\sum_{i=1}^3|\xi|^{-1}\,\xi^i\,(\partial_i  G_{\lambda,2})(\xi)=-2\,\lambda^2\int_{S_{\xi,\lambda}}|\xi|^{-1}\,\bar{g}(\xi,\bar\nu)\,R\,\mathrm{d}\bar\mu.
$$
For ease of notation, we will assume that $\xi=(0,\,0,\,\xi^3)$ and $\xi^3>0$. Consider the subsets $$S_+=\{x\in S_{\xi,\lambda}:\bar g(x, e_3)\leq 0\}, \quad S_-=\{x\in S_{\xi,\lambda}:\bar{g}(\xi,\bar \nu(x))\geq 0\}, \quad -S_+=\{-x:x\in S_+\}.$$ 
\begin{figure}\centering
	\includegraphics[width=0.4\linewidth]{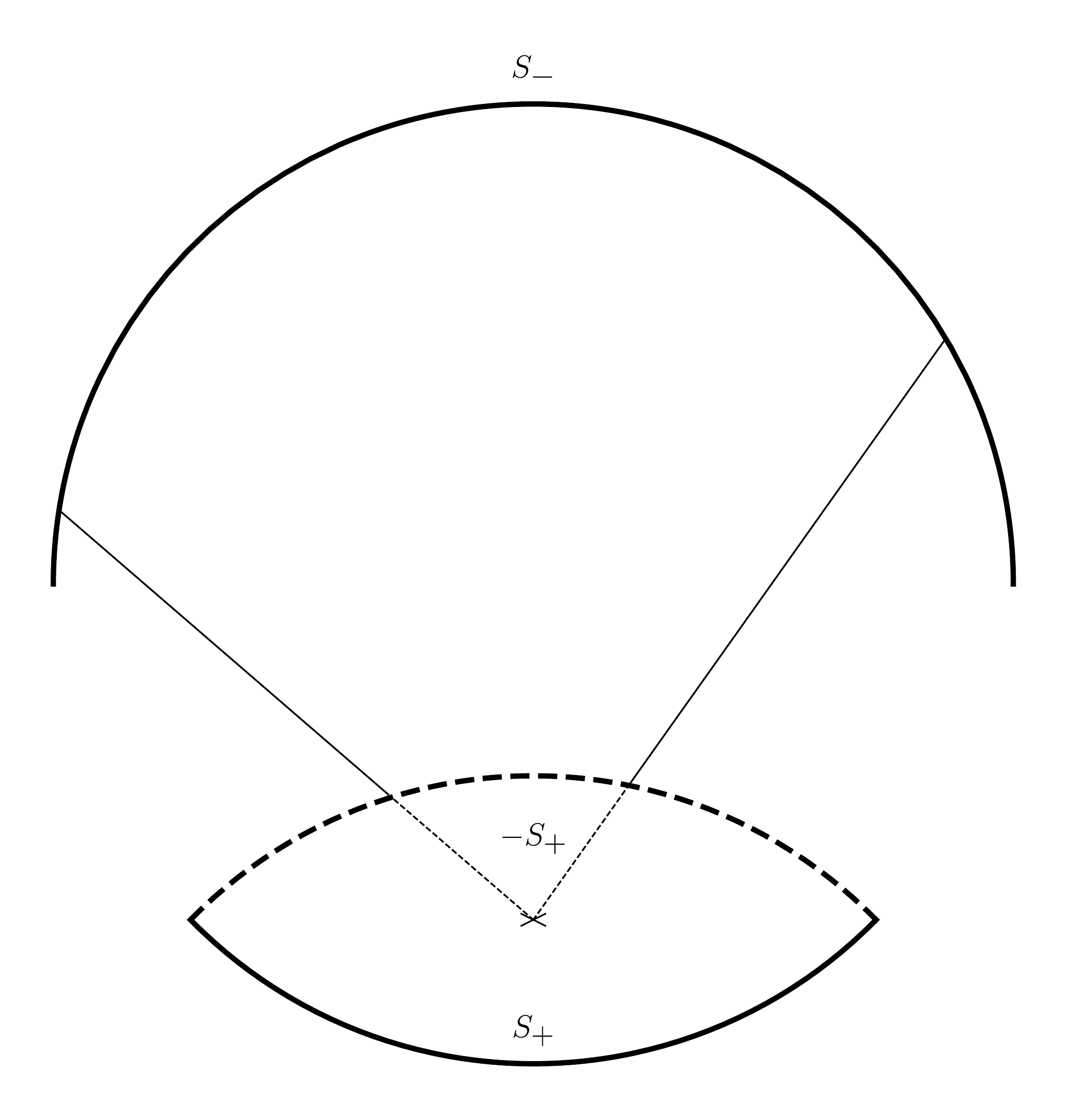}
	\caption{An illustration of the proof of Lemma \ref{G der sRT}. The scalar curvature is compared along the solid black lines connecting $S_-$ and $-S_+$. The number $t$ is given by the sum of one and the quotient of the solid black line and the dashed black line.  The cross marks the origin in the asymptotically flat chart.  }
	\label{figure reflection}
\end{figure}
Using (\ref{sRT2}) and (\ref{sRT3}), we obtain
$$
-2\,\lambda^2\int_{S_{\xi,\lambda}}|\xi|^{-1}\,\bar{g}(\xi,\bar\nu)\,R\,\mathrm{d}\bar\mu \geq 2\,\lambda^2\int_{-S_+}|\xi|^{-1}\,\bar{g}(\xi,\bar\nu)\,R\,\mathrm{d}\bar\mu-2\,\lambda^2\int_{S_-}|\xi|^{-1}\,\bar{g}(\xi,\bar\nu)\,R\,\mathrm{d}\bar\mu-o(1).
$$
We parametrize  almost all of $S_-$ via
$$
\Psi:(0,\pi/2)\times(0,2\pi)\to S_-\qquad \text{ given by } \qquad \Psi(\zeta,\,\varphi)= \lambda\,(\sin\zeta\,\sin\varphi,\,\sin\zeta\,\cos\varphi,\,\cos\zeta+\xi_3).
$$
Likewise, we parametrize almost all of $-S_+$ via
$$
(0,\arccos(\xi^3))\times(0,2\,\pi)\to-S_+, \qquad  (\theta,\,\varphi)\mapsto \lambda\,(\sin\theta\,\sin\varphi,\,\sin\theta\,\cos\varphi,\,\cos\theta-\xi_3).
$$
As shown in Figure \ref{figure reflection}, given $\zeta\in(0,\pi)$, there exists a unique angle $\theta=\theta(\zeta)\in(0,\arccos(\xi_3))$ with $\theta< \zeta$ and a number $t=t(\zeta)>1$ such that
$$
t\,(\sin\theta\,\sin\varphi,\,\sin\theta\,\cos\varphi,\,\cos\theta-\xi_3)=(\sin\zeta\,\sin\varphi,\,\sin\zeta\,\cos\varphi,\,\cos\zeta+\xi_3).
$$
Moreover,
$$
t=\frac{\sin\zeta}{\sin\theta}.
$$
Since $0< \theta< \zeta <\pi/2$, we have $\tan\zeta>\tan\theta$. It follows that $t^{-1}$ is increasing on $(0,\pi/2)$. Consequently, $-\log t$ is also increasing on $(0,\pi/2)$ and thus
\begin{align} \label{change of variables}  
\dot{\theta}\,\sin\theta\,\cos\theta\geq t^{-2}\,\sin\zeta\,\cos\zeta.
\end{align} 
Performing a change of variables and using \eqref{change of variables} and \eqref{sRT3}, we obtain
\begin{align*}
&2\,\lambda^2\int_{-S_+}|\xi|^{-1}\,\bar{g}(\xi,\bar\nu)\,R\,\mathrm{d}\bar\mu-2\,\lambda^2\int_{S_-}|\xi|^{-1}\,\bar{g}(\xi,\bar\nu)\,R\,\mathrm{d}\bar\mu
\\&\qquad\geq2\,\lambda^4 \int_0^{2\pi}\int_0^{\pi/2} \left[t^{-2}\,R(t^{-1}\, \Psi(\zeta,\,\varphi))-R( \Psi(\zeta,\,\varphi))\right] \sin\zeta\,\cos\zeta\,\mathrm{d}\zeta\, \mathrm{d}\varphi-o(1).
\end{align*}
By (\ref{sRT}),
$$
\lambda^4\,[t^{-2}\,R(t^{-1}\, \Psi(\zeta,\,\varphi))-R( \Psi(\zeta,\,\varphi))]\geq-o(1).
$$
In particular,
$$
\sum_{i=1}^3|\xi|^{-1}\,\xi^i\,(\partial_i G_{\lambda,2})(\xi)\geq -o(1).
$$
Conversely, it is elementary to check that the function $G_{1}$ defined in (\ref{G1})
is strictly increasing in radial directions on $\{\xi\in\mathbb{R}^3:0<|\xi|<1\}$. Given $\delta_1\in(0,1-\delta)$, it follows that  $G_\lambda$ is strictly increasing in radial directions on $ \{\xi\in\mathbb{R}^3:\delta_1<|\xi|<1-\delta\}$, provided $\lambda>1$ is sufficiently large.\\ \indent
 It remains to show that $G_\lambda$ is strictly convex near the origin. Again, it is elementary to check that $G_{1}$
is strictly convex on $\{\xi\in\mathbb{R}^3:|\xi|<1\}$. Moreover, given $a\in\mathbb{R}^3$  with $|a|=1$, we compute
\begin{align*}
(\bar D^2G_{\lambda,2})|_\xi(a,\,a)=-2\,\lambda^2 \int_{S_{\xi,\lambda}}\bigg( \lambda\, \bar{g}(a,\bar \nu)^2\,\bar D_{\bar\nu} R +3\,\bar g(a,\bar\nu)^2\, R-R\bigg)\,\mathrm{d}\bar\mu.
\end{align*}
If $\xi=0$, the growth condition (\ref{sRT})  implies that $$-\lambda \,\bar D_{\bar\nu} R\geq 2\,R-o(\lambda^{-4}).$$ Combined  with  (\ref{sRT3}), this gives  
$$
(\bar D^2G_{\lambda,2})|_{(0,\,0,\,0)}\geq-o(1)\, \operatorname{Id}.
$$
Using \eqref{s2 asymptotic to Schwarzschild}, we conclude that there are  $c>0$ and $\delta_0\in(0,1-\delta)$, both independent of $\lambda$, such that
$$
(\bar D^2G_{\lambda,2})|_{\xi}\geq -(o(1)+c\,|\xi|) \operatorname{Id}
$$ 
for every $\xi\in\mathbb{R}^3$ with $|\xi|< \delta_0$, provided  $\lambda>1$ is sufficiently large. The assertions of the lemma follow.
\end{proof}
\begin{lem}
	 Suppose that the scalar curvature $R$ of $(\mathbb{R}^3,g)$ satisfies (\ref{sRT}). There is a constant $\lambda_2>\lambda_1$ depending only on $g$ and $\delta\in(0,1/2)$ such that $G_\lambda$ is strictly increasing in radial directions on $\{\xi\in\mathbb{R}^3:1+\delta<|\xi|<1+\delta^{-1}\}$, provided  $\lambda>\lambda_2$. \label{G der sRT 2}
\end{lem}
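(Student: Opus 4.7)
The plan is to follow the structure of the second half of the proof of Lemma \ref{G der sRT}, adapted to the outlying regime. By Lemma \ref{G expansion}, for $|\xi| > 1 + \delta$ one has $G_\lambda(\xi) = G_3(\xi) + G_{\lambda,4}(\xi) + O(\lambda^{-1})$ where
\begin{align*}
G_3(\xi) &= -\frac{32\,\pi}{|\xi|^2 - 1} - 48\,\pi\,|\xi|^{-1}\log\frac{|\xi| + 1}{|\xi| - 1} - 128\,\pi\log\bigl(1 - |\xi|^{-2}\bigr), \\
G_{\lambda,4}(\xi) &= -2\,\lambda\int_{B_\lambda(\lambda\,\xi)} R\,\text{d}\bar v.
\end{align*}
Lemma \ref{C2  estimate} together with the $C^4$-decay of $g$ ensures that the family $\{G_{\lambda,4}\}$ is uniformly bounded in $C^3$ on the compact annular set $A_\delta := \{\xi \in \mathbb{R}^3 : 1+\delta \leq |\xi| \leq 1 + \delta^{-1}\}$, so by interpolation the $O(\lambda^{-1})$ error converges to $0$ in $C^2(A_\delta)$.

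A direct calculation yields
$$G_3'(r) = \frac{64\,\pi\,r}{(r^2-1)^2} + 48\,\pi\,r^{-2}\log\frac{r+1}{r-1} - \frac{160\,\pi}{r\,(r^2-1)},$$
and, expanding in powers of $r^{-2}$ (and checking the boundary behaviour $G_3'(r) \to \infty$ as $r \to 1^+$ together with $G_3'(r) \sim (256\,\pi/5)\,r^{-7}$ as $r \to \infty$), one verifies that $G_3'$ is strictly positive on $(1,\infty)$. Consequently there is a constant $c_0 = c_0(\delta) > 0$ such that $|\xi|^{-1}\,\xi^i\,\partial_i G_3(\xi) \geq c_0$ on $A_\delta$.

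The technical heart of the proof is the lower bound
$$|\xi|^{-1}\,\xi^i\,\partial_i G_{\lambda,4}(\xi) \geq -o(1) \qquad \text{as } \lambda \to \infty, \text{ uniformly on } A_\delta. \qquad (\star)$$
By the first variation of area, this radial derivative equals $-2\,\lambda^2\,|\xi|^{-1}\int_{S_{\xi,\lambda}} R\,\bar g(\xi, \bar\nu)\,\text{d}\bar\mu$. Since $B_\lambda(\lambda\,\xi)$ is disjoint from a neighbourhood of the origin, no reflection argument employing an evenness hypothesis such as (\ref{even R}) is available or needed. Instead one invokes the Pohozaev identity
$$\int_{S_{\xi,\lambda}} R\,\bar g(x, \bar\nu)\,\text{d}\bar\mu = \int_{B_\lambda(\lambda\,\xi)} \biggl[\frac{x^k\,\partial_k\bigl(|x|^2\,R\bigr)}{|x|^2} + R\biggr] \text{d}\bar v,$$
together with the growth condition (\ref{sRT}), the $C^4$-decay bound $|R| = O(|x|^{-4})$, and the consequent pointwise lower bound $R \geq -o(|x|^{-4})$ obtained by integrating (\ref{sRT}) along rays. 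Using the boundary identity $\bar g(x, \bar\nu) = \lambda\,(\bar g(\xi, \bar\nu) + 1)$ on $S_{\xi,\lambda}$ one then converts the resulting estimate on $\int_S R\,\bar g(x, \bar\nu)\,\text{d}\bar\mu$ into a one-sided estimate for $\int_S R\,\bar g(\xi, \bar\nu)\,\text{d}\bar\mu$, from which $(\star)$ follows.

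Summing the two bounds, $|\xi|^{-1}\,\xi^i\,\partial_i G_\lambda(\xi) \geq c_0 - o(1) - O(\lambda^{-1}) > 0$ uniformly on $A_\delta$ for $\lambda > \lambda_2$, with $\lambda_2$ sufficiently large. The main obstacle is $(\star)$: a direct estimate using only $|R| = O(|x|^{-4})$ yields merely the coarse bound $|\xi|^{-1}\,\xi^i\,\partial_i G_{\lambda,4}(\xi) = O(1)$, which is too crude to dominate the (possibly small) monotonicity constant $c_0(\delta)$. Exploiting the one-sided growth condition (\ref{sRT}) via Pohozaev is precisely what supplies the quantitative improvement required.
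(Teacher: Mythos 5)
Your proof is correct, but it takes a genuinely different route from the paper's. The paper's proof of Lemma~\ref{G der sRT 2} is a one-line reference back to Lemma~\ref{G der sRT}: it runs the same angular comparison argument, parametrizing the two halves of $S_{\xi,\lambda}$ by radial lines from the origin (the angles $\zeta$ and $\theta$), and using (\ref{sRT}) to compare $R$ at the near and far intersection points along each ray; the only change from the on-center case is that the reflection $-S_+$ is dropped, since for an outlying sphere both halves already lie on the same side of the origin, so (\ref{even R}) is not needed. Your argument instead applies the divergence theorem (the Pohozaev identity for $R\,x$) to convert the surface integral $\int_{S_{\xi,\lambda}} R\,\bar g(x,\bar\nu)\,\text{d}\bar\mu$ into the bulk integral $\int_{B_\lambda(\lambda\xi)}\left[|x|^{-2}\,x^k\partial_k(|x|^2 R)+R\right]\text{d}\bar v$, on which (\ref{sRT}) and its consequence (\ref{sRT3}) give a pointwise one-sided bound directly, and then uses $\bar g(x,\bar\nu)=\lambda\,(\bar g(\xi,\bar\nu)+1)$ to pass back to the radial derivative of $G_{\lambda,4}$. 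This is cleaner than the paper's radial-parametrization argument in the outlying regime and avoids the explicit change of variables, at the cost of not directly extending to the on-center case (where the ball contains the origin and the Pohozaev flux picks up an inner boundary contribution); the paper's angular comparison is chosen so that both cases are handled by a single mechanism, with the reflection as the only extra ingredient on-center. One small remark: your treatment of the positivity of $G_3'$ on $(1,\infty)$ is abbreviated, but it does close up explicitly — multiplying through by $r^2(r^2-1)^2/(16\pi)$ and expanding $\log\frac{r+1}{r-1}$ yields $48\sum_{m\geq 0}\frac{r^{-2m-1}}{(2m+1)(2m+3)(2m+5)}$, a series with all positive coefficients, so the positivity and the uniform lower bound $c_0(\delta)$ on the compact annulus are indeed justified.
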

\begin{proof}
	The argument is almost the same as the proof of Lemma \ref{G der sRT}   except that we do not need to consider the reflection $-S_+$. In particular, the assumption (\ref{sRT2}) is not required. 
\end{proof}
\begin{proof}[Proof of Theorem \ref{existence thm} ]
	First, we show that for every $\lambda>1$ sufficiently large, there exists a stable area-constrained Willmore sphere with area radius $\lambda$.
	\\ \indent To see this, we decompose
		\begin{align*}
	G_\lambda(\xi)=G_{1}(\xi)+2\,\lambda\int_{\mathbb{R}^3\setminus{B_{\lambda}(\lambda\,\xi)}}R\,\mathrm{d}\bar{v}+O(\lambda^{-1})
	\end{align*}
as in (\ref{G decomposition}).	Note that $G_1(0)=0$ and $\lim_{|\xi|\nearrow 1}G_1(\xi)=\infty$. Using (\ref{R decay}), we find
$$
2\,\lambda\int_{\mathbb{R}^3\setminus{B_{\lambda}(0)}}R\,\mathrm{d}\bar{v}=O(1).
$$
Conversely, (\ref{sRT3}) implies, as $\lambda\to\infty$,
$$
2\,\lambda\int_{\mathbb{R}^3\setminus{B_{\lambda}(\lambda\,\xi)}}R\,\mathrm{d}\bar{v}\geq -2\,\lambda \int_{\mathbb{R}^3\setminus{B_{\lambda\,(1-|\xi|)}(0)}}o(|x|^{-4})\,\mathrm{d}\bar{v}\geq -o(|1-|\xi||^{-1}).
$$	
 It follows that there is a number $z\in(1/2,1)$ such that
	$$
	G_\lambda(0)<G_\lambda(\xi)
	$$
	for every $\xi$ with $|\xi|=z$ and every sufficiently large $\lambda>1$. Consequently, there is a local minimum $\xi(\lambda)$ of $G_\lambda$ with $$|\xi(\lambda)|<z. $$  Lemma \ref{variational to 3 dim} shows that $\Sigma(\lambda)=\Sigma_{\xi(\lambda),\lambda}$ is a stable area-constrained Willmore surface. We claim that
	 $$\xi(\lambda)=o(1).$$
	 Otherwise,  there exists  a suitable sequence $\{\lambda_j\}_{j=1}^\infty$ with $\lim_{j\to\infty}\lambda_j=\infty$ such that 
	  \begin{itemize}
	  	\item[$\circ$] $\lim_{j\to\infty}\xi({\lambda_j})= \xi_0$ with $\xi_0\in\{\xi\in\mathbb{R}^3:0<|\xi|<1\}$ and
	  	\item[$\circ$] $(\bar DG_{\lambda_j})|_{\xi(\lambda_j)}=0$.
	  \end{itemize}
  This is incompatible with Lemma \ref{G der sRT}. Lemma \ref{u, lambda expansions} implies that $\kappa=\kappa({\Sigma({\lambda})})$ is decreasing and approaches $0$ as $\lambda\to\infty$. By Proposition \ref{foliation prop}, the surfaces $\{\Sigma({\lambda}):\lambda>\lambda_0\}$ form a smooth foliation. This finishes the proof of Theorem \ref{existence thm}.
	   \end{proof}
   \begin{proof}[Proof of Theorem  \ref{uniqueness thm}]
Suppose, for a contradiction, that there is a sequence of area-constrained Willmore sphere $\{\Sigma_j\}_{j=0}^\infty$ with
	$$
	\liminf_{j\to\infty}|\Sigma_j|=\infty,\quad \limsup_{j\to\infty} \int_{\Sigma_j} |\hcirc|^2\,\mathrm{d}\mu<\epsilon_0, \quad 0<\liminf_{j\to\infty} \frac{\rho({\Sigma_j})}{\lambda({\Sigma_j})}\leq \limsup_{j\to\infty} \frac{\rho({\Sigma_j})}{\lambda({\Sigma_j})}<\infty,
	$$
	and $\Sigma_j\neq \Sigma({\lambda_j})$ where $\lambda_j=\lambda(\Sigma_j)$. For $\epsilon_0>0$ small enough, Lemma 4.2 in \cite{koerber2020area} implies the uniform, scale invariant curvature estimate
$$
||h||_{L^\infty(\Sigma_j)}= O( \lambda_j^{-1})
$$
with corresponding higher-order estimates as well as the estimate
$$
\kappa({\Sigma_j})=O(\lambda_j^{-3}).
$$ Passing to a subsequence, if necessary, the rescaled surfaces $\tilde \Sigma_j=\lambda_j^{-1}\,\Sigma_j$ converge smoothly to a Euclidean Willmore surface $\tilde \Sigma\subset \mathbb{R}^3$ satisfying
$$
\int_{\tilde \Sigma} |\hbarcirc|^2\,\mathrm{d}\bar\mu<\epsilon_0, \quad |\tilde \Sigma|=4\,\pi, \quad \frac{1}{4\,\pi}\int_{\tilde \Sigma} y\,\mathrm{d}\bar\mu=\xi_0
$$
where $|\xi_0|\neq 1$. Now, the gap theorem \cite[Theorem 2.7]{kuwertsmall} for Euclidean Willmore surfaces due to E.~Kuwert and R.~Schätzle implies that $\tilde \Sigma={S}_{1}(\xi_0)$. It follows that $\Sigma_j$ is a perturbation of a coordinate sphere for large $j$. By Proposition \ref{LS proposition}, it is  captured in our Lyapunov-Schmidt reduction in the sense that  $\Sigma_j=\Sigma_{\xi_j,\lambda_j}$  where  $\xi_j$ is a critical point of $G_{\lambda_j}$ and $\lim_{j\to\infty}\xi_j= \xi_0$. If $|\xi_0|<1$, Lemma \ref{G der sRT} implies that $\xi_0=0$. However, since $G_{\lambda_j}$ is strictly convex near the origin, it follows that $\xi_j=\xi({\lambda_j})$, a contradiction. If $|\xi_0|>1$, we use Lemma \ref{G der sRT 2} instead to obtain a contradiction in a similar way.
\end{proof}
\begin{prop}
	Assumptions as in Theorem \ref{existence thm}. There is no sequence $\{\Sigma_j\}_{j=1}^\infty$ of connected, closed area-constrained Willmore surfaces   with
		\label{prop lower hawking mass bound}
		\begin{align*}
	\liminf_{j\to\infty}|\Sigma_j|=\infty,\quad \liminf_{j\to\infty} m_H(\Sigma_j)&>-\infty,\quad  \limsup_{j\to\infty}\operatorname{genus}(\Sigma_j)<\infty \\ 0<\liminf_{j\to\infty} \frac{\rho({\Sigma_j})}{\lambda({\Sigma_j})}&\leq \limsup_{j\to\infty} \frac{\rho({\Sigma_j})}{\lambda({\Sigma_j})}<\infty
	\end{align*} 
that are not part of the foliation from Theorem \ref{existence thm}.
\end{prop}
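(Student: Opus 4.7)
The plan is to reduce the claim to Theorem \ref{uniqueness thm} by showing that, for $j$ sufficiently large, each $\Sigma_j$ is a topological sphere satisfying the small-energy condition $\int_{\Sigma_j}|\hcirc|^2\,\text{d}\mu<\epsilon_0$. Suppose, for a contradiction, that such a sequence $\{\Sigma_j\}$ exists.

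First, I would translate the lower bound on the Hawking mass into an upper bound on the Willmore energy via the identity
\[
\int_{\Sigma_j} H^2\,\text{d}\mu=16\,\pi-\frac{64\,\pi^{3/2}\,m_H(\Sigma_j)}{\sqrt{|\Sigma_j|}}\leq 16\,\pi+o(1).
\]
The Gauss equation for a surface in a $3$-manifold reads $K_{\Sigma_j}=K_{\mathrm{amb}}+\tfrac{1}{4}H^2-\tfrac{1}{2}|\hcirc|^2$, where $K_{\mathrm{amb}}$ is the sectional curvature of $(M,g)$ along the tangent plane of $\Sigma_j$. Integrating and combining with Gauss--Bonnet, using $|\Sigma_j|=O(\lambda_j^2)$ together with the decay $K_{\mathrm{amb}}=O(|x|^{-3})$ coming from $C^4$-asymptotic Schwarzschild, I obtain
\[
\int_{\Sigma_j}|\hcirc|^2\,\text{d}\mu=\tfrac{1}{2}\int_{\Sigma_j}H^2\,\text{d}\mu+2\int_{\Sigma_j}K_{\mathrm{amb}}\,\text{d}\mu+8\,\pi\,(\operatorname{genus}(\Sigma_j)-1)\leq 8\,\pi\,\operatorname{genus}(\Sigma_j)+o(1).
\]
After passing to a subsequence, I may assume that $\operatorname{genus}(\Sigma_j)$ is constant equal to some integer $g_0\geq 0$.

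Next, I would rescale by $\lambda_j^{-1}$: the surfaces $\tilde\Sigma_j=\lambda_j^{-1}\,\Sigma_j$ have area $4\,\pi$, lie in a fixed annulus of $\mathbb{R}^3\setminus\{0\}$ (by comparability of $\rho$ and $\lambda$), satisfy area-constrained Willmore equations in the rescaled metric $\tilde g_j=\lambda_j^{-2}\,g$, and enjoy a uniform bound on $\int|\tilde h|^2\,\text{d}\tilde\mu$. Since $\tilde g_j\to\bar g$ in $C^k_{\mathrm{loc}}$ on the relevant annulus, L.~Simon's compactness theorem for $W^{2,2}$ immersions with bounded area and bounded $L^2$ second fundamental form produces, after a further extraction, a weak limit $\tilde\Sigma_\infty\subset\mathbb{R}^3$ that is a closed Willmore surface with $\int_{\tilde\Sigma_\infty} H^2\,\text{d}\bar\mu\leq 16\,\pi$. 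The Li--Yau inequality gives the reverse bound $\int H^2\,\text{d}\bar\mu\geq 16\,\pi$ for closed embedded surfaces in $\mathbb{R}^3$, so equality holds and $\tilde\Sigma_\infty$ must be a round sphere of area $4\,\pi$. The connectedness of each $\Sigma_j$ and the tight energy budget (any non-trivial bubble would also carry Willmore energy at least $16\,\pi$) preclude loss of compactness, so the convergence is smooth and topology-preserving, whence $g_0=0$.

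With $g_0=0$, the estimate above sharpens to $\int_{\Sigma_j}|\hcirc|^2\,\text{d}\mu=o(1)$, so for $j$ large enough $\Sigma_j$ is a sphere satisfying (\ref{small energy assumption}). Theorem \ref{uniqueness thm} then forces $\Sigma_j$ to belong to the foliation from Theorem \ref{existence thm}, contradicting the assumption that it does not. The hard part is the compactness step: because the small-energy hypothesis is not available a priori, the pointwise curvature estimate of \cite[Lemma 4.2]{koerber2020area} used in the proof of Theorem \ref{uniqueness thm} is unavailable, and one must instead rely on the softer weak-convergence theory for Willmore immersions due to Simon and Kuwert--Sch\"atzle, together with a careful energy accounting to rule out bubbling and genus collapse in the Euclidean limit.
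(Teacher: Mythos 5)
Your reduction via the Gauss equation, Gauss--Bonnet, and the Hawking-mass lower bound to the estimate $\int_{\Sigma_j}|\hcirc|^2\,\text{d}\mu\leq 8\,\pi\,\operatorname{genus}(\Sigma_j)+o(1)$ matches the paper's opening steps, which record the corresponding identities in (\ref{integrated Gauss equation 0}) and (\ref{integrated Gauss equation}). The divergence is in how you then force $\operatorname{genus}(\Sigma_j)=0$, and here your argument has a genuine gap.

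The paper does not pass to a limit for this step. After transferring the bound to the Euclidean Willmore energy (showing $16\,\pi-\int_{\Sigma_j}\bar H^2\,\text{d}\bar\mu=O(\lambda^{-1}(\Sigma_j))$), it invokes the Bauer--Kuwert theorem \cite{kuwertbauer}: for each genus $g$ the infimum of $\int\bar H^2$ over closed embedded genus-$g$ surfaces in $\mathbb{R}^3$ is attained, and since round spheres are the only closed surfaces with $\int\bar H^2=16\,\pi$, this infimum is strictly greater than $16\,\pi$ for every $g\geq1$. For the finitely many genera in play this gives a fixed gap above $16\,\pi$, which $\int_{\Sigma_j}\bar H^2\to 16\,\pi$ cannot reach, forcing $\operatorname{genus}(\Sigma_j)=0$ eventually. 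This is a pointwise statement about each $\Sigma_j$; no compactness of the sequence or control of a limit object is needed.

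Your route through Simon's compactness and an energy-quantization bubbling argument does not close. You rule out loss of compactness on the grounds that ``any non-trivial bubble would also carry Willmore energy at least $16\,\pi$,'' but genus degeneration need not be visible in $\int H^2$: the natural blow-up model at a collapsing handle is a complete \emph{minimal} surface such as a catenoid, which is a Willmore surface with $\int H^2=0$ while carrying $\int|\hcirc|^2=8\,\pi$. The $H^2$-budget is therefore consistent with a genus-one sequence converging as varifolds to a round sphere, the handle pinched off through a catenoidal neck, and your bound $\int|\hcirc|^2\leq 8\,\pi\,\operatorname{genus}+o(1)$ is only borderline against this scenario (for $\operatorname{genus}=1$ the catenoid exactly saturates it), so it cannot exclude it either. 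Consequently the asserted ``smooth and topology-preserving'' convergence, and with it $g_0=0$, is not justified by the energy accounting you give. This is precisely the difficulty the Bauer--Kuwert argument sidesteps by producing a quantitative lower bound for each fixed positive-genus surface directly. Once $\operatorname{genus}(\Sigma_j)=0$ is secured, your final step --- concluding $\int_{\Sigma_j}|\hcirc|^2\,\text{d}\mu=o(1)$ from (\ref{integrated Gauss equation}) and invoking Theorem \ref{uniqueness thm} --- coincides with the paper's.
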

\begin{proof}
	According to Theorem \ref{uniqueness thm}, it suffices to verify that $\Sigma_j$ is a sphere for every $j$ sufficiently large and that
	\begin{align}
	\limsup_{j\to\infty} \int_{\Sigma_j} |\hcirc|^2\,\mathrm{d}\mu=0.
	\label{toshowcoro}
	\end{align}
	Let $K$ denote the Gauss curvature of $\Sigma_j$. 
	 Using the Gauss equation in the form
	 $$
	4\, K=2\,R-4\,\operatorname{Rc}(\nu,\nu)+H^2-2\,|\hcirc|^2
	 $$
  and the Gauss-Bonnet theorem
  $$
  \int_{\Sigma_j} K\,\mathrm{d}\mu=4\,\pi\,(1-\operatorname{genus}(\Sigma_j)),
  $$
  we find that
  \begin{align}
  \int_{\Sigma_j} H^2\,\mathrm{d}\mu = 16\,\pi\,(1-\operatorname{genus}(\Sigma_j))+2\int_{\Sigma_j} |\hcirc|^2\,\mathrm{d}\mu+2\int_{\Sigma_j}\left(2\,\operatorname{Ric}(\nu,\nu)-R\right)\mathrm{d}\mu.
  	\label{integrated Gauss equation 0}
  \end{align}
  In particular,
	\begin{align}
	m_H(\Sigma_j)=\sqrt{\frac{|\Sigma_j|}{(16\,\pi)^3}}\bigg(16\,\pi\,\operatorname{genus}(\Sigma_j)-2\int_{\Sigma_j} |\hcirc|^2\,\mathrm{d}\mu-2\int_{\Sigma_j}\left(2\,\operatorname{Ric}(\nu,\nu)-R\right)\mathrm{d}\mu\bigg).
	\label{integrated Gauss equation}
	\end{align}
	The decay assumptions on $g$ imply that 
	$$
	-\int_{\Sigma_j}\left(2\,\operatorname{Ric}(\nu,\nu)-R\right) \mathrm{d}\mu =O(\lambda(\Sigma_j)^2\,\rho(\Sigma_j)^{-3})=O(\lambda(\Sigma_j)^{-1}).
	$$ 
	In particular,
	$$
	\int_{\Sigma_j} |\hcirc|^2\,\mathrm{d}\mu=O(1).
	$$
	As in Lemma \ref{perturbations}, we  compute that
	$$
	\hbarcirc=(1+|x|)^{2}\,\hcirc+O(|x|^{-2}\,|h|)+O(|x|^{-3})
	$$
	and consequently
	$$
		\int_{\Sigma_j} |\hbarcirc|^2\,\mathrm{d}\bar \mu=\int_{\Sigma_j} | \hcirc|^2\,\mathrm{d} \mu+O(\lambda(\Sigma_j)^{-2}).
	$$
	Now, using the Gauss equation for the Euclidean surface $\Sigma_j\subset\mathbb{R}^3$ and the Gauss-Bonnet theorem, we find that
	$$
	16\,\pi-\int_{\Sigma_j} \bar H^2\,\mathrm{d}\bar\mu=O(\lambda(\Sigma_j)^{-1}).
	$$
	According to the result \cite[Theorem 1.2]{kuwertbauer} of E.~Kuwert and M.~Bauer, for any fixed genus, there exists an embedded surface which attains the infimum of the Euclidean Willmore energy. Since the round spheres are the only compact surfaces with Euclidean Willmore energy equal to $4\,\pi$, it follows that $\operatorname{genus}(\Sigma_j)=0$ for $j$ large. Thus, (\ref{toshowcoro}) follows directly from (\ref{integrated Gauss equation}).
\end{proof} 
\label{section 2} 
\section{Proof of Theorem \ref{far outlying thm}}
In this section, we assume 
 that $g$ is a Riemannian metric on $\mathbb{R}^3$ such that
\begin{align} \label{far decay}
g=(1+|x|^{-1})^4\,\bar{g} +\sigma
\end{align} 
where $\sigma$ is a symmetric, covariant two-tensor with, as $x\to\infty$ for every multi-index $J$ with $|J|\leq 5$,
$$\partial_J \sigma=O(|x|^{-2-|J|}).$$
\indent 
Let $|\xi|>2$ and $\lambda>\lambda_0$ for some $\lambda_0>1$ large. As in Section \ref{section 2}, 
given $l\in\{0,\,1,\,2,\dots\}$, we use $\Lambda_l$ to denote the space of the $l$-th spherical harmonics on $S_{\xi,\lambda}=S_\lambda(\lambda\,\xi)$. Likewise, we define $\Lambda_{>2}$, $\Lambda_{>1}$, and $\Lambda_{>0}$ to be the orthogonal complements of $\Lambda_0\oplus\Lambda_1\oplus \Lambda_2$, $\Lambda_0\oplus\Lambda_1$, and $\Lambda_0$ in  $C^{4,\alpha}(S_{\xi,\lambda})$, respectively. We suppress the dependence on $\xi$ and $\lambda$ to relax the notation. \\ \indent  We recall the definition of the rescaled metric $g_{\xi,\lambda}$ in (\ref{pull back metric}). Note that
 $$||g_{\xi,\lambda}-\bar g||_\mathcal{G}=O(\lambda^{-1}\,|\xi|^{-1}).$$  Consequently, Lemma \ref{inverse function theorem lemma} leads to the following proposition.
\begin{prop}
	There are constants $\lambda_0>1$, $c>1$, and $\epsilon>0$ depending on $g$ such that for every $|\xi|>2$ and $\lambda>\lambda_0$
	there exist $u_{\xi,\lambda}\in C^{\infty}(S_{\xi,\lambda})$ and $\kappa_{\xi,\lambda}\in\mathbb{R}$ such that the following hold. The surface $\Sigma_{\xi,\lambda}=\Sigma_{\xi,\lambda}(u_{\xi,\lambda})$ has the  properties
	\begin{itemize}
		\item[$\circ$] $W(\Sigma_{\xi,\lambda})+\kappa_{\xi,\lambda}\,H(\Sigma_{\xi,\lambda})\in \Lambda_1$,
		\item[$\circ$] $|\Sigma_{\xi,\lambda}|=4\,\pi\, \lambda^2$.
	\end{itemize} There holds $u_{\xi,\lambda}\perp \Lambda_1 $ and  	\label{LS proposition far outlying}
	\begin{equation}
	\begin{aligned}
	|u_{\xi,\lambda}|+\lambda\,|\bar \nabla u_{\xi,\lambda}|+\lambda^2\,|\bar \nabla^2 u_{\xi,\lambda}|+\lambda^3\,|\bar\nabla^3 u_{\xi,\lambda}|+\lambda^4\,|\bar\nabla^4 u_{\xi,\lambda}|&< c\,|\xi|^{-1},
	\\\lambda^3\,|\kappa_{\xi,\lambda}|&<c\,|\xi|^{-1}.
	\end{aligned}
		\label{far-outlying initial estimate}
\end{equation}
Moreover, if $\kappa\in\mathbb{R}$ and  $\Sigma_{\xi,\lambda}(u)$ with $u\perp\Lambda_1(S_{\xi,\lambda})$ are such that
\begin{itemize}
	\item[$\circ$] $\Delta H+(|\hcirc|^2+\operatorname{Ric}(\nu,\nu)+\kappa)\,H\in \Lambda_1,$
	\item[$\circ$] $|\Sigma_{\xi,\lambda}(u)|=4\,\pi\, \lambda^2,$
\end{itemize}
and
\begin{align*}
|u|+\lambda\,|\bar\nabla u|+\lambda^2\,|\bar\nabla^2 u|+\lambda^3\,|\bar\nabla^3 u|+\lambda^4\,|\bar\nabla^4 u|&<\epsilon\,\lambda,\\
\lambda^3\,|\kappa|&<\epsilon\,\lambda,
\end{align*}
then $u=u_{\xi,\lambda}$ and $\kappa=\kappa_{\xi,\lambda}$.
\end{prop}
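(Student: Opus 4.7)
The plan is to deduce this proposition directly from Lemma \ref{inverse function theorem lemma}, applied to the rescaled metric $g_{\xi,\lambda}$, and then to rescale back to $S_{\xi,\lambda}$ while tracking the extra $|\xi|^{-1}$ gain. The key input is the bound $\|g_{\xi,\lambda} - \bar g\|_\mathcal{G} = O(\lambda^{-1}|\xi|^{-1})$ recorded immediately before the statement, which is strictly better than the $O(\lambda^{-1})$ bound used in the proof of Proposition \ref{LS proposition}. This additional decay is precisely what will propagate into the $|\xi|^{-1}$ factors in the estimates.

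First I would verify that the smooth maps $u\colon\mathcal{U}\to\mathcal{V}$ and $\kappa\colon\mathcal{U}\to I$ furnished by Lemma \ref{inverse function theorem lemma} satisfy $u(\bar g) = 0$ and $\kappa(\bar g) = 0$. Indeed, the Euclidean unit sphere has area $4\,\pi$, constant $H \equiv 2$, $\operatorname{Ric}\equiv 0$, and $\hcirc\equiv 0$, so the expression in (\ref{willmore in lambda1}) reduces to $2\,\kappa$, which lies in $\Lambda_1(S_1(0))$ only for $\kappa = 0$. Then, for every $|\xi| > 2$ and every $\lambda$ exceeding a threshold $\lambda_0$ depending only on $(M,g)$, the metric $g_{\xi,\lambda}$ lies in $\mathcal{U}$, and the smoothness of $u$ and $\kappa$ combined with the vanishing at $\bar g$ gives
$$\|u(g_{\xi,\lambda})\|_{C^{4,\alpha}(S_1(0))} + |\kappa(g_{\xi,\lambda})| \leq c\,\|g_{\xi,\lambda} - \bar g\|_\mathcal{G} = O(\lambda^{-1}|\xi|^{-1}).$$

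Next I would unwind the rescaling. Writing $\tilde u$ for the perturbation produced on $S_1(0)$, the graph map $\Phi^{\tilde u}$ pushes forward under $\Theta_{\xi,\lambda}$ to the graph over $S_{\xi,\lambda}$ associated with $u_{\xi,\lambda}(x) = \lambda\,\tilde u(\lambda^{-1}(x - \lambda\,\xi))$. Under this identification, the $k$-th Euclidean derivative of $u_{\xi,\lambda}$ in $x$ carries a factor $\lambda^{1-k}$ relative to the corresponding derivative of $\tilde u$ in $y$, so a uniform $C^{4,\alpha}$-bound of order $\lambda^{-1}|\xi|^{-1}$ on $\tilde u$ translates into each of the five weighted bounds in (\ref{far-outlying initial estimate}). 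Because the Lagrange multiplier has dimensions of curvature squared, it rescales as $\kappa_{\xi,\lambda} = \lambda^{-2}\,\tilde\kappa$, yielding $\lambda^3|\kappa_{\xi,\lambda}| = O(|\xi|^{-1})$. The orthogonality $u_{\xi,\lambda} \perp \Lambda_1(S_{\xi,\lambda})$ is preserved by the pullback, since first spherical harmonics are mapped to first spherical harmonics and the relevant $L^2$-products scale as an overall power of $\lambda$.

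Finally, the uniqueness clause follows directly from the uniqueness part of Lemma \ref{inverse function theorem lemma}: the stated smallness hypotheses on any competing $(u,\kappa)$ rescale back to place $(\tilde u,\tilde\kappa)$ inside $\mathcal{V}\times I$, so this pair must coincide with $(u(g_{\xi,\lambda}),\kappa(g_{\xi,\lambda}))$. I do not foresee any serious obstacle; the whole argument is a scaling variant of the derivation of Proposition \ref{LS proposition}, the only non-routine item being the careful tracking of the $|\xi|^{-1}$ factor through the Lipschitz estimate and the rescaling of derivatives.
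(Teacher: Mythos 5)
Your proposal is correct and matches the paper's approach, which likewise deduces the proposition from Lemma \ref{inverse function theorem lemma} by rescaling and by noting that $\|g_{\xi,\lambda}-\bar g\|_{\mathcal G}=O(\lambda^{-1}|\xi|^{-1})$ in the regime $|\xi|>2$. The paper leaves the scaling bookkeeping implicit, and you have simply spelled it out; in particular your identification $u_{\xi,\lambda}(x)=\lambda\,\tilde u(\lambda^{-1}(x-\lambda\xi))$ and the resulting factors $\lambda^{1-k}$ on the $k$-th derivatives, as well as $\kappa_{\xi,\lambda}=\lambda^{-2}\tilde\kappa$, are exactly right.
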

As in the previous section, we  abbreviate $u=u_{\xi,\lambda}$ and $\kappa=\kappa_{\xi,\lambda}$. Lemma \ref{u, lambda expansions} suggests that a stronger estimate for $u$ than (\ref{far-outlying initial estimate}) might hold. However, more care is required since error terms of order $O(\lambda^{-1})$ may be  larger than any inverse power of $|\xi|$.
\\ \indent 
First, we introduce some notation. We define
$$
\phi(x)=1+|x|^{-1}
$$
to be the conformal factor of the Schwarzschild metric with mass $m=2$. As in \cite{chodosh2019far}, we use a bar underneath a quantity to indicate evaluation at $\lambda\,\xi$. If the quantity includes derivatives, these are taken first before we evaluate. For instance, we have
\label{underline page}
$$
\ubar{\sigma}=\sigma(\lambda\,\xi), \qquad \bar D\ubar{\sigma}=(\bar D\sigma)(\lambda\,\xi).
$$
We note that the decay assumptions and Taylor's theorem imply for example that
$$
\ubar{\sigma}(\lambda\,\bar\nu+\lambda\,\xi)=\ubar{\sigma} +\lambda\, D_{\bar\nu}\ubar{\sigma}+O(\lambda^{-2}\,|\xi|^{-4}).
$$
\indent  We now adapt Lemma \ref{u, lambda expansions} to the current setting. In the statement of the following lemma, we let 
$$
Y^{ij}_2=\frac{1}{2}\,\bigg[3\,\bar g(\bar \nu,e_i)\,g(\bar \nu,e_j)-\delta_{ij}\bigg]\in\Lambda_2
$$ where $i,\,j=1,\,2,\,3$ and $\{e_1,\,e_2,\,e_3\}$ denotes the standard basis of $\mathbb{R}^3$.
\begin{lem} \label{far outlying estimates}
	There holds
\begin{align*}
	\kappa&=O(\lambda^{-4}\,|\xi|^{-4}),\\
	W(\Sigma_{\xi,\lambda})+\kappa\, H(\Sigma_{\xi,\lambda})&={O}(\lambda^{-5}\,|\xi|^{-3}),
	\\
	u&=-2\,|\xi|^{-1}+O(\lambda^{-1}\,|\xi|^{-2})+O(|\xi|^{-3}).
\end{align*}
More precisely, 
\begin{align*}
\operatorname{proj}_{\Lambda_2} u&=-\frac13\,\sum_{i,\,j=1}^3\bigg[4\,|\xi|^{-5}\,\xi^i\,\xi^j+\lambda\,\ubar{\phi}^{-6}\,\ubar{\sigma}(e_i,e_j)\bigg]Y_2^{ij}+O(|\xi|^{-4}), \\
\operatorname{proj}_{\Lambda_{>2}}u&=O(|\xi|^{-4})+O(\lambda^{-1}\,|\xi|^{-3}).
\end{align*}
These identities may be differentiated once with respect to $\xi$.
\end{lem}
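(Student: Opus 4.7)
The plan is to refine the initial bounds of Proposition \ref{LS proposition far outlying} by projecting the constrained Willmore equation
\[
W(\Sigma_{\xi,\lambda})+\kappa\,H(\Sigma_{\xi,\lambda})\in\Lambda_1
\]
onto each of the first few spherical harmonic spaces while carefully tracking powers of both $\lambda^{-1}$ and $|\xi|^{-1}$. Following the template of Lemma \ref{u, lambda expansions}, I would invoke Lemma \ref{Q lem} to rewrite this relation as a master equation of the form
\[
\bar\Delta^2 u+2\,\lambda^{-2}\,\bar\Delta u-2\,\lambda^{-1}\,\kappa=W(S_{\xi,\lambda})-Y_1+E(u,\xi,\lambda),
\]
where $Y_1\in\Lambda_1$ absorbs the $\Lambda_1$-content and $E$ collects the quadratic terms in $u$ together with the cross-terms coming from the $\sigma$-perturbation. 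The key observation is that the crude estimates $u=O(|\xi|^{-1})$ and $\kappa=O(\lambda^{-3}\,|\xi|^{-1})$ from Proposition \ref{LS proposition far outlying} already force $E=O(\lambda^{-5}\,|\xi|^{-3})$, so each spherical harmonic component of the master equation can be inverted directly.

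In a second step, I would expand $W(S_{\xi,\lambda})$ using Corollary \ref{Willmore operator on sphere spherical harmonics} together with a Taylor expansion of $\sigma$ at $\lambda\,\xi$ in the underlined notation. The area constraint and Lemma \ref{area expansion} pin down $\operatorname{proj}_{\Lambda_0}u=-2\,|\xi|^{-1}+O(\lambda^{-1}\,|\xi|^{-2})$, after which the $\Lambda_0$-projection of the master equation yields the refined $\kappa=O(\lambda^{-4}\,|\xi|^{-4})$. The $\Lambda_1$-projection forces $Y_1=O(\lambda^{-5}\,|\xi|^{-3})$ because the Schwarzschild expansion of $W(S_{\xi,\lambda})$ carries no $l=1$ contribution and the leading $\sigma$-correction in the Taylor expansion at $\lambda\,\xi$ is purely quadratic in $\bar\nu$. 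On $\Lambda_2$, the operator $\bar\Delta^2+2\,\lambda^{-2}\,\bar\Delta$ acts as multiplication by $24\,\lambda^{-4}$, so the Schwarzschild harmonic $-32\,\lambda^{-4}\,|\xi|^{-3}\,P_2\bigl(-|\xi|^{-1}\,\bar g(\bar\nu,\xi)\bigr)$ inverts to $-\tfrac{4}{3}\,|\xi|^{-5}\,\xi^i\,\xi^j\,Y_2^{ij}$ after using the identity
\[
P_2\bigl(-|\xi|^{-1}\,\bar g(\bar\nu,\xi)\bigr)=|\xi|^{-2}\,\xi^i\,\xi^j\,Y_2^{ij},
\]
while the trace-free part of $\ubar{\phi}^{-6}\,\ubar{\sigma}$ enters the linearized Willmore operator with a relative weight of $\lambda$ (originating from the conformal scaling $g_S=\phi^4\bar g$), producing the announced $-\tfrac{1}{3}\,\lambda\,\ubar{\phi}^{-6}\,\ubar{\sigma}(e_i,e_j)\,Y_2^{ij}$ contribution. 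On $\Lambda_{>2}$, the eigenvalues of the master operator are of order $\lambda^{-4}\,l^4$; the $l$-th Schwarzschild harmonic has size $\lambda^{-4}\,|\xi|^{-l-1}$ and the $\sigma$-contributions are uniformly of order $\lambda^{-3}\,|\xi|^{-3}$, so inversion yields $\operatorname{proj}_{\Lambda_{>2}}u=O(|\xi|^{-4})+O(\lambda^{-1}\,|\xi|^{-3})$.

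The main obstacle is the bookkeeping of the error term $E(u,\xi,\lambda)$: its summands of the form $u\cdot\nabla^2 u$, $|\bar h|^2\,u^2$, $\sigma\cdot u$, and $\kappa\,u$ must each be bounded uniformly at the sharper rate $O(\lambda^{-5}\,|\xi|^{-3})$. The initial estimate $u=O(|\xi|^{-1})$ alone is not sharp enough for this, so a short bootstrap is needed: the $\Lambda_0$- and $\Lambda_2$-projections first give the intermediate improvement $u=-2\,|\xi|^{-1}+O(|\xi|^{-3})$, which is then fed back into $E$ to close the argument at the sharper scale. The final statement on differentiability in $\xi$ follows by differentiating the implicit-function-theorem identity of Lemma \ref{inverse function theorem lemma}, using Remark \ref{u ambient derivatives ren} together with the $C^5$-decay of the ambient metric, which ensures that all $\sigma$-dependent terms appearing in the above analysis are $C^1$ in $\xi$ with estimates consistent with the ones stated.
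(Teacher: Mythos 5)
Your overall plan—project the reduced Willmore equation onto spherical harmonics, use Corollary \ref{Willmore operator on sphere spherical harmonics} to expand $W(S_{\xi,\lambda})$, and invert the biharmonic operator on each $\Lambda_l$—matches the spirit of the paper's proof, and your identity $P_2\bigl(-|\xi|^{-1}\bar g(\bar\nu,\xi)\bigr) = |\xi|^{-2}\xi^i\xi^j Y_2^{ij}$ is correct. However, there is a genuine gap at the core of your bootstrap: you claim that the crude estimates $u = O(|\xi|^{-1})$, $\kappa = O(\lambda^{-3}|\xi|^{-1})$ already force the Taylor-expansion error $E = O(\lambda^{-5}|\xi|^{-3})$, and that feeding the intermediate bound $u = -2|\xi|^{-1} + O(|\xi|^{-3})$ back into $E$ closes the estimate. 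Neither part of this is correct. The quadratic remainder in the Taylor expansion of the Willmore operator depends on the \emph{full} graph function $u$, not just on $u - \operatorname{proj}_{\Lambda_0}u$, and the full $u$ remains of size $O(|\xi|^{-1})$ even after your intermediate improvement. The paper's first pass explicitly records the resulting error as $O(\lambda^{-5}|\xi|^{-2})$ (one power of $|\xi|$ worse than you need), and a second pass based on the same reference sphere would not improve this, because the large $\Lambda_0$ part of $u$ reappears in the quadratic terms.

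The missing idea is the reparametrization trick: one sets $\tilde u = u + 2|\xi|^{-1}$, $\tilde\lambda = \lambda - 2|\xi|^{-1}$, $\tilde\xi = \lambda(\lambda - 2|\xi|^{-1})^{-1}\xi$, so that $\lambda\xi = \tilde\lambda\tilde\xi$ and $\Sigma_{\xi,\lambda} = \Phi^{\tilde u}_{\tilde\xi,\tilde\lambda}(S_{\tilde\xi,\tilde\lambda})$. This absorbs the dominant constant mode of $u$ into a rescaling of the reference sphere, yielding $\tilde u = O(\lambda^{-1}|\xi|^{-2}) + O(|\xi|^{-3})$. Only now is the Taylor-expansion error small enough—$O(\lambda^{-6}|\xi|^{-4}) + O(\lambda^{-5}|\xi|^{-6})$—for the projections onto $\Lambda_0$, $\Lambda_1$, $\Lambda_2$, and $\Lambda_{>2}$ to deliver the claimed estimates $\kappa = O(\lambda^{-4}|\xi|^{-4})$, $Y_1 = O(\lambda^{-5}|\xi|^{-3})$, and the explicit $\Lambda_2$-formula, after passing back from the tilde to the untilded variables via $\tilde\lambda = \ubar{\phi}^{-2}\lambda + O(\lambda^{-1}|\xi|^{-2})$. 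Without this change of reference sphere, the argument does not close, and your proposal as written would not yield the sharp $|\xi|$-decay in the lemma.
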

\begin{proof}
	From Corollary \ref{Willmore operator on sphere spherical harmonics} we obtain
	\begin{equation}
	\begin{aligned}
	\operatorname{proj}_{\Lambda_0} W({S_{\xi,\lambda}})=&\,O(\lambda^{-5}\,|\xi|^{-4}),\\
	\operatorname{proj}_{\Lambda_1} W({S_{\xi,\lambda}})=&\,O(\lambda^{-5}\,|\xi|^{-3}),\\
	\operatorname{proj}_{\Lambda_2} W({S_{\xi,\lambda}})=&\,O(\lambda^{-4}\,|\xi|^{-3})+O(\lambda^{-5}\,|\xi|^{-2}),\\
	\operatorname{proj}_{\Lambda_{>2}} W({S_{\xi,\lambda}})=&\,O(\lambda^{-4}\,|\xi|^{-4})+O(\lambda^{-5}\,|\xi|^{-3}).
	\end{aligned}
	\label{Willmore operator estimates}
	\end{equation}
 We consider the family of surfaces $$\{\Phi^{t\,u}_{\xi,\lambda}(S_{\xi,\lambda}):t\in[0,1]\}$$ 
 where $\Phi^{t\,u}_{\xi,\lambda}:S_{\xi,\lambda}\to M$ is as in (\ref{graph map}). The initial velocity of this variation with respect to the metric $g$ is given by
	\begin{align*}
	w=u\,g(\bar\nu,\nu).
	\end{align*}
Note that
	\begin{align}
	g(\bar\nu,\nu)=\phi^2+O(\lambda^{-2}\,|\xi|^{-2}). \label{g bar nu nu}
	\end{align}
By (\ref{far-outlying initial estimate}) and Taylor's theorem,
	\begin{align}
	Q_{S_{\xi,\lambda}}w=W({S_{\xi,\lambda}})-W({\Sigma_{\xi,\lambda}})+O(\lambda^{-5}\,|\xi|^{-2}).
	\label{initial linearization}
	\end{align}
	 Using (\ref{Q formula}), we find that
\begin{align*}
Q_{S_{\xi,\lambda}}w&=\Delta^2_{S_{\xi,\lambda}}w+2\,\lambda^{-2}\,\phi^{-4}\,\Delta_{S_{\xi,\lambda}}w+O(\lambda^{-5}\,|\xi|^{-2})\\&=\phi^{-6}\,(\bar\Delta^2_{S_{\xi,\lambda}}u+2\,\lambda^{-2}\,\bar\Delta_{S_{\xi,\lambda}}u)+O(\lambda^{-5}\,|\xi|^{-2}).
\end{align*}
By Proposition \ref{LS proposition far outlying}, we have
$$
W({\Sigma_{\xi,\lambda}})+\kappa\, H({\Sigma_{\xi,\lambda}})=Y_1\in\Lambda_1.
$$ Moreover, (\ref{mean curvature change}), Proposition \ref{LS proposition far outlying}, Lemma \ref{Schwarzschild identities}, and Lemma \ref{perturbations} imply that
$$
\operatorname{proj}_{\Lambda_0}H({\Sigma_{\xi,\lambda}})=2\,\lambda^{-1}+O(\lambda^{-2}\,|\xi|^{-1}).
$$
Thus, projecting  (\ref{initial linearization}) onto $\Lambda_0$,  we  find 
$$
\kappa=O(\lambda^{-4}\,|\xi|^{-2}).
$$
Next, we observe that 
$$
\operatorname{proj}_{\Lambda_{>0}}H({\Sigma_{\xi,\lambda}})=O(\lambda^{-2}\,|\xi|^{-1}).
$$
Projecting (\ref{initial linearization}) onto $\Lambda_1$, we conclude that
$$
Y_1=O(\lambda^{-5}\,|\xi|^{-2}).
$$
Similarly, we obtain
\begin{align}
\operatorname{proj}_{\Lambda_{>1}}u=O(\lambda^{-1}\,|\xi|^{-2})+O(|\xi|^{-3}).
\label{slightly improved estimates}
\end{align}
Finally,  Lemma \ref{area expansion},  (\ref{far-outlying initial estimate}), and the formula for the first variation of area  imply
$$
\int_{S_{\xi,\lambda}} H({S_{\xi,\lambda}})\,w\,\mathrm{d}\mu =-16\,\pi\, \lambda\,|\xi|^{-1}+O(|\xi|^{-2}).
$$
Using the improved estimates (\ref{slightly improved estimates}) for $\operatorname{proj}_{\Lambda_{>1}}u$ and arguing as in the proof of Lemma \ref{Lambda0 estimate}, we obtain
$$
\operatorname{proj}_{\Lambda_0}u=-2\,|\xi|^{-1}+O(\lambda^{-1}\,|\xi|^{-2}).
$$
To remove this scaling effect of the perturbation, we define
\begin{align}\label{tilde lambda def}\tilde u=u+2\,|\xi|^{-1}, \qquad \tilde\lambda=\lambda-2\,|\xi|^{-1} \qquad\text{and}\qquad  \tilde  \xi=\lambda\,(\lambda-2\,|\xi|^{-1})^{-1}\,\xi\end{align}
and note that $\lambda\,\xi=\tilde\lambda\,\tilde \xi$. Then, $\Sigma_{\xi,\lambda}=\Phi^{\tilde u}_{\tilde \xi,\tilde \lambda}(S_{\tilde\xi,\tilde \lambda})$ and there holds
\begin{align}
\tilde{u}=O(\lambda^{-1}\,|\xi|^{-2})+O(|\xi|^{-3}). \label{first tilde u estimate}
\end{align}
We now repeat the above argument to obtain an improved estimate for $\tilde u$. As before, we consider the family of surfaces $$\{\Phi^{t\,\tilde u}_{\tilde\xi,\tilde\lambda}(S_{\tilde\xi,\tilde\lambda}):t\in[0,1]\}.$$ The initial velocity of this family with respect to the metric $g$ is given by
$$
\tilde w=\tilde u\,g(\bar\nu,\nu).
$$
Note that (\ref{first tilde u estimate}) implies the improved estimate
\begin{align}
Q_{S_{\tilde \xi,\tilde \lambda}}\tilde w=W({S_{\tilde \xi,\tilde \lambda}})-W({\Sigma_{\xi,\lambda}})+O(\lambda^{-6}\,|\xi|^{-4})+O(\lambda^{-5}\,|\xi|^{-6}). \label{second velocity}
\end{align}
Revisiting equation (\ref{Q formula}) and recalling (\ref{g bar nu nu}), we find
\begin{align*}
Q_{S_{\tilde \xi,\tilde \lambda}}\tilde w&=\Delta^2_{S_{\tilde \xi,\tilde \lambda}}\tilde w+2\,\tilde \lambda^{-2}\,\phi^{-4}\Delta_{S_{\tilde \xi,\tilde \lambda}}\tilde w+O(\lambda^{-6}\,|\xi|^{-4})+O(\lambda^{-5}\,|\xi|^{-6})
\\&=\phi^{-6}\,(\bar\Delta^2_{S_{\tilde \xi,\tilde \lambda}}\tilde u+2\,\tilde \lambda^{-2}\,\bar\Delta_{S_{\tilde \xi,\tilde \lambda}}\tilde u)+O(\lambda^{-6}\,|\xi|^{-4})+O(\lambda^{-5}\,|\xi|^{-6}).
\end{align*}
Arguing as before, this improved estimate yields
$$
\kappa=O(\lambda^{-4}\,|\xi|^{-4}),\qquad Y_1=O(\lambda^{-5}\,|\xi|^{-3}) \qquad\text{and}\qquad \operatorname{proj}_{\Lambda>2}\tilde u=O(|\xi|^{-4})+O(\lambda^{-1}\,|\xi|^{-3}).
$$
Finally, we project (\ref{second velocity}) onto $\Lambda_2$. Recalling Corollary \ref{Willmore operator on sphere spherical harmonics}, we find
\begin{equation}
\begin{aligned}
&\bar\Delta^2_{S_{\tilde \xi,\tilde \lambda}}\operatorname{proj}_{\Lambda_2}\tilde u\,+2\,\tilde \lambda^{-2}\,\bar\Delta_{S_{\tilde \xi,\tilde \lambda}}\operatorname{proj}_{\Lambda_2} \tilde u\\&\qquad=\,\operatorname{proj}_{\Lambda_2}(\phi^6\,W({S_{\tilde \xi,\tilde\lambda}}))+O(\lambda^{-6}\,|\xi|^{-4})+O(\lambda^{-5}\,|\xi|^{-6})
\\&\qquad=\,-32\,\tilde \lambda^{-4}\,|\xi|^{-3}\,P_2\left(-|\xi|^{-1}\,\bar g(\bar\nu,\xi)\right)-4\,\tilde \lambda^{-3}\,\ubar{\phi}^{-4}\,
(3\,\ubar{\sigma}(\bar\nu,\bar\nu)-\bar {\operatorname{tr}}\,\ubar\sigma)+O(\lambda^{-4}\,|\xi|^{-4})
\end{aligned}
\label{Lambda_2 equation}
\end{equation}
where $P_2$ is the second Legendre polynomial defined by \eqref{legendre generating2}.
To conclude, we use Corollary \ref{Willmore eigenvalues} and the estimate \begin{align}\tilde\lambda=\ubar{\phi}^{-2}\,\lambda+O(\lambda^{-1}\,|\xi|^{-2}),\label{tilde lambda est}\end{align}
which is immediate from the definition (\ref{tilde lambda def}).
\end{proof}
We recall from (\ref{G definition}) that the function $ G_\lambda:\{\xi\in\mathbb{R}^3:|\xi|>2\}\to\mathbb{R}$  is given by $$  G_\lambda(\xi)= \lambda^2\,\bigg(\int_{\Sigma_{\xi,\lambda}} H^2\,\mathrm{d}\mu-16\,\pi\bigg).
$$
Using the improved estimates for $u$ obtained in Lemma \ref{far outlying estimates}, we now show that the expansion in Lemma \ref{G expansion} holds with better error control. This is the key step in the proof of Theorem \ref{far outlying thm}.
\\ \indent In the proof of Lemma \ref{far-outlying expansion}, we use the notation
$$
\sigmacirc=\sigma-\frac13\,(\bar {\operatorname{tr}}\,\sigma)\,\bar g
$$
\begin{lem}
	There holds
	\begin{align*}
	 G_\lambda(\xi)= -\frac{128\,\pi}{15}\,|\xi|^{-6}
	-2\,\lambda \int_{B_{\lambda}(\lambda\,\xi)} R\,\mathrm{d}\bar{v}+O(\lambda^{-1}\,|\xi|^{-6})+O(|\xi|^{-7}).
	\end{align*}
	This identity may  	\label{far-outlying expansion} be differentiated once with respect to $\xi$.
\end{lem}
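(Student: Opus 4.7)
The plan is to sharpen Lemma \ref{G expansion} using the refined projections of $u$ onto spherical harmonics furnished by Lemma \ref{far outlying estimates}. A preliminary but essential observation is that the explicit part $-32\,\pi(|\xi|^2-1)^{-1}-48\,\pi\,|\xi|^{-1}\log\frac{|\xi|+1}{|\xi|-1}-128\,\pi\log(1-|\xi|^{-2})$ appearing in Lemma \ref{G expansion} admits a Taylor expansion in $|\xi|^{-2}$ whose coefficients at $|\xi|^{-2}$ and $|\xi|^{-4}$ cancel identically, while the coefficient at $|\xi|^{-6}$ works out to $-128\,\pi/15$; the tail of this Taylor series is $O(|\xi|^{-8})$ and is absorbed in the $O(|\xi|^{-7})$ error term. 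Thus the substantive task is to improve the $O(\lambda^{-1})$ remainder of Lemma \ref{G expansion} to $O(\lambda^{-1}\,|\xi|^{-6})$.

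The approach is to re-run the proof of Lemma \ref{G expansion} but expand $\Sigma_{\xi,\lambda}$ as a graph over the shifted coordinate sphere $S_{\tilde\xi,\tilde\lambda}$ with the renormalized perturbation $\tilde u=u+2\,|\xi|^{-1}$ introduced in the proof of Lemma \ref{far outlying estimates}. Since $\lambda\,\xi=\tilde\lambda\,\tilde\xi$, the scalar-curvature integral $\int_{B_\lambda(\lambda\,\xi)}R\,\text{d}\bar v$ is unaffected by the reparametrisation, but crucially $\tilde u=O(\lambda^{-1}\,|\xi|^{-2})+O(|\xi|^{-3})$ is much smaller than $u=O(|\xi|^{-1})$. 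Writing
\[
\int_{\Sigma_{\xi,\lambda}}H^2\,\text{d}\mu=\int_{S_{\tilde\xi,\tilde\lambda}}H^2\,\text{d}\mu-2\int_{S_{\tilde\xi,\tilde\lambda}}W\,\tilde u\,\text{d}\mu+\int_{S_{\tilde\xi,\tilde\lambda}}\tilde u\,Q\,\tilde u\,\text{d}\mu+E,
\]
the cubic-in-$\tilde u$ remainder $E$ obeys $\lambda^2\,E=O(\lambda^{-1}\,|\xi|^{-9})$ by the size of $\tilde u$ and its derivatives. The coordinate-sphere Willmore energy is handled by Lemma \ref{Willmore energy sphere}, which produces the scalar-curvature integral together with the explicit contribution of Lemma \ref{G expansion}, up to an error of the claimed order once the change from $\lambda$ to $\tilde\lambda$ is unpacked using (\ref{tilde lambda est}). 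The linear-in-$\tilde u$ term is controlled by pairing the projection estimates (\ref{Willmore operator estimates}) on $W(S_{\tilde\xi,\tilde\lambda})$ with the refined projections of $\tilde u$ from Lemma \ref{far outlying estimates} and the orthogonality $\tilde u\perp \Lambda_1$, yielding either contributions that absorb into the explicit expansion or errors bounded by $O(\lambda^{-1}\,|\xi|^{-6})$ via Cauchy-Schwarz.

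The principal obstacle is the quadratic form $\int\tilde u\,Q\,\tilde u\,\text{d}\mu$. By Corollary \ref{Willmore eigenvalues}, $Q$ has eigenvalue $\lambda^{-4}\,(l-1)\,l\,(l+1)\,(l+2)$ on $\Lambda_l$ modulo lower-order corrections, so the form reduces to a weighted sum of $\|\operatorname{proj}_{\Lambda_l}\tilde u\|^2$; the modes $l=0,1$ vanish at leading order. The work concentrates in $\Lambda_2$, where Lemma \ref{far outlying estimates} provides the decomposition
\[
\operatorname{proj}_{\Lambda_2}\tilde u=-\tfrac13\bigl[4\,|\xi|^{-5}\,\xi^i\,\xi^j+\lambda\,\ubar{\phi}^{-6}\,\ubar{\sigma}(e_i,e_j)\bigr]\,Y_2^{ij}+O(|\xi|^{-4}).
\]
Squaring and integrating using the orthogonality and normalisation identities for the harmonics $Y_2^{ij}$ from Appendix \ref{spherical harmonics appendix}, the purely intrinsic square combines with the explicit Willmore-energy contribution to reconstitute the $-128\,\pi/15\,|\xi|^{-6}$ coefficient after the $|\xi|^{-2}$ and $|\xi|^{-4}$ cancellations noted above; the intrinsic-$\sigma$ cross term and the $\sigma$-$\sigma$ square both contribute at most $O(\lambda^{-1}\,|\xi|^{-6})$ thanks to $\ubar{\sigma}=O(\lambda^{-2}\,|\xi|^{-2})$. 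The $\Lambda_{>2}$ modes contribute at most $O(|\xi|^{-8})+O(\lambda^{-1}\,|\xi|^{-6})$.

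Finally, because Lemma \ref{far outlying estimates} states that all of its identities differentiate once with respect to $\xi$, and all integrands and domains in the argument above depend smoothly on $\xi$, the same differentiation carries through to yield the asserted differentiated expansion.
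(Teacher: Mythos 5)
Your preliminary observation — that the explicit Schwarzschild part of $G_\lambda$ from Lemma \ref{G expansion} Taylor expands in $|\xi|^{-2}$ with vanishing coefficients at $|\xi|^{-2}$ and $|\xi|^{-4}$ and coefficient $-128\,\pi/15$ at $|\xi|^{-6}$ — is correct, and your overall framework (Taylor expanding $F_\lambda$ along $\Phi^{t\,\tilde u}_{\tilde\xi,\tilde\lambda}$, reducing the quadratic form to the $\Lambda_2$-projection of $\tilde u$) is the same as the paper's. However, there is a genuine gap in the treatment of the $\sigma$-contributions.

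You assert that the $\sigma$-cross term and the $\sigma$-$\sigma$ square in the quadratic form are each $O(\lambda^{-1}\,|\xi|^{-6})$, and implicitly that the $\sigma$-terms produced by Lemma \ref{Willmore energy sphere} for the coordinate sphere contribution are absorbed into the error. Neither claim is true uniformly in the far-outlying regime. Indeed, $\ubar\sigma=O(\lambda^{-2}\,|\xi|^{-2})$, so after the $\lambda^2$ normalisation in $F_\lambda$ the cross term $\lambda\,|\xi|^{-3}\,\ubar\sigma$ has size $O(\lambda^{-1}\,|\xi|^{-5})$ and the square term $\lambda^2\,\ubar{\phi}^{-8}\,|\ubar\sigmacirc|^2$ has size $O(\lambda^{-2}\,|\xi|^{-4})$. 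Neither of these is bounded by $O(\lambda^{-1}\,|\xi|^{-6})+O(|\xi|^{-7})$ without imposing a relation between $\lambda$ and $|\xi|$, and no such relation holds in the theorem's hypotheses (for instance $|\xi|\sim\lambda$ is allowed, as in the construction proving Theorem \ref{counterexample thm 3}). The paper's proof handles this by computing these $\sigma$-terms exactly on both sides: the quadratic form produces $-\frac{48\,\pi}{15}\,\lambda^2\,\ubar{\phi}^{-8}\,|\ubar{\sigmacirc}|^2-\frac{128\,\pi}{15}\,\lambda\,|\xi|^{-3}\,(3\,|\xi|^{-2}\,\ubar\sigma(\xi,\xi)-\operatorname{tr}\ubar\sigma)$ while $F_\lambda(S_{\tilde\xi,\tilde\lambda})$ produces the same quantities with the opposite sign, so they cancel exactly rather than being small individually. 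This exact cancellation — driven by the fact that $\operatorname{proj}_{\Lambda_2}\tilde u$ is forced by the constrained Willmore equation to balance the $\Lambda_2$-part of the sphere's Willmore operator, which is precisely what carries the $\ubar\sigma$ terms — is the technical heart of the lemma and needs to be carried out, not estimated away. Your Taylor-series observation on the Schwarzschild piece is a good consistency check, but it cannot replace the careful bookkeeping of the $\sigma$-contributions.
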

\begin{proof}
We recall from the proof of Lemma \ref{far outlying estimates} that $\Sigma_{\xi,\lambda}=\Phi^{\tilde u}_{\tilde \xi,\tilde \lambda}(S_{\tilde\xi,\tilde \lambda})$, where
$$\tilde u=u+2\,|\xi|^{-1}, \qquad \tilde\lambda=\lambda-2\,|\xi|^{-1} \qquad\text{and} \qquad \tilde \xi=\lambda\, (\lambda-2\,|\xi|^{-1})^{-1}\,\xi.$$
As in the proof of Lemma \ref{far outlying estimates}, we consider the family of surfaces $$\{\Phi^{t\,\tilde u}_{\tilde \xi,\tilde \lambda}(S_{\tilde\xi,\tilde \lambda}):t\in[0,1]\}$$ connecting $S_{\tilde \xi,\tilde \lambda}$ and $\Sigma_{\xi,\lambda}$. Recall the functional $F_\lambda$ defined in (\ref{F functional}). We compute the Taylor expansion of the function
$$
[0,1]\to\mathbb{R}^3, \qquad t\mapsto F_\lambda(\Phi^{t\,\tilde u}_{\tilde \xi,\tilde \lambda}(S_{\tilde\xi,\tilde \lambda}))
$$ at $t=0$. To this end, we abbreviate $W=W(S_{\tilde \xi,\tilde\lambda})$ and $Q=Q_{S_{\tilde\xi,\tilde\lambda}}$. Since the initial velocity is given by $\phi^2\,\tilde u$, we find, using Lemma \ref{variation lemma}, Lemma \ref{far outlying estimates}, as well as (\ref{Willmore operator estimates}), that
	\begin{align*}
	F_\lambda(\Sigma_{\xi,\lambda})&=F_\lambda(S_{\tilde\xi,\tilde \lambda})-2\,\lambda^2\int_{S_{\tilde\xi,\tilde \lambda}}\phi^2\,W\,\tilde u\,\mathrm{d}\mu
	+\lambda^2\int_{S_{\tilde \xi,\tilde \lambda}}\phi^4\,\left[\tilde u\,Q\tilde u-W\,H\,\tilde u^2\right]\mathrm{d}\mu+O(\lambda^{-1}\,|\xi|^{-6}) 
	\\&=F_\lambda(S_{\tilde\xi,\tilde \lambda})-2\,\lambda^2\int_{S_{\tilde\xi,\tilde \lambda}}\phi^2\,W\,\tilde u \,\mathrm{d}\mu
	+\lambda^2\int_{S_{\tilde\xi,\tilde \lambda}}\phi^4\,\tilde u\,Q\tilde u \,\mathrm{d}\mu+O(\lambda^{-1}\,|\xi|^{-6})
	\\&=F_\lambda(S_{\tilde\xi,\tilde \lambda})-2\,\lambda^2\int_{S_{\tilde\xi,\tilde \lambda}}\operatorname{proj}_{\Lambda_2}(\phi^6\,W)\,\operatorname{proj}_{\Lambda_2}\tilde u\,\mathrm{d}\bar\mu
	+\lambda^2\int_{S_{\tilde \xi,\tilde \lambda}}\phi^4\,\tilde u\,Q\tilde u\,\mathrm{d}\mu+O(\lambda^{-1}\,|\xi|^{-6}).
	\end{align*}
Revisiting equation (\ref{Q formula})   again and using (\ref{Lambda_2 equation}), we find
	\begin{align*}
	\lambda^2\int_{S_{\tilde\xi,\tilde \lambda}}\phi^4\,\tilde u\,Q\tilde u\,\mathrm{d}\mu&=\lambda^2\int_{S_{\tilde\xi,\tilde \lambda}} \left[(\bar\Delta\tilde u)^2+2\,\tilde \lambda^{-2}\,\tilde u\,\bar\Delta \tilde u\right]\mathrm{d}\bar\mu+O(\lambda^{-1}\,|\xi|^{-6})
	\\&=\lambda^2\int_{S_{\tilde\xi,\tilde \lambda}} \operatorname{proj}_{\Lambda_2}( \phi^6\, W)\,\operatorname{proj}_{\Lambda_2}\tilde u\,\mathrm{d}\bar\mu+O(\lambda^{-1}\,|\xi|^{-6})
	\\&=24\,\lambda^{2}\,\tilde\lambda^{-4}\int_{S_{\tilde\xi,\tilde \lambda}} (\operatorname{proj}_{\Lambda_2}\tilde u)^2\,\mathrm{d}\bar\mu+O(\lambda^{-1}\,|\xi|^{-6}).
	\end{align*}
	It follows that
	\begin{align}
	F_\lambda(\Sigma_{\xi,\lambda})=F_\lambda(S_{\tilde\xi,\tilde \lambda})-24\,\lambda^{2}\,\tilde\lambda^{-4}\int_{S_{\tilde\xi,\tilde \lambda}} (\operatorname{proj}_{\Lambda_2}\tilde u)^2\,\mathrm{d}\bar\mu+O(\lambda^{-1}|\xi|^{-6}).
	\label{1}
	\end{align}
	For ease of notation, we  assume that $\xi$ is a multiple of $e_3$. Using Lemma \ref{far outlying estimates} and Lemma \ref{spherical identities}, we compute 
	\begin{align}
\notag	&-24\,\lambda^{2}\,\tilde\lambda^{-4}\,\int_{S_{\tilde\xi,\tilde \lambda}} (\operatorname{proj}_{\Lambda_2}\tilde u)^2\,\mathrm{d}\bar\mu
	\\\notag&\qquad=\,-\frac{8}{3}\,\lambda^2\,\tilde\lambda^{-2}\int_{S_1(0)} \bigg(16\,|\xi|^{-6}\,Y^{3,3}_2\,Y^{3,3}_2+8\,\lambda\,|\xi|^{-3}\,\ubar{\phi}^{-6}\,\ubar{\sigma}(e_i,e_j)\,Y^{3,3}_2\,Y^{ij}_2\\\notag&\,\qquad\qquad\qquad\qquad\qquad\qquad+\lambda^{2}\,\ubar{\phi}^{-12}\,\sum_{i,\,j,\,k,\,\ell=1}^3\ubar{\sigma}(e_i,e_j)\, \ubar{\sigma}(e_k,e_l)\,Y_2^{ij}\,Y^{kl}_2\bigg)\,\mathrm{d}\bar\mu
	\\\label{2}&\qquad=\,-\frac{512\,\pi}{15}\,\lambda^2\,\tilde\lambda^{-2}\,|\xi|^{-6}-\frac{128\,\pi}{15}\, \lambda^3\,\tilde \lambda^{-2}\,|\xi|^{-3}\,\left(3\,|\xi|^{-2}\,\ubar{\sigma}(\xi,\xi)-\bar {\operatorname{tr}}\,\ubar{\sigma}\right)-\frac{48\,\pi}{15}\,\lambda^4\,\tilde\lambda^{-2}\,\ubar{\phi}^{-12}\,|\ubar{\sigmacirc}|^2\\\notag&\qquad\qquad\,+O(\lambda^{-1}\,|\xi|^{-6})
	\\\notag&\qquad=\,-\frac{512\,\pi}{15}\,|\xi|^{-6}-\frac{128\,\pi}{15}\,\lambda\, |\xi|^{-3}\,\left(3\,|\xi|^{-2}\,\ubar{\sigma}(\xi,\xi)-\bar {\operatorname{tr}}\,\ubar{\sigma}\right)-\frac{48\,\pi}{15}\,\lambda^2\,\ubar{\phi}^{-8}\,|\ubar{\sigmacirc}|^2\\\notag&\qquad\qquad\,+O(\lambda^{-1}\,|\xi|^{-6}).
	\end{align}
		In the last equation, we have used (\ref{tilde lambda est}).
	Conversely, Lemma \ref{Willmore energy sphere} and Taylor's theorem give that
	\begin{align}
	\notag F_\lambda(S_{\tilde\xi,\tilde \lambda})=&\,\lambda^2\,\tilde\lambda^{-2}\,F_{\tilde\lambda}(S_{\tilde\xi,\tilde \lambda})
	\\	\label{3}=&\,\frac{128\,\pi}{5}\,|\xi|^{-6}-2\,\lambda^2\,\tilde\lambda^{-1}\,\ubar{\phi}^{4} \int_{{B_{\tilde \lambda}(\tilde \lambda\,\tilde \xi)}} R\,\mathrm{d}\bar{v}\\\notag&\qquad+\frac{48\,\pi}{15}\,\lambda^{2}\,\ubar{\phi}^{-8}\,|\ubar{\sigmacirc}|^2+\frac{128\,\pi}{15}\,\lambda\,|\xi|^{-3}\,\left(3\,|\xi|^{-2}\,\ubar{\sigma}(\xi,\xi)-\bar {\operatorname{tr}}\,\ubar{\sigma}\right)+O(\lambda^{-1}\,|\xi|^{-6})+O(|\xi|^{-7}).
	\end{align}
	Finally, using \eqref{far decay} and (\ref{tilde lambda est}), we find
	\begin{equation} \label{4}
	\begin{aligned}
	\lambda^2\,\tilde\lambda^{-1}\,\ubar{\phi}^{4} \int_{{B_{\tilde \lambda}(\tilde \lambda\,\tilde \xi)}}R\,\mathrm{d}\bar{v}=&\,	\lambda\,\ubar{\phi}^{6} \int_{{B_{\ubar{\phi}^{-2}\,\lambda}( \lambda\, \xi)}}R\,\mathrm{d}\bar{v}+O(\lambda^{-1}\,|\xi|^{-6})
		\\=&\,	\lambda \int_{B_{\lambda}(\lambda\,\xi)}R\,\mathrm{d}\bar{v}+O(\lambda^{-1}\,|\xi|^{-6}).	\end{aligned}
		\end{equation}
Assembling (\ref{1}), (\ref{2}), (\ref{3}), and (\ref{4}), the assertion follows.
\end{proof}
\begin{proof}[Proof of Theorem \ref{far outlying thm}]
Suppose, for a contradiction, that there exists a sequence of outlying area-constrained Willmore spheres $\{\Sigma_j\}_{j=1}^\infty$ with
$$
\lim_{j\to\infty} |\Sigma_j|=\infty, \qquad \limsup_{j\to\infty}\int_{\Sigma_j}|\hcirc|^2\,\mathrm{d}\mu<\epsilon_0, \qquad \lim_{j\to\infty} \frac{\rho({\Sigma_j})}{\lambda({\Sigma_j})}=\infty.
$$  
As in the proof of Theorem \ref{uniqueness thm}, we may assume that $$\Sigma_j=\Sigma_{\xi_j,\lambda_j}$$ for suitable  $\xi_j\in\mathbb{R}^3$ and $\lambda_j\in(\lambda_0,\infty)$ where
$$
\lim_{j\to\infty}|\xi_j|=\infty \qquad\text{and}\qquad \lim_{j\to\infty}\lambda_j=\infty.
$$
Arguing as in the proof of Lemma \ref{G der sRT 2}, but this time using the exact growth condition
$$
\sum_{i=1}^3x^i\,\partial_i(|x|^2\,R)(x)\leq 0 ,
$$
we find that
$$
\sum_{i=1}^3\xi_j^i\,\partial_i\bigg(-2\,\lambda_j \int_{B_{\lambda}(\lambda_j\,\xi_j)}R\,\mathrm{d}\bar{v}\bigg)\geq 0.
$$ In conjunction with Lemma \ref{far-outlying expansion}, this gives 
$$
\sum_{i=1}^3\xi_j^i\,(\partial_i  G_\lambda)(\xi_j)\geq\frac{256\,\pi}{5}\,|\xi_j|^{-6}+O(\lambda_j^{-1}\,|\xi_j|^{-6})+O(|\xi_j|^{-7}).
$$
In particular, $$\sum_{i=1}^3\xi_j^i\,(\partial_iG_{\lambda_j})(\xi_j)>0.$$  This is incompatible with Lemma \ref{variational to 3 dim}.
\end{proof}
\section{Proof of Theorems \ref{counterexample thm 1}, \ref{counterexample thm 2} and  \ref{counterexample thm 3}}
We first prove Theorem \ref{counterexample thm 1} and Theorem \ref{counterexample thm 3}. To this end, we adapt a construction from \cite[\S3 and \S6]{chodosh2019far}. The metrics in this construction are rotationally symmetric. Their scalar curvature has a pulse. \\\indent
We briefly recall some steps from \cite{chodosh2019far}. \\ \indent  Given a function $S:(0,\infty)\to(-\infty,0]$ with
$$
S^{(l)}=O(s^{-4-l})
$$  
for every integer $l\geq0$, we define the function $\Psi:(0,\infty)\to(-\infty,0]$ by
\begin{align}
\Psi(s)=s^{-1}\int_s^\infty (t-s)\,t\, S(t)\,\mathrm{d} t.
\label{metric construction 1}
\end{align}
Note that 
$$
\Psi^{(l)}=O(s^{-2-l})
$$
for every integer $l\geq0$. The metric 
$$
g=(1+|x|^{-1}+\Psi(|x|))^4\,\bar g
$$
on $\mathbb{R}^3\setminus\{0\}$ is $C^k$-asymptotic to Schwarzschild with mass $m=2$ for every $k\geq 2$. Its scalar curvature $R$ is given by
\begin{align}
R(x)=-8\,(1+O(|x|^{-1}))\,S(|x|). \label{metric construction 2}
\end{align}
In particular, $R\geq 0$ outside  a compact set.
\begin{proof}[Proof of Theorem \ref{counterexample thm 1}] First, as shown in Figure \ref{fig:test}, we construct a metric $g_2$ which admits large outlying area-constrained Willmore spheres $\{\Sigma_j\}_{j=1}^\infty$ with
$$
2\sqrt{2}<\frac{\rho({\Sigma_j})}{\lambda({\Sigma_j})}<5 \qquad \text{and}\qquad m_H(\Sigma_j)>-o(1).
$$ Let $\chi\in C^{\infty}(\mathbb{R})$ be such that $\chi(t)>0$ for all $t\in(3,4)$ and $\operatorname{supp}\chi\subset[3,4]$. Let
$$
S(s)=-B\sum_{k=0}^\infty  10^{-4\,k}\,\chi(10^{-k}\,s).
$$
The constant $B>0$ will be chosen (large) later.  Let $j\geq1$ be a  large integer and $\lambda_j=10^j$.
Recall the definition of $G_{\lambda}$ in (\ref{G definition}).
$ G_{\lambda_j}$ is rotationally symmetric on $\{\xi\in\mathbb{R}^3:|\xi|>2\}$. As  in the proof of Lemma \ref{G der sRT}, we have
$$
G_{\lambda_j}=G_1+G_{2,\lambda_j}+o(1).
$$ 
This expansion may be differentiated twice.  Here, $G_1$ is strictly increasing in radial directions and independent of both $\lambda$ and $B$, while 
\begin{align*}
\sum_{i=1}^3\xi^i\,(\partial_iG_{2,\lambda_j})(\xi)=-2\,\lambda_j^2 \int_{S_{\xi,\lambda_j}}\bar g(\xi,\bar\nu) \,R\,\mathrm{d} \bar\mu
=-16\, B \int_{S_{1}(\xi)} \bar g(\xi,\bar\nu) \,\chi\,\mathrm{d} \bar\mu+o(1).
\end{align*}
The integral on the right hand side vanishes if $|\xi|=5$ and is negative if $|\xi|=2\sqrt{2}$. Thus, using that $ G_{\lambda_j}$ is rotationally symmetric and that $G_1$ is strictly increasing in radial directions, we may increase $B>0$ appropriately so $ G_{\lambda_j}$ attains a local minimum at some $\xi_j\in\mathbb{R}^3$ with $$2\,\sqrt{2}<|\xi_j|<5$$ for every sufficiently large $j$.

\begin{figure}
	\centering
	\begin{subfigure}{0.5\textwidth}
		
		\includegraphics[width=1\linewidth]{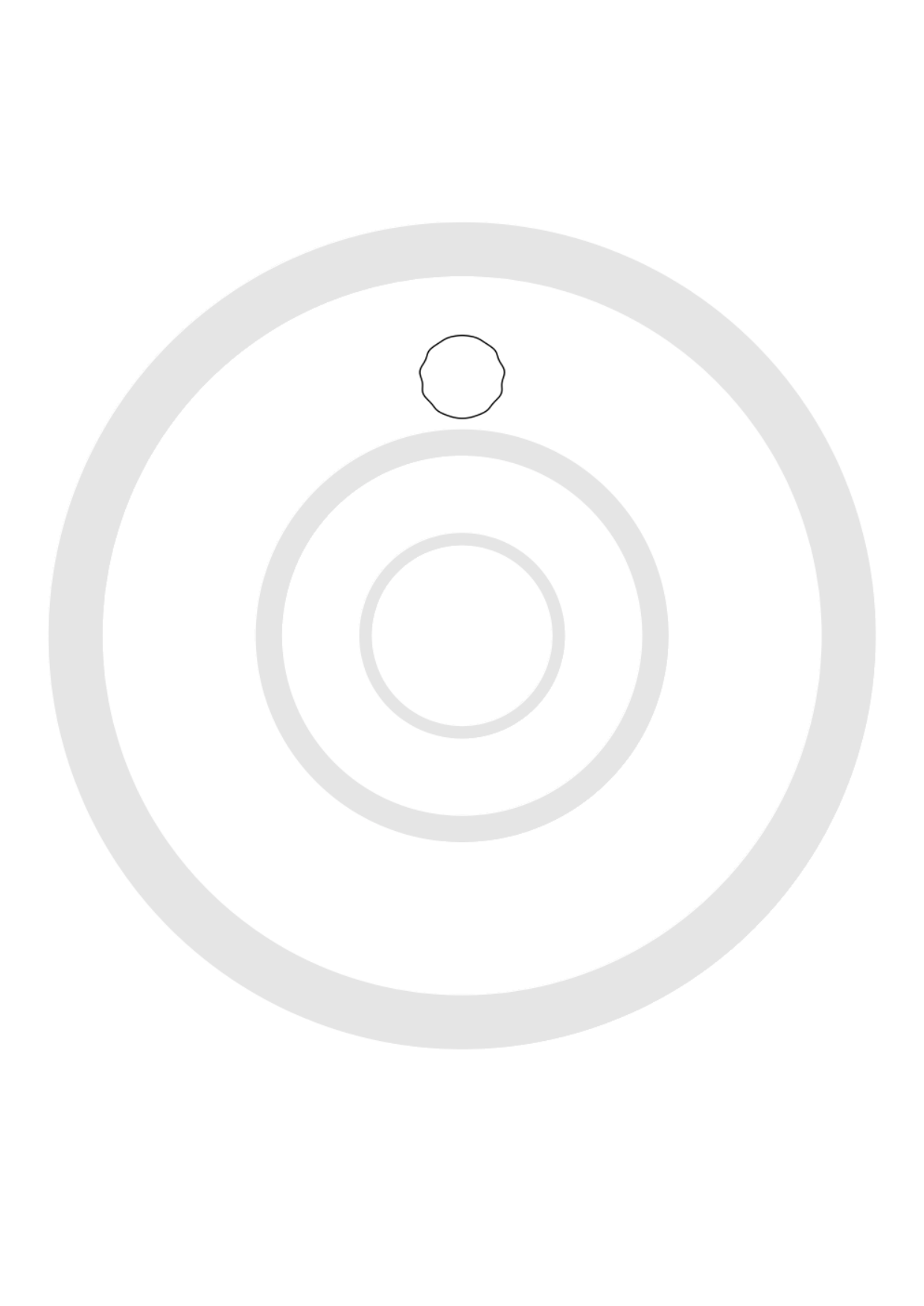}

	\end{subfigure}%
	\begin{subfigure}{0.5\textwidth}

		\includegraphics[width=1\linewidth]{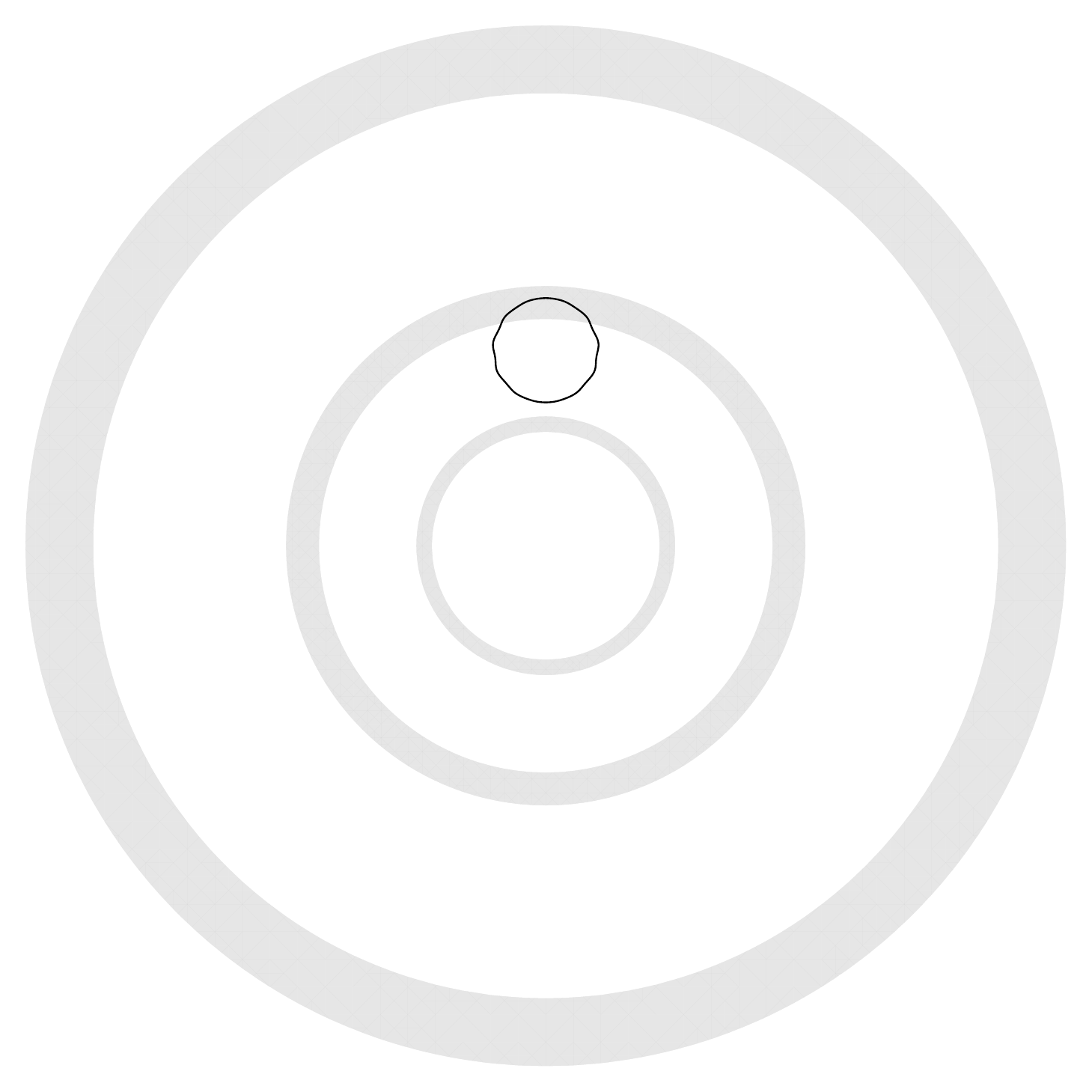}
	\end{subfigure}
\caption{An illustration of the construction for the proof of Theorem \ref{counterexample thm 1}. The scalar curvature is positive in the shaded region and vanishes elsewhere. On the left, the surface $\Sigma_{\xi,\lambda_j}$ with $|\xi|=5$ is shown. It does not overlap with the shaded region. In particular, the radial derivative of $G_{2,\lambda_j}$ vanishes. On the right, the surface corresponds to the choice $|\xi|=2\,\sqrt{2}$.  If this surface is moved upwards, the overlap with the shaded region increases. The radial derivative of $G_{2,\lambda_j}$ is negative.}
	\label{fig:test}
\end{figure}

 \indent
Next, we construct a metric $g_1$ which admits large on-center area-constrained Willmore spheres that are not part of the foliation from Theorem \ref{existence thm}. This time, we choose a smooth function $\chi$ such that $\chi(t)>0$ for all $t\in(9/8,11/8)$ and $\operatorname{supp}\chi\subset[9/8,11/8]$. We write
$$
 G_{\lambda_j}= G_1+ G_{2,\lambda_j}+o(1).
$$
As before, $G_1$ is strictly increasing in radial directions and independent of both $\lambda$ and $B$, while
\begin{align*}
\sum_{i=1}^3\xi^i\,(\partial_iG_{2,\lambda_j})(\xi)=-16\,B \int_{S_{1}(\xi)} \bar g(\xi,\bar\nu) \,\chi\,\mathrm{d} \bar\mu+o(1).
\end{align*}
The integral on the right hand side is negative if $|\xi|=1/4$ and positive if $|\xi|=7/8$. Thus, we may again increase $B$ appropriately such that for every $j$ large, $G_{\lambda_j}$ attains a local minimum $\xi_j\in\mathbb{R}^3$ with $$1/4<|\xi_j|<7/8.$$ 
This completes the proof of Theorem \ref{counterexample thm 1}.
\end{proof}
\begin{proof}[Proof of Theorem \ref{counterexample thm 3}]
 \indent To construct $g_4$, we choose a suitable function $\Psi$ in (\ref{metric construction 1}) which satisfies
 \begin{align}
 \Psi^{(l)}=O(s^{-3-l}).
 \label{rapidly decaying Psi}
 \end{align}
 In particular, $\Psi$ decays one order faster than the perturbations used to construct $g_1$ and $g_2$. Due to the fast decay of the Schwarzschild contribution, this perturbation will still be strong enough to admit large far-outlying area-constrained Willmore spheres with Hawking masses bounded from below. More precisely, we choose $\chi\in C^\infty(\mathbb{R})$ with $\chi(t)>0$ for all $t\in(4,6)$, $\operatorname{supp}\chi\subset[4,6]$, and  $\chi'(5)=1$. Let
$$
S(s)=-\sum_{k=0}^\infty 10^{-5\,k}\,\chi(10^{-k}\,s)
$$
and note that $S^{(l)}=O(s^{-5-l})$ for every integer $l\geq 0$. This ensures that (\ref{rapidly decaying Psi}) holds. Now, let $j\geq 1$ be large, $\lambda_j=10^j$, and $\xi_t=t\,10^j\,  a$ where $t\in[3,7]$ and $a\in\mathbb{R}^3$ is such that $|a|=1$. From (\ref{metric construction 2}), we see that
$$
R=O(|x|^{-5}).
$$
Using Taylor's theorem, we find that
$$
-2\,\lambda \int_{B_{\lambda_j}(\lambda_j\,\xi_t)} R\,\mathrm{d}\bar{v}=-\frac{8\,\pi}{3}\,\lambda_j^4\,\ubar{R}+O(\lambda^{-1}\,|\xi_t|^{-6})+O(|\xi_t|^{-7}).
$$
Lemma \ref{far-outlying expansion} implies that
\begin{align*}
G_{\lambda_j}(\xi_t)&=-\frac{128\,\pi}{15}\,|\xi_t|^{-6}-\frac{8\,\pi}{3}\,\lambda_j^4\,\ubar{R}+O(\lambda^{-1}\,|\xi_t|^{-6})+O(|\xi_t|^{-7})
\\&=-\frac{128\,\pi}{15}\,t^{-6}\,10^{-6j}-\frac{64\,\pi}{3}\,\chi(t)\,10^{-6j}+O(10^{-7j}).
\end{align*}
The derivative of the quantity on the right hand side with respect to $t$ is positive if $t=7$ and negative if $t=5$  provided $j$ is large. Since $G_{\lambda_j}$ is rotationally symmetric, it follows that, for every $j$  large, there is a number $t_j\in[5,7]$ with $(\bar D G_{\lambda_j})(\xi_j)=0$ where $\xi_j=t_j\,10^j\,a$. In particular, $\{\Sigma_{\xi_j,\lambda_j}\}_{j=1}^\infty$ is a  sequence of far-outlying stable area-constrained Willmore spheres with diverging area and Hawking mass bounded from below. 
\end{proof} 
\begin{rema} \normalfont
	The proofs of Theorem \ref{counterexample thm 1} and Theorem \ref{counterexample thm 3} follow the proofs of Theorem 1.3 and 1.8 in \cite{chodosh2019far} closely. Note that the analysis of the function $G_{\lambda}$ differs from the analysis of the reduced area functional  in \cite{chodosh2019far}. The construction of the metric $g_1$ has features not considered in \cite{chodosh2019far}.
\end{rema}
\begin{figure}
	\centering
	\begin{subfigure}{0.5\textwidth}
		
		\includegraphics[width=1\linewidth]{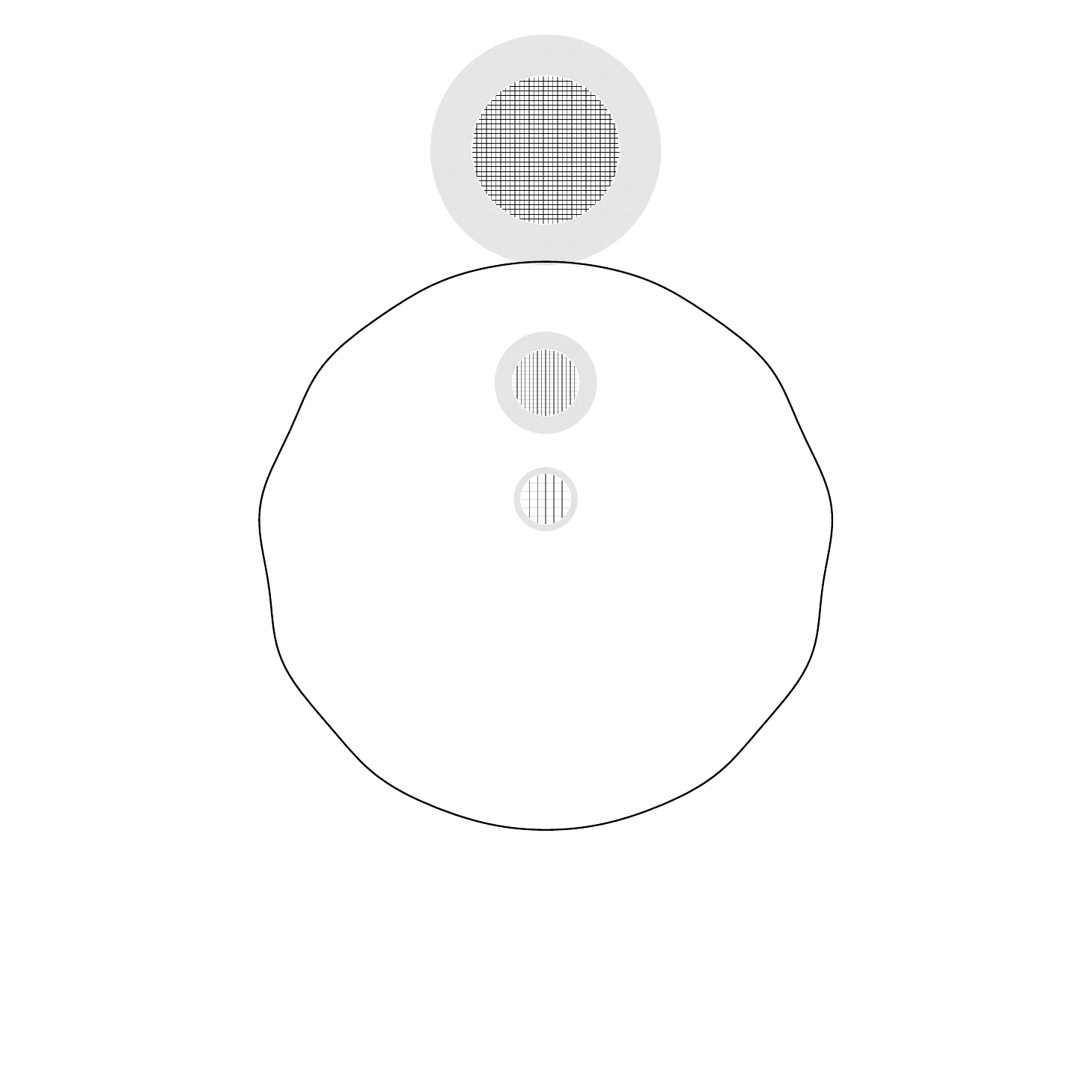}
		
	\end{subfigure}%
	\begin{subfigure}{0.5\textwidth}
		
		\includegraphics[width=1\linewidth]{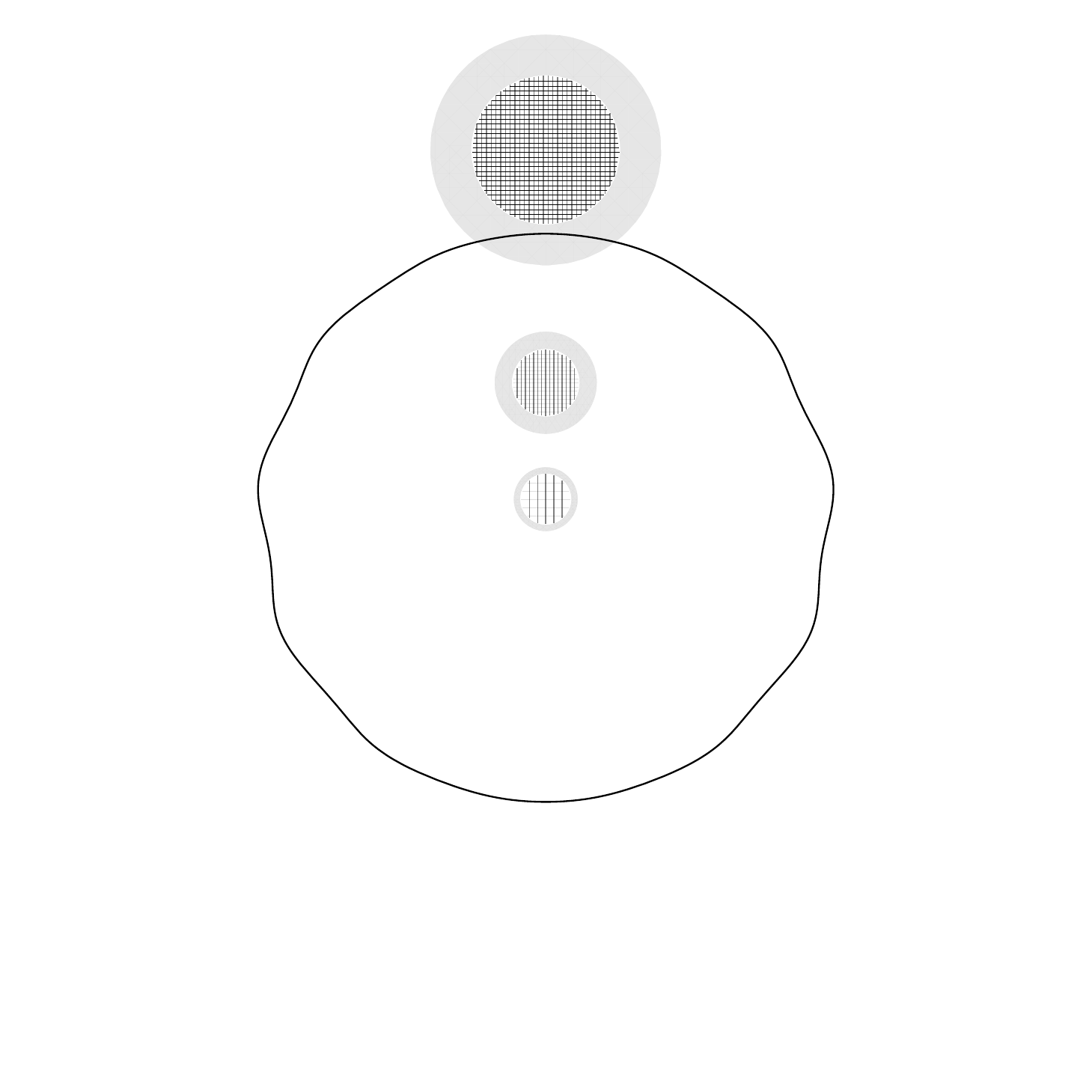}
	\end{subfigure}
	\caption{An illustration of the construction for the proof of Theorem \ref{counterexample thm 2}. The odd part of the scalar curvature is  positive in the shaded region and negative in the hatched region. On the left and right, the surfaces $\Sigma_{\xi,\lambda_j}$ corresponding to the choices $\xi=0$ and $\xi=t\, e_1$ for some small $t>0$, respectively, are shown. The latter has larger overlap with the shaded region, causing the derivative of $G_{2,\lambda_j}$ in direction $e_1$ to be negative at $\xi=0$.}
	\label{Figure no centering}
\end{figure}
\begin{proof}[Proof of Theorem \ref{counterexample thm 2}]
We construct a metric $g_3$ on $\mathbb{R}^3\setminus\{0\}$ that admits a foliation by area-constrained Willmore spheres whose leaves do not center around the origin; see Figure \ref{Figure no centering}. To this end, we let  $\chi:\mathbb{R}^3\to[0,\infty)$ be a  standard bump function
$$
\chi(y)=\begin{cases} &e^{-(1-|y|^2)^{-1}} \text{ if } |y|<1,
\\ &0\qquad\qquad\hspace{0.15cm} \text{ if } |y|\geq 1.
\end{cases}
$$
If $y\in B_1(0)$, we compute
$$
(\bar\Delta\chi)(y)=2\,\chi(y)\,\frac{4\,|y|^2+|y|^4-3}{(1-|y|^2)^{4}}.
$$
In particular, $\chi$ is strictly subharmonic on  $\{y\in\mathbb{R}^3:\sqrt{3}/2<|y|<1\}$. Let
$$
\psi(x)=\sum_{k=0}^\infty 10^{-2\,k}\, \chi\left(2\cdot10^{-k}\,(x-10^{k}\,e_1)\right)
$$
and define the conformally flat metric
$$
g_3=[1+|x|^{-1}-\epsilon\, (|x|^{-2}+\delta\,\psi(x))]^4\,\bar g 
$$
on $\mathbb{R}^3\setminus\{0\}$
where  $\epsilon,\,\delta>0$ will be chosen (small) later. Note that $g_3$ is $C^k$-asymptotic to Schwarzschild with mass $m=2$ for every $k\geq 2$. The scalar curvature of $g_3$ is given by
\begin{align*}
R&=8\, \epsilon\,\bar\Delta(|x|^{-2}+\delta\,\psi(x))+O(|x|^{-5})
\\&=8\,\epsilon\,\bigg(2\,|x|^{-4}+4\,\delta\sum_{k=0}^\infty 10^{-4\,k}\, (\bar\Delta\chi)\left(2\cdot10^{-k}\,(x-10^{k}\,e_1)\right)\bigg)+O(|x|^{-5}).
\end{align*}
In particular, $R\geq 0$ outside  a bounded set, provided $\delta>0$ is sufficiently small. A similar computation shows that, taking $\delta>0$ smaller if necessary, 
$$
\sum_{i=1}^3x^i\,\partial_i(|x|^{2}\,R)\leq 0
$$
outside  a bounded set.\\\indent  Arguing as in the proof of Lemma \ref{G der sRT}, we may choose $\epsilon>0$  small  such that $G_\lambda$ is strictly convex in $\{\xi\in\mathbb{R}^3:|\xi|<1/2\}$ and strictly radially increasing near $\{\xi\in\mathbb{R}^3:|\xi|=1/2\}$ provided $\lambda>1$ is sufficiently large. It follows that $G_\lambda$ has a unique critical point $\xi(\lambda)$ with $|\xi(\lambda)|<1/2$. Let
$$
\lambda_j=\frac{9}{16}\,10^j.
$$ As in (\ref{G decomposition}), we consider the decomposition
$$
G_{\lambda_j}=G_1+G_{2,\lambda_j}+o(1).
$$
Note that $ (\bar DG_1)(0)=0$. Using that $\chi$ is strictly subharmonic on $\{y\in\mathbb{R}^3:\sqrt{3}/2<|y|<1\}$ and that odd functions integrate to zero on a sphere, we obtain
$$
(\partial_1 G_{2,\lambda_j})(0)=-4\,\bigg(\frac{9}{16}\bigg)^4\,\epsilon\,\delta\int_{S_1(0)} (9\,x-16\, e_1)\,\bar g(e_1,\bar\nu) \,(\bar\Delta\chi)\,\mathrm{d} \bar\mu+o(1)=-c\,\epsilon\,\delta+o(1)
$$
where $c>0$ is  independent of $j$.
By Lemma \ref{C2  estimate}, the $C^2$-norm of $G_\lambda$ is bounded. It follows that there is $z\in(0,1/2)$ with $$|\xi({\lambda_j})|\geq z$$ provided $j$ is sufficiently large. \\ \indent To conclude, note that the same argument as in the proof of Proposition \ref{foliation prop}  shows that the family of surfaces $\{\Sigma_{\lambda,\xi(\lambda)}:\lambda>\lambda_0\}$ forms a smooth asymptotic foliation of $\mathbb{R}^3$.
\end{proof}
\begin{appendices}
	\section{The Willmore energy}
	\label{Willmore appendix}
	In this section, we provide some background material on the Willmore energy. We refer to \cite[\S3]{lamm2011foliations} for proofs and further information.\\ \indent  Let $(M,g)$ be a  Riemannian 3-manifold without boundary. Let $\Sigma\subset M$ be a closed, two-sided surface with unit normal $\nu$. The Willmore energy of $\Sigma$ is the quantity
	\begin{align}
	\mathcal{W}(\Sigma)=\frac14 \int_{\Sigma} H^2\,\mathrm{d}\mu, \label{Willmore energy}
	\end{align}
	where $H$ is the mean curvature scalar computed as the divergence of $\nu$ along $\Sigma$. \\ \indent  Let $\epsilon>0$ and $U\in C^{\infty}(\Sigma\times(-\epsilon,\epsilon))$ with $U(\,\cdot\,,\,0)=0$. 
	Decreasing $\epsilon>0$ if necessary, we obtain a smooth variation $\{\Sigma_s:s\in(-\epsilon,\epsilon)\}$ of embedded surfaces
	$\Sigma_s=\Phi_s(\Sigma_s)$ where
	$$
	\Phi_s:\Sigma\to M\qquad\text{  is given by }\qquad \Phi_s(x)=\operatorname{exp}_x(U(x,s)\,\nu(x)).
	$$
	We denote the initial velocity and initial acceleration of the variation by
	$$
	u(x)=\dot{U}(x,\,0)\qquad \text{  and } \qquad v(x)=\ddot{U}(x,\,0).
	$$
	\indent 	 In the following lemma, we recall the formulae for the first and the second variation of the Willmore energy (\ref{Willmore energy}). To this end, we recall that 
	\begin{align}
	Lf=-\Delta f-(|h|^2+\operatorname{Ric}(\nu,\nu))\,f
	\label{stability operator}
	\end{align}
	denotes the linearization of the mean curvature operator. In particular,
	\begin{align}
	Lu=\frac{d}{ds}\bigg|_{s=0}(H(\Sigma_s)\circ\Phi_s). \label{mean curvature change}
	\end{align}
	\begin{lem}[{\cite[\S 3]{lamm2011foliations}}] There holds \label{variation lemma}
		\begin{align}
		\frac{d}{ds}\bigg|_{s=0}\int_{\Sigma_s} H^2\,\mathrm{d}\mu=-2\int_{\Sigma} W\,u\,\mathrm{d}\mu
		\label{first variation}
		\end{align}
		where
		$$
		W=\Delta H +(|\hcirc|^2+\operatorname{Ric}(\nu,\nu))\,H.
		$$
		Moreover,
		\begin{align*}
	\frac{d^2}{ds^2}\bigg|_{s=0}\int_{\Sigma_s} H^2 \,\mathrm{d}\mu=-2\int_{\Sigma}\left[u\,Qu+H\,W\,u^2+W\,v\right]\mathrm{d}\mu
		\end{align*}
		where
		\begin{equation}
		\label{Q general formula}
		\begin{aligned}
		Qu=&\,L(Lu)+\frac12 \,H^2\, Lu+2\,H\,g(\hcirc,\nabla^2u)+2\,H\,\operatorname{Ric}(\nu,\nabla u)+2\,\hcirc(\nabla H,\nabla u)\\&\qquad +u\,\bigg[|\nabla H|^2+2\,\operatorname{Ric}(\nu,\nabla H)
		+H\,\Delta H+2\,g(\hcirc,\nabla^2 H)+2\,H^2\,|\hcirc|^2\\&\qquad\qquad\quad +2\,H\,g(\operatorname{Ric},\hcirc)-H\,(D_{\nu}\operatorname{Ric})(\nu,\nu)\bigg]	
			\end{aligned}
			\end{equation}
is the linearization of the Willmore operator.	
\end{lem}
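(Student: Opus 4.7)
The plan is to derive both variations from two standard building blocks: the area-element evolution $\frac{d}{ds}\big|_{s=0} d\mu_s = H\,u\,d\mu$, and the mean-curvature evolution $\dot H = Lu$ recorded in (\ref{mean curvature change}). For the first variation, I would differentiate $\int_{\Sigma_s} H^2\,d\mu_s$ under the integral sign to obtain $\int(2H\,Lu + H^3 u)\,d\mu$. Expanding $Lu = -\Delta u - (|h|^2+\operatorname{Ric}(\nu,\nu))u$, integrating $-2H\,\Delta u$ by parts on the closed surface $\Sigma$, and using the two-dimensional trace decomposition $|h|^2 = |\hcirc|^2 + \tfrac12 H^2$ to absorb the cubic term, the expression collapses cleanly to $-2\int W\,u\,d\mu$.

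For the second variation, the strategy is to first upgrade the first-variation identity to arbitrary $s$: with $\dot U(\cdot,s)$ denoting the normal speed at time $s$, the same computation yields $\frac{d}{ds}\int_{\Sigma_s} H^2\,d\mu_s = -2\int_{\Sigma_s} W(\Sigma_s)\,\dot U(\cdot,s)\,d\mu_s$. Differentiating in $s$ once more, evaluating at $s=0$ via the product rule (using $\dot U(\cdot,0)=u$, $\ddot U(\cdot,0)=v$, and again the area-element identity) gives
\[
\frac{d^2}{ds^2}\bigg|_{s=0}\int_{\Sigma_s} H^2\,d\mu_s = -2\int_\Sigma\bigl[\dot W\,u + W\,v + W\,H\,u^2\bigr]\,d\mu,
\]
where $\dot W := \frac{d}{ds}\big|_{s=0} W(\Sigma_s)$. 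Setting $Qu := \dot W$ yields the second-variation formula modulo the explicit description of $Q$.

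The main work, and the principal obstacle, is to derive the explicit formula (\ref{Q general formula}) for $Qu$ by linearizing each ingredient of $W = \Delta H + (|\hcirc|^2+\operatorname{Ric}(\nu,\nu))\,H$ along the normal variation. Concretely, I would use the induced-metric variation $\dot g_{ij} = -2\,u\,h_{ij}$ together with $\dot H = Lu$ to expand $\frac{d}{ds}(\Delta H)$, which produces $\Delta(Lu) = L(Lu) + (\text{lower order})$ along with coupling terms such as $g(\hcirc,\nabla^2 H)u$ and $|\nabla H|^2 u$; use the normal-graph evolution of $h$ (Codazzi-Mainardi plus an ambient-curvature correction) to expand $\frac{d}{ds}|\hcirc|^2$, producing the terms $g(\hcirc,\nabla^2 u)$ and $\hcirc(\nabla H,\nabla u)$; and split $\frac{d}{ds}\operatorname{Ric}(\nu,\nu)$ into an ambient radial part $(D_\nu\operatorname{Ric})(\nu,\nu)\,u$ and a tangential part arising from $\dot\nu = -\nabla u$ which pairs with $\hcirc$ through the Gauss equation, accounting for the term $2\,H\,\operatorname{Ric}(\nu,\nabla u)$ and the $H^2\,|\hcirc|^2$ contribution.

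The difficulty is not conceptual but combinatorial: consistent tracking of tangential versus normal derivatives, systematic use of Gauss-Codazzi-Ricci identities, and absorbing the $\frac12 H^2\,Lu$ correction that arises from the trace decomposition in dimension two are what generate every single term of (\ref{Q general formula}). Since the calculation is carried out in detail in \cite[\S 3]{lamm2011foliations}, it suffices to cite that reference once the structure above has been set up.
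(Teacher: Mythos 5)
The first-variation computation is correct and is exactly the standard argument: differentiate under the integral using $\dot{(\mathrm{d}\mu)}=Hu\,\mathrm{d}\mu$ and $\dot H=Lu$, integrate $-2H\Delta u$ by parts, and absorb the cubic term via $|h|^2=|\hcirc|^2+\tfrac12 H^2$ to land on $-2\int Wu\,\mathrm{d}\mu$. The paper itself offers no proof — it cites \cite[\S3]{lamm2011foliations} — so there is nothing in the text for your argument to diverge from, and your overall route is the natural one.

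For the second variation there is, however, a sign problem that you should resolve. Your differentiation of the first-variation identity is sound and yields
$\frac{d^2}{ds^2}\big|_{s=0}\int_{\Sigma_s}H^2\,\mathrm{d}\mu=-2\int_\Sigma\bigl[\dot W\,u+W\,v+W\,H\,u^2\bigr]\mathrm{d}\mu$,
but the identification $Qu:=\dot W$ contradicts the paper's definition (\ref{Q change of W}), which reads $Qu=-\frac{d}{ds}\big|_{s=0}\bigl(W(\Sigma_s)\circ\Phi_s\bigr)$, i.e.\ $Qu=-\dot W$. It is also inconsistent with (\ref{Q general formula}): by (\ref{stability operator}), $L(Lu)=-\Delta(Lu)-(|h|^2+\operatorname{Ric}(\nu,\nu))Lu$, so the leading term of (\ref{Q general formula}) is $+\Delta^2u$, whereas the leading term of $\dot W=\Delta\dot H+\dots=\Delta(Lu)+\dots$ is $-\Delta^2 u$. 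This is precisely why your claim ``$\Delta(Lu)=L(Lu)+(\text{lower order})$'' is off by a sign; the correct relation is $\Delta(Lu)=-L(Lu)-(|h|^2+\operatorname{Ric}(\nu,\nu))Lu$. With the correct convention $Qu=-\dot W$, your computation gives $\frac{d^2}{ds^2}\big|_{s=0}\int H^2=2\int u\,Qu-2\int(Wv+HWu^2)$, which is the form the paper actually uses: in the proof of Lemma \ref{G expansion} the quadratic Taylor coefficient of $\int_{\Sigma}H^2$ is recorded as $+\int u\,Qu-\int WHu^2$. So the displayed second-variation formula in the statement carries a sign misprint, and your two compensating sign slips (in defining $Q$ and in relating $\Delta L$ to $LL$) were introduced to reproduce it verbatim. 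Once you fix these two signs, your argument is the correct derivation, and, like the paper, it is reasonable to defer the long Gauss–Codazzi bookkeeping behind (\ref{Q general formula}) to \cite{lamm2011foliations}.
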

The operator $Q$ measures how $W$ changes along a normal variation of $\Sigma$. More precisely,
\begin{align}
Qu=-\frac{d}{ds}\bigg|_{s=0}(W(\Sigma_s)\circ \Phi_s).
\label{Q change of W}
\end{align}
\indent Surfaces that are critical for the Willmore energy are called Willmore surfaces. The corresponding Euler-Lagrange equation is
$$
-W=0.
$$	
Surfaces that are critical for the Willmore energy among so-called area-preserving variations are called area-constrained Willmore surfaces. They satisfy the area-constrained Willmore equation
$$
-W=\kappa\, H
$$
where $\kappa\in\mathbb{R}$ is a Lagrange parameter. A variation $\{\Sigma_s:|s|<\epsilon\}$ is called area-preserving if
$
|\Sigma_s|=|\Sigma|
$
for all $s\in(-\epsilon,\epsilon)$. \\ \indent An area-constrained Willmore surface $\Sigma$ is stable if it passes the second derivative test for the Willmore energy among all area-preserving variations. Note that this is always satisfied if $\Sigma$ is a minimal surface. If $\Sigma$ is not a minimal surface, we define
$
u^\perp=u+s\,H
$
where
$$
s=-\bigg(\int_{\Sigma}H\,u\,\mathrm{d}\mu\bigg)^{-1}\,\int_{\Sigma}H^2\,\mathrm{d}\mu
$$
is chosen such that
$$
\int_{\Sigma} u^\perp\, H\,\mathrm{d}\mu=0.
$$
 It can be seen that $\Sigma$ is stable if and only if 
$$
\kappa \int_{\Sigma} u^\perp\,Lu^\perp\,\mathrm{d}\mu\leq \int_{\Sigma} u^\perp\,Qu^\perp\,\mathrm{d}\mu
$$
for every $u\in C^{\infty}(\Sigma)$; see \cite[(41) on p.~16]{lamm2011foliations}.
\section{Spherical harmonics and Legendre polynomials}
\label{spherical harmonics appendix}
In this section, we collect some standard facts about the Laplace operator on the unit sphere. 
\begin{lem}\label{spherical harmonics lemma 1}
	The eigenvalues of the operator 
	$$-\bar\Delta:H^{2}(S_1(0))\to L^2(S_1(0))$$
are given by
$$
\{l\,(l+1):l=0\,,1\,,2,\dots\}.
$$	
\end{lem}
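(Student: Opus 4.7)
The plan is to identify the eigenfunctions of $-\bar\Delta$ on $S_1(0)$ with the restrictions to $S_1(0)$ of homogeneous harmonic polynomials on $\mathbb{R}^3$ and to read off the eigenvalues by applying the Euclidean Laplacian in polar coordinates.

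First, I would decompose the Euclidean Laplacian in polar coordinates $(r,\omega)\in(0,\infty)\times S_1(0)$ as
\begin{equation*}
\bar\Delta = \partial_r^2 + \tfrac{2}{r}\,\partial_r + r^{-2}\,\bar\Delta_{S_1(0)}.
\end{equation*}
Given an integer $l\geq 0$ and a homogeneous harmonic polynomial $p$ on $\mathbb{R}^3$ of degree $l$, write $p(r\omega)=r^l\,Y_l(\omega)$ where $Y_l$ is the restriction of $p$ to $S_1(0)$. Applying the displayed formula and using that $\bar\Delta p=0$ yields
\begin{equation*}
0 = l(l-1)\,r^{l-2}\,Y_l + 2l\,r^{l-2}\,Y_l + r^{l-2}\,\bar\Delta_{S_1(0)} Y_l,
\end{equation*}
so that $-\bar\Delta_{S_1(0)}Y_l = l(l+1)\,Y_l$. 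This shows that every value $l(l+1)$ is an eigenvalue of $-\bar\Delta$ on $S_1(0)$, with eigenspace containing the restrictions of all homogeneous harmonic polynomials of degree $l$.

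Next, I would argue that these are all eigenvalues. For this, I would invoke the standard fact that the space of restrictions to $S_1(0)$ of homogeneous polynomials of degree at most $N$ coincides with the direct sum, over $l\leq N$, of the restrictions of homogeneous harmonic polynomials of degree $l$; this follows from the harmonic decomposition $p = \sum_{k} |x|^{2k}\,h_{l-2k}$ with each $h_j$ harmonic and homogeneous of degree $j$. By the Stone-Weierstrass theorem, polynomials are dense in $C^0(S_1(0))$ and hence in $L^2(S_1(0))$, so the algebraic sum $\bigoplus_{l\geq 0}\Lambda_l$ is dense in $L^2(S_1(0))$.

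Finally, I would close the argument by observing that $-\bar\Delta$ is a self-adjoint operator with compact resolvent on $L^2(S_1(0))$, so its spectrum is discrete with eigenspaces pairwise $L^2$-orthogonal and spanning $L^2(S_1(0))$. Since the dense subspace $\bigoplus_{l\geq 0}\Lambda_l$ is already exhausted by eigenspaces with eigenvalues $l(l+1)$, no other eigenvalue can exist. The only subtlety is arranging the density and self-adjointness cleanly, but both are standard; I do not expect any nontrivial obstacle in this proof, as the argument is a textbook application of separation of variables together with the harmonic decomposition of polynomials.
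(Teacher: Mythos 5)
Your proof is correct and complete. The paper itself states this result without proof in Appendix \ref{spherical harmonics appendix}, presenting it as a standard fact (citing \cite{CourantHilbert} for related material), so there is no paper proof to compare against. Your argument -- polar decomposition of the Euclidean Laplacian, restriction of homogeneous harmonic polynomials to obtain eigenfunctions with eigenvalue $l(l+1)$, the harmonic decomposition $p=\sum_k |x|^{2k}h_{l-2k}$ to show that spherical polynomials exhaust all polynomial restrictions, Stone-Weierstrass for density, and self-adjointness with compact resolvent to conclude that no further eigenvalues exist -- is the standard textbook proof and handles all the necessary points correctly.
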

We denote the eigenspace corresponding to the eigenvalue $l\,(l+1)$ by $$\Lambda_l(S_1(0))=\{f\in C^{\infty}(S_1(0)):-\bar\Delta f=l\,(l+1)f\}.$$ Recall that these eigenspaces are finite dimensional and that
$$
L^2(S_1(0))=\bigoplus_{l=0}^\infty \Lambda_l(S_1(0)). 
$$
\begin{coro}
	\label{Willmore eigenvalues}
	The eigenvalues of the operator
	$$
	\bar\Delta^2+2\,\bar\Delta: H^4(S_1(0))\to L^2(S_1(0))
	$$
	are given by
	$$\{(l-1)\,l\,(l+1)\,(l+2):l=0\,,1\,,2,\dots\}.$$
\end{coro}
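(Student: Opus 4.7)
The plan is to diagonalize $\bar\Delta^2 + 2\bar\Delta$ by using the spectral decomposition of $-\bar\Delta$ already recorded in Lemma \ref{spherical harmonics lemma 1} and the completeness of spherical harmonics.

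First, I would observe that since $\bar\Delta^2 + 2\bar\Delta = p(-\bar\Delta)$ with $p(t) = t^2 - 2t$, each eigenspace $\Lambda_l(S_1(0))$ of $-\bar\Delta$ is invariant under $\bar\Delta^2 + 2\bar\Delta$, and on $\Lambda_l(S_1(0))$ the operator acts as multiplication by
\[
p(l(l+1)) = l^2(l+1)^2 - 2\,l(l+1) = l(l+1)\bigl(l(l+1) - 2\bigr) = (l-1)\,l\,(l+1)\,(l+2).
\]
This already exhibits $(l-1)\,l\,(l+1)\,(l+2)$ as an eigenvalue for every integer $l\geq 0$, with eigenspace containing $\Lambda_l(S_1(0))$.

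Next, I would verify that there are no other eigenvalues. Since $L^2(S_1(0)) = \bigoplus_{l=0}^{\infty} \Lambda_l(S_1(0))$, any $f \in H^4(S_1(0))$ admits an $L^2$-convergent decomposition $f = \sum_l f_l$ with $f_l \in \Lambda_l(S_1(0))$, and
\[
(\bar\Delta^2 + 2\bar\Delta)f = \sum_{l=0}^{\infty} (l-1)\,l\,(l+1)\,(l+2)\, f_l.
\]
If $f$ is a nonzero eigenfunction with eigenvalue $\mu$, matching coefficients in this expansion forces $f_l = 0$ for every $l$ with $(l-1)\,l\,(l+1)\,(l+2) \neq \mu$, so $\mu$ must belong to the list $\{(l-1)\,l\,(l+1)\,(l+2):l = 0,1,2,\dots\}$.

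There is no real obstacle; the only thing to note is that one must invoke the completeness of spherical harmonics in $L^2(S_1(0))$ to rule out additional eigenvalues, and appeal to elliptic regularity to ensure that eigenfunctions in $H^4$ decompose term-wise via genuine smooth spherical harmonics.
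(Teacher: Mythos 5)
Your proposal is correct and is exactly the argument the paper intends (the result is stated as a corollary of Lemma~\ref{spherical harmonics lemma 1} with no separate proof): write $\bar\Delta^2+2\bar\Delta$ as the polynomial $p(-\bar\Delta)$ with $p(t)=t^2-2t$, evaluate on each $\Lambda_l(S_1(0))$ to get $p(l(l+1))=(l-1)\,l\,(l+1)\,(l+2)$, and use completeness of the spherical harmonics plus elliptic regularity to see there are no other eigenvalues.
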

\begin{lem} 
	There holds
	\begin{align*}
	\Lambda_0(S^1(0))&=\operatorname{span}\{1\},\\
	\Lambda_1(S^1(0))&=\operatorname{span}\{y^1,\,y^2,\,y^3\},\\
	\Lambda_2(S^1(0))&=\operatorname{span}\{Y_2^{11},\,Y_2^{22},\,Y_2^{12},\,Y_2^{13},\,Y_2^{23}\}.\\
	\end{align*}
	Here $y^1,\,y^2,\,y^3$ are the coordinate functions and
	$$
	Y_2^{ij}=\frac12\,(3\,y^i\,y^j-\delta^{ij}).
	$$
\end{lem}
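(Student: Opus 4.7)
The plan is to verify directly that each listed function is a restriction of a homogeneous harmonic polynomial on $\mathbb{R}^3$, and then close the argument with a dimension count. The key computational tool is the identity $\bar\Delta_{\mathbb{R}^3} P = r^{l-2}\bigl[\,l(l+1)\,f + \bar\Delta_{S_1(0)} f\,\bigr]$ for a function $P$ that is homogeneous of degree $l$ with restriction $f = P|_{S_1(0)}$; in particular, if $P$ is a homogeneous harmonic polynomial of degree $l$ on $\mathbb{R}^3$, then $f \in \Lambda_l(S_1(0))$ by Lemma \ref{spherical harmonics lemma 1}.

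Applying this observation case by case is immediate. The constant $1$ is a harmonic homogeneous polynomial of degree $0$, hence $1 \in \Lambda_0(S_1(0))$. Each coordinate function $y^i$ is a harmonic homogeneous polynomial of degree $1$, hence $y^i \in \Lambda_1(S_1(0))$. For the degree-two level, I would compute
\[
\bar\Delta_{\mathbb{R}^3}\bigl(3\,y^i\,y^j - \delta^{ij}\,|y|^2\bigr) = 6\,\delta^{ij} - 2\,\delta^{ij}\cdot 3 = 0,
\]
so that $Y_2^{ij} = \tfrac{1}{2}(3\,y^i\,y^j - \delta^{ij})$ is the restriction to $S_1(0)$ of a harmonic homogeneous polynomial of degree $2$, and therefore $Y_2^{ij} \in \Lambda_2(S_1(0))$. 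Linear independence at each level is immediate (the relevant polynomials restrict to linearly independent functions on $S_1(0)$, as can be checked by evaluating at coordinate-axis points and their pairwise sums).

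To conclude equality rather than merely inclusion, I would invoke the standard dimension count: the restriction map from homogeneous harmonic polynomials of degree $l$ on $\mathbb{R}^3$ into $\Lambda_l(S_1(0))$ is injective, and the space of such polynomials has dimension $\binom{l+2}{2} - \binom{l}{2} = 2\,l+1$, which is known to agree with $\dim \Lambda_l(S_1(0))$. This yields dimensions $1$, $3$, and $5$ at levels $l = 0,\,1,\,2$, matching the number of listed independent functions. The only small point requiring care is that on $S_1(0)$ one has $Y_2^{11} + Y_2^{22} + Y_2^{33} = \tfrac{1}{2}(3\,|y|^2 - 3) = 0$, which explains why only five of the symmetric combinations $Y_2^{ij}$ need to be listed in order to span $\Lambda_2(S_1(0))$. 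There is no real obstacle in this proof — it is purely a combination of a routine polynomial computation and the classical dimension formula for spherical harmonics on $S^2$.
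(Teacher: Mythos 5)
Your proof is correct and is exactly the classical argument: identify each function as the restriction of a homogeneous harmonic polynomial via the identity $\bar\Delta_{\mathbb{R}^3}P = r^{l-2}\bigl[l(l+1)f + \bar\Delta_{S_1(0)}f\bigr]$ together with Lemma \ref{spherical harmonics lemma 1}, then close with the dimension count $\dim \Lambda_l(S_1(0)) = 2l+1$ and the traceless relation $Y_2^{11}+Y_2^{22}+Y_2^{33}=0$. The paper states this lemma without proof as standard background, and your argument is the expected one; no gap.
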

We also record the following useful orthogonality relations.
\begin{lem}	\label{spherical identities}
	Let $i,j,k,l\in\{1,\,2\,,3\}$. There holds
	\begin{align*}
	\int_{S_1(0)} y^i\,y^j\,\mathrm{d}\bar\mu&=\frac{4\,\pi}{3}\,\delta_{ij},\\
	\int_{S_1(0)} y^i\,y^j\,y^k\,y^l\,\mathrm{d}\bar\mu&=\frac{4\,\pi}{15}\,(\delta_{ij}\,\delta_{kl}+\delta_{ik}\,\delta_{jl}+\delta_{il}\,\delta_{jk}),
	\\
	\int_{S_1(0)}Y_2^{ij}\,Y_2^{kl}\,\mathrm{d}\bar\mu&=\frac{\pi}{5}\,(3\,\delta_{ik}\,\delta_{jl}+3\,\delta_{il}\,\delta_{jk}-2\,\delta_{ij}\,\delta_{kl}),\\\int_{B_1(0)} y^i\,y^j\,\mathrm{d}\bar{v}&=\frac{4\,\pi}{15}\,\delta_{ij}.
	\end{align*}

\end{lem}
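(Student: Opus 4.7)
The plan is to deduce each of the four identities by a short invariance-and-trace argument, relying on the fact that $O(3)$ acts transitively on $S_1(0)$ and preserves both $\text{d}\bar\mu$ and $\text{d}\bar{v}$.

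For the first identity, observe that $\int_{S_1(0)} y^i\,y^j\,\text{d}\bar\mu$ is a symmetric 2-tensor invariant under the $O(3)$-action on $\mathbb{R}^3$, so it must equal $C_1\,\delta_{ij}$ for some $C_1\in\mathbb{R}$. Taking the trace, $3\,C_1=\int_{S_1(0)}|y|^2\,\text{d}\bar\mu=|S_1(0)|=4\,\pi$, which gives $C_1=4\,\pi/3$. For the second identity, the integral defines a fully symmetric, $O(3)$-invariant 4-tensor on $\mathbb{R}^3$, hence equals $C_2\,(\delta_{ij}\delta_{kl}+\delta_{ik}\delta_{jl}+\delta_{il}\delta_{jk})$ for some $C_2\in\mathbb{R}$. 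Contracting over the pairs $(i,j)$ and $(k,l)$ and summing produces $15\,C_2=\int_{S_1(0)}|y|^4\,\text{d}\bar\mu=4\,\pi$, so $C_2=4\,\pi/15$.

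For the third identity, I would expand
\begin{equation*}
Y_2^{ij}\,Y_2^{kl}=\tfrac14\,\bigl(9\,y^i y^j y^k y^l-3\,y^i y^j\,\delta_{kl}-3\,y^k y^l\,\delta_{ij}+\delta_{ij}\,\delta_{kl}\bigr)
\end{equation*}
and integrate term by term, substituting the first two identities. The $y^i y^j y^k y^l$ integral contributes $\tfrac{3\,\pi}{5}(\delta_{ij}\delta_{kl}+\delta_{ik}\delta_{jl}+\delta_{il}\delta_{jk})$, the cross terms contribute $-\pi\,\delta_{ij}\delta_{kl}-\pi\,\delta_{ij}\delta_{kl}$, and the constant term contributes $\pi\,\delta_{ij}\delta_{kl}$; collecting these yields $\tfrac{\pi}{5}(3\,\delta_{ik}\delta_{jl}+3\,\delta_{il}\delta_{jk}-2\,\delta_{ij}\delta_{kl})$, as claimed. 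For the fourth identity, the coarea formula applied in radial polar coordinates gives
\begin{equation*}
\int_{B_1(0)} y^i\,y^j\,\text{d}\bar{v}=\int_0^1 r^4\,\text{d}r\int_{S_1(0)}y^i\,y^j\,\text{d}\bar\mu=\tfrac{1}{5}\cdot\tfrac{4\,\pi}{3}\,\delta_{ij}=\tfrac{4\,\pi}{15}\,\delta_{ij}.
\end{equation*}

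There is no serious obstacle in this proof; it is entirely routine invariant-theory bookkeeping. The only step that requires minor care is the algebraic reduction in the third identity, where the coefficients from the three contributions must be combined correctly.
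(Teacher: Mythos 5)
Your proof is correct, and all four computations check out. The paper states this lemma without proof in Appendix B (it is presented alongside other standard facts about spherical harmonics), so there is no paper argument to compare against; your invariance-and-trace approach is the standard way to establish these identities and is appropriately brief.
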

The Legendre polynomials $P_0,\,P_1,\,P_2,\dots$ may be defined via a generating function.
More precisely, given $s\in[0,1]$ and $t\in[0,1)$, there holds
\begin{align}
(1-2\,s\,t+t^2)^{-\frac12}=\sum_{l=0}^\infty P_l(s)\,t^l. \label{legendre generating2}
\end{align} 
A proof of the following lemma can be found in \cite[\S8]{CourantHilbert}.
\begin{lem}	\label{Legendre identities} Let $i\in\{1,\,2,\,3\}$ and $l,\,l_1,\,l_2\in\{0,\,1,\,2\dots\}$. 
	There holds
	\begin{align}
	P_l(-y^i)\in\Lambda_l(S_1(0)).
	\label{legendre polynomials are spherical harmonics}
	\end{align}
Moreover,
	$$
	\int_{S_1(0)} P_{l_1}(-y^i)\,P_{l_2}(-y^i)\,\mathrm{d}\bar\mu=\frac{4\,\pi}{2\,l+1}\,\delta_{l_1l_2}.
	$$
\end{lem}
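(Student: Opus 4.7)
The plan is to reduce both claims to one-dimensional Sturm-Liouville theory for Legendre's differential equation. For the first claim, I would introduce spherical coordinates $(\theta,\varphi)$ on $S_1(0)$ in which $y^i=\cos\theta$ is the axial height function. In such coordinates, the spherical Laplacian applied to any function depending only on $\theta$ reduces to the Sturm-Liouville operator $\frac{1}{\sin\theta}\,\partial_\theta(\sin\theta\,\partial_\theta\,\cdot)$. Applied to the axisymmetric function $P_l(-y^i)=P_l(\cos(\pi-\theta))$, this operator yields $-l\,(l+1)\,P_l(-y^i)$ via Legendre's differential equation, since substituting $\tilde\theta=\pi-\theta$ transforms the operator to itself and preserves the measure. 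This establishes $P_l(-y^i)\in\Lambda_l(S_1(0))$.

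For the orthogonality, suppose first that $l_1\neq l_2$. Self-adjointness of $-\bar\Delta$ on $S_1(0)$ combined with the eigenvalue identity just established gives
\[
l_1\,(l_1+1)\int_{S_1(0)} P_{l_1}(-y^i)\,P_{l_2}(-y^i)\,\text{d}\bar\mu = l_2\,(l_2+1)\int_{S_1(0)} P_{l_1}(-y^i)\,P_{l_2}(-y^i)\,\text{d}\bar\mu,
\]
which forces the integral to vanish. When $l_1=l_2=l$, I would evaluate in the same spherical coordinates:
\[
\int_{S_1(0)} P_l(-y^i)^2\,\text{d}\bar\mu = 2\,\pi\int_0^\pi P_l(-\cos\theta)^2\,\sin\theta\,\text{d}\theta = 2\,\pi\int_{-1}^1 P_l(t)^2\,\text{d}t = \frac{4\,\pi}{2\,l+1},
\]
where the final equality is the classical $L^2$-normalization of the Legendre polynomials on $[-1,1]$, a standard consequence of the Rodrigues formula.

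Neither step presents a genuine obstacle, since both reduce to facts that are worked out in the reference cited immediately after the lemma. A more intrinsic alternative to the first part would be to recognize $|x|^l\,P_l(-x^i/|x|)$ as a homogeneous harmonic polynomial of degree $l$ on $\mathbb{R}^3$, by expanding the Newtonian potential $|x+t\,e_i|^{-1}$ in powers of $|x|/t$ via the generating function identity (\ref{legendre generating2}) and noting that the Laplacian annihilates each term separately. The general correspondence between homogeneous harmonic polynomials of degree $l$ and eigenfunctions of $-\bar\Delta$ with eigenvalue $l\,(l+1)$ then yields the first claim without any computation specific to Legendre's equation.
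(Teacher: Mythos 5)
The paper does not prove this lemma itself; it simply cites Courant--Hilbert \cite[\S8]{CourantHilbert} for the proof, so there is no in-text argument to compare against. Your proof is correct and standard. The reduction to Legendre's differential equation via the axisymmetric form of $\bar\Delta$ is sound (the reflection $\theta\mapsto\pi-\theta$ indeed fixes the Sturm--Liouville operator $\sin^{-1}\theta\,\partial_\theta(\sin\theta\,\partial_\theta\,\cdot)$), the self-adjointness argument for orthogonality when $l_1\neq l_2$ is the usual one, and the normalization $\int_{-1}^1 P_l^2\,\text{d}t=2/(2l+1)$ pulled back through $t=-\cos\theta$ gives $4\pi/(2l+1)$ as claimed. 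Your alternative route through the generating function is also valid: since $|x-r\,e_i|^{-1}$ is harmonic in $x$ away from $r\,e_i$ and the expansion $(1-2st+t^2)^{-1/2}=\sum_l P_l(s)\,t^l$ with $s=x^i/|x|$, $t=|x|/r$ exhibits $|x|^l\,P_l(x^i/|x|)$ as the coefficient of $r^{-l-1}$, each of these is a homogeneous harmonic polynomial of degree $l$, and reflection $x^i\mapsto-x^i$ preserves harmonicity. This reproduces the first claim without invoking Legendre's ODE, which may be the cleaner of the two for this paper's purposes, since it ties directly into the generating-function identity (\ref{legendre generating2}) already recorded nearby.
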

 The next lemma extends the calculation of inverse powers of $|y+\xi|$ in  \cite[p.~668]{brendle2014large}.
\begin{lem}
	\label{inverse expansions lemma} Let $\xi\in\mathbb{R}^3$ with $|\xi|\neq 1$ and   $k\in\{0,\,1\,,2\,,3\}$.
	There holds, for all $y\in S_1(0)$
	\begin{align*}
	|y+\xi|^{-2k-1}=\begin{dcases}&\sum_{l=0}^\infty\, a_{k,l}(\xi)\,|\xi|^l\, P_l(-|\xi|^{-1}\,\bar g(y,\xi))\quad\hspace{0.6cm} \text{ if } |\xi|<1,\\&\sum_{l=0}^\infty\, \tilde a_{k,l}(\xi)\,|\xi|^{-l-1}\, P_l(-|\xi|^{-1}\,\bar g(y,\xi))\quad \text{ if } |\xi|>1.
	\end{dcases}
	\end{align*}
	Here,
	\begin{align*}
	a_{0,l}(\xi)&=1,\\
	 a_{1,l}(\xi)&=(2\,l+1)\,\frac{1}{1-|\xi|^2},\\
	 a_{2,l}(\xi)&=(2l+1)\,\frac{(2\,l+3)-(2\,l-1)\,|\xi|^2}{3\,(1-|\xi|^2)^3}, \\ 
	a_{3,l}(\xi)&=(2l+1)\,\frac{(2\,l+3)\,(2\,l+5)-2\,(2\,l-3)\,(2\,l+5)\,|\xi|^2+(2\,l-3)\,	(2\,l-1)\,|\xi|^4}{15\,(1-|\xi|^2)^5},
	\end{align*}
	and
	$$
	\tilde a_{k,l}(\xi)=(-1)^k\,|\xi|^{-2k}\,a_{k,l}(|\xi|^{-2}\,\xi).
	$$
\end{lem}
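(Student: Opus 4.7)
I will base the proof on the classical Legendre generating function identity (\ref{legendre generating2}). For $y \in S_1(0)$, setting $s = -|\xi|^{-1}\,\bar{g}(y,\xi)$ and $t = |\xi|$ gives $|y+\xi|^2 = 1 - 2\,s\,t + t^2$, so the case $k = 0$, $|\xi| < 1$ is immediate from (\ref{legendre generating2}) and yields $a_{0,l} \equiv 1$. The $|\xi| > 1$ case will then follow uniformly in $k$ from the inversion $\zeta = |\xi|^{-2}\,\xi$: one verifies that $-|\zeta|^{-1}\,\bar g(y,\zeta) = -|\xi|^{-1}\,\bar g(y,\xi)$ and $|y+\zeta|^2 = |\xi|^{-2}\,|y+\xi|^2$, whence $|y+\xi|^{-2k-1} = |\xi|^{-2k-1}\,|y+\zeta|^{-2k-1}$. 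Expanding the right-hand side with the $|\zeta| < 1$ formula and using that $a_{k,l}(\zeta)$ depends only on $|\zeta|$ produces $\tilde a_{k,l}$ in terms of $a_{k,l}$. It therefore suffices to determine $a_{k,l}$ for $k = 1,\,2,\,3$.

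For $k = 1$, write $X = (1-2\,s\,t+t^2)^{-1/2}$. Differentiating (\ref{legendre generating2}) in $t$ gives $(s-t)\,X^3 = \sum_{l=0}^\infty l\,P_l(s)\,t^{l-1}$; multiplying by $2\,t$ and adding the tautology $X = (1-2\,s\,t+t^2)\,X^3$ produces the identity
\[
(1 - t^2)\,X^3 = \sum_{l=0}^\infty (2\,l+1)\,P_l(s)\,t^l,
\]
which gives $a_{1,l}(\xi) = (2\,l+1)/(1-|\xi|^2)$. For $k = 2$ and $k = 3$ I will iterate using $\partial_s X^{2\,k+1} = (2\,k+1)\,t\,X^{2\,k+3}$. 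Applying $\partial_s$ to the known expansion of $X^{2k+1}$ and invoking the classical identity
\[
P_l'(s) = \sum_{\substack{0 \leq m < l \\ l - m \text{ odd}}}(2\,m+1)\,P_m(s),
\]
derived by iterating Bonnet's recursion $P_{l+1}'(s) - P_{l-1}'(s) = (2\,l+1)\,P_l(s)$, and reorganizing the double sum by $m$ via the substitution $l = m + 2\,j + 1$ reduces matters to the elementary series $\sum_{j \geq 0} j^r\,t^{2j}$ for $r = 0,\,1,\,2$. Their closed forms $(1-t^2)^{-1}$, $t^2\,(1-t^2)^{-2}$, and $t^2\,(1+t^2)\,(1-t^2)^{-3}$ produce polynomials in $|\xi|^2$ matching the stated formulas and explain the pattern of two additional powers of $(1-|\xi|^2)$ per iteration, as well as the factorials $1,\,3,\,15$ in the denominators.

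The one substantive step is the algebraic bookkeeping at $k = 3$, where one must combine the quadratic polynomial $(2\,m+3) - (2\,m-1)\,t^2$ obtained for $a_{2,m}$ with the Legendre derivative identity and sum three series to assemble the quartic polynomial in $|\xi|^2$ appearing in $a_{3,l}$. I anticipate no genuine analytical obstacle; the argument is a careful but routine computation starting from (\ref{legendre generating2}).
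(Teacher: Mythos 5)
Your proof is correct, but takes a genuinely different route from the paper's. The paper's argument rests on the recursion
\[
|y+\xi|^{-2k-1}=\frac{1}{1-|\xi|^2}\left(\frac{2}{2k-1}\,\xi^i\,\partial_i |y+\xi|^{-2k+1}+|y+\xi|^{-2k+1}\right),
\]
where $\partial_i$ acts on $\xi$. Because $-|\xi|^{-1}\,\bar g(y,\xi)$ is homogeneous of degree zero in $\xi$, the radial operator $\xi^i\partial_i$ annihilates the Legendre factors $P_l$ and acts only on the purely radial coefficients $a_{k-1,l}(\xi)\,|\xi|^l$ (resp.\ $\tilde a_{k-1,l}(\xi)\,|\xi|^{-l-1}$), so each step from $k-1$ to $k$ is a single differentiation of a function of $|\xi|$ with no resummation and, in particular, no appeal to the expansion of $P_l'$. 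Your argument instead iterates $\partial_s$ on the generating series, which is equally valid but requires the identity $P_l'=\sum_{m<l,\ l-m \text{ odd}}(2m+1)P_m$, a reorganization of the resulting double sum by $l=m+2j+1$, and the closed forms of $\sum_{j}j^r\,t^{2j}$ for $r=0,1,2$; carrying this through does reproduce $a_{2,l}$ and $a_{3,l}$, so there is no gap, merely more bookkeeping than the paper needs. One detail worth flagging: your inversion $\zeta=|\xi|^{-2}\xi$, made explicit, gives $\tilde a_{k,l}(\xi)=|\xi|^{-2k}\,a_{k,l}(|\xi|^{-2}\xi)$ with no factor $(-1)^k$, and a direct check at $k=1$ (expand $(1-t^2)^{-1}(1-2st+t^2)^{-3/2}$ at $t=|\xi|^{-1}$) gives $\tilde a_{1,l}(\xi)=(2l+1)/(|\xi|^2-1)$; so the sign $(-1)^k$ in the lemma's stated relation between $\tilde a_{k,l}$ and $a_{k,l}$ appears to be a typographical slip for odd $k$, and your computation is in fact the correct one.
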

\begin{proof}
	If $k=0$, the expansions follow from (\ref{legendre generating2}) with $s=-|\xi|^{-1}\,\bar g(y,\xi)$ and choice of
	$$
	t=\begin{dcases}& |\xi|\qquad\,\,\,\,\,\, \text{if }|\xi|<1, \\
	&|\xi|^{-1}\qquad \text{if }|\xi|>1.
	\end{dcases}
	$$ 
	The asserted formula follows from the recursive relation
	$$
	|y+\xi|^{-2k-1}=\frac{1}{1-|\xi|^2}\,\bigg(\frac{2}{2\,k-1}\,\sum_{i=1}^3\xi^i\,\partial_i( |y+\xi|^{-2k+1})+|y+\xi|^{-2k+1}\bigg),
	$$
	where the partial derivative is with respect to $\xi$.
\end{proof}
\begin{lem}
	Let $t\in(-1,1)$. There holds
	$$
	\log(1+t)=\sum_{l=1}^\infty \frac{(-1)^{l-1}}{l}\,t^l
	\qquad \text{ and } \qquad 
	\frac{1}{(1-t^2)^2}=\sum_{l=0}^\infty (1+l)\, t^{2\,l}.
	$$
	\label{log identities}
\end{lem}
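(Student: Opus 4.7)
Both identities are classical facts about power series on $(-1,1)$, so the plan is to derive them from the geometric series by term-by-term integration and differentiation, respectively. Throughout, the only analytic input I need is that a power series with radius of convergence $1$ can be integrated or differentiated term-by-term on any compact subinterval of $(-1,1)$, which follows from the uniform convergence of the partial sums there.

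For the first identity, I would start from the geometric series
\[
\frac{1}{1+s}=\sum_{l=0}^{\infty}(-1)^l\,s^l,\qquad |s|<1,
\]
and integrate from $0$ to $t\in(-1,1)$. Uniform convergence on $[-|t|,|t|]$ justifies swapping integral and sum, yielding
\[
\log(1+t)=\int_0^t\frac{\mathrm{d}s}{1+s}=\sum_{l=0}^{\infty}(-1)^l\,\frac{t^{l+1}}{l+1}=\sum_{l=1}^{\infty}\frac{(-1)^{l-1}}{l}\,t^l,
\]
which is the claim.

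For the second identity, the quickest route is to differentiate the geometric series
\[
\frac{1}{1-u}=\sum_{l=0}^{\infty}u^l,\qquad |u|<1,
\]
once in $u$ to obtain $(1-u)^{-2}=\sum_{l=0}^{\infty}(l+1)\,u^l$, again legitimately on compact subsets of $(-1,1)$. Setting $u=t^2$ for $t\in(-1,1)$ gives
\[
\frac{1}{(1-t^2)^2}=\sum_{l=0}^{\infty}(l+1)\,t^{2l},
\]
as desired.

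There is essentially no obstacle; the only point that requires a word of justification is the interchange of limit and integral or derivative, which is handled once and for all by the uniform convergence of power series on compact subsets of the open disk of convergence. Since the lemma is used in the main text only inside an already convergent sum (compare the application in Lemma \ref{G expansion}), no quantitative refinement is needed.
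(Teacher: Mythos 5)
Your proof is correct: both identities follow from the geometric series by term-by-term integration and differentiation, respectively, and the justification via uniform convergence on compact subintervals is exactly the standard argument. The paper states this lemma without proof as a classical fact, so there is no distinct approach in the paper to compare against; your derivation is the canonical one.
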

	\section{Some geometric expansions}
In this section, we compute several geometric expansions needed in this paper. Recall that $S_{\xi,\lambda}=S_\lambda(\lambda\,\xi)$. We distinguish between the cases   $|\xi|<1-\delta$ and $|\xi|>1+\delta$ where $\delta\in(0,1/2)$. We will assume that $\lambda>\lambda_0$ where $\lambda_0>1$ is large. We note that, for every $x\in S_{\xi,\lambda}$,
		\begin{align*}
		\delta\leq \lambda^{-1}\, |x| \leq 2-\delta \qquad \text{if}\qquad |\xi|<1-\delta
		\end{align*} 
		and
		\begin{align*} 
		|\xi|-1&\leq
		\lambda^{-1}\,|x|\leq |\xi|+1\qquad \text{if}\qquad  |\xi|>1+\delta.
		\end{align*} \indent 
Throughout, we assume that  $g$ is a Riemannian metric on $\mathbb{R}^3$ such that
\begin{align*}
g=(1+|x|^{-1})^4\,\bar{g} +\sigma
\end{align*} 
where $\sigma$ is a symmetric, covariant two-tensor with, as $x\to\infty$  for every multi-index $J$ with $|J|\leq 4$,
\begin{align} \label{ac sigma}  \partial_J \sigma=O(|x|^{-2-|J|}).\end{align} 
 We point out additional assumptions on $g$ where required. \\ \indent 
 The estimates below depend on $\lambda_0>1,\,\delta\in(0,1/2)$, and $g$. They are otherwise  independent of $\xi$ and  $\lambda$. For outlying surfaces, where $|\xi|>1+\delta$, we recall that a bar underneath a quantity indicates evaluation at $\lambda\,\xi$, possibly after taking derivatives.  For example,
 $$
 \sigma(\lambda\,\bar\nu+\lambda\,\xi)=\ubar{\sigma} +\lambda\, D_{\bar\nu}\ubar{\sigma}+O(\lambda^{-2}\,|\xi|^{-4})
 $$
 is shorthand for
 $$
\sigma(\lambda\,\bar\nu+\lambda\,\xi)={\sigma}(\lambda\,\xi) +\lambda\, (D_{\bar\nu}{\sigma})(\lambda\,\xi)+O(\lambda^{-2}\,|\xi|^{-4}).
 $$ \indent When stating that an error term such as  $\mathcal{E}=O(\lambda^{-l_1}\,|\xi|^{-l_2})$ may be differentiated with respect to $\xi$  with $l_1,\,l_2\in\mathbb{Z}$, we mean that
 $$
\bar{D} \mathcal{E}= O(\lambda^{-l_1}\,|\xi|^{-l_2-1}),
 $$
 where differentiation is with respect to $\xi$.\\\indent
For the statements below, recall that 
$$
\phi(x)=1+|x|^{-1}
$$
denotes the conformal factor of the Schwarzschild metric.
\begin{lem}
	 We have, for $i,\,j=1,\,2,\,3$,
	 	\begin{align}
	 (\operatorname{Ric}_S)(e_i,e_j)=2 \,\phi^{-2} \,|x|^{-3}(\delta_{ij}-3\,|x|^{-2}\,x^i\,x^j).
	 \end{align}
Moreover, there holds
	\label{Schwarzschild identities}
	$$
\nu_S(S_{\xi,\lambda})=\phi^{-2}\,\bar\nu \qquad \text{ 
	and } \qquad 
H_S(S_{\xi,\lambda})=2\,\phi^{-2}\,\lambda^{-1}-4\,\phi^{-3}\, |x|^{-3}\,\bar g(x,\bar\nu).
$$
\end{lem}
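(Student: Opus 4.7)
The three identities are standard geometric facts about the Schwarzschild spatial metric $g_S=\phi^4\,\bar g$ with $\phi=1+|x|^{-1}$. The plan is to exploit the conformal relation between $g_S$ and the Euclidean metric together with the fact that $\phi$ is harmonic on $\mathbb{R}^3\setminus\{0\}$ (which is equivalent to the vanishing of the Schwarzschild scalar curvature).

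For the unit normal, simply note that $g_S(\bar\nu,\bar\nu)=\phi^4\,\bar g(\bar\nu,\bar\nu)=\phi^4$, so the unit $g_S$-normal in the direction of $\bar\nu$ is $\nu_S=\phi^{-2}\,\bar\nu$. For the mean curvature, I would apply the standard conformal transformation rule: if $g=e^{2\omega}\tilde g$ on a $3$-manifold and $\Sigma$ is a two-sided surface with $\tilde g$-unit normal $\tilde\nu$, then
\[
H_g=e^{-\omega}\bigl(H_{\tilde g}+2\,\partial_{\tilde\nu}\omega\bigr).
\]
Specializing to $\tilde g=\bar g$, $\omega=2\,\log\phi$, and the sphere $S_{\xi,\lambda}$ gives $H_S=\phi^{-2}(H_{\bar g}+4\,\phi^{-1}\,\partial_{\bar\nu}\phi)$. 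Substituting $H_{\bar g}(S_{\xi,\lambda})=2\,\lambda^{-1}$ and using $\bar\nabla\phi=-|x|^{-3}\,x$ (so that $\partial_{\bar\nu}\phi=-|x|^{-3}\,\bar g(x,\bar\nu)$) yields the claimed expression.

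For the Ricci tensor I would start from the general conformal change formula in dimension $n$:
\[
\operatorname{Ric}_g=\operatorname{Ric}_{\tilde g}-(n-2)\bigl(\tilde\nabla^2 f-df\otimes df\bigr)-\bigl(\tilde\Delta f+(n-2)\,|\tilde\nabla f|^2\bigr)\,\tilde g,
\]
specialize to $\tilde g=\bar g$ (so $\operatorname{Ric}_{\tilde g}=0$), $n=3$, and $f=2\,\log\phi$, and use $\bar\Delta\phi=0$ to eliminate the Laplacian term. Inserting $\bar\nabla_i\phi=-|x|^{-3}x^i$, $\bar\nabla_i\bar\nabla_j\phi=-|x|^{-3}\delta_{ij}+3\,|x|^{-5}x^ix^j$, and $|\bar\nabla\phi|^2=|x|^{-4}$ produces an expression of the form $A(r)\,\delta_{ij}+B(r)\,x^ix^j$, where both coefficients factor through the identity $\phi\,|x|-1=|x|$. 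This collapse is what brings all contributions to the common prefactor $\phi^{-2}\,|x|^{-3}$ and produces the trace-free pattern $\delta_{ij}-3\,|x|^{-2}x^ix^j$.

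The whole argument is computational; there is no conceptual obstacle. The only risk is a bookkeeping slip in the conformal transformation formulas, which I would guard against by checking the trace against the known fact $R_{g_S}=0$.
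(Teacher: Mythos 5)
The paper states Lemma \ref{Schwarzschild identities} without proof (it is a standard computation recorded in the appendix), so there is no argument to compare against line by line; your approach via the conformal transformation rules is exactly the natural one, and the derivations of $\nu_S=\phi^{-2}\bar\nu$ and of $H_S=\phi^{-2}\bigl(H_{\bar g}+4\phi^{-1}\partial_{\bar\nu}\phi\bigr)=2\phi^{-2}\lambda^{-1}-4\phi^{-3}|x|^{-3}\bar g(x,\bar\nu)$ are correct.

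For the Ricci tensor, however, your proposed sanity check is too weak: the conformal-change computation, once $\bar\Delta\phi=0$ is used, \emph{automatically} produces a tensor of the form $c\,\phi^{-2}|x|^{-3}(\delta_{ij}-3|x|^{-2}x^ix^j)$, which is $\bar g$-trace-free for every constant $c$, so the identity $R_{g_S}=0$ cannot detect an error in the overall coefficient. If you actually carry out the algebra you outline — namely substitute $f=2\log\phi$, $n=3$, $\bar\nabla_i\phi=-|x|^{-3}x^i$, $\bar\nabla_i\bar\nabla_j\phi=3|x|^{-5}x^ix^j-|x|^{-3}\delta_{ij}$, $|\bar\nabla\phi|^2=|x|^{-4}$, and use $\phi|x|-1=|x|$ to collapse the coefficients — you obtain
\[
(\operatorname{Ric}_S)_{ij}=2\,\phi^{-2}|x|^{-3}\bigl(\delta_{ij}-3\,|x|^{-2}x^ix^j\bigr),
\]
i.e.\ the coefficient is $2$, not the $4$ displayed in the lemma. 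This can be cross-checked independently: for mass-$m$ Schwarzschild in area-radius coordinates $s=|x|\,\phi^2$ one has $\operatorname{Ric}_S(\nu_S,\nu_S)=-2m/s^3$, and with $m=2$ the formula with coefficient $2$ gives $\operatorname{Ric}_S(\nu_S,\nu_S)=-4\phi^{-6}|x|^{-3}=-4/s^3$, which is correct, whereas coefficient $4$ would give $-8/s^3$. (One can also note that the computation of $\int_{S_{\xi,\lambda}}\operatorname{Ric}(\nu-\nu_S,\nu)\,\text{d}\mu$ in the proof of Lemma \ref{Willmore energy sphere} is only consistent with coefficient $2$.) So your method is right, but you should replace the trace check by a check of the radial component against $-2m/s^3$ in order to actually catch such slips, and you will find the stated constant needs correction.
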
	
A more precise version of the expansion in the following lemma was computed in \cite[p.~670]{brendle2014large}. 
\begin{lem}[\cite{brendle2014large}]
	\label{area expansion}
	There holds
	\begin{align*}
	|S_{\xi,\lambda}|=\begin{dcases}&	
	4\,\pi \,\lambda^2+ 16\,\pi\, \lambda+O(1)\hspace{2.9cm} \text{ if } |\xi|<1-\delta,\\
	&4\,\pi\, \lambda^2+ 16\,\pi\, \lambda\,|\xi|^{-1}+O(|\xi|^{-2}) \hspace{1.45cm}\text{ if } |\xi|>1+\delta.
	\end{dcases}	
	\end{align*}
\end{lem}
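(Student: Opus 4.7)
The plan is to reduce everything to explicit Euclidean integrals over the sphere $S_{\xi,\lambda}$ by expanding the metric, and then to evaluate the leading terms using the spherical harmonic machinery of Lemma \ref{inverse expansions lemma} and Lemma \ref{Legendre identities}.

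First, since $g=\phi^4\,\bar g+\sigma$ with $\sigma=O(|x|^{-2})$, I would argue that the induced area form satisfies $\mathrm{d}\mu=\phi^4\,\mathrm{d}\bar\mu+O(|x|^{-2})\,\mathrm{d}\bar\mu$; this follows from a standard determinant expansion, noting that $\phi^{-4}$ is bounded. Integrating the perturbation contribution $O(|x|^{-2})\,\mathrm{d}\bar\mu$ over $S_{\xi,\lambda}$ yields $O(1)$ when $|\xi|<1-\delta$ (since $|x|\gtrsim\delta\,\lambda$ there) and $O(|\xi|^{-2})$ when $|\xi|>1+\delta$ (since $|x|\sim\lambda\,|\xi|$), so $\sigma$ is absorbed in the announced error. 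It therefore remains to analyze $\int_{S_{\xi,\lambda}}\phi^4\,\mathrm{d}\bar\mu$.

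Next, I would expand $\phi^4=(1+|x|^{-1})^4=\sum_{k=0}^{4}\binom{4}{k}|x|^{-k}$ and evaluate each term. The $k=0$ term contributes $4\,\pi\,\lambda^2$. For the others, using the substitution $x=\lambda\,\xi+\lambda\,y$ with $|y|=1$, one has
\[
\int_{S_{\xi,\lambda}}|x|^{-k}\,\mathrm{d}\bar\mu=\lambda^{2-k}\int_{S_{1}(0)}|y+\xi|^{-k}\,\mathrm{d}\bar\mu(y).
\]
For $k=1$, I would invoke the expansion of $|y+\xi|^{-1}$ from Lemma \ref{inverse expansions lemma} (with $k=0$ there), which gives a sum over Legendre polynomials $P_l(-|\xi|^{-1}\bar g(y,\xi))$. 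Since $\int_{S_1(0)}P_l\,\mathrm{d}\bar\mu=4\,\pi\,\delta_{l0}$ by the orthogonality in Lemma \ref{Legendre identities} (with $P_0=1$), only the $l=0$ term survives. This yields $4\,\pi$ in the on-center case and $4\,\pi\,|\xi|^{-1}$ in the outlying case, producing the main correction $16\,\pi\,\lambda$ respectively $16\,\pi\,\lambda\,|\xi|^{-1}$ after multiplication by $\binom{4}{1}$ and $\lambda$. For $k\in\{2,3,4\}$, I would merely estimate $|y+\xi|^{-k}$ crudely: it is bounded by a constant depending on $\delta$ in the on-center case, and by $O(|\xi|^{-k})$ in the outlying case, which produces contributions of the stated error size after accounting for the prefactor $\lambda^{2-k}$.

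There is no serious obstacle here; the only point that requires care is bookkeeping in the outlying regime, where one must verify that all error terms truly decay like $|\xi|^{-2}$ (and not merely like $|\xi|^{-1}$). The $k=2$ contribution is the worst, giving exactly $O(|\xi|^{-2})$, which sets the error bar and matches the statement. Assembling the pieces yields the two claimed expansions.
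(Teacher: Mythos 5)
Your proof is correct and is essentially the standard computation that the cited reference \cite{brendle2014large} (and its outlying analogue in \cite{chodosh2019far}) carries out: expand $\mathrm{d}\mu=\phi^4\,\mathrm{d}\bar\mu+O(|\sigma|)\,\mathrm{d}\bar\mu$, expand $\phi^4$ in powers of $|x|^{-1}$, and evaluate the $|x|^{-1}$ term via the Legendre generating function from Lemma \ref{inverse expansions lemma} while bounding the higher-order terms crudely. The paper does not reproduce this argument and simply attributes the result; your bookkeeping in the outlying regime (in particular, that the $k=2$ term and the $\sigma$-contribution each give exactly $O(|\xi|^{-2})$ while $k=3,4$ are smaller since $\lambda>\lambda_0$) is what is needed and is correct.
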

We need a more precise expansion of the Willmore energy of $S_{\xi,\lambda}$ than that computed in \cite{chodosh2017global}. To this end, we first compute the dependence of certain geometric quantities on the perturbation $\sigma$ away from $g_S$. 
\begin{lem}
	Let $\{e_1,\, e_2\}$ be a local Euclidean orthonormal frame for $TS_{\xi,\lambda}$. There holds
	\begin{align*}
	\nu-\nu_S=&\,-\frac12\,\phi^{-6}\,\sigma(\bar\nu,\bar\nu)\,\bar\nu-\phi^{-6}\sum_{\alpha=1}^2\sigma(\bar\nu,e_\alpha)\,e_\alpha+O(\lambda^{-4}\,(1+|\xi|)^{-4}),\\
	\hcirc^\alpha_\beta=&\,-\frac12\,\lambda^{-1}\,\phi^{-6}\,\left[2\,\sigma(e_\alpha,e_\beta)-(\bar {\operatorname{tr}}\,\sigma-\sigma(\bar\nu,\bar\nu))\,\delta_{\alpha\beta}\right]\\&\qquad -\frac12\,\left[(\bar D_{e_\alpha}\sigma)(\bar\nu,e_\beta)+(\bar D_{e_\beta}\sigma)(\bar\nu,e_\alpha)-(\bar D_{\bar\nu}\sigma)(e_\alpha,e_\beta)\right]\\&\qquad +\frac14\,\left[2\,(\bar{\operatorname{div}}\sigma)(\bar\nu)-\bar D_{\bar\nu}\bar {\operatorname{tr}}\,\sigma- (\bar D_{\bar\nu}\sigma)(\bar\nu,\bar\nu)\right]\,\delta_{\alpha\beta}+O(\lambda^{-4}\,(1+|\xi|)^{-4}),
\\	H-H_S=&\,\lambda^{-1}\,\phi^{-6}\,\left[2\,\sigma(\bar\nu,\bar\nu)-\bar {\operatorname{tr}}\,\sigma\right]+\frac12 \,\left[\bar D_{\bar\nu}\bar {\operatorname{tr}}\,\sigma+(\bar D_{\bar\nu}\sigma) (\bar\nu,\bar\nu)-2\,(\bar{\operatorname{div}}\sigma)(\bar\nu)\right]\\&\qquad +O(\lambda^{-4}\,(1+|\xi|)^{-4}),\\
\bar\Delta(H-H_S)=&\,4\,\lambda^{-3}\,\ubar{\phi}^{-10}\,\left[\bar {\operatorname{tr}}\,\sigma-3\,\sigma(\bar\nu,\bar\nu)\right]+Y_1+Y_3+O(\lambda^{-5}\,(1+|\xi|)^{-4})
	\end{align*}
	where $\alpha,\,\beta=1,\,2$.
	\label{perturbations}
	Here, $Y_1$ and $Y_3$ are respectively first and third spherical harmonics with
	$$
	Y_1=O(\lambda^{-5}\,(1+|\xi|)^{-3})\qquad\text{ and } \qquad  Y_3=O(\lambda^{-5}\,(1+|\xi|)^{-3}).
	$$
	If $g$ satisfies \eqref{ac sigma} for every multi-index $J$ with $|J|\leq 5$,  these identities may be differentiated once with respect to $\xi$.
\end{lem}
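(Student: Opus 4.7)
The four formulas are first-order Taylor expansions in the perturbation $\sigma = g - g_S$. On $S_{\xi,\lambda}$ the decay $\partial_J\sigma = O(|x|^{-2-|J|})$ combined with $|x|\sim \lambda(1+|\xi|)$ gives $|\sigma| = O(\lambda^{-2}(1+|\xi|)^{-2})$ and analogously for derivatives, so every quadratic-in-$\sigma$ remainder is $O(\lambda^{-4}(1+|\xi|)^{-4})$, matching the stated errors. Differentiability with respect to $\xi$ follows by applying $\bar D_\xi$ at each step of the expansion, which is legitimate under the $C^5$-asymptotic hypothesis.

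For the first formula I would write $\nu = \alpha\nu_S + \beta^\gamma e_\gamma$ and impose $g(\nu,e_\alpha)=0$ together with $g(\nu,\nu) = 1$. Using Lemma \ref{Schwarzschild identities} ($\nu_S = \phi^{-2}\bar\nu$) and $g_S(\nu_S,e_\alpha)=0$, $g_S(e_\alpha,e_\beta) = \phi^4\delta_{\alpha\beta}$, $g_S(\nu_S,\nu_S)=1$, direct algebra yields
$\beta^\alpha = -\phi^{-6}\sigma(\bar\nu,e_\alpha) + O(\sigma^2)$ and $\alpha = 1 - \tfrac{1}{2}\phi^{-4}\sigma(\bar\nu,\bar\nu) + O(\sigma^2)$, which rearranges to the asserted expansion for $\nu-\nu_S$. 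For the second formula I would use $h_{\alpha\beta} = -g(\nabla_{e_\alpha}\nu, e_\beta)$ together with the standard first-order Christoffel variation $\Gamma^k_{ij}[g] - \Gamma^k_{ij}[g_S] = \tfrac{1}{2}g_S^{kl}(\nabla^S_i\sigma_{jl} + \nabla^S_j\sigma_{il} - \nabla^S_l\sigma_{ij}) + O(\sigma^2)$, and combine with the expansion of $\nu$ above. Converting $\nabla^S$ into $\bar\nabla$ via the conformal-change formula absorbs the $\phi^{-1}\bar D\phi = O(|x|^{-2})$ cross-terms into the $O(\sigma^2)$ error after multiplication by $\sigma$; the residual conformal factors collect into the $\phi^{-6}$ prefactor. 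Extracting the traceless part in the Euclidean frame gives $\hcirc^\alpha_\beta$, and tracing (using $\sum_\alpha\sigma(e_\alpha,e_\alpha)=\operatorname{tr}\sigma-\sigma(\bar\nu,\bar\nu)$ and raising with $g^{-1} = \phi^{-4}\bar g^{-1} + O(\sigma)$) produces the $H-H_S$ formula.

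The Laplacian identity is the most delicate. Restrict $H-H_S$ to $S_{\xi,\lambda}$ in the outlying regime and Taylor-expand both $\sigma(x)$ and $\phi^{-6}(x)$ around $\lambda\xi$, using $x - \lambda\xi = \lambda\bar\nu$:
\[
\sigma(x) = \ubar\sigma + \lambda(\bar D\ubar\sigma)(\bar\nu,\cdot,\cdot) + O(\lambda^2(\lambda|\xi|)^{-4}),
\]
and similarly for $\phi^{-6}(x)$. The dominant contribution comes from $\bar\Delta$ acting on $\lambda^{-1}\ubar\phi^{-6}[2\ubar\sigma(\bar\nu,\bar\nu) - \operatorname{tr}\ubar\sigma]$, which is quadratic in $\bar\nu$: via the identity $\bar\nu^i\bar\nu^j = \tfrac{2}{3}Y_2^{ij} + \tfrac{1}{3}\delta^{ij}$ it decomposes into a $Y_0$-piece killed by $\bar\Delta$ and a $Y_2$-piece on which $-\bar\Delta_{S_{\xi,\lambda}}$ acts as multiplication by $6\lambda^{-2}$, reproducing a multiple of $[\operatorname{tr}\sigma - 3\sigma(\bar\nu,\bar\nu)]$ at order $\lambda^{-3}$; after collecting the further conformal-factor contributions from the intrinsic spherical Laplacian of $\phi^{-6}$ and from the $g$- versus $\bar g$-traces implicit in $H-H_S$, the prefactor promotes from $\ubar\phi^{-6}$ to $\ubar\phi^{-10}$. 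The second bracket in the $H-H_S$ expansion is linear-plus-cubic in $\bar\nu$ (since it is built from $\bar D_{\bar\nu}\operatorname{tr}\sigma$, $(\bar D_{\bar\nu}\sigma)(\bar\nu,\bar\nu)$, and $(\bar{\operatorname{div}}\sigma)(\bar\nu)$), hence lies in $\Lambda_1\oplus\Lambda_3$ and, after $\bar\Delta$, feeds into the $Y_1 + Y_3$ remainder. All higher-order Taylor corrections in $\sigma$ and $\phi^{-6}$ are absorbed into $O(\lambda^{-5}(1+|\xi|)^{-4})$ by the decay of $\bar D^2\sigma$.

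The main obstacle is the bookkeeping in step~3: one must correctly track the $Y_0$, $Y_1$, $Y_2$, and $Y_3$ content at each order of the Taylor expansion, identify which contributions survive modulo the $Y_1+Y_3$-remainder, and verify that the multiple conformal factors (from $H-H_S$ itself, from the variation of $\phi^{-6}$ over $S_{\xi,\lambda}$, and from the trace conventions) accumulate precisely into $\ubar\phi^{-10}$. The underlying algebra is voluminous but routine; the spherical-harmonic decomposition via Lemma~\ref{spherical harmonics lemma 1} and the decay hypothesis on $\bar D^J\sigma$ are the only substantive inputs, together with the explicit expansions of steps 1 and 2.
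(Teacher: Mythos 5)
The paper's own proof is a single sentence: introduce the linear family $g_t = g_S + t\,\sigma$ and linearize the relevant geometric quantities at $t=0$. Your proposal is precisely this linearization carried out explicitly — you solve for $\nu$ from the orthogonality and normalization conditions to first order in $\sigma$, vary the second fundamental form via the first-order Christoffel variation, trace to get $H-H_S$, and then Taylor-expand along the sphere and decompose into spherical harmonics for the Laplacian identity. This is the same route as the paper, just spelled out. Your computations of $\alpha$ and $\beta^\gamma$ for $\nu-\nu_S$ are correct, the observation that the $\phi^{-1}\bar D\phi \cdot \sigma$ cross-terms are $O(|x|^{-4})$ and hence absorbed into the stated error is right, and the key spherical-harmonic decomposition $\bar\nu^i\bar\nu^j = \tfrac{2}{3}Y_2^{ij} + \tfrac{1}{3}\delta^{ij}$ together with the eigenvalue $6\lambda^{-2}$ on $\Lambda_2$ is exactly what produces the $\lambda^{-3}$ coefficient and the $\operatorname{tr}\sigma - 3\sigma(\bar\nu,\bar\nu)$ combination in the last line. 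The one place you are vague is the promotion of the conformal prefactor from $\ubar\phi^{-6}$ to $\ubar\phi^{-10}$, and the estimate on $\bar\Delta$ of the quadratic Taylor remainder, which requires control of two derivatives of the remainder and not merely its sup-norm; but both of these are routine given the $C^4$-decay, and the paper omits them as well.
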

\begin{proof}
Given $t\in[0,1]$, we define the family of metrics $
g_t=g_S+t\,\sigma
$
such that $g_0=g_S$ and $g_1=g$. The identities can be obtained upon linearizing the respective quantities at $t=0$.
\end{proof} Second, we recall the conformal killing operator
$$
\mathcal{D}Z=\mathcal{L}_Z g-\frac13\,\operatorname{tr}\,(\mathcal{L}_Z g)\, g
$$
where $Z$ is a vector field. 
\begin{lem}
	Let $Z=\phi^{-2}\,(x-\lambda\,\xi)$. If $|\xi|<1-\delta$, there holds, in $\mathbb{R}^3\setminus B_\lambda(\lambda\,\xi)$,
	$$
	\mathcal{D}_SZ=O(\lambda^{-1}\,|x|^{-1})\qquad\text{and}\qquad  \mathcal{D}_SZ-\mathcal{D}Z=O(\lambda^{-1}\,|x|^{-2}).
	$$
	If $|\xi|>1-\delta$, there holds, in $ B_\lambda(\lambda\,\xi)$,
	$$
	\mathcal{D}_SZ=O(\lambda^{-2}\,|\xi|^{-2})\qquad\text{and}\qquad  \mathcal{D}_SZ-\mathcal{D}Z=O(\lambda^{-3}\,|\xi|^{-3}).
	$$
	\label{killing lemma}
	
\end{lem}
\begin{proof}
Note that
	$$
	(\mathcal{D}Z)(e_i,e_j)=g(D_{e_i} Z,e_j)+g(D_{e_j}Z,e_i)-\frac23\,(\operatorname{div}Z)\,g(e_i,e_j).
	$$
We compute
$$
\bar D_{e_i}Z=\phi^{-2}\,\lambda^{-1}\,e_i+2\,\phi^{-1}\,|x|^{-3}\,\bar g(x,e_i)\,Z
$$
and
$$
\bar{\operatorname{div}}Z=3\,\phi^{-2}\,\lambda^{-1}+2\,\phi^{-1}\,|x|^{-3}\,\bar g(x,Z).
$$
Moreover, we have
\begin{align*}
&\circ\qquad D_SZ-\bar DZ=O(|x|^{-2}\,|Z|), \\
&\circ\qquad DZ- D_SZ=O(|x|^{-3}\,|Z|), \\
&\circ\qquad {\operatorname{div}}_SZ-\bar{\operatorname{div}} Z=O(|x|^{-2}\,|Z|),\text{ and} \\
&\circ\qquad {\operatorname{div}}Z- {\operatorname{div}}_SZ=O(|x|^{-3}\,|Z|).
\end{align*}
 Finally, note that
$Z=O(\lambda^{-1}\,|x|)$ in $\mathbb{R}^3\setminus B_\lambda(\lambda\,\xi)$ if $|\xi|<1-\delta$ and that  $Z=O(1)$ in $B_\lambda(\lambda\,\xi)$ if $|\xi|>1+\delta$. \\ \indent The assertion follows from these estimates.
\end{proof}
For Lemma \ref{Willmore energy sphere} below, recall that
$$
\sigmacirc=\sigma-\frac13\,(\bar {\operatorname{tr}}\,\sigma)\,\bar g.
$$
\begin{lem}
	\label{Willmore energy sphere}
	If $|\xi|<1-\delta$, there holds
	\begin{align*}
	\int_{{S}_{\xi,\lambda}} H^2\,\mathrm{d}\mu=&\,16\,\pi-64\,\pi\,{\lambda}^{-1} +8\,\pi\,\lambda^{-2}\bigg[\frac{10-6\,|\xi|^2}{(1-|\xi|^2)^2}+3\,|\xi|^{-1}\log\frac{1+|\xi|}{1-|\xi|}\bigg]\\&\qquad +2\,\lambda^{-1}\int_{\mathbb{R}^3\setminus{B_{\lambda}(\lambda\,\xi)}} R\,\mathrm{d}\bar{v} +O(\lambda^{-3}).	\end{align*}
 If $|\xi|>1+\delta$, there holds
	\begin{align*}
	\int_{{{S}_{\xi,\lambda}}} H^2\,\mathrm{d}\mu=\,&16\,\pi +8\,\pi\,\lambda^{-2}\bigg[\frac{10-6\,|\xi|^2}{(|\xi|^2-1)^2}+3\,|\xi|^{-1}\log\frac{|\xi|+1}{|\xi|-1}\bigg]-2\,\lambda^{-1}\,\ubar{\phi}^{4} \int_{{B_{\lambda}(\lambda\,\xi)}} R\,\mathrm{d}\bar{v}\\\,&\qquad +\frac{48\,\pi}{15}\,\ubar{\phi}^{-8}\,|\ubar{\sigmacirc}|^2+\frac{128\,\pi}{15}\,\lambda^{-1}\,|\xi|^{-3}\,(3\,|\xi|^{-2}\,\ubar{ \sigma}(\xi,\xi)-\bar {\operatorname{tr}}\,\ubar{\sigma}) +O(\lambda^{-3}\,|\xi|^{-6}).
\end{align*}
Both expressions may be differentiated twice with respect to $\xi$.
\end{lem}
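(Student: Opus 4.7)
The plan is to decompose $\int_{S_{\xi,\lambda}} H^2\,\text{d}\mu$ into a pure Schwarzschild contribution and a correction due to $\sigma$, compute the Schwarzschild piece by explicit series expansion, and identify the $\sigma$-correction with $\int R\,\text{d}\bar v$ via the Pohozaev identity.

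First, I would split
\begin{align*}
H^2\,\text{d}\mu - H_S^2\,\text{d}\mu_S = 2\,H_S\,(H-H_S)\,\text{d}\mu_S + (H-H_S)^2\,\text{d}\mu_S + H^2\,(\text{d}\mu - \text{d}\mu_S)
\end{align*}
and insert the expansions of Lemma \ref{perturbations}. The leading, linear-in-$\sigma$ part is a combination of the surface integrals of $\sigma(\bar\nu,\bar\nu)$, $\operatorname{tr}\sigma$, $(\bar{\operatorname{div}}\sigma)(\bar\nu)$, and $\bar D_{\bar\nu}\operatorname{tr}\sigma$. In the on-center case, the quadratic-in-$\sigma$ pieces are absorbed into the $O(\lambda^{-3})$ error; in the outlying case, Taylor-expanding $\sigma$ at $\lambda\,\xi$ they produce exactly the terms $\frac{48\,\pi}{15}\,\ubar{\phi}^{-8}\,|\ubar{\sigmacirc}|^2$ and $\frac{128\,\pi}{15}\,\lambda^{-1}\,|\xi|^{-3}\,(3\,|\xi|^{-2}\,\ubar{\sigma}(\xi,\xi) - \operatorname{tr}\ubar{\sigma})$.

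Next, I would evaluate $\int_{S_{\xi,\lambda}} H_S^2\,\text{d}\mu_S$ directly. Using Lemma \ref{Schwarzschild identities}, $H_S^2\,\text{d}\mu_S$ reduces, after the change of variable $x = \lambda\,(y+\xi)$ with $y \in S_1(0)$ and expanding $\phi = 1+|x|^{-1}$, to a polynomial expression in $|y+\xi|^{-k}$ for small $k$. The Legendre-series expansions of Lemma \ref{inverse expansions lemma}, together with the orthogonality relations of Lemma \ref{Legendre identities} and the summation identities of Lemma \ref{log identities}, sum the resulting series in closed form. This produces the explicit contribution $16\,\pi - 64\,\pi/\lambda + 8\,\pi\,\lambda^{-2}[\,\cdot\,]$ in the on-center case --- where the $-64\,\pi/\lambda$ term encodes the ADM mass through the Schwarzschild conformal factor --- and the analogous expression with the $-64\,\pi/\lambda$ term absent in the outlying case.

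The key step is to identify the boundary $\sigma$-integrals from the first step with $\int R\,\text{d}\bar v$ via the Pohozaev identity (Schoen \cite{MR929283}). Linearizing the scalar curvature about $g_S$ produces a divergence-form expression whose integral over $\mathbb{R}^3 \setminus B_\lambda(\lambda\,\xi)$ (for on-center) or $B_\lambda(\lambda\,\xi)$ (for outlying) equals, modulo errors of order $|\sigma|^2$, a boundary integral on $S_{\xi,\lambda}$ that matches precisely the combinations of surface $\sigma$-integrals from the first step. Inverting this identity converts those sphere integrals into $\pm 2\,\lambda^{-1}\,\ubar{\phi}^4 \int R\,\text{d}\bar v$; in the on-center case $\ubar{\phi}^4$ can be replaced by $1$ up to the $O(\lambda^{-3})$ error, while in the outlying case the factor $\ubar{\phi}^4$ at $\lambda\,\xi$ must be kept. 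Differentiability of the expansion in $\xi$ is then inherited from the $C^4$-asymptotic decay.

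The main obstacle is performing the Pohozaev integration by parts carefully enough that the resulting boundary expression cancels the linear-in-$\sigma$ sphere integrals of step one while producing only the explicit Schwarzschild series already accounted for in step two. The bookkeeping is particularly delicate in the outlying regime because one loses uniform decay in $|\xi|$: error terms must be tracked with explicit factors of $|\xi|^{-1}$, each $\xi$-derivative costing one such factor, and the quadratic $|\ubar{\sigmacirc}|^2$ and $\ubar{\sigma}$-trace terms now survive at the required order $O(\lambda^{-3}\,|\xi|^{-6})$ and must be recovered from the Taylor expansion of $\sigma$ at $\lambda\,\xi$.
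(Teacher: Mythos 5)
The computation of the Schwarzschild part by Legendre-series summation is the same as the paper's, and you correctly anticipate that some version of Pohozaev turns boundary data into the $\int R\,\text{d}\bar v$ term. But the route from $\int H^2\,\text{d}\mu-\int H_S^2\,\text{d}\mu_S$ to that volume integral has a genuine gap.

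The paper does not expand $H^2\,\text{d}\mu-H_S^2\,\text{d}\mu_S$ pointwise. It inserts the integrated Gauss equation
\[
\int_{S_{\xi,\lambda}} H^2\,\text{d}\mu = 16\,\pi + 2\int_{S_{\xi,\lambda}}|\hcirc|^2\,\text{d}\mu + 2\int_{S_{\xi,\lambda}}\bigl(2\,\operatorname{Ric}(\nu,\nu)-R\bigr)\,\text{d}\mu,
\]
a consequence of Gauss--Bonnet, and then applies the Pohozaev identity for the divergence-free Einstein tensor $E=\operatorname{Ric}-\tfrac12 R\,g$ to the boundary integral $\int_{S_{\xi,\lambda}}E(Z,\nu)\,\text{d}\mu$ with $Z=\phi^{-2}\,\lambda^{-1}(x-\lambda\,\xi)$. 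That is an exact, nonlinear identity; the $\int R$ term drops out because $Z$ fails to be conformally Killing for $g$ precisely by a $\operatorname{div}Z$ factor. Your proposal instead expands $2\,H_S(H-H_S)$, $(H-H_S)^2$, $H^2(\text{d}\mu-\text{d}\mu_S)$ using Lemma~\ref{perturbations} and asserts that the resulting surface $\sigma$-integrals ``match precisely'' the boundary term of a linearized-in-$\sigma$ divergence identity for $R$.

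The specific mechanism you invoke is incorrect: the linearization of the scalar curvature about a curved background $g_S$ is \emph{not} in divergence form. One has $\dot R(\sigma) = -\Delta_S\operatorname{tr}_S\sigma + \operatorname{div}_S\operatorname{div}_S\sigma - g_S(\operatorname{Ric}_S,\sigma)$, and the zeroth-order term $g_S(\operatorname{Ric}_S,\sigma)\sim|x|^{-5}$ integrates over $\mathbb{R}^3\setminus B_\lambda(\lambda\,\xi)$ to a quantity of order $\lambda^{-2}$ --- the \emph{same} order as the target correction $2\,\lambda^{-1}\int R\,\text{d}\bar v$. So it is not an error term, and a divergence-theorem argument applied to the linearized $R$ alone would produce the wrong coefficient. (If you instead linearize about the Euclidean $\bar g$, the quadratic error in $g-\bar g$ picks up an $O(|x|^{-2})$ pure-Schwarzschild contribution, which is again at main order.) The paper's integrated-Gauss-plus-Einstein-Pohozaev route is precisely designed to sidestep this: the $\langle\operatorname{Ric}_S,\sigma\rangle$-type cross terms are absorbed into the nonlinear boundary quantity $\int E(Z,\nu)$ and the residual $\operatorname{Ric}(\nu-\nu_S,\nu)$ surface integral, and the latter is what produces, in the outlying case, the $\tfrac{128\pi}{15}\,\lambda^{-1}|\xi|^{-3}(3|\xi|^{-2}\ubar\sigma(\xi,\xi)-\operatorname{tr}\ubar\sigma)$ contribution (which is linear, not quadratic, in $\sigma$, contrary to what you write); the $\tfrac{48\pi}{15}\ubar{\phi}^{-8}|\ubar{\sigmacirc}|^2$ term comes from $2\int|\hcirc|^2$, a term your decomposition does not isolate explicitly. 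To repair your plan you would need to verify that your surface $\sigma$-integrals equal $\int E(Z,\nu)$ plus the correct remainder, which is essentially rederiving the integrated Gauss equation anyway.
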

\begin{proof}  
	We first compute the Schwarzschild contribution. Note that
		\begin{align}
	2\,\bar g(x, \bar{\nu})=\lambda\,(1-|\xi|^2)+\lambda^{-1}\,|x|^2.
	\label{angle}
	\end{align}
	In the case where $|\xi|<1-\delta$, we use Lemma \ref{Schwarzschild identities} to compute
	\begin{align*}
	\int_{S_{\xi,\lambda}} H_S^2\,\mathrm{d}\mu_S&=\int_{S_{\xi,\lambda}}\left[4\,\lambda^{-2}-16\,\lambda^{-1}\,(|x|^{-3}-|x|^{-4})\,\bar g(x,\bar\nu)+16\, |x|^{-6}\,\bar g(x,\bar\nu)^2\right]\,\mathrm{d}\bar\mu +O(\lambda^{-3})
	\\&=16\,\pi-64\,\pi\,{\lambda}^{-1} +16\,\pi\,\lambda^{-2}\frac{5-3\,|\xi|^2}{(1-|\xi|^2)^2}+24\,\pi\,\lambda^{-2}\,|\xi|^{-1}\log\frac{1+|\xi|}{1-|\xi|}+O(\lambda^{-3}).
	\end{align*}
	In the case where $|\xi|>1+\delta$, we compute that
		\begin{align*}
	\int_{S_{\xi,\lambda}} H_S^2\,\mathrm{d}\mu_S=16\,\pi +16\,\pi\,\lambda^{-2}\frac{5-3\,|\xi|^2}{(|\xi|^2-1)^2}+24\,\pi\,\lambda^{-2}|\xi|^{-1}\log\frac{|\xi|+1}{|\xi|-1}+O(\lambda^{-3}\,|\xi|^{-6}).
	\end{align*}
\indent 	To compute the contribution from the perturbation $\sigma$ off Schwarzschild, we start from the identity 
	\begin{align}
	\int_{S_{\xi,\lambda}} H^2\,\mathrm{d}\mu=16\,\pi+2\int_{S_{\xi,\lambda}} |\hcirc|^2\, \mathrm{d}\mu+2\int_{S_{\xi,\lambda}} \left(2\,\operatorname{Ric}(\nu,\nu)-R\right)\,\mathrm{d}\mu,
	\label{integrated Gauss II}
	\end{align}
	cf.~(\ref{integrated Gauss equation 0}). The first integral on the right  vanishes if $\sigma=0$.  If $|\xi|<1-\delta$ and $\sigma\neq 0$, using Lemma \ref{perturbations}, we estimate
	$$
	\int_{S_{\xi,\lambda}} |\hcirc|^2\, \mathrm{d}\mu=O(\lambda^{-4}).
	$$
	Conversely, if $|\xi|>1+\delta$, using Lemma \ref{perturbations}, Taylor expansion, and cancellations due to symmetry, we find that
	\begin{align*}
		\int_{S_{\xi,\lambda}} |\hcirc|^2\, \mathrm{d}\mu=\,&\ubar{\phi}^{-8}\,\lambda^{-2}\int_{S_{\xi,\lambda}} 
\left[\bar g(\ubar{\sigma}_{|_{S_{\xi,\lambda}}},\ubar{\sigma}_{|_{S_{\xi,\lambda}}})-\frac12\,(\bar {\operatorname{tr}}_{S_{\xi,\lambda}}\,\ubar{\sigma})^2\right] \mathrm{d}\bar\mu+O(\lambda^{-4}\,|\xi|^{-6})\\
		=\,&\ubar{\phi}^{-8}\,\lambda^{-2}\int_{S_{\xi,\lambda}} 
		\bigg[|\ubar{\sigma}|^2-\frac12\,(\bar {\operatorname{tr}}\,\ubar{\sigma})^2+
		\frac12\, \ubar\sigma(\bar\nu,\bar\nu)\, \ubar\sigma(\bar\nu,\bar\nu)+ \ubar\sigma(\bar\nu,\bar\nu)\,\bar {\operatorname{tr}}\,\ubar\sigma \\&\qquad\quad\quad\qquad -2\sum_{i=1}^3\ubar{\sigma}(\bar\nu,e_i)\,\ubar{\sigma}(\bar\nu,e_i)\bigg] \mathrm{d}\bar\mu+O(\lambda^{-4}\,|\xi|^{-6})\\
		=\,&\frac{24\,\pi}{15}\,\ubar{\phi}^{-8}\,|\ubar{\sigmacirc}|^2+O(\lambda^{-4}\,|\xi|^{-6}).
	\end{align*}
	In the last step, we have used Lemma \ref{spherical identities}. \\ \indent 
	To compute the second integral in (\ref{integrated Gauss II}), first recall that the Einstein tensor
	$$
	E=\operatorname{Ric}-\frac12\, R\,g
	$$
	 is divergence free. If $|\xi|<1-\delta$, this leads to the following form of the Pohozaev identity
	\begin{align}
\int_{S_{\xi,\lambda}}E(Z,\nu)\,\mathrm{d}\mu=\int_{S_r(0)} E(Z,\nu)\,\mathrm{d}\mu-\int_{B_r(0)\setminus B_\lambda(\lambda\,\xi)}\left[\frac12\, g(E,\mathcal{D}Z)-\frac16\,(\operatorname{div}Z)\,R\right]\mathrm{d} v,	
\label{pohozaev 1}
	\end{align}
		valid for every vector field $Z$ and every $r>2\,\lambda$. Similarly, if $|\xi|>1+\delta$, we have 
			\begin{align}
		\int_{S_{\xi,\lambda}}E(Z,\nu)\,\mathrm{d}\mu=\int_{ B_\lambda(\lambda\,\xi)}\left[\frac12\, g(E,\mathcal{D}Z)-\frac16\,(\operatorname{div}Z)\,R\right]\mathrm{d} v.
		\label{pohozaev 2}
		\end{align}  We refer to \cite[\S 6.4]{lamm2011foliations} for a discussion of the Pohozaev identity and a related application.
		\\\indent	
		Let $$Z=\phi^{-2}\lambda^{-1}(x-\lambda\,\xi)$$
		and note that $Z=\nu_S$ on $S_{\xi,\lambda}$. Consequently,
		$$
		\int_{S_{\xi,\lambda}} E_S(\nu_S,\nu_S)\,\mathrm{d}\mu_S= \int_{S_{\xi,\lambda}} E_S(\nu_S,Z)\,\mathrm{d}\mu_S
		$$
		and
		$$
		\int_{S_{\xi,\lambda}} E(\nu,\nu)\,\mathrm{d}\mu= \int_{S_{\xi,\lambda}} E(\nu,Z)\,\mathrm{d}\mu+\int_{S_{\xi,\lambda}} E(\nu,\nu-\nu_S)\,\mathrm{d}\mu.
		$$
\indent 		If $|\xi|<1-\delta$, we let $r\to\infty$ in \eqref{pohozaev 1} and obtain, using Lemma \ref{Schwarzschild identities}, Lemma \ref{killing lemma}, and  that $R_S=0$, 
		\begin{align*}
		\int_{S_{\xi,\lambda}}E_S(\nu_S,Z)\,\mathrm{d}\mu_S=\,&-16\,\pi\,\lambda^{-1}-\frac12\,\int_{\mathbb{R}^3\setminus B_{\lambda}(\lambda\,\xi)} g_S(E_S,\mathcal{D}_SZ)\,\mathrm{d} v_S	\\=\,&-16\,\pi\,\lambda^{-1}-\frac12\,\int_{\mathbb{R}^3\setminus B_{\lambda}(\lambda\,\xi)} g(E,\mathcal{D}Z)\,\mathrm{d} v	+O(\lambda^{-3}).
		\end{align*}
Likewise, we have		
	$$
	\int_{S_{\xi,\lambda}}E(\nu,Z)\,\mathrm{d}\mu=-16\,\pi\,\lambda^{-1}-\frac12\,\int_{\mathbb{R}^3\setminus B_{\lambda}(\lambda\,\xi)} g(E,\mathcal{D}Z)\,\mathrm{d} v+\frac16\,\int_{\mathbb{R}^3\setminus B_{\lambda}(\lambda\,\xi)}(\operatorname{div}Z)\,R\,\mathrm{d}v.
	$$
	Finally, we use the coarse estimates
	$$
	\int_{\mathbb{R}^3\setminus B_{\lambda}(\lambda\,\xi)}(\operatorname{div}Z)\,R\,\mathrm{d}v=3\,\lambda^{-1}\,\int_{\mathbb{R}^3\setminus B_{\lambda}(\lambda\,\xi)}R\,\mathrm{d}v+O(\lambda^{-3})
	$$	
	and 
	$$
	\int_{S_{\xi,\lambda}} E(\nu,\nu-\nu_S)\,\mathrm{d}\mu=O(\lambda^{-3}).
	$$
	\indent 
		If $|\xi|>1+\delta$, we use \eqref{pohozaev 2}, Lemma \ref{killing lemma}, and $R_S=0$ to obtain that
	\begin{align*}
	\int_{S_{\xi,\lambda}}E_S(\nu_S,Z)\,\mathrm{d}\mu_S=\,&\frac12\,\int_{ B_{\lambda}(\lambda\,\xi)} g_S(E_S,\mathcal{D}_SZ)\,\mathrm{d} v_S
\\	=\,&\frac12\,\int_{ B_{\lambda}(\lambda\,\xi)} g(E,\mathcal{D}Z)\,\mathrm{d} v+O(\lambda^{-3}\,|\xi|^{-6}).
	\end{align*}
	Likewise, we have		
	$$
	\int_{S_{\xi,\lambda}}E_S(\nu,Z)\,\mathrm{d}\mu=\frac12\,\int_{B_{\lambda}(\lambda\,\xi)} g(E,\mathcal{D}Z)\,\mathrm{d} v-\frac16\,\int_{ B_{\lambda}(\lambda\,\xi)}(\operatorname{div}Z)\,R\,\mathrm{d}v.
	$$
Note that
	$$
	-\frac16 \int_{ B_\lambda(\lambda\,\xi)}(\operatorname{div}Z)\,R\,\mathrm{d} v=-\frac12\,  \ubar{\phi}^4\,\lambda^{-1} \int_{ B_\lambda(\lambda\,\xi)}R\,\mathrm{d} \bar{v}+O(\lambda^{-3}\,|\xi|^{-6}).
	$$
	Moreover, using that $R=O(|x|^{-4})$, we have
	$$
	\int_{S_{\xi,\lambda}}E(\nu-\nu_S,\nu)\,\mathrm{d}\mu=\int_{S_{\xi,\lambda}}\operatorname{Ric}(\nu-\nu_S,\nu)\,\mathrm{d}\mu+O(\lambda^{-3}\,|\xi|^{-6}).
	$$
Finally, using Lemma \ref{Schwarzschild identities} and the expansion for $\nu-\nu_S$ from Lemma 	\ref{perturbations}, we
	obtain
	\begin{align*}
		\int_{S_{\xi,\lambda}}\operatorname{Ric}&(\nu-\nu_S,\nu)\,\mathrm{d}\mu\\=&
		-\int_{S_{\xi,\lambda}} |x|^{-3}\,\left[\ubar{\sigma}(\bar\nu,\bar\nu)\,\big(1+3\,|x|^{-2}\,\bar g(x,\bar\nu)^2\big)-6\,|x|^{-2}\,\ubar{\sigma}(x,\bar\nu)\,g(x,\bar\nu)\right]\mathrm{d}\bar \mu
		+O(\lambda^{-3}\,|\xi|^{-6})
		\\=&
		-\int_{S_{\xi,\lambda}} \lambda^{-3}\,|\xi|^{-3}\,\left[\ubar{\sigma}(\bar\nu,\bar\nu)\,\big(1+3\,|\xi|^{-2}\,\bar g(\xi,\bar\nu)^2\big)-6\, |\xi|^{-2}\,\ubar{\sigma}(\xi,\bar\nu)\,\bar g(\xi,\bar \nu)\right]\mathrm{d}\bar \mu
		+O(\lambda^{-3}\,|\xi|^{-6})
		\\=&\,\frac{32\,\pi}{15}\,\lambda^{-1}\,|\xi|^{-3}\,\left[3\,|\xi|^{-2}\,\ubar{ \sigma}(\xi,\xi)-\bar {\operatorname{tr}}\,\ubar{\sigma}\right]+O(\lambda^{-3}\,|\xi|^{-6}).
	\end{align*}
	We have used Lemma \ref{spherical identities} in the third equality. \\ \indent The assertion follows from these estimates.
\end{proof}
\begin{rema}\normalfont
Let $\{S_j\}_{j=1}^\infty$ be a sequence of coordinate spheres $S_j=S_{\lambda_j}(\lambda_j\,\xi_j)$ with $\lambda_j>1$ and \label{slow divergence} $\xi_j\in\mathbb{R}^3$ that are slowly divergent in the sense that $\lim_{j\to\infty}\rho_j=\infty$ and $\lim_{j\to\infty} \lambda_j^{-1}\,\rho_j=0$
 where $\rho_j=\rho(S_j)$. If the spheres are on-center, we compute 
\begin{equation*}
\begin{aligned}
\int_{S_j} H_S^2\,\mathrm{d}\mu_S=
16\,\pi-32\,\pi\,\lambda_j^{-1}(2-\rho_j^{-1})+8\,\pi\,\rho_j^{-2}+O(\lambda_j^{-2}\,\log\lambda_j)+O(\lambda_j^{-1}\,\rho_j^{-2})+O(\rho_j^{-3}).
\end{aligned}
\end{equation*}
If the spheres are outlying, we have
\begin{align*}
\int_{S_j} H_S^2\,\mathrm{d}\mu_S = 16\,\pi+32\,\pi\,\lambda_j^{-1}\,\rho_j^{-1}+8\,\pi\,\rho_j^{-2}+O(\lambda_j^{-2}\,\log\lambda_j)+O(\lambda_j^{-1}\,\rho_j^{-2})+O(\rho_j^{-3}).
\end{align*}
Using Lemma \ref{Schwarzschild identities}, we compute that, in either case,
$$
\min_{x\in S_j} (\phi^2\,H_S)=2\,\lambda_j^{-1}-4\,\rho_j^{-2}+O(\rho_j^{-3}).
$$
Thus, if $\rho^2_j=o(\lambda_j)$, it follows that $\operatorname{min}_{x\in S_j}H_S<0$ and  $m_H(S_j)<0$ for all $j$ large.
\end{rema}

Next, we express the Willmore operator $-W({S_{\xi,\lambda}})$ in terms of spherical harmonics. 
\begin{lem}
	There holds
	\begin{align*}
	{W}({{S}_{\xi,\lambda}})&=\,\frac{1}{2}\,\phi^{-8}\big[-9\,\lambda^{-3}\,|x|^{-1}+(3\,|\xi|^2-7)\,\lambda^{-1}\,|x|^{-3}-3\,(1-|\xi|^2)\,(7\,|\xi|^2+5)\,\lambda\,|x|^{-5}\\  &\qquad\qquad +15\,(1-|\xi|^2)^3\,\lambda^3\,|x|^{-7}\big] 
	\\&\qquad \,+4\,\ubar{\phi}^{-10}\,\lambda^{-3}\,[\bar {\operatorname{tr}}\,\ubar{\sigma}-3\,\ubar{\sigma}(\bar\nu,\bar\nu)]+Y_1+Y_3+O(\lambda^{-5}\,(1+|\xi|)^{-4}).
	\end{align*}
	Here $Y_1$ is a first spherical harmonic and $Y_3$ is a third spherical harmonic. They satisfy 	\label{willmore operator one lemma}
	$$
	Y_1=O(\lambda^{-5}\,(1+|\xi|)^{-3})\qquad\text{ and } \qquad  Y_3=O(\lambda^{-5}\,(1+|\xi|)^{-3}).
	$$
	If $g$ satisfies \eqref{ac sigma} for every multi-index $J$ with $|J|\leq 5$, this identity may be differentiated once with respect to $\xi$.
\end{lem}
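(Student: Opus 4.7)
My plan is to split $W(S_{\xi,\lambda}) = W_S + (W - W_S)$, where $W_S$ is the Willmore quantity computed with respect to the exact Schwarzschild background $g_S$, and to treat each contribution separately using Lemma \ref{Schwarzschild identities} and Lemma \ref{perturbations}.

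For the Schwarzschild piece, I first exploit the conformal structure $g_S = \phi^4\,\bar g$: since $S_{\xi,\lambda}$ is umbilic as a round sphere in the Euclidean metric, and the conformal rule $\hcirc^\alpha{}_\beta = \phi^{-2}\,\bar{\hcirc}^\alpha{}_\beta$ preserves umbilicity, the surface is umbilic in $g_S$ as well, so $|\hcirc_S|^2 = 0$. Consequently $W_S = \Delta_S H_S + \operatorname{Ric}_S(\nu_S,\nu_S)\,H_S$. I would then substitute the formulae for $H_S$, $\nu_S$, and $\operatorname{Ric}_S$ from Lemma \ref{Schwarzschild identities}, compute the intrinsic Laplacian $\Delta_S H_S$ via the conformal relation between the induced Laplacians of $g_S$ and $\bar g$ on $S_{\xi,\lambda}$, and eliminate $\bar g(x,\bar\nu)$ in favor of $|x|$ using the identity $2\,\bar g(x,\bar\nu) = \lambda(1-|\xi|^2) + \lambda^{-1}\,|x|^2$ from (\ref{angle}). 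After collecting by inverse powers of $|x|$, this should assemble into the four-term polynomial with coefficients $-9$, $3|\xi|^2 - 7$, $-3(1-|\xi|^2)(7|\xi|^2 + 5)$, and $15(1-|\xi|^2)^3$ in the statement.

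For the perturbation correction, I would write
\[
W - W_S = \Delta(H - H_S) + (\Delta - \Delta_S)\,H_S + |\hcirc|^2\,H_S + [\operatorname{Ric}(\nu,\nu) - \operatorname{Ric}_S(\nu_S,\nu_S)]\,H_S + \operatorname{Ric}_S(\nu_S,\nu_S)(H - H_S)
\]
modulo terms quadratic in $\sigma$, using that $|\hcirc_S|^2 = 0$. The dominant piece is $\Delta(H - H_S) = \bar\Delta(H - H_S) + O(\lambda^{-5}\,(1+|\xi|)^{-4})$, for which Lemma \ref{perturbations} provides
\[
\bar\Delta(H - H_S) = 4\,\lambda^{-3}\,\ubar{\phi}^{-10}\,[\operatorname{tr}\ubar{\sigma} - 3\,\ubar{\sigma}(\bar\nu,\bar\nu)] + Y_1 + Y_3 + O(\lambda^{-5}\,(1+|\xi|)^{-4}).
\]
Each of the remaining terms and the quadratic remainder is $O(\lambda^{-5}\,(1+|\xi|)^{-4})$: the Ricci correction because $\operatorname{Ric} - \operatorname{Ric}_S$ involves two derivatives of $\sigma$ of size $O(|x|^{-4})$, and the other terms because they are products of a factor bounded by $O(\lambda^{-1})$ with a first-order expression in $\sigma$ that decays like $|x|^{-2}$ or faster.

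The main obstacle will be the Schwarzschild bookkeeping. Computing $\Delta_S H_S$ intrinsically generates many auxiliary terms of the form $|x|^{-k}\,\bar g(x,\bar\nu)^j$ for various $k$ and $j$; verifying by direct calculation that, after systematic reduction via (\ref{angle}), they collapse into the claimed four coefficients demands patience but no new ideas. Finally, the assertion that the identity may be differentiated once with respect to $\xi$ when $(M,g)$ is $C^5$-asymptotic to Schwarzschild follows because each ingredient---Lemma \ref{perturbations} and the Taylor expansion of $\sigma$ around $\lambda\,\xi$---carries one extra derivative under the stronger decay assumption.
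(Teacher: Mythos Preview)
Your approach is correct and essentially coincides with the paper's proof: both split the computation into the Schwarzschild contribution (handled via Lemma \ref{Schwarzschild identities}, the conformal Laplacian rule, and the identity (\ref{angle})) and the $\sigma$-perturbation (handled via Lemma \ref{perturbations}), with the remaining cross-terms absorbed into the error. The only cosmetic difference is that the paper computes $H\,\operatorname{Ric}(\nu,\nu)$ and $\Delta H$ separately rather than first isolating $W_S$, but this is the same calculation reorganized.
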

\begin{proof}
	Using Lemma \ref{Schwarzschild identities} and (\ref{angle}), we  find
	\begin{align*}
	H\,\operatorname{Ric}(\nu,\nu)&=\phi^{-8}\big[-3\,\lambda^{-3}\,|x|^{-1}+2\,(3\,|\xi|^2-1)\,\lambda^{-1}\, |x|^{-3}-3\,  (1-|\xi|^2)^2\,\lambda\,|x|^{-5}\big]\\&\qquad+{O}(\lambda^{-5}\,(1+|\xi|)^{-4}).
	\end{align*}
	Next, using the transformation of the Laplacian under a conformal change of the metric, we find
	\begin{align*}
	\Delta_{S_{\xi,\lambda}} H=\phi^{-4}\,\bar{\Delta}_{S_{\xi,\lambda}} H_S+\phi^{-4}\,\bar\Delta_{S_{\xi,\lambda}}(H-H_S)+{O}(\lambda^{-6}\,(1+|\xi|)^{-4}).
	\end{align*}
	Let $$\psi:\mathbb{R}^3\to\mathbb{R}\qquad\text{be given by} \qquad \psi(x)=2\,\phi^{-3}\,\lambda^{-1}-2\,\phi^{-3}\,\lambda\,(1-|\xi|^2)\,|x|^{-3}.$$
	By Lemma \ref{Schwarzschild identities} and \eqref{angle}, we have
	$\psi(x)=H_S(x)$ for all $x\in S_{\xi,\lambda}$. Consequently,
	$$
	\bar{\Delta}_{S_{\xi,\lambda}}H_S=\bar\Delta_{\mathbb{R}^3}\psi-\bar D^2_{\bar\nu,\bar\nu}\psi-2\,\lambda^{-1}\,\bar D_{\bar\nu}\psi.
	$$
	We compute
	$$
	\partial_i \psi=6\,\phi^{-4}\,\lambda^{-1}\,|x|^{-3}\,x^i+6\,\phi^{-4}\,\lambda\,(1-|\xi|^2)\,|x|^{-5}\,x^i
	$$
and 
\begin{align*} 
\partial_i\partial_j\psi&=6\,\phi^{-4}\,\lambda^{-1}\,|x|^{-3}\,(\delta_{ij}-3\,|x|^{-2}\,x^i\,x^j)+6\,\phi^{-4}\,\lambda\,(1-|\xi|^2)\,|x|^{-5}\,(\delta_{ij}-5\,|x|^{-2}\,x^i\,x^j)\\&\qquad+O(\lambda^{-5}\,(1+|\xi|)^{-4}).
\end{align*} 
Using this and \eqref{angle}, we obtain
\begin{align*} 
\bar\Delta_{\mathbb{R}^3}\psi&=-12\,\phi^{-4}\,\lambda\,(1-|\xi|^2)\,|x|^{-5}+O(\lambda^{-5}\,(1+|\xi|)^{-4}),\\
\bar D^2_{\bar\nu,\bar\nu}\psi&=-\frac92\,\phi^{-4}\,\lambda^{-3}\,|x|^{-1}-\frac12\,\phi^{-4}\,\lambda^{-1}\,(21-33\,|\xi|^2)\,|x|^{-3}-\frac12\,\phi^{-4}\,\lambda\,(1-|\xi|^2)\,(27-39\,|\xi|^2)\,|x|^{-5}\\&\qquad -\frac{15}{2}\,\phi^{-4}\,\lambda^3\,(1-|\xi|^2)^3\,|x|^{-7}+O(\lambda^{-5}\,(1+|\xi|)^{-4}),\\
\bar D_{\bar\nu}\psi&=3\,\phi^{-4}\,\lambda^{-2}\,|x|^{-1}+6\,\phi^{-4}\,(1-|\xi|^2)\,|x|^{-3}+3\,\phi^{-4}\,\lambda^2\,(1-|\xi|^2)^2\,|x|^{-5}.
\end{align*}
	The assertion follows from this and Lemma \ref{perturbations}.

\end{proof}
The  following corollary  is an immediate consequence of   Lemma \ref{inverse expansions lemma} and Lemma \ref{willmore operator one lemma}.
\begin{coro}
	\label{Willmore operator on sphere spherical harmonics}
	If $|\xi|<1-\delta$, there holds
	$$
	{W}({{S}_{\xi,\lambda}})=4\,\lambda^{-4}\sum_{l=0}^\infty (l-1)\,(l+1)\,(l+2)\,|\xi|^{l}\,P_l(-|\xi|^{-1}\,\bar g(\bar \nu,\xi))+{O}(\lambda ^{-5}).
	$$
	If $|\xi|>1+\delta$, there holds
\begin{equation} \label{W operator outlying}
	\begin{aligned}	{W}({{S}_{\xi,\lambda}})&=-4\,\lambda^{-4}\sum_{l=0}^\infty (l-1)\,l\,(l+2)\,|\xi|^{-l-1}\,P_l(-|\xi|^{-1}\,\bar g(\bar \nu,\xi))\\&\qquad 
	-4 \,\lambda^{-3}\,\ubar{\phi}^{-10}\,(3\,\ubar{\sigma}(\bar\nu,\bar\nu)-\bar {\operatorname{tr}}\,\ubar\sigma)+Y_1+Y_3
	+{O}(\lambda^{-5}\,|\xi|^{-4}).
	\end{aligned}
\end{equation}
	Here, $Y_1$ and $Y_3$ are, respectively, first and third spherical harmonics with
	$$
	Y_1=O(\lambda^{-5}\,|\xi|^{-3})\qquad \text{ and } \qquad Y_3=O(\lambda^{-5}\,|\xi|^{-3}).
	$$
	If $g$ satisfies \eqref{ac sigma} for every multi-index $J$ with $|J|\leq 5$, then (\ref{W operator outlying}) may be differentiated once with respect to $\xi$.
\end{coro}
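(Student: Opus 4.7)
The plan is to insert the spherical-harmonic expansions of Lemma \ref{inverse expansions lemma} into the pointwise formula for $W(S_{\xi,\lambda})$ from Lemma \ref{willmore operator one lemma} and collect coefficients term by term. First, I would parametrize $S_{\xi,\lambda}$ by writing $x = \lambda\,(y+\xi)$ with $y = \bar\nu(x) \in S_1(0)$, so that $|x|^{-2k-1} = \lambda^{-2k-1}\,|y+\xi|^{-2k-1}$. The bounds on $|x|$ recorded at the start of the appendix give $\phi^{-8} = 1 + O(\lambda^{-1}\,(1+|\xi|)^{-1})$, so replacing $\phi^{-8}$ by $1$ in the Schwarzschild bracket of Lemma \ref{willmore operator one lemma} creates only an error of order $\lambda^{-5}\,(1+|\xi|)^{-4}$, while each surviving term acquires a common prefactor $\lambda^{-4}$.

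Next, in the on-center case $|\xi| < 1 - \delta$, I would apply Lemma \ref{inverse expansions lemma} with $k = 0,\,1,\,2,\,3$ to each inverse power of $|y+\xi|$. Collecting the coefficient of $|\xi|^l\,P_l(-|\xi|^{-1}\,\bar g(y,\xi))$ produces the scalar
\[
c_l(\xi) = -9 + (3\,|\xi|^2 - 7)\,a_{1,l}(\xi) - 3\,(1-|\xi|^2)\,(7\,|\xi|^2+5)\,a_{2,l}(\xi) + 15\,(1-|\xi|^2)^3\,a_{3,l}(\xi).
\]
The $\sigma$-contribution from Lemma \ref{willmore operator one lemma} is $O(\lambda^{-5})$ in this regime, because $|\ubar{\sigma}| = O(\lambda^{-2})$ uniformly for $|\xi|$ bounded above, and is absorbed into the error. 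It remains to verify the algebraic identity
\[
c_l(\xi) = 8\,(l-1)\,(l+1)\,(l+2) \qquad \text{for every } l \geq 0.
\]
Upon clearing the $(1-|\xi|^2)^k$ denominators implicit in $a_{k,l}$, this becomes a polynomial identity in $|\xi|^2$ and $l$; all powers of $|\xi|^2$ are expected to cancel exactly, leaving a cubic in $l$ which is fixed by its values at, for instance, $l = 0,\,1,\,2,\,3$. A quick check at $l=0$ yields $c_0 = -9 + (7\,|\xi|^2-7)/(1-|\xi|^2) = -16 = 8\cdot(-1)\cdot 1\cdot 2$, consistent with the claim. This cancellation is the only step requiring a genuine computation and is what I expect to be the main technical obstacle.

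For the outlying case $|\xi| > 1 + \delta$, the same procedure applies after replacing $a_{k,l}$ by $\tilde a_{k,l} = (-1)^k\,|\xi|^{-2k}\,a_{k,l}(|\xi|^{-2}\,\xi)$ and the factor $|\xi|^l$ by $|\xi|^{-l-1}$. The analogous cancellation produces $-8\,(l-1)\,l\,(l+2)$ in place of $8\,(l-1)\,(l+1)\,(l+2)$, which accounts precisely for the different combinatorial factor displayed in the outlying formula. Crucially, the $\sigma$-term $4\,\ubar{\phi}^{-10}\,\lambda^{-3}\,[\operatorname{tr}\ubar{\sigma} - 3\,\ubar{\sigma}(\bar\nu,\bar\nu)]$ is now of size $O(\lambda^{-3}\,|\xi|^{-2})$ and dominates the target error $O(\lambda^{-5}\,|\xi|^{-4})$, so it must be retained as displayed. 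Writing $\bar\nu^i\,\bar\nu^j = \tfrac13\,\delta^{ij} + \tfrac23\,Y_2^{ij}$ shows that this $\sigma$-piece lies in $\Lambda_0 \oplus \Lambda_2$ and hence contributes nothing to $Y_1$ or $Y_3$; the $Y_1$ and $Y_3$ remainders carry over directly from Lemma \ref{willmore operator one lemma}. Finally, the differentiability in $\xi$ under the $C^5$-hypothesis is inherited from the corresponding statement in Lemma \ref{willmore operator one lemma}, since $a_{k,l}$ and $\tilde a_{k,l}$ are smooth in $\xi$ on their respective domains and all the manipulations above commute with $\bar D_\xi$.
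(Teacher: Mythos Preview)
Your proposal is correct and follows exactly the route the paper takes: the paper states that the corollary is an immediate consequence of Lemma \ref{inverse expansions lemma} and Lemma \ref{willmore operator one lemma}, and what you have written is precisely the substitution-and-collection argument this entails. One small remark: your on-center justification ``$|\ubar{\sigma}| = O(\lambda^{-2})$ uniformly for $|\xi|$ bounded above'' is slightly imprecise when $|\xi|$ is near the origin, since $\ubar\sigma=\sigma(\lambda\xi)$ need not be controlled there; the correct estimate is that $\sigma$ evaluated on points of $S_{\xi,\lambda}$ satisfies $|\sigma(x)|=O(|x|^{-2})=O(\lambda^{-2})$ because $|x|\ge\delta\lambda$, which is enough to absorb the whole $\sigma$-block into $O(\lambda^{-5})$.
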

\begin{rema} \normalfont
Note that
	$$
3\,\ubar{\sigma}(\bar\nu,\bar\nu)-\bar {\operatorname{tr}}\,\ubar\sigma=	\sum_{i,\,j=1}^3\ubar{\sigma}(e_i,e_j)\,(3\,\bar g(\bar\nu,e_i)\,\bar g(\bar\nu, e_j)-\delta_{ij})\in\Lambda_2(S_{\xi,\lambda}).
	$$
\end{rema}
In the next lemma, we specify the formula for the linearization of the Willmore operator (\ref{Q general formula}) to a sphere.
\begin{lem}
	\label{Q lem}
		 For every $u\in C^{\infty}(S_{\xi,\lambda})$ there holds 
	\begin{equation}
	\begin{aligned}
	Q_{S_{\xi,\lambda}}u&=L(Lu)+\frac12\, H^2\, Lu+(\nabla ^2 u)*O(\lambda^{-4}\,(1+|\xi|)^{-2})+(\nabla u)*O(\lambda^{-4}\,(1+|\xi|)^{-3})\\&\qquad+u*O(\lambda^{-5}\,(1+|\xi|)^{-3}).
	\end{aligned}
	\label{Q formula}
	\end{equation}
 If $g$ satisfies \eqref{ac sigma} for every multi-index $J$ with $|J|\leq 5$, (\ref{Q formula}) may be differentiated once with respect to $\xi$.
\end{lem}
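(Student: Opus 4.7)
The plan is to start from the general formula (\ref{Q general formula}) for $Qu$ provided by Lemma \ref{variation lemma} and, on the coordinate sphere $S_{\xi,\lambda}$, estimate each term beyond $L(Lu) + \frac{1}{2} H^2 Lu$ in terms of the size of $u$ and its covariant derivatives. The remaining terms are
\begin{equation*}
2H\,g(\hcirc,\nabla^2 u) + 2H\,\operatorname{Ric}(\nu,\nabla u) + 2\hcirc(\nabla H,\nabla u) + u\,\mathcal{R},
\end{equation*}
where $\mathcal{R}$ denotes the bracket in (\ref{Q general formula}). Thus the lemma reduces to bounding $H$, $\hcirc$, $\operatorname{Ric}$, $\nabla H$, $\Delta H$, $\nabla^2 H$, and $D_\nu \operatorname{Ric}$ along $S_{\xi,\lambda}$.

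The two inputs I would use are Lemma \ref{Schwarzschild identities} and Lemma \ref{perturbations}. A direct conformal calculation, starting from the fact that the Euclidean sphere is umbilical, shows that $\hcirc_S=0$, so the traceless second fundamental form is generated entirely by the perturbation $\sigma$. Combining this observation with the asymptotic $|x|\sim \lambda(1+|\xi|)$ on $S_{\xi,\lambda}$ and the $C^4$-decay $\partial_J\sigma = O(|x|^{-2-|J|})$, one obtains
\begin{equation*}
H = O(\lambda^{-1}), \qquad \hcirc = O(\lambda^{-3}(1+|\xi|)^{-2}), \qquad \operatorname{Ric} = O(\lambda^{-3}(1+|\xi|)^{-3}),
\end{equation*}
together with $\nabla H = O(\lambda^{-3}(1+|\xi|)^{-2})$ (differentiating $H_S = 2\phi^{-2}\lambda^{-1} - 4\phi^{-3}|x|^{-3}\bar g(x,\bar \nu)$ tangentially and using (\ref{angle}) to simplify), $\Delta H,\,\nabla^2 H = O(\lambda^{-4}(1+|\xi|)^{-2})$, and $D_\nu \operatorname{Ric} = O(\lambda^{-4}(1+|\xi|)^{-4})$.

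Plugging these estimates into the three first-order terms gives $|H\,\hcirc| \lesssim \lambda^{-4}(1+|\xi|)^{-2}$ and $|H\,\operatorname{Ric}| + |\hcirc\,\nabla H| \lesssim \lambda^{-4}(1+|\xi|)^{-3}$, matching the claimed coefficients of $\nabla^2 u$ and $\nabla u$. For the coefficient of $u$, each summand of $\mathcal{R}$ is of size at most $O(\lambda^{-5}(1+|\xi|)^{-3})$: the dominant contribution is $H\Delta H$, while the terms involving $\hcirc$ are of lower order thanks to the extra $(1+|\xi|)^{-1}$ factors carried by $\hcirc$ and $\operatorname{Ric}$. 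Assembling these bounds yields the expansion (\ref{Q formula}). The differentiability in $\xi$ is immediate under the $C^5$-asymptotic hypothesis, since the last statement of Lemma \ref{perturbations} allows us to differentiate each ingredient estimate once with respect to $\xi$ at the cost of one extra factor of $\lambda^{-1}$.

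The main obstacle will be bookkeeping rather than any conceptual difficulty: one must verify the tangential derivative estimates for $H_S$, $\nabla H_S$, and $\Delta H_S$ carefully, because $|x|$ and $\bar g(x,\bar\nu)$ both vary nontrivially on $S_{\xi,\lambda}$, and the estimates must be uniform across both the on-center regime $|\xi|<1-\delta$ and the outlying regime $|\xi|>1+\delta$. Once these are verified, the proof reduces to multiplying and collecting terms.
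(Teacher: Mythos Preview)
Your approach is the same as the paper's: start from (\ref{Q general formula}), subtract off $L(Lu)+\tfrac12 H^2 Lu$, and bound the leftover terms using the decay of $H$, $\hcirc$, $\operatorname{Ric}$, $\nabla H$, $\nabla^2 H$, and $D_\nu\operatorname{Ric}$ on $S_{\xi,\lambda}$. Almost all of your intermediate estimates agree with the paper's.

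There is one genuine inconsistency, however. You record $\Delta H = O(\lambda^{-4}(1+|\xi|)^{-2})$ and then assert that the dominant term $H\,\Delta H$ in $\mathcal{R}$ is $O(\lambda^{-5}(1+|\xi|)^{-3})$. These two statements are incompatible: with $H=O(\lambda^{-1})$ your stated bound on $\Delta H$ only yields $H\,\Delta H=O(\lambda^{-5}(1+|\xi|)^{-2})$, which is one power of $(1+|\xi|)$ short of what the lemma claims for the coefficient of $u$. The naive estimate $\Delta H=O(\lambda^{-4}(1+|\xi|)^{-2})$ coming from bounding the components of $\nabla^2 H$ is \emph{not} sharp in the outlying regime; the individual summands in the explicit formula for $\bar\Delta H_S$ (cf.\ the proof of Lemma~\ref{willmore operator one lemma}) are each only of size $\lambda^{-4}|\xi|^{-1}$, and the extra decay comes from a cancellation among them.

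The paper extracts this cancellation without any direct computation by writing
\[
\Delta H \;=\; W \;-\; \big(|\hcirc|^2+\operatorname{Ric}(\nu,\nu)\big)H
\]
and invoking Corollary~\ref{Willmore operator on sphere spherical harmonics}, which gives $W(S_{\xi,\lambda})=O(\lambda^{-4}(1+|\xi|)^{-3})$ via the Legendre expansion. Since $(|\hcirc|^2+\operatorname{Ric}(\nu,\nu))H=O(\lambda^{-4}(1+|\xi|)^{-3})$ as well, one obtains the sharp bound $\Delta H=O(\lambda^{-4}(1+|\xi|)^{-3})$, and hence $H\,\Delta H=O(\lambda^{-5}(1+|\xi|)^{-3})$ as required. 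Once you insert this step, the rest of your argument goes through exactly as written.
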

\begin{proof}
	This follows from (\ref{Q general formula}), the decay of the metric, Lemma \ref{perturbations}, the estimates $$\nabla H=O(\lambda^{-3}\,(1+|\xi|)^{-2}), \qquad \nabla^2H=O(\lambda^{-4}\,(1+|\xi|)^{-2}),$$
	and the  estimate
	$$
	\Delta H=W+O(\lambda^{-4}\,(1+|\xi|)^{-3})=O(\lambda^{-4}\,(1+|\xi|)^{-3}).
	$$
	We have used Corollary \ref{Willmore operator on sphere spherical harmonics} in the last equation.
\end{proof}
\section{The foliation property}
 Recall from the proof of Theorem \ref{existence thm} that  $\Sigma(\lambda)$ is the sphere $$\Sigma(\lambda)=\Sigma_{\xi(\lambda),\lambda}=\Sigma_{\xi(\lambda),\lambda}(u_{\xi(\lambda),\lambda})$$ where $\xi(\lambda)\in\mathbb{R}^3$ is the unique local minimum near the origin of the function $G_\lambda$ defined in (\ref{G definition}). In particular, by Lemma \ref{variational to 3 dim}, $\Sigma(\lambda)$ is a stable area-constrained Willmore surface. Moreover, we have seen in the proof of Theorem \ref{existence thm} that,  as $\lambda\to\infty$,
 \begin{align}
 \xi(\lambda)=o(1). \label{xi decay}
 \end{align}
 By Proposition \ref{LS proposition} and Remark \ref{u ambient derivatives ren}, we have 
 \begin{align}
 u_{\xi(\lambda),\lambda}=O(1),\qquad 
 (\bar D u)|_{(\xi(\lambda),\lambda)}=O(\lambda^{-1}),  \qquad  u'|_{(\xi(\lambda),\lambda)}=O(\lambda^{-2}),
 \label{implicit f}
 \end{align}
 where we recall that $\bar D$ and the dash indicate differentiation with respect to the parameters $\xi$ and $\lambda$, respectively.\\ 
    \indent  We now verify that the family of spheres $\{\Sigma(\lambda):\lambda>\lambda_0\}$ forms a smooth foliation, provided $\lambda_0>1$ is sufficiently large.   
\begin{prop}\label{foliation prop}
	Suppose that $(M,g)$ is $C^4$-asymptotic to Schwarzschild with mass $m>0$ and that the scalar curvature $R$ satisfies (\ref{sRT}) and (\ref{sRT2}). Then	the family $\{\Sigma(\lambda):\lambda>\lambda_0\}$ of stable area-constrained Willmore spheres is a smooth foliation of the complement of a compact subset of $M$ provided $\lambda_0>1$ is sufficiently large.
\end{prop}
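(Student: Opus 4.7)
The plan is to verify the foliation property by parametrizing the family and showing that the parametrization is a local diffeomorphism onto a neighborhood of infinity. Define
$$F:(\lambda_0,\infty)\times S_1(0)\to M, \qquad F(\lambda, y)=\lambda\,\xi(\lambda)+\lambda\, y+u_{\xi(\lambda),\lambda}\big(\lambda\,\xi(\lambda)+\lambda\, y\big)\,y.$$
Smoothness of $F$ in $(\lambda, y)$ follows from the smooth dependence of $u_{\xi,\lambda}$ on $(\xi,\lambda)$ established in Proposition \ref{LS proposition} and the smooth dependence of $\xi(\lambda)$ on $\lambda$, the latter being an application of the implicit function theorem to the equation $\bar D G_\lambda(\xi(\lambda))=0$ using the non-degenerate Jacobian $\bar D^2 G_\lambda\geq\tau\,\operatorname{Id}$ supplied by Lemma \ref{G der sRT}. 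The foliation property then reduces to showing that the normal velocity
$$w(\lambda, y)=g\big(\nu_{\Sigma(\lambda)},\, \partial_\lambda F(\lambda, y)\big)$$
is strictly positive for all $\lambda>\lambda_0$ and all $y\in S_1(0)$, provided $\lambda_0$ is chosen sufficiently large.

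To analyze $w$, I first observe that the tangential derivative bound $\nabla u_{\xi,\lambda}=O(\lambda^{-1})$ from Proposition \ref{LS proposition} implies that the Euclidean unit normal to $\Sigma(\lambda)$ at $F(\lambda, y)$ differs from $y$ only by an $O(\lambda^{-1})$ tangential correction; combined with $g=\bar g+O(|x|^{-1})$, this reduces the positivity of $w$ to a positive lower bound on $\bar g(y,\partial_\lambda F)$. A direct computation gives
$$\bar g(y,\partial_\lambda F(\lambda, y))=1+\bar g(y,\xi(\lambda))+\bar g(y,\lambda\,\xi'(\lambda))+\partial_\lambda\big[u_{\xi(\lambda),\lambda}(\lambda\,\xi(\lambda)+\lambda\, y)\big].$$
The term $\bar g(y,\xi(\lambda))$ is $o(1)$ by (\ref{xi decay}). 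Applying the chain rule together with the parameter-derivative bounds $\bar D u_{\xi,\lambda}=O(\lambda^{-1})$ and $u_{\xi,\lambda}'=O(\lambda^{-2})$ from Remark \ref{u ambient derivatives ren} together with the estimate $\xi'(\lambda)=O(\lambda^{-1})$ discussed below, the final term is $O(\lambda^{-2})$.

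The main obstacle is the refined control $\lambda\,\xi'(\lambda)=o(1)$. Differentiating $\bar D G_\lambda(\xi(\lambda))=0$ in $\lambda$ and using $\bar D^2 G_\lambda\geq\tau\,\operatorname{Id}$ gives $\xi'(\lambda)=-(\bar D^2 G_\lambda)^{-1}\,\partial_\lambda\bar D G_\lambda(\xi(\lambda))$. The naive estimate based solely on the $C^4$-decay $R=O(|x|^{-4})$ only yields $\partial_\lambda\bar D G_\lambda=O(\lambda^{-1})$, hence $\lambda\,\xi'(\lambda)=O(1)$, which is insufficient to guarantee positivity. To upgrade this to $\lambda\,\xi'(\lambda)=o(1)$, I would revisit the scalar-curvature integral appearing in the expansion of $\bar D G_\lambda$ from Lemma \ref{G expansion} at the critical point $\xi(\lambda)=o(1)$ and apply the same symmetrization and radial-monotonicity arguments employed in the proof of Lemma \ref{G der sRT}: the asymptotic evenness (\ref{sRT2}) cancels the leading odd contribution of $R$ in the gradient, while the growth condition (\ref{sRT}) produces the requisite sign in the radial direction. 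Together these upgrade the bound to $\partial_\lambda\bar D G_\lambda(\xi(\lambda))=o(\lambda^{-1})$, giving $\lambda\,\xi'(\lambda)=o(1)$.

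Assembling these estimates, $\bar g(y,\partial_\lambda F(\lambda, y))=1+o(1)$ uniformly in $y\in S_1(0)$ as $\lambda\to\infty$, so that $w(\lambda, y)\geq\tfrac12$ for every $y\in S_1(0)$ once $\lambda_0$ is large enough. It follows that $F$ is a smooth diffeomorphism from $(\lambda_0,\infty)\times S_1(0)$ onto a neighborhood of infinity in $M$, and therefore $\{\Sigma(\lambda):\lambda>\lambda_0\}$ is a smooth foliation, as claimed.
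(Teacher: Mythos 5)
Your proposal follows the same route as the paper: parametrize the family, reduce the foliation property to positivity of the normal velocity, and obtain the decisive bound $\lambda\,\xi'(\lambda)=o(1)$ by differentiating the critical-point condition $\bar D G_\lambda|_{\xi(\lambda)}=0$, inverting $\bar D^2 G_\lambda\geq\tau\,\operatorname{Id}$, and controlling $\partial_\lambda\bar D G_\lambda$ using the scalar-curvature hypotheses together with $\xi(\lambda)=o(1)$. One small correction to the mechanism you sketch for the last step: the bound $\partial_\lambda\bar D G_\lambda(\xi(\lambda))=o(\lambda^{-1})$ does not draw on the radial-monotonicity hypothesis (\ref{sRT}) at all; the paper derives from (\ref{sRT2}) (via integration along rays, using the $C^4$-decay) the derivative version $x^i(\partial_iR)(x)+x^i(\partial_iR)(-x)=o(|x|^{-4})$, and this evenness--of--the--radial--derivative estimate, together with $\xi(\lambda)=o(1)$, is what kills both scalar-curvature integrals in $\partial_\lambda\bar D G_\lambda$, while (\ref{sRT}) enters the argument only through the convexity bound $\bar D^2 G_\lambda\geq\tau\,\operatorname{Id}$ from Lemma \ref{G der sRT} that you already invoke.
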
 
\begin{proof}
 By Lemma \ref{G der sRT}, there are constants $\tau>0$ and $\delta_0>0$ such that \begin{align}\bar D^2 G_\lambda\geq \tau\,\operatorname{Id}
	\label{strict convexity}\end{align} on $\{\xi\in\mathbb{R}^3:|\xi|<\delta_0\}$ provided $\lambda>\lambda_0$ and $\lambda_0>1$ is sufficiently large. In particular, by the implicit function theorem, the dependence of $\xi(\lambda)$ on $\lambda$ is smooth.  It follows that the map
	  $$\Psi:{S}_1(0)\times(\lambda_0,\infty)\to M \qquad \text{ given by }\qquad
	\Psi(y,\,\lambda)=\Phi_{\xi(\lambda),\lambda}^{u_{\xi(\lambda),\lambda}}({\lambda\,y+\xi})
	$$
	is smooth. Using (\ref{xi decay}) and (\ref{implicit f}), we find that $\Sigma(\lambda)$ encloses every given compact set, provided $\lambda>1$ is sufficiently large. 
		 \\ \indent   We claim that
	\begin{align}
		\xi'(\lambda)=o(\lambda^{-1}).
		\label{toshow}
	\end{align}
	To see this, let $a\in \mathbb{R}^3$. Differentiating the identity $(\bar DG_\lambda)|_{\xi(\lambda)}(a)=0$ with respect to $\lambda$, we obtain
	\begin{align}
	(\bar D^2G_\lambda)|_{\xi(\lambda)}(a,\, \xi'(\lambda))+(\bar{D} G_\lambda')|_{\xi(\lambda)}(a)=0.
	\label{critical point differentiated}
	\end{align}
	The argument presented in the proof of Lemma \ref{G der sRT} also shows that we may differentiate the error terms in Lemma \ref{G expansion} with respect to $\lambda$.  Applying Lemma \ref{G expansion} and using (\ref{xi decay}), we thus find 
	$$(\bar{D} G'_\lambda)|_{\xi(\lambda)}(a)=-4\,\lambda \int_{S_{\xi(\lambda),\lambda}}  \bar g(a,\bar\nu) \,R\,\mathrm{d}\bar\mu -2\,\lambda^2 \int_{S_{\xi(\lambda),\lambda}}  g(a,\bar\nu) \,(\bar D_{\bar \nu} R)\,\mathrm{d}\bar\mu+o(\lambda^{-1}).
	$$
Since $(M,g)$ is $C^4$-asymptotic to Schwarzschild, we obtain from (\ref{sRT2})  that
	\begin{align}
\sum_{i=1}^3	\big[x^i(\partial_i R)(x)+x^i(\partial_i R)(-x)\big]=o(|x|^{-4}).
	\label{dSRT}
	\end{align}
	Indeed, if (\ref{dSRT}) failed, integration along radial lines would yield that (\ref{sRT2}) must be violated, too. From this, we find that
	$$
	(\bar{D} G'_\lambda)|_{\xi(\lambda)}(a)=\xi(\lambda)\,O(\lambda^{-1})+o(\lambda^{-1})=o(\lambda^{-1}).
	$$
	Choosing $
	a=\xi'(\lambda)
	$
	and using (\ref{critical point differentiated}) as well as (\ref{strict convexity}), we obtain the asserted estimate (\ref{toshow}).\\ \indent	Note that $\Psi(\,\cdot\,,\,\lambda)$ parametrizes $\Sigma(\lambda)$ and that $\bar\nu=y+O(\lambda^{-1})$. Using (\ref{implicit f}) and (\ref{toshow}), we compute that
	\begin{align*}
	\bar g(\Psi',y)=&\,1+\bar g(\xi(\lambda),y)+\lambda\,\bar g( \xi'(\lambda),y)+(\bar D_{\xi'(\lambda)}u)|_{({\xi(\lambda),\lambda})}+ u'|_{(\xi(\lambda),\lambda)}\\=&\,1+o(1).
	\end{align*}
	In particular, $\bar g(\Psi',\bar \nu)>0$. This finishes the proof.
\end{proof}

\section{Remark on far-outlying stable constant mean curvature surfaces}
\label{CMC appendix}
In this section, we show that the assumptions of Theorem \ref{far outlying thm} are sufficient to preclude large far-outlying stable constant mean curvature spheres in $(M,g)$ as well. In the statement of the following result, $\operatorname{vol}(\Sigma)$ denotes the volume of the compact domain bounded by  $\Sigma$. 
\begin{thm}
	Suppose that $(M,g)$ is $C^5$-asymptotic to Schwarzschild with mass $m>0$ and that its scalar curvature $R$ satisfies
	\begin{align}
\sum_{i=1}^3	x^i\,\partial_i(|x|^2R)\leq 0.
	\label{CMC R Growth}
	\end{align}
	There is no sequence $\{\Sigma_j\}^\infty_{j=1}$ of outlying stable constant mean curvature spheres $\Sigma_j\subset M$ with
	\label{CMC outlying}
	$$
	\lim_{j\to\infty}\operatorname{vol}(\Sigma_j)=\infty\qquad \text{ and } \qquad \lim_{j\to\infty}\rho({\Sigma_j})\,H(\Sigma_j)=\infty.
	$$
\end{thm}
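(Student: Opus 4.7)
The plan is to adapt the strategy for Theorem \ref{far outlying thm} to the setting of stable constant mean curvature spheres. The Lyapunov-Schmidt framework for far-outlying CMC spheres has been developed in \cite{brendle2014large,chodosh2017global,chodosh2019far}, and I would use it as a direct analog of Proposition \ref{LS proposition far outlying} and Lemma \ref{far outlying estimates}. First, for each $\xi\in\mathbb{R}^3$ with $|\xi|>2$ and each $\lambda>\lambda_0$, I would use the implicit function theorem to construct a perturbation $\Sigma_{\xi,\lambda}=\Phi^{u_{\xi,\lambda}}_{\xi,\lambda}(S_{\xi,\lambda})$ of area $4\,\pi\,\lambda^2$ whose mean curvature is constant modulo first spherical harmonics, along with a reduced functional $A_\lambda$ on $\{\xi\in\mathbb{R}^3:|\xi|>2\}$ whose critical points correspond precisely to CMC surfaces. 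Iterating as in the proof of Lemma \ref{far outlying estimates} yields $u_{\xi,\lambda}=-2\,|\xi|^{-1}+O(\lambda^{-1}\,|\xi|^{-2})+O(|\xi|^{-3})$, together with sharper estimates on the higher spherical-harmonic components of $u_{\xi,\lambda}$.

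Second, combining the Pohozaev identity for the Einstein tensor used in the proof of Lemma \ref{Willmore energy sphere} with the expansion of $u_{\xi,\lambda}$ from the first step, I would establish an asymptotic expansion of $A_\lambda$ in the far-outlying regime of the schematic form
\begin{equation*}
A_\lambda(\xi)=A^{\textup{Sch}}(\xi)+c\,\lambda\int_{B_\lambda(\lambda\,\xi)}R\,\text{d}\bar v+\textup{(lower-order error)},
\end{equation*}
where $A^{\textup{Sch}}$ is the exact Schwarzschild contribution -- explicit, rotationally symmetric, and strictly monotone in the radial direction -- and $c\neq 0$ is a universal constant. The relevant computation in a neighboring setting is carried out in \cite[\S 4, \S 5]{chodosh2019far}.

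Finally, I would conclude as in the proof of Theorem \ref{far outlying thm}. Suppose for a contradiction that there existed a sequence $\{\Sigma_j\}_{j=1}^\infty$ of stable CMC spheres as in the statement. Standard curvature estimates for stable CMC spheres (see \cite{chodosh2017global}) together with $\operatorname{vol}(\Sigma_j)\to\infty$ and $\rho(\Sigma_j)\,H(\Sigma_j)\to\infty$ allow one to represent $\Sigma_j=\Sigma_{\xi_j,\lambda_j}$ via the Lyapunov-Schmidt reduction, with $|\xi_j|\to\infty$ and $\lambda_j\to\infty$. Arguing as in the proof of Lemma \ref{G der sRT 2}, the exact growth condition (\ref{CMC R Growth}) implies
\begin{equation*}
\xi_j^i\,\partial_i\bigg(c\,\lambda_j\int_{B_{\lambda_j}(\lambda_j\,\xi_j)}R\,\text{d}\bar v\bigg)
\end{equation*}
has a definite sign compatible with that of the Schwarzschild contribution to $\xi_j^i\,(\partial_iA^{\textup{Sch}})(\xi_j)$, so that $\xi_j^i\,(\partial_iA_{\lambda_j})(\xi_j)\neq 0$ for $j$ large. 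This contradicts the criticality of $\xi_j$.

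The main obstacle is the refinement of the expansion of $A_\lambda$ with error terms decaying faster than the leading Schwarzschild contribution, which in the CMC case decays more slowly in $|\xi|$ than its Willmore counterpart. As in the two-step iteration in the proof of Lemma \ref{far outlying estimates}, this requires improving the naive estimates for the $\Lambda_0$, $\Lambda_1$, and $\Lambda_2$ projections of $u_{\xi,\lambda}$ by passing through a rescaled background sphere and using the $C^5$-decay of $g$; the precise book-keeping is analogous to, but lighter than, that performed for the Willmore case in Lemma \ref{far-outlying expansion}.
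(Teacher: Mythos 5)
Your high-level plan -- reduce to the behavior of a reduced area functional $A_\lambda$ via Lyapunov-Schmidt, obtain an expansion in the far-outlying regime, and derive a contradiction from the radial monotonicity afforded by the growth condition on $R$ -- matches the paper's. However, there are two substantive places where your sketch diverges from what is actually needed.

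First, the schematic expansion you propose, $A_\lambda(\xi)=A^{\textup{Sch}}(\xi)+c\,\lambda\int_{B_\lambda(\lambda\,\xi)}R\,\text{d}\bar v+\textup{error}$, is borrowed from the Willmore case (Lemma \ref{far-outlying expansion}) but is not the correct form of the reduced \emph{area} functional. The expansion of $A_\lambda$ from \cite[p.~25]{chodosh2019far} contains the pointwise Taylor coefficients $-\frac{2\pi}{15}\lambda^4\ubar{R}-\frac{\pi}{105}\lambda^6\bar\Delta\ubar{R}$; if one Taylor-expands $\lambda\int_{B_\lambda(\lambda\,\xi)}R\,\text{d}\bar v$, one obtains $\frac{4\pi}{3}\lambda^4\ubar R+\frac{2\pi}{15}\lambda^6\bar\Delta\ubar R+\dots$, whose coefficients are in a different ratio. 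So $A_\lambda$ is \emph{not} $A^{\textup{Sch}}$ plus a multiple of $\lambda\int R$. Consequently, the ``argue as in Lemma \ref{G der sRT 2}'' step -- a reflection argument for a surface integral of $R$ -- does not apply directly. In the paper's proof, after differentiating radially and reassembling the pieces via the Pohozaev-type reasoning for $\operatorname{div}\operatorname{div}\sigma-\bar\Delta\operatorname{tr}\sigma$, one ends up with the genuinely different quantity $\tfrac12\int_{B_\lambda(\lambda\,\xi)}\bar g(\xi,\lambda\,\xi-x)\,R\,\text{d}\bar\mu$ (a weighted volume integral, not a surface integral), whose non-negativity under (\ref{CMC R Growth}) is precisely the result established in \cite[\S 2.2]{chodosh2019far}.

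Second, and more importantly, you do not address the content of the theorem, which is weakening the hypotheses of \cite[Corollary 1.7]{chodosh2019far} from $C^7$-decay to $C^5$-decay. Simply re-running the LS construction and the expansion from \cite{chodosh2019far} needs $C^6$- or $C^7$-decay, because the differentiated expansion (\ref{CE 1}) of $A_\lambda$ demands two extra derivatives of $\sigma$ when one first estimates the troublesome term (\ref{CE 2}) and then differentiates. The key maneuver in the paper -- and the actual point of this appendix -- is to take the radial derivative of $A_\lambda$ \emph{before} estimating (\ref{CE 2}), which allows the scalar-curvature contribution to surface at a level requiring one less derivative. Without this reordering, your proposal would recover the old $C^7$ hypothesis rather than the stated $C^5$ one. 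Relatedly, there is no need to re-derive the CMC Lyapunov-Schmidt framework from scratch; the construction of $\Sigma_{\xi,\lambda}$, the reduced functional $A_\lambda$, and the expansion (\ref{CE 1}) are all imported directly from \cite{chodosh2019far}, and the new work is confined to the differentiation-before-estimation of the single term (\ref{CE 2}).
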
 
\begin{rema} \normalfont
	The hypotheses of Theorem \ref{CMC outlying} are weaker than those of Corollary 1.7 in \cite{chodosh2019far}. First, we only require $C^5$-decay of the metric, while $C^7$-decay of the metric is assumed in \cite{chodosh2019far}. Second, the growth condition (\ref{CMC R Growth}) is weaker than the radial convexity  assumption
	$$
\sum_{i,\,j=1}^3x^i\,x^j\,\partial_i\partial_j R\geq 0
	$$ in \cite{chodosh2019far}.
\end{rema}	
The proof of Theorem \ref{CMC outlying} is a small variation of the proof of Corollary 1.7 in \cite{chodosh2019far}. The asserted improvement is obtained by first taking the radial derivative of the area functional of a coordinate sphere and then estimating the resulting terms, rather than first estimating the area functional and then taking the radial derivative. This approach brings out the contribution of the scalar curvature in a more precise way. We only point out the necessary modifications of the proof.
\\ \indent  We recall that $$\phi(x)=1+|x|^{-1}$$ denotes the conformal factor of the Schwarzschild metric with mass $m=2$. Moreover, continuing the notation introduced on p.~\pageref{underline page}, we use a bar underneath a quantity to indicate evaluation at $\lambda\,\xi$.
\begin{proof}[Proof of Theorem \ref{CMC outlying}]
	As in the proof of Theorem \ref{far outlying thm}, there exists a constant $\lambda_0>1$ which only depends on $(M,g)$ such that for every $\lambda>\lambda_0$ and $\xi\in\mathbb{R}^3 $ with $|\xi|>2$, there is a surface $\Sigma_{\xi,\lambda}$ with the following properties:
	\begin{itemize}
		\item[$\circ$]  $\Sigma_{\xi,\lambda}$ is a perturbation of the sphere $S_{\tilde \xi,\tilde \lambda}$, where $$\tilde\lambda\,\tilde \xi=\lambda\,\xi.$$ Moreover, we have
		\begin{align}
		\tilde \lambda =\lambda\, \ubar{ \phi}^{-2}+O(\lambda^{-1}\,|\xi|^{-2}). \label{tilde lambda relation}
		\end{align}
		\item[$\circ$] The mean curvature of $\Sigma_{\xi,\lambda}$ is constant up to first spherical harmonics.
		\item[$\circ$] There holds
		$$
		\operatorname{vol}(\Sigma_{\xi,\lambda})=\frac{4\,\pi}{3}\,\lambda^3.
		$$
		\item[$\circ$]  $\Sigma_{\xi,\lambda}$ has constant mean curvature if and only if $\xi$ is a critical point of the function
		$$
		A_\lambda:\{\xi\in\mathbb{R}^3:|\xi|>2\}\to\mathbb{R}^3\qquad \text{ given by }\qquad  A_\lambda(\xi)=|\Sigma_{\xi,\lambda}|.
		$$ 
	\end{itemize}
We note that $\tilde \lambda$ is denoted by $r$ in \cite{chodosh2019far}. \\ \indent 
	In \cite[p.~182]{chodosh2019far}, it was shown that
	\begin{align}
	A_\lambda(\xi)=4\,\pi\, \lambda^2-\frac{2\,\pi}{15}\,\lambda^4\,\ubar{R}-\frac{\pi}{105}\,\lambda^6\, \bar\Delta\ubar{R}-\frac{8\,\pi}{35}\,|\xi|^{-6}+O(\lambda^{-1}\,|\xi|^{-6})+O(|\xi|^{-7}).
	\label{CE 1}
	\end{align}
	This identity may be differentiated once with respect to $\xi$. 
	In the derivation of this identity and its differentiability in \cite[\S 4]{chodosh2019far}, $C^6$-decay of the metric rather than $C^5$-decay is used to analyze the contribution of the term 
		\begin{align}
	\frac12 \int_{S_{\tilde \xi,\tilde \lambda}}[\bar {\operatorname{tr}}\,\sigma-\sigma(\bar\nu,\bar\nu)]\,\mathrm{d}\bar \mu
	-\tilde\lambda^{-1}\int_{B_{\tilde\lambda}(\lambda \,\xi)} \bar {\operatorname{tr}}\,\sigma \,\mathrm{d} \bar{v}
	\label{CE 2}
	\end{align}
	to (\ref{CE 1}). In \cite[\S 4.1 and \S 4.2]{chodosh2019far}, (\ref{CE 2}) was computed to be 
	$$
	-\frac{2\,\pi}{15}\,  \lambda^4\, \ubar {R}
	-
	\frac{\pi}{105}\,\lambda^6\,  \bar\Delta \ubar{ R}
	-\frac{8\,\pi}{15}\, \lambda|\xi|^{-3}\,\left[\bar {\operatorname{tr}}\,\ubar{\sigma}-3|\xi|^{-2}\sigma(\xi,\xi)+\lambda\,\bar D_{\xi} \bar {\operatorname{tr}}\,\ubar{\sigma}\right]+O(\lambda^{-1}\,|\xi|^{-6})+O(|\xi|^{-7}).
	$$
	It follows that
	\begin{align*}
	A_\lambda(\xi)&=4\,\pi\, \lambda^2-\frac{8\,\pi}{35}\,|\xi|^{-6}+\frac{8\,\pi}{15}\, \lambda\,|\xi|^{-3}\,(\bar {\operatorname{tr}}\,\ubar{\sigma}-3|\xi|^{-2}\,\sigma(\xi,\xi)+\lambda\,\bar D_{\xi} \bar {\operatorname{tr}}\,\ubar{\sigma})
	\\&\qquad +\frac12 \int_{S_{\tilde \xi,\tilde \lambda}}[\bar {\operatorname{tr}}\,\sigma-\sigma(\bar\nu,\bar\nu)]\,\mathrm{d}\bar \mu
	-\tilde\lambda^{-1}\int_{B_{\tilde\lambda}(\lambda \,\xi)} \bar {\operatorname{tr}}\,\sigma \,\mathrm{d} \bar{v}+O(\lambda^{-1}\,|\xi|^{-6})+O(|\xi|^{-7}).
	\end{align*}
	This expansion may be differentiated once with respect to $\xi$ provided $(M,g)$ is $C^5$-asymptotic to Schwarzschild. We proceed by computing the radial derivative of $A_\lambda$. Using Taylor's theorem and cancellations due to symmetry, we find
	\begin{equation} \label{rd 1}
	\begin{aligned}
	&\sum_{i=1}^3\xi^i\,\partial_i\bigg(\frac{8\,\pi}{15}\, \lambda|\xi|^{-3}\,(\bar {\operatorname{tr}}\,\ubar{\sigma}-3\,|\xi|^{-2}\,\sigma(\xi,\xi)+\lambda\,\bar D_{\xi} \bar {\operatorname{tr}}\,\ubar{\sigma})\bigg)
	\\ &\qquad =\,-2\int_{B_{\lambda}(\lambda\,\xi)} \left[|x|^{-3}\bar {\operatorname{tr}}\,{\sigma}-3\,|x|^{-5}\,\sigma(x,x)+|x|^{-3}\,\bar D_{x}\bar {\operatorname{tr}}\,\sigma\right]\,\bar g(\lambda\,\xi-x,\xi)\,\mathrm{d}\bar{v}\\&\qquad\qquad +O(\lambda^{-1}\,|\xi|^{-6}).
	\end{aligned}
	\end{equation}
	Next, we compute
	\begin{equation}
	\label{rd 2}
	\begin{aligned}
	&\,\sum_{i=1}^3\xi^i\,\partial_i\bigg(\frac12 \int_{S_{\tilde \xi,\tilde \lambda}}[\bar {\operatorname{tr}}\,\sigma-\sigma(\bar\nu,\bar\nu)]\,\mathrm{d}\bar \mu
	-\tilde\lambda^{-1}\int_{B_{\tilde\lambda}(\lambda\, \xi)} \bar {\operatorname{tr}}\,\sigma \,\mathrm{d} \bar{v}\bigg)
	\\&\qquad =\,\frac12 \,\lambda\int_{S_{\tilde \xi,\tilde \lambda}}\left[\bar D_{\xi}\bar {\operatorname{tr}}\,\sigma-\bar D_{\xi}\sigma(\bar\nu,\bar\nu)
	-2\,\tilde\lambda^{-1}\, \bar {\operatorname{tr}}\,\sigma\,\bar g(\xi,\bar\nu)\right] \,\mathrm{d}\bar \mu
	\\&\qquad\qquad \,+\frac12\,\xi^i\,\partial_i\tilde\lambda\,\bigg(\int_{S_{\tilde \xi,\tilde \lambda}}\left[\bar D_{\bar\nu}\bar {\operatorname{tr}}\,\sigma-\bar D_{\bar\nu}\sigma(\bar\nu,\bar\nu)
	-2\,\tilde \lambda^{-1}\,\sigma(\bar\nu,\bar\nu)\right] \mathrm{d}\bar\mu+2\,\tilde\lambda^{-2}\int_{B_{\tilde\lambda}(\lambda\, \xi)}\bar {\operatorname{tr}}\,\sigma\,\mathrm{d}\bar{v}\bigg).
	\end{aligned}
	\end{equation}
	From (\ref{tilde lambda relation}), we find that 
	$$
\sum_{i=1}^3	\xi^i\,\partial_i \tilde \lambda=2\,|\xi|^{-1}+O(\lambda^{-1}\,|\xi|^{-2}).
	$$
	Using cancellations due to symmetry, we compute, using Taylor's theorem to expand all terms up to second derivatives of $\sigma$ and Lemma \ref{spherical identities}, that the last line of  (\ref{rd 2}) equals
	\begin{equation}
	\label{rd 3}
	\begin{aligned}
	&-\frac{16\,\pi}{15}\,\lambda^3\,|\xi|^{-1}\,(\operatorname{div}\operatorname{div}\ubar{\sigma}- \bar\Delta\bar {\operatorname{tr}}\,\ubar{\sigma})+O(\lambda^{-1}\,|\xi|^{-6})\\&\qquad =\,4 \int_{B_{\lambda}(\lambda\,\xi)}(\operatorname{div}\operatorname{div}\ubar{\sigma}- \bar\Delta\bar {\operatorname{tr}}\,\ubar{\sigma})\,(|\xi|^{-1}-|x|^{-1})\,\bar g(\xi,\lambda\,\xi-x)\,\mathrm{d}\bar{v}+O(\lambda^{-1}\,|\xi|^{-6}).
	\end{aligned}
	\end{equation}
	Here, we have also used that
	$$
	|\xi|^{-1}=|x|^{-1}-\lambda^{-2}\,|\xi|^{-3}\,\bar g(\xi,\lambda\,\xi-x)+O(\lambda^{-1}|\xi|^{-3}).
	$$
	Finally, using $\lambda\,\xi=\tilde\lambda\,\tilde\xi$, we can argue exactly as in \cite[\S 2.1]{chodosh2019far} to show that
	the second line of (\ref{rd 2}) equals
	\begin{equation} \label{rd 4}
	\begin{aligned}
	&\frac12  \int_{B_{\tilde \lambda}(\lambda\,\xi)}(\operatorname{div}\operatorname{div}{\sigma}- \bar\Delta\bar {\operatorname{tr}}\,{\sigma})\,\bar g(\tilde \xi,\lambda\,\xi-x) \,\mathrm{d}\bar\mu\\
	&\qquad=\,-\frac{2\,\pi}{15}\,\tilde \lambda^5\, \bar D_{\tilde \xi}(\operatorname{div}\operatorname{div}\ubar{\sigma}-\bar\Delta\bar {\operatorname{tr}}\,\ubar{\sigma})+O(\lambda^{-1}\,|\xi|^{-6})
	\\&\qquad=\,-\frac{2\,\pi}{15}\,\ubar{\phi}^{-8}\, \lambda^5\, \bar D_{ \xi}(\operatorname{div}\operatorname{div}\ubar{\sigma}-\bar\Delta\bar {\operatorname{tr}}\,\ubar{\sigma})+O(\lambda^{-1}\,|\xi|^{-6})
	\\
	&\qquad=\,\frac12\,\, \ubar{\phi}^{-8} \int_{B_{\lambda}(\lambda\,\xi)}(\operatorname{div}\operatorname{div}{\sigma}- \bar\Delta\bar {\operatorname{tr}}\,{\sigma})\,\bar g(\xi,\lambda\,\xi-x) \,\mathrm{d}\bar\mu+O(\lambda^{-1}\,|\xi|^{-6}).
	\end{aligned}
	\end{equation}
	In the first and third equality, we have used Taylor's theorem to expand the integrand up to fourth derivatives of $\sigma$, the $C^5$-decay of the metric, cancellations due to symmetry, and Lemma \ref{spherical identities}. In the second equality, we have used (\ref{tilde lambda relation}). According to \cite[\S 4.9]{chodosh2019far}, there holds 
	$$
	R=\phi^{-8}\,(\operatorname{div}\operatorname{div}\ubar{\sigma}- \bar\Delta\bar {\operatorname{tr}}\,\ubar{\sigma})-4\,\left[|x|^{-3}\,\bar {\operatorname{tr}}\,{\sigma}-3\,|x|^{-5}\,\sigma(x,x)+|x|^{-3}\,\bar D_{x}\bar {\operatorname{tr}}\,\sigma\right]+O(\lambda^{-1}\,|\xi|^{-6})
	$$
	while 
	$$
	\ubar{\phi}^{-8}=\phi^8+8\,(|x|^{-1}-|\xi|^{-1})+O(\lambda^{-1}\,|\xi|^{-2}).
	$$
	Combing this with (\ref{rd 1}), (\ref{rd 2}), (\ref{rd 3}), and (\ref{rd 4}), we conclude that
	$$
	\sum_{i=1}^3\xi^i\,(\partial_i A_\lambda)(\xi)=\frac{48\,\pi}{35}\,|\xi|^{-6}+\frac12 \int_{B_\lambda(\lambda\,\xi)} \bar g(\xi,\lambda\,\xi-x)\,R\,\mathrm{d}\bar\mu+O(\lambda^{-1}\,|\xi|^{-6})+O(|\xi|^{-7}).
	$$
	In \cite[\S 2.2]{chodosh2019far}, it has been shown that this integral is non-negative provided that (\ref{CMC R Growth}) holds.
	In particular, $$\sum_{i=1}^3\xi^i\,(\partial_i A_\lambda)(\xi)>0$$  provided both $\xi\in\mathbb{R}^3$ and $\lambda>1$ are large. We may now conclude the proof as in \cite{chodosh2019far}.

\end{proof}\end{appendices}


\begin{thebibliography}{10}
	
	\bibitem{alessandroni2016local}
	Roberta Alessandroni and Ernst Kuwert.
	\newblock Local solutions to a free boundary problem for the {W}illmore
	functional.
	\newblock {\em Calc. Var. Partial Differential Equations}, 55(2):Art. 24, 29,
	2016.
	
	\bibitem{kuwertbauer}
	Matthias Bauer and Ernst Kuwert.
	\newblock Existence of minimizing {W}illmore surfaces of prescribed genus.
	\newblock {\em Int. Math. Res. Not.}, (10):553--576, 2003.
	
	\bibitem{bray1997penrose}
	Hubert~L. Bray.
	\newblock {\em The {P}enrose inequality in general relativity and volume
		comparison theorems involving scalar curvature}.
	\newblock ProQuest LLC, Ann Arbor, MI, 1997.
	\newblock Thesis (Ph.D.)--Stanford University.
	
	\bibitem{Bray:2001}
	Hubert~L. Bray.
	\newblock Proof of the {R}iemannian {P}enrose inequality using the positive
	mass theorem.
	\newblock {\em J. Differential Geom.}, 59(2):177--267, 2001.
	
	\bibitem{brendle2013constant}
	Simon Brendle.
	\newblock Constant mean curvature surfaces in warped product manifolds.
	\newblock {\em Publ. Math. Inst. Hautes \'{E}tudes Sci.}, 117:247--269, 2013.
	
	\bibitem{brendle2014large}
	Simon Brendle and Michael Eichmair.
	\newblock Large outlying stable constant mean curvature spheres in initial data
	sets.
	\newblock {\em Invent. Math.}, 197(3):663--682, 2014.
	
	\bibitem{mineffectivePMT}
	Alessandro Carlotto, Otis Chodosh, and Michael Eichmair.
	\newblock Effective versions of the positive mass theorem.
	\newblock {\em Invent. Math.}, 206(3):975--1016, 2016.
	
	\bibitem{carlotto2016localizing}
	Alessandro Carlotto and Richard Schoen.
	\newblock Localizing solutions of the {E}instein constraint equations.
	\newblock {\em Invent. Math.}, 205(3):559--615, 2016.
	
	\bibitem{cederbaumexplicit}
	Carla Cederbaum and Christopher Nerz.
	\newblock Explicit {R}iemannian manifolds with unexpectedly behaving center of
	mass.
	\newblock {\em Ann. Henri Poincar\'{e}}, 16(7):1609--1631, 2015.
	
	\bibitem{chodosh2019far}
	Otis Chodosh and Michael Eichmair.
	\newblock On far-outlying constant mean curvature spheres in asymptotically
	flat {R}iemannian 3-manifolds.
	\newblock {\em J. Reine Angew. Math.}, 767:161--191, 2020.
	
	\bibitem{chodosh2017global}
	Otis Chodosh and Michael Eichmair.
	\newblock Global uniqueness of large stable {CMC} spheres in asymptotically
	flat {R}iemannian 3-manifolds.
	\newblock {\em Duke Math. J.}, 171(1):1--31, 2022.
	
	\bibitem{CESY}
	Otis Chodosh, Michael Eichmair, Yuguang Shi, and Haobin Yu.
	\newblock Isoperimetry, scalar curvature, and mass in asymptotically flat
	{R}iemannian 3-manifolds.
	\newblock {\em Comm. Pure Appl. Math.}, 74(4):865--905, 2021.
	
	\bibitem{christodoulou71some}
	Demetrios Christodoulou and Shing-Tung Yau.
	\newblock Some remarks on the quasi-local mass.
	\newblock In {\em Mathematics and general relativity ({S}anta {C}ruz, {CA},
		1986)}, volume~71 of {\em Contemp. Math.}, pages 9--14. Amer. Math. Soc.,
	Providence, RI, 1988.
	
	\bibitem{CourantHilbert}
	R.~Courant and D.~Hilbert.
	\newblock {\em Methods of mathematical physics. {V}ol. {I}}.
	\newblock Interscience Publishers, Inc., New York, N.Y., 1953.
	
	\bibitem{deLellisMueller}
	Camillo De~Lellis and Stefan M\"{u}ller.
	\newblock Optimal rigidity estimates for nearly umbilical surfaces.
	\newblock {\em J. Differential Geom.}, 69(1):75--110, 2005.
	
	\bibitem{isostructure}
	Michael Eichmair and Jan Metzger.
	\newblock Large isoperimetric surfaces in initial data sets.
	\newblock {\em J. Differential Geom.}, 94(1):159--186, 2013.
	
	\bibitem{eichmair2013unique}
	Michael Eichmair and Jan Metzger.
	\newblock Unique isoperimetric foliations of asymptotically flat manifolds in
	all dimensions.
	\newblock {\em Invent. Math.}, 194(3):591--630, 2013.
	
	\bibitem{Geroch:1973}
	Robert Geroch.
	\newblock Energy extraction.
	\newblock {\em Annals of the New York Academy of Sciences}, 224(1):108--117,
	1973.
	
	\bibitem{hawking1968gravitational}
	Stephen Hawking.
	\newblock Gravitational radiation in an expanding universe.
	\newblock {\em J. Mathematical Phys.}, 9(4):598--604, 1968.
	
	\bibitem{huisken2001inverse}
	Gerhard Huisken and Tom Ilmanen.
	\newblock The inverse mean curvature flow and the {R}iemannian {P}enrose
	inequality.
	\newblock {\em J. Differential Geom.}, 59(3):353--437, 2001.
	
	\bibitem{huisken1996definition}
	Gerhard Huisken and Shing-Tung Yau.
	\newblock Definition of center of mass for isolated physical systems and unique
	foliations by stable spheres with constant mean curvature.
	\newblock {\em Invent. Math.}, 124(1-3):281--311, 1996.
	
	\bibitem{ikoma2017embedded}
	Norihisa Ikoma, Andrea Malchiodi, and Andrea Mondino.
	\newblock Embedded area-constrained {W}illmore tori of small area in
	{R}iemannian three-manifolds {I}: minimization.
	\newblock {\em Proc. Lond. Math. Soc. (3)}, 115(3):502--544, 2017.
	
	\bibitem{JangWald1977}
	Pong~Soo Jang and Robert~M. Wald.
	\newblock The positive energy conjecture and the cosmic censor hypothesis.
	\newblock {\em Journal of Mathematical Physics}, 18(1):41--44, 1977.
	
	\bibitem{koerber2020area}
	Thomas Koerber.
	\newblock The area preserving {W}illmore flow and local maximizers of the
	{H}awking mass in asymptotically {S}chwarzschild manifolds.
	\newblock {\em J. Geom. Anal.}, 31(4):3455--3497, 2021.
	
	\bibitem{kuwertsmall}
	Ernst Kuwert and Reiner Sch\"{a}tzle.
	\newblock The {W}illmore flow with small initial energy.
	\newblock {\em J. Differential Geom.}, 57(3):409--441, 2001.
	
	\bibitem{lm2}
	Tobias Lamm and Jan Metzger.
	\newblock Small surfaces of {W}illmore type in {R}iemannian manifolds.
	\newblock {\em Int. Math. Res. Not. IMRN}, (19):3786--3813, 2010.
	
	\bibitem{LammMetzger}
	Tobias Lamm and Jan Metzger.
	\newblock Minimizers of the {W}illmore functional with a small area constraint.
	\newblock {\em Ann. Inst. H. Poincar\'{e} Anal. Non Lin\'{e}aire},
	30(3):497--518, 2013.
	
	\bibitem{lamm2011foliations}
	Tobias Lamm, Jan Metzger, and Felix Schulze.
	\newblock Foliations of asymptotically flat manifolds by surfaces of {W}illmore
	type.
	\newblock {\em Math. Ann.}, 350(1):1--78, 2011.
	
	\bibitem{lms2019local}
	Tobias Lamm, Jan Metzger, and Felix Schulze.
	\newblock Local foliation of manifolds by surfaces of {W}illmore type.
	\newblock {\em Ann. Inst. Fourier (Grenoble)}, 70(4):1639--1662, 2020.
	
	\bibitem{laurain2019analyse}
	Paul Laurain.
	\newblock {\em Sur l'analyse de quelques problemes invariants conformes}.
	\newblock {H}abilitation, Universit{\'e} de Paris, 2019.
	\newblock \url{http://webusers.imj-prg.fr/~paul.laurain/main.pdf}.
	
	\bibitem{laurainmondino}
	Paul Laurain and Andrea Mondino.
	\newblock Concentration of small {W}illmore spheres in {R}iemannian
	3-manifolds.
	\newblock {\em Anal. PDE}, 7(8):1901--1921, 2014.
	
	\bibitem{mondino2013conformal}
	Andrea Mondino.
	\newblock The conformal {W}illmore functional: a perturbative approach.
	\newblock {\em J. Geom. Anal.}, 23(2):764--811, 2013.
	
	\bibitem{rivieremondino}
	Andrea Mondino and Tristan Rivi\`ere.
	\newblock Willmore spheres in compact {R}iemannian manifolds.
	\newblock {\em Adv. Math.}, 232:608--676, 2013.
	
	\bibitem{MR0192184}
	Stanislav~I. Poho\v{z}aev.
	\newblock On the eigenfunctions of the equation {$\Delta u+\lambda f(u)=0$}.
	\newblock {\em Dokl. Akad. Nauk SSSR}, 165:36--39, 1965.
	
	\bibitem{qing2007uniqueness}
	Jie Qing and Gang Tian.
	\newblock On the uniqueness of the foliation of spheres of constant mean
	curvature in asymptotically flat 3-manifolds.
	\newblock {\em J. Amer. Math. Soc.}, 20(4):1091--1110, 2007.
	
	\bibitem{Regge-Teitelboim}
	Tullio Regge and Claudio Teitelboim.
	\newblock Role of surface integrals in the {H}amiltonian formulation of general
	relativity.
	\newblock {\em Ann. Physics}, 88:286--318, 1974.
	
	\bibitem{MR929283}
	Richard~M. Schoen.
	\newblock The existence of weak solutions with prescribed singular behavior for
	a conformally invariant scalar equation.
	\newblock {\em Comm. Pure Appl. Math.}, 41(3):317--392, 1988.
	
	\bibitem{wei2020maximizers}
	Guodong Wei.
	\newblock On the minimizers of curvature functionals in asymptotically flat
	manifolds.
	\newblock {\em The Journal of Geometric Analysis}, 2020.
	
	\bibitem{Yu:2020}
	Haobin Yu.
	\newblock Isoperimetry for asymptotically flat 3-manifolds with positive {ADM}
	mass.
	\newblock {\em Math. Ann.}, 2022.
	
\end{thebibliography}
\end{document}